\documentclass{amsart}

\usepackage{amsfonts}
\usepackage{amsmath}
\usepackage{amssymb, esint}
\usepackage{amsthm,color}
\usepackage{enumerate}
\usepackage{mathtools}
\usepackage{eurosym}
\usepackage[hidelinks, bookmarksdepth=3]{hyperref}

\usepackage{url}
\usepackage[hidelinks, bookmarksdepth=3]{hyperref}
\usepackage{xcolor}
\usepackage{cancel}
\usepackage{ulem} 
\usepackage{caption,subcaption}
\usepackage{tikz}
\usepackage{tikz-3dplot}

\usepackage{imakeidx}
\usepackage{tabularx, booktabs}
\usepackage{multicol}

\usepackage{graphicx}
\usepackage{amssymb,amscd,amsthm,amsxtra,esint}
\usepackage{latexsym}
\usepackage{epsfig,esint}
\newtheorem{thm}{Theorem}[section]
\newtheorem{cor}[thm]{Corollary}
\newtheorem{lem}[thm]{Lemma}
\newtheorem{prop}[thm]{Proposition}
\theoremstyle{definition}
\newtheorem{defn}[thm]{Definition}
\theoremstyle{remark}
\newtheorem{rem}[thm]{Remark}
\numberwithin{equation}{section}

\newcommand{\be}{\begin{equation}}
\newcommand{\ee}{\end{equation}}

\newcommand{\R}{\mathbb R}

\newcommand{\eps}{\varepsilon}

\newcommand{\p}{\partial}

\newcommand{\comment}[1]{}
\usepackage{geometry}
\definecolor{strongmagenta}{RGB}{180,0,180}

\begin{document}

\title[Triple Junctions]{Triple Junctions in Multi-Phase Obstacle type  Problems}
\author{Daniela De Silva}
\address{Department of Mathematics, Barnard College, Columbia University, New York, NY 10027}
\email{\tt  desilva@math.columbia.edu}
\author{Ovidiu  Savin}
\address{Department of Mathematics, Columbia University, New York, NY 10027}\email{\tt  savin@math.columbia.edu}
\author[Shahgholian]{Henrik Shahgholian%\,\orcidlink{0000-0002-1316-7913}
}
\address{Department of Mathematics, KTH Royal Institute of Technology, SE-10044 Stockholm, Sweden 
%\& Faculty of Mathematics and Mechanics,  Yerevan State University, Armenia.
}
\email{henriksh@kth.se}

\begin{abstract} We study the regularity of the free boundary for a multi-phase obstacle-type problem associated to quadrature domains, which arises from the minimization of the energy 
$$J(U,F):= \int_{B_1} (\frac 1 2 |\nabla U|^2 + F \cdot U)dx$$
among all competitors $U=(u^1,\ldots, u^N)$ with non-negative components $u^i \geq 0$ with disjoint supports, and constant forcing terms $f^i>0.$ We focus on the geometry of the free boundary at points where at least three distinct components meet (triple junctions).  \end{abstract}

%\thanks{}
\subjclass{35R35}%
\keywords{Multi-phase free boundary problem, Triple-junction points, Regularity theory}

\maketitle

\maketitle

\section{Introduction}

In this paper we study the regularity of the free boundary arising in a multi-phase obstacle-type problem associated to quadrature domains. Precisely, we consider minimizers of the energy functional  
$$J(U,F):= \int_{B_1} (\frac 1 2 |\nabla U|^2 + F \cdot U)dx$$
among all competitors $U=(u^1,\ldots, u^N)$ which have non-negative components $u^i \geq 0$   whose positivity sets are pairwise disjoint. In other words, $U$ maps  $B_1 \subset \R^n$ to the union of the non-negative coordinate axes in $\R^N$.

Specifically, we minimize $J$ over the class $\mathcal A$ defined as
$$\mathcal A:=\left \{U \in H^1(B_1)| \quad u^i \cdot u^j = 0 \quad \mbox{for} \  i \neq j, \quad u^i \geq 0 \right \},$$ 
with a given forcing term $F=(f^1, \ldots, f^N) \in L^2(B_1)$, $B_1 \subset \R^n$. There is a free boundary associated to this minimization problem, consisting of the boundaries of the supports of each  $u^i$, that is 
$$\Gamma(U):= \p\{|U|>0\} \cap B_1.$$ 

The case $F\equiv 0$ has been widely investigated and appears in a variety of contexts: as harmonic maps into singular spaces \cite{GS,S}, as limit of competition-diffusion models \cite{CTV1,CTV2,CTV4}, optimal partitions for spectral  problems \cite{BBH,CL2,CTV3}, and as double-valued harmonic functions \cite{A,CL1, DLS, SW}. In particular, minimizers are known to be Lipschitz continuous and the free boundary can be decomposed into a regular part consisting of analytic hypersurfaces separating adjacent supports and a singular part in which $U$ has a homogeneous behavior of homogeneity $\gamma >1$. For a more detailed explanation, we refer to the recent survey by Ognibene and Velichkov \cite{OV1} and the references therein.

The minimization problem in the class $\mathcal A$, for an energy involving both the Dirichlet part and a more general potential term $F(x,U)$, was considered by Conti, Terracini, and Verzini in \cite{CTV4} in the modeling of reaction-diffusion phenomena. The energy $J(\cdot, F)$ above with $f^i > 0$ appears in the work of the third author with Arakelyan \cite{AS}, in the construction of multi-phase quadrature domains.

\subsection{Main Results.}\label{subsec:main}

 The main issue under investigation in this paper is the classification of global solutions and local regularity properties of the free boundary of minimizers of the energy $J$. We initially address the basic questions of optimal regularity, compactness, uniqueness, and non-degeneracy of minimizers. Some of these basic results can be found in the literature, possibly in a slightly different setting. Thus, for completeness, we provide the details.
 
 Our main purpose is to study the free boundary $\Gamma(U)$ in the case when $F > 0$ is constant. This is done by a blow-up procedure which depends on the growth of $U$ near the free boundary. Precisely, let $0 \in \Gamma(U)$, and denote by
$$a_r:= \max_{B_r} |U|.$$
We show that if 

\begin{equation} \label{c1intro}
    \lim_{r\to0} \frac{a_r}{r^2} = \infty,
\end{equation}
then,  what we call  {\it type I  rescalings}, which  normalize the $L^\infty$ norm must converge (up to a subsequence) 
$$U_r(x):= a_r^{-1} U(rx) \to \mathcal U_0,$$
and the limit $\mathcal U_0$ is a nontrivial homogeneous minimizer of $J_0=J(\cdot,0)$. We refer to $\mathcal U_0$ as a {\it type I (tangent) cone}.
On the other hand if \eqref{c1intro} does not hold, then 
\begin{equation} \label{c2intro}
\liminf_{r\to0} \frac{a_r}{r^2} < \infty,
\end{equation}
and the quadratic rescalings, what we call \emph{type II  rescalings},    converge (up to a subsequence) $$U_r(x):= r^{-2} U(rx) \to U^F_0,$$
where the limit $U^F_0$ is a nontrivial minimizer homogeneous of degree two of $J(\cdot,F)$.
We refer to $U^F_0$ as a {\it type II (tangent) cone}.

We focus our attention on free boundary points where at least 3 components meet. Otherwise, when there are at most two nonzero components of $U$ in a neighborhood of a free boundary point, say $u^1$ and $u^2$, the problem can be reduced to a scalar one by considering the function
$$u:= u^1-u^2,$$
which solves the two-phase obstacle problem
$$\triangle u= f_1 \chi_{\{u>0\}} - f_2 \chi_{\{u<0\}}.$$
For the regularity of the free boundary for this scalar problem we refer to the book by Petrosyan, Shahgholian, and Uraltseva \cite{PSU}.

We establish that when $n=2$, there are at most 3 components that can meet at a free boundary point and we refer to such a point that belongs to the boundaries of 3 different components as a {\it triple junction}. Moreover, there are only two possible blow-up cones at a triple junction, either the $3/2$ homogeneous harmonic which is of type I, or an explicit $2$-homogeneous solution $U_0^F$ which is of type II. The phases of $U_0^F$ occupy three sectors of angles $\theta_i \in (\frac \pi 2, \pi)$, $i=1,2,3,$ which are determined uniquely from the components of $F$ that correspond to the three sectors. Precisely, we have the following.

\begin{thm}\label{Pintro} Assume $n=2$, $F>0$ constant, $N \ge 3$, and that $0$ belongs to the free boundaries of at least three components of a minimizer $U \in \mathcal A$ of $J(\cdot, F)$. Then there are exactly three nontrivial components in a neighborhood of $0$. Moreover, there are only two possible blow-ups (up to rotations and reflections) at $0$ 
$$|\mathcal U_0|=r^{3/2} |\cos \frac{3 \theta}{2}| \quad \mbox{(type I), or} \quad \quad U_0^F \quad \mbox{as in \eqref{uof} (type II)}.$$
\end{thm}

We also establish the uniqueness of the tangent cones at a triple junction which constitutes the main part of the paper. As a consequence we obtain the free boundary regularity in a neighborhood of a triple junction in 2 dimensions. 

\begin{thm}[Regularity of the free boundary at triple junctions]\label{localclassintro}
Assume $n=2$, $F>0$ constant, and that $x_0$ is a triple junction point for  a minimizer $U \in \mathcal A$ of $J(\cdot, F)$. Then in $B_{r_0}(x_0)$, for some $r_0$ small, $\Gamma(U)$ consists of three $C^{1,\alpha}$ curves starting at $x_0$ and meeting at either $120^\circ$ angles, or at $\theta_i$, $i=1,2,3$ angles obtained from $F$ by the compatibility equation
\begin{equation}\label{uofintro}
 f^1\tan\theta_1=f^2\tan\theta_2=f^3\tan\theta_3, \quad \sum \theta_i=2\pi, \quad \theta_i \in (\frac \pi 2, \pi).
 \end{equation}
\end{thm}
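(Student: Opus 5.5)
By Theorem \ref{Pintro}, there are exactly three nontrivial components near $x_0$, and every blow-up at $x_0$ is either the type I cone $|\mathcal U_0| = r^{3/2}|\cos\frac{3\theta}{2}|$ or the type II cone $U_0^F$ of \eqref{uof}, up to rotations and reflections. The plan has three parts. First, show that the type of the blow-up (I or II) does not depend on the sequence $r\to 0$. Second, prove uniqueness of the tangent cone, with a rate, in each of the two regimes. Third, deduce the $C^{1,\alpha}$ structure of $\Gamma(U)$ by applying the decay estimate at $x_0$ and at nearby free boundary points. We translate so that $x_0=0$.

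\textbf{Step 1: the dichotomy.} Let $a_r=\max_{B_r}|U|$. We need either $a_r/r^2\to\infty$ or $a_r\le Cr^2$ for all small $r$. We will use a Weiss-type monotonicity formula adapted to homogeneity two for $J(\cdot,F)$:
\begin{equation*}
W(r)=r^{-n-2}\int_{B_r}\Bigl(\tfrac12|\nabla U|^2+F\cdot U\Bigr)-r^{-n-3}\int_{\partial B_r}|U|^2,
\end{equation*}
which is monotone for minimizers. Along a type II sequence, the limit of $W$ is $W(U_0^F)$, a fixed finite value. Along a type I sequence, the quadratic rescalings blow up like $r^{-1/2}$ times a $3/2$-homogeneous profile, and this forces $W(r)\to-\infty$. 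The two behaviors cannot coexist along different subsequences, because $W(0^+)$ exists. If the type I regime holds, we pass instead to an Almgren frequency, perturbed by a lower-order term because $F\neq0$. Its limit is $3/2$, and this uniquely selects the type I cone up to rotation.

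\textbf{Step 2: uniqueness and a rate (main obstacle).} For the rotational uniqueness we will prove an epiperimetric inequality, or equivalently a {\L}ojasiewicz-type inequality, for $W$ (in the type II case) or for the $3/2$-Weiss energy (in the type I case) near the respective cone. The approach in two dimensions is to expand the trace on $\partial B_1$ in Fourier modes, separately on each of the three arcs, and compare with the homogeneous extension. Rotations give the only degenerate direction. In the type II case, the angles are rigid because of \eqref{uofintro}, which is nondegenerate: $f^i\tan\theta_i=\lambda$ determines $\theta_i(\lambda)$ monotonically, and $\sum\theta_i=2\pi$ fixes $\lambda$. This yields a decay $W(r)-W(0^+)\le Cr^{\alpha}$, so the blow-ups converge at rate $r^\alpha$ to a single rotated cone. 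The hard part is the free-boundary (sector-angle) perturbations. A competitor has to be built by moving the junction lines, and the cost must be compared with the spectral gap of the Laplacian on arcs with Dirichlet conditions at the junctions. I would first attempt a direct construction: homogeneous extension plus a radial correction $r^{2+\epsilon}$ acting on the higher modes. For type I, the forcing term $F\cdot U$ is of order $r^2\ll r^{3/2}$ and can be treated as an error $O(r^{1/2})$ in the epiperimetric step.

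\textbf{Step 3: regularity of the free boundary.} The same decay estimate holds uniformly at nearby points $y$ that are triple junctions, but in fact $0$ is isolated among triple junctions. Indeed, convergence to the cone with a rate, combined with the flatness of the cone away from the origin, shows that in the annuli $B_r\setminus B_{r/2}$ the set $\Gamma(U)$ lies in an $r^{1+\alpha}$-neighborhood of three rays, and it separates only two phases there. By the two-phase theory \cite{PSU}, each piece is then a $C^{1,\alpha}$ graph over its ray at scale $r$. Because the rate is geometric, the tangent directions match across dyadic scales. The result is three $C^{1,\alpha}$ curves emanating from $0$, with tangent angles equal to those of the cone: $120^\circ$ in type I, and the $\theta_i$ of \eqref{uofintro} in type II.
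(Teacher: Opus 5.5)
\textbf{Step 2 is the gap.} Steps 1 and 3 are essentially routine. Step 2, however, carries the whole content of the theorem (uniqueness of the tangent cone), and it is only a program: you announce an epiperimetric/{\L}ojasiewicz inequality near both cones and then say you would ``first attempt'' a construction. So the key step is not proved.

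The sketch also rests on the wrong linearization. Perturbations that move the junction rays are governed by the transmission problem \eqref{FBgeneral}--\eqref{transmissiongen}, with jump condition $V_{\nu^+}+V_{\nu^-}=a^F V/|\nabla U_0^F|$. They are not governed by Dirichlet problems on the three arcs. In the Dirichlet picture the homogeneities are $k\pi/\theta_i$, which never equal $2$ for $\theta_i\in(\frac\pi2,\pi)$. So you would not even see the rotation mode $\partial_\theta U_0^F$, which you call the only degenerate direction.

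For the correct operator (Lemma \ref{ei}), homogeneity $2$ is indeed simple. But this needs its own argument and does not follow from the rigidity of exact $2$-homogeneous solutions expressed by \eqref{uofintro}. Below the critical homogeneity there are also the translation modes ($\lambda=1$) and the mode $r^{\lambda_3}g_3^F$ with $\lambda_3\in[\frac32,2)$, which no rigid motion of the cone generates. (A smaller point: in Step 1, Theorem \ref{Pintro} does not tell you that normalized rescalings along every sequence with $a_r/r^2\to\infty$ converge to the $3/2$-cone when $\liminf a_r/r^2<\infty$.)

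\textbf{The crux is the mode $r^{\lambda_3}g_3^F$, which your proposal never mentions.} It makes $U_0^F$ unstable. The global solutions $U_t^F=U_0^F+t\,r^{\lambda_3}g_3^F+\dots$ are arbitrarily close to $U_0^F$ in $B_1$. Yet they have a $120^\circ$ junction when $t>0$ and no junction at all when $t<0$. Closeness to $U_0^F$ at one scale therefore says nothing about the blow-up. Consequently:
\begin{itemize}
\item a uniqueness argument at a type II point must genuinely use that $0$ is a priori of type II (for you, $\Phi_U(r)\ge\Phi_{U_0^F}$);
\item any competitor construction must control how the sub-critical modes interact nonlinearly with the moving free boundary.
\end{itemize}
Your ``homogeneous extension plus $r^{2+\epsilon}$ on the higher modes'' addresses neither point. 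The type I part, an epiperimetric inequality at the $Y$-cone with forcing, is likewise only asserted.

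\textbf{What the paper does instead.} It avoids epiperimetric inequalities altogether:
\begin{itemize}
\item it linearizes via Lemma \ref{impro};
\item it expands solutions of the linearized problem in $\nabla U_0^F$, $r^{\lambda_3}g_3^F$ and $\partial_\theta U_0^F$ (Lemma \ref{L8.2});
\item it runs an improvement of flatness relative to the whole family $U_t^F$ (Proposition \ref{IFgeneral}), whose parameter $t$ absorbs the unstable mode.
\end{itemize}
Iterating from a scale where $U$ is close to $U_0^F$, exactly one of three things happens:
\begin{itemize}
\item $|t_k|\le\delta$ forever, giving a unique type II cone;
\item $t_k<-\delta$, so $0$ is not a triple junction, a contradiction;
\item $t_k>\delta$, and after rescaling one lands in the type I improvement of flatness, Theorem \ref{curves}.
\end{itemize}

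Your proposal is therefore missing one of two things: an actual proof of an epiperimetric inequality at $U_0^F$ that handles these sub-critical modes, or the construction of the family $U_t^F$ (Section \ref{sec:non-sym-F}) that the paper uses in its place.
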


The two different behaviors of the free boundary at a triple junction correspond to the two types of tangent cone. In the symmetric case when the 3 nontrivial components of $F$ are all equal, the angles $\theta_i$ are equal to $120^\circ$ as well.

We show further that the cone $\mathcal U_0$ is stable in the sense that a small perturbation of its boundary data on $\p B_1$ produces a solution with a triple junction point that has a tangent cone of type I. On the other hand the type II cone $U_0^F$ is unstable, since a small perturbation of its boundary data may produce solutions with no triple junctions and fattening of the zero phase, or solutions that have a triple junction point with a type I tangent cone. In other words, an arbitrarily small perturbation of a triple junction with angles $\theta_i$ may produce a triple junction with $120^\circ$ angles. This shows that the $C^{1,\alpha}$ norm of the free boundaries in the second alternative cannot be bounded above, unless $F$ is symmetric. 

In order to understand perturbations of the type II cones we need to classify global solutions that are asymptotic to $U_0^F$ at infinity. It turns out that there is a one-parameter family of such solutions (modulo rigid motions) that we denote by $U_t^F$, for some parameter $t \in \R$. The solutions $U^F_t$ with $t>0$ (or with $t<0$) are obtained from each other after quadratic rescalings. The family $U_t^F$ can be constructed explicitly in the symmetric case when $F=(1,1,1)$. For the general case, one needs to construct global solutions with prescribed asymptotics at infinity by a limiting procedure,
see Section \ref{sec:non-sym-F}.

The family $U^F_t$ plays an important role in understanding the regularity properties of triple junctions for our problem. Using the analogy with minimal surface theory, it can be thought as providing the Hardt-Simon foliation near the cone $U_0^F$, with $U^F_t$ for $t>0$, and $t<0$, representing the leaves on each side of the cone. 

We provide a complete classification of global solutions for $n=2$, $N=3$ that have at most polynomial growth at infinity, and that have connected phases in contact with each other.

\begin{thm}[Classification of Global Solutions in 2d]\label{mainintro} For $n=2, N=3$, let $U \in \mathcal A$ be a global minimizer of $J(\cdot, F)$, $F> 0$ constant, with polynomial growth at infinity, $$|U(x)| = O(|x|^m),  \quad \text{for some $m \in \mathbb N$}.$$ Assume $$\{u^i>0\} \quad  \text{are connected,}$$ and $$\p\{u^i > 0\} \cap \p\{u^j >0\} \neq \emptyset \quad \text{for all $i,j \in \{1,2,3\}$.}$$ Then, $U$ must coincide with $U^F_t$ for some $t$ (modulo rigid motions).
\end{thm}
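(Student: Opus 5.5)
The plan is a blow-down argument at infinity followed by a comparison with the one-parameter family $U^F_t$.

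\emph{Step 1: the blow-down is $U^F_0$.} Let $a_R:=\max_{B_R}|U|$ and apply the rescaling dichotomy of the introduction as $R\to\infty$, not $r\to0$.
\begin{itemize}
\item If $a_R/R^2\to\infty$, the type I rescalings $a_R^{-1}U(Rx)$ converge, up to subsequences, to a nontrivial homogeneous minimizer of $J_0$. Since the growth is polynomial and we are in 2d with three phases, this limit is a homogeneous harmonic partition of degree $\gamma\ge 3/2$.
\item Otherwise the quadratic rescalings $R^{-2}U(Rx)$ converge to a nontrivial $2$-homogeneous minimizer of $J(\cdot,F)$.
\end{itemize}
Connectedness of the phases and pairwise contact of their boundaries should force the blow-down to have exactly three sectors meeting at one point. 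The $2$-homogeneous three-sector minimizer is then $U^F_0$; this is where the compatibility equation \eqref{uofintro} enters, as in the proof of Theorem \ref{Pintro}.

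To exclude a type I blow-down, I would use a Weiss/Alt--Caffarelli--Friedman type monotonicity formula. Superlinear-in-$R^2$ growth is incompatible with the forcing $F>0$ dominating at large scales. Concretely, each phase satisfies $\Delta u^i=f^i$ in its support, so $u^i\ge c\,d^2$ away from the boundary of its support. Then the harmonic limit would have to vanish identically in regions where $U$ has quadratic size, giving a contradiction. I expect an argument via the Weiss energy with $F$ to show the limit is $2$-homogeneous. Uniqueness of the blow-down up to rotation should follow from the uniqueness-of-tangent-cone machinery developed for triple junctions (Theorem \ref{localclassintro}), applied at infinity.

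\emph{Step 2: asymptotic expansion.} Knowing $U\to U^F_0$ after quadratic rescaling at infinity, linearize around $U^F_0$. I would write $U=U^F_0+w$ on large annuli, with $w$ solving a linearized problem on the three sectors with transmission/free-boundary conditions. I would classify homogeneous solutions of this linearized problem by separation of variables in the angle. The modes of degree in $[0,2)$ correspond to:
\begin{itemize}
\item translations (degree $1$);
\item rotations;
\item a single additional mode of degree $0$, a constant shift that opens or closes the zero phase, which is exactly the parameter $t$ in $U^F_t$.
\end{itemize}
All other modes decay in the sense that their degree exceeds $2$, or are excluded. Matching the leading correction of $U$ at infinity to one of these, I fix a rigid motion and a value $t$.

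\emph{Step 3: rigidity via comparison.} After the normalization of Step 2, $U$ and $U^F_t$ agree at infinity up to $o(1)$ errors in the appropriate scaling. The family $U^F_t$ is monotone in $t$; this is the Hardt--Simon foliation picture, with leaves ordered on each side of the cone. Sliding $t$, I find the first parameters $t_-\le t\le t_+$ at which $U^F_{t_\pm}$ touches $U$ from either side in the ordering of the three phases, with the ordering meaning $u^i$ larger and $u^j$ smaller. A strong maximum principle / Hopf argument for the multiphase problem then rules out interior contact unless the two solutions coincide. The asymptotic agreement rules out contact at infinity unless the parameters are equal, forcing $t_-=t_+$ and $U=U^F_t$.

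\emph{Main obstacle.} I expect Step 2 to be the hard part: obtaining a quantitative rate $U\to U^F_0$ at infinity, via an epiperimetric or Łojasiewicz-type inequality, sharp enough to read off the $t$-mode and exclude the others. The comparison in Step 3 also requires a correct partial order between vector solutions. I would first try ordering by the scalar quantities $u^1-u^2$ on the common interface and $u^3$ near the zero phase.
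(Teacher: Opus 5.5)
Your Step 1 does not exclude a type I blow-down, and this is the first genuine gap. Non-degeneracy ($u^i\ge c\,d^2$, $\max_{\p B_r}u^i\ge cr^2$) is only a lower bound, and it is fully consistent with a harmonic limit. Under the normalization $a_R^{-1}U(Rx)$ with $a_R/R^2\to\infty$, every quantity of quadratic size tends to zero, and so does the rescaled forcing $R^2a_R^{-1}F$. At large scales the forcing disappears rather than dominates. The Weiss functional cannot help either: its boundedness at infinity comes, via Caccioppoli, from $\liminf a_r/r^2<\infty$, which is exactly what must be shown. Remark \ref{rem38} also shows that superquadratic global solutions with $F>0$ exist once more phases are allowed, so the exclusion must use the topological hypotheses essentially.

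What is needed is this. First, the hypotheses force the type I blow-down to be the $3/2$-cone $\mathcal U_0$; this is Lemma \ref{time}, a genuine topological argument, not something that merely ``should'' hold. Next, take a large scale $r_j$ where $U_j=a_{r_j}^{-1}U(r_jx)$ is $\eps$-close to $\mathcal U_0$ with forcing at most $\eps^2$. There the type I improvement of flatness (Proposition \ref{zero}, Theorem \ref{curves}) can be iterated to all smaller scales, because $F$ rescales as $\rho^{1/2}F(\rho x)$. It shows that $U$ has a triple junction at which $\max_{B_{rr_j}}|U|\ge\frac12 r^{3/2}a_{r_j}$ for all $r\le1$. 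Taking $r=1/r_j$ gives $a_{r_j}\le 2a_1r_j^{3/2}$, contradicting $a_{r_j}/r_j^2\to\infty$; this is Proposition \ref{not1}.

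Step 2 rests on an incorrect spectrum for the linearized problem \eqref{FBgeneral}--\eqref{transmissiongen}. Its homogeneities are (Lemmas \ref{expandV} and \ref{ei}):
\begin{itemize}
\item $1,1$, the translations $\p_{x_i}U_0^F$;
\item a simple $\lambda_3\in[3/2,2)$ with eigenfunction $g_3^F$ (proportional to $\cos\frac{3\theta}{2}$, with $\lambda_3=3/2$, when $F=(1,1,1)$);
\item $\lambda_4=2$, the rotation $\p_\theta U_0^F$, which is simple;
\item then $\lambda_5>5/2$.
\end{itemize}
So there is no degree-$0$ mode, the rotation sits exactly at degree $2$, and $t$ is not a constant shift. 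Instead $U^F_t=U^F_0+t\,r^{\lambda_3}g^F_3+\dots$, with $U^F_t=|t|^{2\beta}U^F_{\pm1}(|t|^{-\beta}x)$ and $\beta=1/(2-\lambda_3)$.

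With the correct spectrum you need neither a convergence rate from an epiperimetric-type inequality nor a sliding argument. Every mode of degree at most $2$ is generated by translations, rotations and $\p_tU^F_t|_{t=0}$, and there is a gap above $2$. This yields an improvement of flatness relative to the family $U^F_t$ (Proposition \ref{IFgeneral}, via Lemma \ref{improgeneral}). Start it at an arbitrarily large scale $r$, where $r^{-2}U(rx)$ is close to a rotation of $U_0^F$ by Weiss, Remark \ref{all} and non-degeneracy. Iterating down to a fixed scale makes the error there arbitrarily small, so $U$ is a rigid motion of some $U^F_t$.

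Your Step 3 would not go through as proposed anyway, because the multiphase problem has no comparison principle for an order between solutions. In the two-phase case the monotone order is $u^1$ up, $u^2$ down. With three phases arranged cyclically around a junction, no consistent choice of signs exists. The only comparison tool available is the subharmonicity of $d(U,V)$ (Lemma \ref{duv}), which gives stability but not ordering. In addition, no ordering of $t\mapsto U^F_t$ is available for non-symmetric $F$, where the free boundaries of $U^F_t$ are curved and move with $t$.
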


In view of Theorem \ref{mainintro}, the family $U^F_t$ models the local behavior of solutions defined in $B_1$ that have phases in contact with each other in a sufficiently small ball around the origin. 
This leads to a more accurate description than the one in Theorem \ref{localclassintro}, in the sense that if three phases are present in a small ball then the solution can be represented as a $C^{1,\alpha}$ diffeomorphism of one of the global solutions $U_t^F$ for some $t$ small (see Theorem \ref{thm:best}).

We also study the behavior of the free boundary in arbitrary dimensions $n \ge 3$, at points that have a two-dimensional tangent cone. In particular we say that $x_0 \in \Gamma(U)$ is a $Y$-singularity of type I or of type II if a blow-up cone coincides (modulo rotations) with the trivial extensions to $\R^n$ of the two-dimensional cones $\mathcal U_0$ and $U_0^F$ from Theorem \ref{Pintro}.

\begin{thm}\label{typeIgeneralintro} Let $x_0 \in \Gamma(U)$ be a type I Y-singularity. Then, the blow-up profile is unique and in a neighborhood of $x_0$ the free boundary $\Gamma(U)$ consists of three $C^{1,\alpha}$ hypersurfaces that meet at $120^\circ$ on a $C^{1,\alpha}$ codimension 2 curve made of type I Y-singularities (for any $\alpha<1/2$).
\end{thm}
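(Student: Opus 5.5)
The plan is to reduce the type I case to an epiregularity statement for the homogeneous problem $J_0=J(\cdot,0)$, treating $F$ as a lower order perturbation, and then use a standard Weiss/Monneau-type decay argument. At a type I point the growth $a_r/r^2\to\infty$ holds, and rescalings $U_r=a_r^{-1}U(rx)$ minimize $J(\cdot, r^2a_r^{-1}F)$ with a forcing term tending to zero. Since the cone $\mathcal U_0$ (extended trivially to $\R^n$) is $3/2$-homogeneous, I would use the Weiss-type monotonicity formula with homogeneity $3/2$ for $J_0$, with an error term from $F$. Because $|U|\le Cr^{3/2}$ near such points, this error is of order $r^{2}\cdot r^{3/2}/r^{3}$ relative to the normalized energy, that is $O(r^{1/2})$, which is integrable in $dr/r$. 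The first step is therefore an almost-monotonicity formula
$$W(r)=r^{-n-1}\int_{B_r}\big(\tfrac12|\nabla U|^2+F\cdot U\big)-\tfrac{3}{2}r^{-n-2}\int_{\p B_r}|U|^2,\qquad \frac{d}{dr}\big(W(r)+Cr^{1/2}\big)\ge \frac{c}{r}\,\Big\|\p_r U_r-\tfrac32 U_r\Big\|^2_{L^2(\p B_1)}.$$

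The core step, and the main obstacle, is an epiperimetric inequality (or equivalently a Łojasiewicz/linearization estimate) for $J_0$ near the $Y$-cone $\mathcal U_0$ in $\R^n$. The plan is to parametrize boundary data close to the cone on $\p B_1$ by perturbations of the three free boundary half-hyperplanes and of the harmonic components. The linearized problem is then a transmission problem for three harmonic functions on three wedges $\times\R^{n-2}$, coupled by the free boundary conditions: equal $|\nabla u^i|$ across interfaces, and a $120^\circ$ angle condition. I would show that this linearized operator is integrable, meaning its $3/2$-homogeneous kernel consists only of rotations and translations along the spine of the cone. I would also show that the next homogeneity is strictly larger than $3/2$, with a spectral gap giving a rate $r^{\alpha}$ for every $\alpha<1/2$; the restriction $\alpha<1/2$ comes from the $F$ perturbation. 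I would try to obtain this by a direct Fourier decomposition in the angular variable together with separation of variables in $\R^{n-2}$, and then run the ``competitor by homogeneous extension plus correction'' construction to get $W(\text{competitor})\le (1-\kappa)W(\text{boundary data extension})$.

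With the epiperimetric inequality in hand, the remaining steps are routine. First I obtain decay $W(r)-W(0^+)\le Cr^{\alpha}$ and, through the Dini-type estimate on $\int \|\p_r U_r-\frac32U_r\|$, the uniqueness of the blow-up with the rate $\|U_r-\mathcal U_{x_0}\|_{L^2(\p B_1)}\le Cr^{\alpha}$. Next I note that the estimate is uniform for nearby type I $Y$-points, since smallness of $W$ and closeness to the cone are open conditions. This uniformity makes the cone $\mathcal U_{x}$ depend Hölder-continuously on $x$ along the set of $Y$-points, which is therefore a $C^{1,\alpha}$ codimension $2$ curve (manifold). Points near that manifold that are not $Y$-points have only two phases locally; for these I would invoke the two-phase regularity from \cite{PSU} (or flatness improvement) to get $C^{1,\alpha}$ hypersurfaces. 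Finally, the hypersurfaces glue up to the singular curve with $120^\circ$ angles, by the uniform closeness to the rotated cones at all scales.
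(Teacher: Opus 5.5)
\paragraph{Comparison with the paper.} Your route, an almost-monotone $3/2$-Weiss energy plus an epiperimetric inequality at $\mathcal U_0$, differs from the paper's, which uses no energy-decay argument. The paper iterates the improvement of flatness Proposition~\ref{zeron}, which is proved by compactness:
\begin{itemize}
\item it linearizes only in annuli away from the spine, where Proposition~\ref{pert} and Lemma~\ref{improgeneral} apply;
\item it identifies the limit as a double-valued harmonic function (Lemmas~\ref{extendnew} and \ref{expandV20});
\item it passes from the annulus to the whole ball through the subharmonicity of $d(U,V)$ (Lemma~\ref{duv}).
\end{itemize}
As written, your argument has genuine gaps.

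\paragraph{Gap 1: the growth bound.} You use $|U|\le Cr^{3/2}$ near $x_0$, but this is not available. At a type I point you only know that $a_r/r^2\to\infty$ and that $a_r^{-1}U(rx)\to\mathcal U_0$ along a subsequence. The $r^{3/2}$ growth is an \emph{output} of the theorem (compare Theorem~\ref{curves}). Without it, the $F$-error in $\frac{d}{dr}W$ is only bounded by $C a_r/r^2$, which is not known to be integrable, since the only a priori bound is $a_r\le Cr$. You also cannot identify $W(0^+)=0$, on which your decay step rests. The paper needs only one scale at which $d(U_r,\mathcal U_0)\le\eps$ and the rescaled forcing $r^2a_r^{-1}F$ is at most $\eps^2$; the growth then comes out of the iteration. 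To repair this you would need one of two things. Either give an almost-monotone (truncated) frequency argument showing that the frequency at $x_0$ is exactly $3/2$ and that $\int_{\p B_r}|U|^2\le Cr^{n+2}$. Or run the decay inductively from such an initial scale. Also, with $\frac12|\nabla U|^2$ in the bulk term, the boundary coefficient must be $\frac34$, not $\frac32$.

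\paragraph{Gap 2: the epiperimetric inequality.} You rightly call this the core, but it is not proved, and your plan for it does not work as stated.
\begin{itemize}
\item The traces $U_r|_{\p B_1}$ are only uniformly close to $\mathcal U_0$. Near $\p B_1\cap\{x'=0\}$ nothing is known about their free boundaries, so they cannot be parametrized by perturbations of three half-hyperplanes.
\item Competitors must be built in the non-convex class $\mathcal A$, where ``homogeneous extension plus correction'' does not automatically preserve disjointness of supports.
\item The linearization is misdescribed. The forcing disappears at the linear level, so one gets \eqref{FBBB}--\eqref{transmissionBBB}: $V$ is just a double-valued harmonic function, with no separate angle condition.
\item Its $3/2$-homogeneous part is $p\,\mathcal U_0+q\,\p_\theta\mathcal U_0+(Mx'')\cdot\nabla\mathcal U_0$, i.e.\ dilations and rotations, including tilts of the spine. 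Translations transverse to the spine are the $1/2$-homogeneous modes ${\bf m}\cdot\nabla\mathcal U_0$, and translations along the spine act trivially.
\end{itemize}

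\paragraph{Gap 3: the $Y$-points must fill the curve.} A decay argument centered at known $Y$-points shows only that the $Y$-points near $x_0$ lie on a $C^{1,\alpha}$ codimension-two manifold. It does not show that they fill it, which you need for the picture of three hypersurfaces meeting along a curve. This requires a stability statement: every $U$ that is $\eps$-close to a rotated $\mathcal U_0$ near a point of the approximate spine has a type I $Y$-point within $C\eps$ of that point. In the paper this is built in. The translations $\eps\bar{\bf m}$ in Proposition~\ref{zeron} let the iteration start at any point of the approximate spine and converge to a $Y$-point there, exactly as in Theorem~\ref{curves}.
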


\begin{thm}\label{typeIIgeneralintro} Let $x_0 \in \Gamma(U)$ be a type II Y-singularity. Then, the blow-up profile is unique. Moreover, all the triple junction points near $x_0$ belong to a $C^{1,\alpha}$ codimension-two submanifold  (for any $\alpha<1/2$). 
\end{thm}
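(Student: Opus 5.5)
We prove Theorem \ref{typeIIgeneralintro} in two steps: first uniqueness of the type II blow-up, then the structure of the set of triple junctions. We use only what the statement's setup provides: the type II cone is the trivial extension of the $2$-homogeneous $U_0^F$, and $F$ is constant. The $2$-dimensional cone $U_0^F$ is rigid up to rotations, since its angles $\theta_i$ are determined by $F$ through \eqref{uofintro}. The possible type II blow-ups at a point of $\R^n$ are therefore parametrized by a compact family $\{U_0^F\circ R\}$, where $R$ ranges over orthogonal maps that fix the $(n-2)$-dimensional spine direction and the sector ordering.

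\textbf{Uniqueness.} We work with the quadratic rescalings $U_r(x)=r^{-2}U(x_0+rx)$, which solve the same problem because $F$ is constant. We use a Weiss-type monotonicity formula
$$W(U,r)=r^{-n-2}\int_{B_r}\Big(\tfrac12|\nabla U|^2+F\cdot U\Big)-r^{-n-3}\int_{\p B_r}|U|^2 .$$
Its derivative controls $\int_{\p B_1}|x\cdot\nabla U_r-2U_r|^2$, and its critical points are exactly the $2$-homogeneous minimizers. The plan is to prove an epiperimetric inequality (or a Łojasiewicz-type inequality) for $W$ near $U_0^F$. This gives the decay $W(U,r)-W(U_0^F)\le Cr^{\beta}$, and integrating the radial derivative then shows that $U_r$ converges at a rate $r^{\beta/2}$ to a single rotation of $U_0^F$.

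The epiperimetric step is the main obstacle, because $U_0^F$ is unstable in the Hardt--Simon sense described in the introduction. The leaves $U_t^F$ show that there exist genuine nearby solutions that are not cones. So we do not expect a pure epiperimetric inequality to hold for arbitrary traces. We would instead use a \emph{log-epiperimetric} or dichotomy argument. We decompose the trace $c=U_r|_{\p B_1}$ into its projection onto the finite-dimensional kernel/unstable modes of the linearization at $U_0^F$ and the remainder. The kernel/unstable modes consist of rotations, translations of the spine, and the $t$-direction generated by $U^F_t$. For the remainder we build a competitor by homogeneous extension plus a correction and obtain a strict energy decrease. For the $t$-mode we use the assumption that $x_0$ is a type II point. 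By the classification of Theorem \ref{mainintro}, extended trivially, a non-negligible $t$-component at scale $r$ propagates to smaller scales, as in the growth dichotomy \eqref{c1intro}--\eqref{c2intro}. It would then force either loss of the triple junction (fattening of the zero phase) or a type I blow-up. Both contradict the hypothesis, so the $t$-component is controlled by the remainder and the decay estimate closes.

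\textbf{Triple-junction set.} Let $\Sigma$ be the set of triple junctions near $x_0$. Points of $\Sigma$ with type I blow-ups are handled by Theorem \ref{typeIgeneralintro}. We then apply the uniqueness argument uniformly at every type II point $y\in\Sigma\cap B_\rho(x_0)$, which is possible because the constants depend only on closeness to the cone family. This gives a rate: $|U_{y,r}-U_0^F\circ R_y|\le Cr^{\alpha}$, with $\alpha<1/2$ coming from the gap to the first nontrivial homogeneity $5/2$ of the linearized problem. A standard comparison shows that the spine direction $R_y$ is Hölder continuous in $y$, and that points of $\Sigma$ near $y$ lie in a cone of small aperture around $y+\mathrm{spine}(R_y)$. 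Since mixed type I/type II points can also occur, we check that the same flatness estimate holds at type I points, where the blow-up is the $3/2$-homogeneous cone, viewed at the scale where it is close to the type II family. Whitney's extension theorem (or a Reifenberg-type argument) then places $\Sigma$ on a $C^{1,\alpha}$ codimension-two submanifold.
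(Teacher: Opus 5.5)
Your outline has a genuine gap at the step you identify as the main obstacle. You assert that a non-negligible $t$-component at scale $r$ ``propagates to smaller scales'' until it forces fattening of the zero phase or a type I junction, but you do not prove it, and the tool you invoke cannot supply it. Theorem \ref{mainintro} classifies two-dimensional \emph{global} solutions, so its trivial extension says nothing about an $n$-dimensional minimizer at finite scales near $x_0$. What you need is a quantitative one-step statement in $\R^n$: if $d(U,U^F_t)\le\eps$ in $B_1$ with $|t|\le\delta$, then $U$ is $\frac\eps2\rho^2$-close in $B_\rho$ to a rigid motion of $U^F_{t+\eps p}$. This is Proposition \ref{IFgeneraln}. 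Its proof rests on the $n$-dimensional linear theory for \eqref{FBlinearnew}--\eqref{transmissionnew}: H\"older decay at the axis $\{x'=0\}$ (Lemma \ref{extendnew}) and the expansion of Lemma \ref{expandV2}. The tilt terms $Mx''\cdot\nabla U_0^F$ in that expansion come from rotations that move the spine, which your parametrization of the cone family leaves out. The proof also uses the $C^1$ dependence of $U^F_t$ on $t$ across $t=0$. Once this is available, the paper simply iterates it from $t_0=0$. The rescaled parameter behaves like $t_kr_k^{\lambda_3-2}$, and if it ever leaves $[-\delta,\delta]$ one gets either a fat zero phase around $x_0$, or, after switching to the type I iteration (Proposition \ref{zeron}), a type I junction at $x_0$. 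Both are impossible at a type II point, so the iteration never stops, $t_k\to 0$, and the blow-up is unique with rate $r^\alpha$. No epiperimetric inequality enters. In fact the $t$-mode is not what would obstruct one: since $\lambda_3<2$, it is a direction of strict decrease of the Weiss energy ($\Phi_{U^F_t}(1)<\Phi_{U^F_0}$ for $t\neq 0$), so traces dominated by it satisfy such an inequality trivially.

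The instability does matter for the second assertion, and there your argument breaks down. Nothing prevents type I junctions from accumulating at $x_0$. At such a point $y$ the Weiss energy centered at $y$ tends to $-\infty$ as $r\to 0$, because the quadratic rescalings grow like $r^{-1/2}$. So there is no estimate of the form $|U_{y,r}-U_0^F\circ R_y|\le Cr^\alpha$, and Theorem \ref{typeIgeneralintro} only provides neighborhoods whose size degenerates as $y\to x_0$. Your remark about checking the flatness estimate at type I points ``at the scale where it is close to the type II family'' points to the missing ingredient without supplying it. What is needed, uniformly for every junction $y$ near $x_0$, is an approximation at \emph{all} scales by a rigid motion of a single leaf $U^F_{t_y}$, with error $C\eps r^\alpha\|U^F_{t_y^+}\|_{L^\infty(B_r)}$. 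This is the $n$-dimensional analogue of Proposition \ref{AS}, obtained by passing from Proposition \ref{IFgeneraln} to the type I iteration once $t_kr_k^{\lambda_3-2}$ becomes large. It is this uniform approximation by the family $U^F_t$, not a cone-by-cone decay, that gives the H\"older dependence of the spine directions and the input for your Whitney or Reifenberg step.
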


\subsection{Related results.}\label{subsec:related} 
In the context of area-minimizing surfaces triple junction singularities were investigated in the pioneering   work of Jean Taylor \cite{T1, T2}, for hypersurfaces in $\R^3$, and by
Leon Simon \cite{Sim} in any dimension $n\geq 3$. In the context of other free boundary problems, triple junction configurations appear on the
graphs of solutions to multiple membrane problems (see for instance \cite{SY1, SY2, SY3, DSS}).

For our energy, when $F=0$, triple junctions were recently analyzed by \cite{OV2,CL3} (uniqueness of blow-ups and regularity of the free boundary). The difference in our case is that there are two possible blow-up configurations, one of them being unstable, which creates new challenges.

\subsection{Organization of the paper.}\label{subsec:org} 
The paper is organized as follows. Section \ref{sec:Basic} contains a discussion of basic properties of minimizers, such as optimal regularity, compactness properties, stability and non-degeneracy. Most technical proofs of these more standard results are postponed till the Appendix. In Section 3, we investigate the asymptotic behavior of global solutions with polynomial growth at
infinity and also provide the blow-up analysis at free boundary points where more than two components meet. In the symmetric case, $F=(1,1,1)$, the proof of Theorem \ref{mainintro} is obtained in Section \ref{sec:proof-symmetric} via an improvement of flatness technique. Section \ref{sec:PerturbativeCase} is devoted to the uniqueness of type I blow-ups and the regularity of the free boundary at
such points when $n=2, N=3$. In Section 6 we obtain the main results in the general non-symmetric case while the case $n\geq 3$ is analyzed in Section \ref{sec:higher}.
Finally, Section \ref{sec:Proof-Lemma-impro} contains the proof of a key technical lemma.

\section{Basic properties of minimizers}\label{sec:Basic}
This section is divided into four  different subsections devoted to optimal regularity, compactness, and uniqueness questions for minimizers of $J(\cdot, F)$. Some of the results were originally proved in \cite{CTV4}, however our potential does not satisfy all the assumptions there. Thus, for completeness we present the proofs in the Appendix for our setting.

We remark that the existence of minimizers of $J(\cdot, F)$ for a given boundary data $\Phi \in \mathcal A$ and the forcing term $F \in L^2$ follows from the direct method of the calculus of variations. Notice that since the set $\mathcal A$ of admissible functions is not convex, the uniqueness of minimizers is not guaranteed in general. 

\subsection{Optimal Regularity and Perturbative Estimates}\label{subsec:Optimal}

In what follows, for a given vectorial function $V=(v^1,\ldots, v^N)$, we denote by 
$$\hat V:=(\hat{v}^1,\ldots, \hat{v}^N), \quad \hat v^i := v^i  - \sum_{j \neq i} v^j.$$

Our first result is the derivation of the Euler-Lagrange equations associated to our minimization problem. Given that the proof is straightforward, we present it immediately rather than postponing it to the Appendix. 

\begin{lem}[Euler- Lagrange inequalities] 
Let $U \in \mathcal A$ minimize $J(\cdot, F)$ in $B_1$, with $F \in L^2(B_1)$.
Then, for all $i=1,\ldots , N$,
\begin{equation}\label{EL}\Delta u^i \geq f^i \chi_{\{u^i>0\}}, \quad \Delta \hat u^i \leq \hat f_U^i, \quad \text{in $B_1,$}\end{equation} where 
$$\hat f^i_U : = f^i\chi_{\{u^i>0\}} - \sum_{j\neq i}f^j\chi_{\{u^j>0\}}+ |f^i|\chi_{\{|U|=0\}}$$
\end{lem}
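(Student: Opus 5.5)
We derive both inequalities by testing minimality against one-sided perturbations that stay inside the admissible class $\mathcal A$, and then reading off the sign of the first variation. Throughout, $\varphi \in C_c^\infty(B_1)$ is a nonnegative test function and $\eps>0$ is small.

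\emph{First inequality.} Take the competitor $U_\eps$ obtained by replacing $u^i$ with $(u^i-\eps\varphi)^+$ and leaving the other components unchanged. Shrinking a support keeps the components disjoint and nonnegative, so $U_\eps\in\mathcal A$. Expanding $J(U_\eps,F)\ge J(U,F)$ gives
$$\int |\nabla (u^i-\eps\varphi)^+|^2-|\nabla u^i|^2 \;\ge\; -2\int f^i\big((u^i-\eps\varphi)^+-u^i\big).$$
On $\{u^i>\eps\varphi\}$ the left integrand is $-2\eps\nabla u^i\cdot\nabla\varphi+\eps^2|\nabla\varphi|^2$. On $\{0<u^i\le \eps\varphi\}$ it equals $-|\nabla u^i|^2\le 0$. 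The right side equals $2\eps\int_{\{u^i>\eps\varphi\}} f^i\varphi+O\big(\int_{\{0<u^i\le\eps\varphi\}}|f^i|\eps\varphi\big)$. Dividing by $2\eps$ and letting $\eps\to0$, we use $\chi_{\{0<u^i\le\eps\varphi\}}\to0$ a.e. together with $f^i\in L^2$. This yields
$$-\int \nabla u^i\cdot\nabla\varphi \ \ge\ \int f^i\chi_{\{u^i>0\}}\varphi,$$
that is, $\Delta u^i\ge f^i\chi_{\{u^i>0\}}$ in the distributional sense. The sign is correct precisely because the dropped gradient term is nonpositive.

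\emph{Second inequality.} Here we push $u^i$ upward. Set $v:=\hat u^i+\eps\varphi$ and define the competitor by $u^i_\eps:=v^+$ and $u^j_\eps:=u^j\,\chi_{\{v\le 0\}}$ for $j\ne i$. Where $v\le0$ we have $\sum_{j\ne i}u^j\ge \eps\varphi\ge0$, and on that set $(u^j-\eps\varphi)^+$ is the natural choice. More precisely, we take $u^j_\eps:=(u^j-\eps\varphi)^+$ on $\{u^j>0\}$, so that $\hat U_\eps^i=\hat u^i+\eps\varphi$ up to truncation. This is the standard "move the interface" variation, and it keeps $U_\eps\in\mathcal A$. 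The Dirichlet energy of $U_\eps$ is $\tfrac12\int|\nabla(\hat u^i+\eps\varphi)|^2$, since the supports are disjoint and $|\nabla U|^2=|\nabla \hat u^i|^2$ locally.

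The potential term changes in three ways:
\begin{itemize}
\item by $\eps f^i\varphi$ on $\{u^i>0\}$;
\item by $-\eps f^j\varphi$ on $\{u^j>0\}$ (up to small sets);
\item by at most $\eps|f^i|\varphi$ on $\{|U|=0\}$, where $u^i_\eps=\eps\varphi$ newly appears.
\end{itemize}
Comparing with $J(U,F)$, dividing by $\eps$ and letting $\eps\to0$ gives
$$\int\nabla\hat u^i\cdot\nabla\varphi+\int \hat f^i_U\varphi\ \ge 0,$$
which is $\Delta\hat u^i\le \hat f^i_U$.

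\emph{Main obstacle.} The delicate step is controlling the error sets where the sign of $\hat u^i+\eps\varphi$ differs from that of $\hat u^i$, i.e.\ $\{-\eps\varphi<\hat u^i\le 0\}$. These sets shrink to $\{\hat u^i=0\}\cap\{\varphi>0\}$, which may have positive measure. On $\{|U|=0\}$ this is exactly what the term $|f^i|\chi_{\{|U|=0\}}$ absorbs. On the remaining part, $\{u^j>0\}$ for some $j\neq i$, we have $\hat u^i<0$, so the error set shrinks to a null set. We bound the potential there by $|f|\,\eps\varphi$, and dominated convergence then closes the argument.
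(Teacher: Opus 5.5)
Your proposal is correct and follows the paper's proof. It uses the same two competitors, $(u^i-\eps\varphi)^+$ for the first inequality and $\bigl((\hat u^i+\eps\varphi)^+,\,(u^j-\eps\varphi)^+\bigr)$ for the second, the same identity $\int|\nabla W|^2=\int|\nabla \hat w^i|^2$ for $W\in\mathcal A$, and the same passage to the limit.

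One cleanup is needed in the second part. Discard your first tentative choice $u^j\chi_{\{v\le 0\}}$: it is not in $H^1$, since it jumps by $\eps\varphi$ across $\{u^j=\eps\varphi\}$. With $(u^j-\eps\varphi)^+$ you get $\hat u^i_\eps=\hat u^i+\eps\varphi$ exactly, not merely ``up to truncation,'' and this exactness is what justifies your formula for the Dirichlet energy.
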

\begin{proof} Let $\varphi \geq 0$ be a smooth function with compact support in $B_1$. 
For a given $i\in \{1,\ldots, N\},$ let $V^{\eps,i}=(v^1,\ldots,v^i,\ldots v^N)$ with  
$$v^i:=(u^i - \eps \varphi)^+, \quad v^j=u^j \quad \text{for } j\neq i ,$$ 
where, for  clarity of exposition,  we have suppressed the dependence of the components on $\epsilon$.
Then $V^{\eps,i} \in \mathcal A$, and hence comparing $J(V^{\eps,i})$ and $J(U)$ we deduce that  
$$\int_{B_1} \frac 1 2 (|\nabla (u^i-\eps \varphi)|^2 -|\nabla u^i|^2) dx \geq  \int_{B_1} (f^i u^i  - f^i(u^i-\eps \varphi)^+) dx.$$ After dividing by $\eps,$ and restricting the integral on the right hand side on its support $\{u^i>0\},$ we get 
$$\frac 1 2 \eps \int_{B_1}|\nabla \varphi|^2 dx - \int_{B_1} \nabla u^i \cdot \nabla \varphi dx \geq \int_{B_1} f^i \varphi \chi_{\{u^i>0\}} dx - \int_{B_1\cap \{0<u^i<\eps \varphi\}}|f^i||\varphi| dx.$$ Here we have used that
$$(u^i-\eps\varphi)^+=u^i-\eps \varphi + (u^i - \eps \varphi)^-,$$
and that the term $(u^i - \eps \varphi)^-$ is bounded by $\eps\varphi$.

Letting $\eps \to 0$ the first inequality in \eqref{EL} follows.

For the second inequality, again fix  $i$ and $\varphi$ as above, we define $V^{\eps,i}=(v^1, \ldots, v^N) \in \mathcal A$ such that 
$$\hat v^i= \hat u^i + \eps \varphi,$$ 
that is  
$$v^i:=(\hat{u}^i + \eps \varphi)^+, \quad v^j:=(u^j-\eps \varphi)^+ \quad \text{for $j \neq i$}.$$

Let us notice that for any vectorial function $W \in \mathcal A$, and $i$ fixed
$$J(W)= \int_{B_1} (\frac 1 2 |\nabla \hat{w}^i|^2 +  F \cdot W)dx.$$ Therefore, comparing $J(U)$ with $J(V)$ we obtain the desired inequality  from similar computations as those above.

\end{proof}

We now state the optimal regularity of minimizers. For this purpose we need to assume that $F$ is bounded. 

\begin{thm}[Optimal regularity]\label{OR} Let $U \in \mathcal A$ minimize $J(\cdot, F)$ in $B_1$ and assume that $F \in L^\infty(B_1).$ Then, 
\begin{equation}\label{opt}\|U\|_{C^{0,1}(B_{1/2})} \leq C( \|U\|_{L^2(B_1)} + \|F\|_{L^\infty(B_1)}),
\end{equation}
for some dimensional constant  $C>0$.
\end{thm}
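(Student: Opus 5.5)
The plan is to combine the Euler--Lagrange inequalities \eqref{EL} with the usual two-step pattern: an a priori $L^\infty$ bound, then a Lipschitz bound obtained from a dyadic decay estimate of the form $\sup_{B_r(x_0)}|U|\le Cr$ at free boundary points, combined with interior estimates inside the positivity sets. By scaling $U\mapsto U/(\|U\|_{L^2}+\|F\|_{L^\infty})$ we normalize $\|U\|_{L^2(B_1)}+\|F\|_{L^\infty(B_1)}\le 1$.

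\textbf{Step 1 ($L^\infty$ bound).} By \eqref{EL}, each $u^i$ satisfies $\Delta u^i\ge -\|F\|_\infty$ in $B_1$. Hence $u^i+\|F\|_\infty|x|^2/(2n)$ is subharmonic, and the mean value inequality gives $\|U\|_{L^\infty(B_{3/4})}\le C$.

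\textbf{Step 2 (interior estimates in the phases).} On the open set $\{u^i>0\}$ the other components vanish. Two-sided variations $u^i\pm\eps\varphi$ with $\varphi$ supported there are admissible, so $\Delta u^i=f^i$ in $\{u^i>0\}$. Standard $W^{2,p}$ estimates then show that if $B_{d}(x)\subset\{u^i>0\}$, we have $|\nabla u^i(x)|\le C\big(d^{-1}\sup_{B_d(x)}u^i+d\big)$. It therefore suffices to prove $\sup_{B_r(x_0)}|U|\le C r$ for all $x_0\in\Gamma(U)\cap B_{1/2}$ and $r\le 1/4$. This linear growth, applied with $d=\mathrm{dist}(x,\Gamma)$, gives $|\nabla U(x)|\le C$ at every point, so \eqref{opt} follows.

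\textbf{Step 3 (linear growth at free boundary points), the main obstacle.} Set $S(r)=\sup_{B_r(x_0)}|U|$. I would prove a dichotomy: there is a universal $C_0$ such that either $S(r/2)\le \tfrac12 S(r)$, or $S(r)\le C_0 r$. This yields $S(r)\le Cr$ by iteration.

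To prove it, argue by contradiction and compactness. Suppose $S(r_k)/r_k\to\infty$ while $S(r_k/2)>\tfrac12 S(r_k)$. Consider the rescalings $U_k(x)=U(x_0+r_kx)/S(r_k)$. These minimize $J(\cdot,F_k)$ with $F_k=r_k^2F/S(r_k)$, and $\|F_k\|_\infty\le r_k/S(r_k)\to0$. They satisfy $|U_k|\le1$ in $B_1$, $U_k(0)=0$, and $\sup_{B_{1/2}}|U_k|\ge 1/2$.

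For the limit we need uniform Hölder (or $H^1_{loc}$) compactness and strong $H^1$ convergence, so that limits of minimizers are minimizers. Hölder continuity can come from \eqref{EL}: each $u^i$ and $\hat u^i$ is, respectively, a sub- and supersolution with bounded right-hand side, so a De Giorgi / Alt--Caffarelli--Friedman type argument applies. More simply, one can use the ACF monotonicity formula for the pair $(u^i,\sum_{j\ne i}u^j)$, perturbed for the right-hand side, which bounds $\frac1{r^4}\int_{B_r}\frac{|\nabla u^i|^2}{|x|^{n-2}}\int_{B_r}\frac{|\nabla \hat u^{i,-}|^2}{|x|^{n-2}}$.

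The limit $U_\infty$ is then a minimizer of $J(\cdot,0)$ in $B_1$ with $U_\infty(0)=0$ and $\sup_{B_{1/2}}|U_\infty|\ge1/2$. The contradiction must use that minimizers of the $F=0$ problem are Lipschitz with a uniform bound (the known theory cited in the introduction). Equivalently, with the ACF formula one obtains the estimate $\frac1{r^{2}}\fint_{B_r}|\nabla U|^2\le C$ uniformly at free boundary points, which gives linear growth directly.

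I expect the delicate point to be controlling the case where only one phase is present near $x_0$. There $u^i$ is a nonnegative subsolution vanishing at $x_0$, and linear growth must come from the supersolution inequality for $\hat u^i$ (a Harnack-type argument on $\hat u^i$), together with a uniform treatment of the error terms from $F$ in the monotonicity formula.
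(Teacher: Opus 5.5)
Steps 1 and 2 are fine in outline. The genuine gap is Step 3, and it is structural rather than technical: your compactness proof of the dichotomy cannot reach a contradiction. In the limit you only retain that $U_\infty$ minimizes $J_0$ in $B_1$, with $U_\infty(0)=0$, $\sup_{B_1}|U_\infty|\le 1$ and $\sup_{B_{1/2}}|U_\infty|\ge \frac12$. The two-phase linear minimizer $U_\infty=(x_n^+,x_n^-,0,\dots,0)$ has all of these properties, so no Lipschitz bound for $J_0$-minimizers is contradicted.

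The reason is that normalizing by $S(r_k)$ at a single scale throws away the hypothesis $S(r_k)/r_k\to\infty$; it survives only as $F_k\to 0$. Meanwhile the limit problem is invariant under $U\mapsto U(rx)/r$ and admits linear solutions. So this scheme can at best give a H\"older modulus (with a decay factor $\theta\in(\frac12,1)$ in place of $\frac12$), never linear growth. Also, the Lipschitz bound for $J_0$-minimizers you want to invoke is just the case $F=0$ of the statement. What is needed is a quantity controlled across all scales, which is exactly what the ACF formula supplies.

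Your ACF fallback is the paper's route, but the proposal omits the steps that make it work.

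(a) Proposition \ref{ACF} requires continuous functions, and your Step 2 uses that $\{u^i>0\}$ is open. The paper obtains continuity from three facts: the subsolution $u^i$ is upper semicontinuous, the supersolution $\hat u^i$ is lower semicontinuous, and $u^i=(\hat u^i)^+$.

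(b) ACF bounds only the \emph{product} of the weighted energies $\frac1{r^2}\int_{B_r}|\nabla u^i|^2|x|^{2-n}$ of two phases, so it gives no bound on $|\nabla U|$ as a whole. (Your $\frac1{r^2}\fint_{B_r}|\nabla U|^2\le C$ is also mis-scaled; linear growth corresponds to $\fint_{B_r}|\nabla U|^2\le C$.) To convert the product bound into growth you need the lower bound in \eqref{Bounds} at a zero of $u^i$:
$\frac{c}{r^2}\fint_{B_r}(u^i)^2+O(1)\le \frac1{r^2}\int_{B_r}|\nabla u^i|^2|x|^{2-n}$.
The paper proves it by testing $\frac12\Delta (u^i)^2=|\nabla u^i|^2+O(1)$, a distribution supported in $\{u^i>0\}$, against a regularized fundamental solution. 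Combined with ACF, this gives at each scale $r$ that $u^k\le Cr$ in $B_{r/2}$ for all but at most one index, say $k=1$.

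(c) The remaining phase is handled by your instinct, the supersolution inequality for $\hat u^1$. Set $v^k(x)=u^k(rx)/r$; then $\Delta\hat v^1\le 1$ and $\hat v^1(0)=0$. The mean value inequality gives $\fint_{B_{1/2}}v^1\le C+\sum_{j\ge2}\fint_{B_{1/2}}v^j\le C$, and subharmonicity of $v^1+C|x|^2$ concludes.

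Note that the case needing (c) is not ``only one phase near $x_0$''. In that case $\hat u^i=u^i\ge 0$ is a supersolution vanishing at $x_0$, and one even gets quadratic decay. The case that needs (c) is when several phases are present and one of them dominates at the given scale, which ACF cannot see.
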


Next, we turn to the analysis of the perturbative setting, in which the free boundary is proven to be smooth and a free boundary condition is achieved in a classical sense. Precisely we have the following result.

\begin{prop}[Perturbative estimates]\label{pert}
Let $U \in \mathcal A $ minimize $J(\cdot, F)$ in $B_1$, $F\in L^\infty(B_1)$, satisfy $$\|U\|_{L^\infty(B_1)} \leq 1.$$ Assume that for some $\mu>0$
\begin{equation}\label{mu0}
|U(\frac 1 2 e_n)| \geq \mu .
\end{equation}
 Then, there exists $\eps>0$ depending on $\mu$ such that if 
\begin{equation}\label{flatnew} \Gamma(U) \subset \{|x_n| \leq \eps\}, \quad  \|F\|_{L^\infty(B_1)} \leq \eps,\end{equation} with $0\in \Gamma(U)$ then, up to relabeling the indices, 
$$u^i \equiv 0 \quad \text{in $B_{1/2}$, for all $i \geq 3$,}$$
and $\Gamma(U)$ is $C^{1,\alpha}$ in $B_{1/2}$, for all $0<\alpha <1,$ with $$|\nabla u^1| = |\nabla u^2| \quad \text{on $\Gamma(U)$.}$$
\end{prop}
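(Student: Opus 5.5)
Throughout, $U$ is a minimizer in $B_1$ with $\|U\|_{L^\infty}\le 1$ and $\|F\|_{L^\infty}\le\eps$. The proof runs by compactness plus a reduction to the scalar two-phase problem with small right-hand side.

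\textbf{Step 1: which components survive.} Theorem \ref{OR} gives $\|U\|_{C^{0,1}(B_{3/4})}\le C$. By connectedness, $B_{3/4}\cap\{x_n>\eps\}$ lies in $\{|U|>0\}$ or in $\{|U|=0\}$, and likewise for $\{x_n<-\eps\}$. It cannot lie in $\{|U|=0\}$ on the top side, because $|U(\frac12 e_n)|\ge\mu$ and $\eps<1/2$. The open set $\{|U|>0\}\cap\{x_n>\eps\}$ is then connected, and on it exactly one component is positive (supports are disjoint, so this is a disjoint open cover). Call this component $u^1$, so $U=u^1e_1$ there. On the bottom side either a single component, $u^2$ say, is positive, or $U\equiv0$. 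Hence in $B_{3/4}$ every $u^i$ with $i\ge3$ is supported in the slab $\{|x_n|\le\eps\}$. On the open set $\{u^i>0\}$, which does not meet $\Gamma(U)$, we have $\Delta u^i=f^i$. Each connected component of $\{u^i>0\}$ is an open subset of the slab on which $u^i$ vanishes on the boundary, so $u^i\le C\eps^2\|F\|_\infty$. In fact, by a maximum principle argument on a set of width $2\eps$, such a component must be empty. More precisely, the positivity set of $u^i$ would have to be open with boundary inside $\Gamma(U)$. Being an open subset of the slab, its boundary contains points of $\{x_n=\pm\eps\}$, and there $|U|>0$ (top) or $U$ is a single other phase; this contradicts the disjointness of supports together with continuity of $U$. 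So $u^i\equiv0$ in $B_{1/2}$ for $i\ge 3$.

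\textbf{Step 2: reduction to a scalar problem.} Set $u:=u^1-u^2$. By the Euler--Lagrange inequalities \eqref{EL} (applied to $\hat u^1$ and $\hat u^2$), $u$ solves the two-phase obstacle problem
\[
\Delta u=f^1\chi_{\{u>0\}}-f^2\chi_{\{u<0\}}\qquad\text{in } B_{1/2},
\]
since $|\Delta u|\le\eps$ in the distributional sense and the measure part on $\Gamma$ vanishes.

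\textbf{Step 3: nondegeneracy and flatness.} Next rule out the case where the bottom side is the zero phase or $u^2$ is small. Use compactness: take $\eps_k\to0$ with minimizers $U_k$. By Theorem \ref{OR}, $U_k\to U_\infty$ uniformly, and $U_\infty$ minimizes $J(\cdot,0)$. Then $u_\infty$ is harmonic with zero set in $\{x_n=0\}$ and $u_\infty(\frac12e_n)\ge\mu$. Since $u_\infty$ has a sign on each side and vanishes on $\{x_n=0\}$, it is a one-dimensional profile: $u^1_\infty=a x_n^+$ near the hyperplane, by the boundary Harnack principle with $a\ge c(\mu)$. The minimality of the pair then forces $u^2_\infty=a x_n^-$, since the zero phase is not allowed for $F=0$ minimizers; a one-phase solution $a x_n^+$ is not a minimizer, because one could decrease energy by letting a second phase enter. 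Consequently $u$ is $\delta$-close to the linear function $a x_n$ with $a>c(\mu)$, and the right-hand side is $O(\eps)$.

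\textbf{Step 4: regularity.} Since $|\nabla u|\ge a/2>0$ near $\Gamma$, the implicit function theorem combined with $C^{1,\alpha}$ estimates for $\Delta u\in L^\infty$ gives that $\{u=0\}$ is a $C^{1,\alpha}$ graph for every $\alpha<1$. Moreover $u\in C^{1,\alpha}$ across $\Gamma$, so $|\nabla u^1|=|\nabla u^2|$ there.

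The main obstacle is the nondegeneracy in Step 3, i.e.\ excluding fattening of the zero phase, which requires the argument that one-phase configurations are not $J_0$-minimizers.
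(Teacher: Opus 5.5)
Your proposal has a genuine gap in Step 1, and Steps 2 and 4 depend on it.

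\textbf{The gap in Step 1.} The claim $u^i\equiv 0$ in $B_{1/2}$ for $i\ge 3$ is not established by what you wrote.
\begin{enumerate}
\item An open subset of the slab need not reach $\{x_n=\pm\eps\}$. It can be an island, or touch only $\p B_{3/4}$.
\item At points of $\p\{u^i>0\}\cap\{x_n=\pm\eps\}$ one simply has $U=0$. This contradicts neither continuity nor disjointness of supports, since $\{x_n=\eps\}$ is not part of the open top region.
\item Your preliminary bound $u^i\le C\eps^2\|F\|_\infty$ fails for components reaching $\p B_{3/4}$, where $u^i$ need not vanish. The right tool there is the strip estimate of Lemma \ref{general}, which gives only $u^i\le \eps^2$ in $B_{1/2}$, not $u^i\equiv 0$.
\end{enumerate}
More fundamentally, no maximum principle applied to $u^i$ alone can give emptiness. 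A positive function with $\Delta u^i=f^i$ on a thin region is perfectly consistent with such arguments. For $f^i<0$ it can be a bump compactly supported in the slab. For any sign it can enter from the lateral boundary with exponentially small size. What excludes a trapped phase is its interaction with the nondegenerate phases $u^1,u^2$, and that only becomes visible once the fine structure of $\Gamma(U)$ is known.

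\textbf{Why Steps 2 and 4 fail without it.} With a third phase present, $u^1-u^2=\frac12(\hat u^1-\hat u^2)$ is the average of the supersolution $\hat u^1$ and the subsolution $-\hat u^2$. Its Laplacian therefore has no two-sided bound. So the scalar reduction in Step 2 is unavailable, and with it your implicit-function-theorem argument in Step 4.

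\textbf{How the paper proceeds.} The paper never assumes the extra phases vanish at the outset.
\begin{enumerate}
\item From $u^i\le\eps^2$ it gets $|\hat u^1-(-\hat u^2)|\le 2\eps^2$.
\item Since $\Delta \hat u^1\le \|F\|_\infty$ and $\Delta(-\hat u^2)\ge-\|F\|_\infty$, this squeezes $\hat u^1$ to within $4\eps^2+\|F\|_\infty$ of its harmonic replacement $h$; see \eqref{u1h}.
\item The Hopf bound $u^1\ge c(\mu)(x_n-\eps)^+$ then gives $|\nabla h(0)|\ge c(\mu)/2$.
\end{enumerate}
This already implicitly rules out the zero phase below the slab, so your compactness Step 3 is not the real obstacle. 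After rescaling it also gives closeness to $(x_1^+,x_1^-,0,\dots,0)$. The argument is then iterated at all scales in an improvement-of-flatness scheme, producing $C^{1,\alpha}$ two-plane approximations around free boundary points. From these come the regularity of $\Gamma(U)$ and the gradient condition. They also remove the trapped phases: a ball of radius $d$ inside $\{u^i>0\}$ would have to fit in a slab of width $O(d^{1+\alpha})$, which is impossible for small $d$.

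Your Step 4 would be a legitimate shortcut once the problem is truly scalar, but reaching the scalar problem is exactly where the iteration is needed. A minor point: in Step 3, $u_\infty$ is not one-dimensional, only $a x_n+O(|x|^2)$, which is all you actually use.
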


\subsection{Sequences of minimizers.}\label{subsec:sequences}
Our compactness result reads as follows.

\begin{prop}[Compactness]\label{compact} Let $U_k \in \mathcal A$ be a sequence of minimizers for $J_k(\cdot):=J(\cdot, F_k)$ in $B_1$, with $F_k \in L^2(B_1).$ Assume $$U_k \to U \quad F_k \to F, \quad \text{in $L^2(B_1)$}$$ then $U$ is a minimizer for $J(\cdot, F).$  
In particular, if $U_k$ minimizes $J(\cdot, F)$ with $F \in L^\infty$ (that is $F_k=F$ for all $k$'s) and
$$M_k:=\|U_k\|_{L^2(B_1)} \to \infty,$$ then the normalized functions $ \frac{U_k}{M_k}$ converge (up to a subsequence) uniformly on compact sets to a minimizer of $J_0=J(\cdot, 0)$ in $B_1$. 
\end{prop}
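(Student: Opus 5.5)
The plan is to follow the standard compactness scheme for minimizers with a nonconvex constraint. The key point is that any competitor for the limit $U$ can be transferred to the approximating $U_k$ at small cost. To keep $\mathcal A$, all modifications will be done through the component-wise structure.

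\textbf{Step 1 (uniform bounds).} Since $U_k \to U$ in $L^2(B_1)$ and $F_k \to F$ in $L^2(B_1)$, the norms $\|U_k\|_{L^2}$ and $\|F_k\|_{L^2}$ are bounded. On each $B_r$, $r<1$, I will obtain a uniform $H^1$ bound by comparing $U_k$ with the competitor $(1-\eta)U_k$, where $\eta$ is a cutoff equal to $1$ on $B_r$. This competitor lies in $\mathcal A$, because multiplying every component by the same nonnegative function preserves nonnegativity and disjointness. This is a Caccioppoli-type estimate for the energy, and the forcing term is controlled by Cauchy--Schwarz. Hence $U_k \rightharpoonup U$ weakly in $H^1_{loc}$. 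The limit $U$ lies in $\mathcal A$, since a subsequence converges a.e., so $u^i\ge0$ and $u^iu^j=0$ pass to the limit.

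\textbf{Step 2 (minimality).} Let $V\in\mathcal A$ with $V=U$ outside $B_r$, $r<1$. For $r<s<1$ and a cutoff $\eta$ equal to $1$ on $B_r$ and $0$ outside $B_s$, I need an admissible competitor that interpolates between $V$ and $U_k$ in the annulus. A linear interpolation $\eta V+(1-\eta)U_k$ is not admissible, so I will work with the ``signed'' maps. On the annulus, $V=U$ and $U_k\to U$. Componentwise, I will define
$$W_k^i:=\big(\eta v^i+(1-\eta)u_k^i-\textstyle\sum_{j\ne i}(\eta v^j+(1-\eta)u_k^j)\big)^+.$$
The intended property is $W_k\in\mathcal A$: if two components were both positive, summing the two defining inequalities would give a contradiction by nonnegativity. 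Moreover, $W_k=V$ in $B_r$ and $W_k=U_k$ outside $B_s$. Truncation does not increase the gradient, so
$$|\nabla W_k^i|\le|\nabla(\eta \hat v^i+(1-\eta)\hat u^i_k)|,$$
and on the annulus this is bounded by the gradients of $U$ and $U_k$ plus $|\nabla\eta|\,|U-U_k|$.

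Minimality of $U_k$ gives $J_k(U_k)\le J_k(W_k)$. The energy of $W_k$ on the annulus is bounded by $C\int_{B_s\setminus B_r}(|\nabla U_k|^2+|\nabla U|^2)+C(s-r)^{-2}\int|U_k-U|^2$. The first part is small when $s-r$ is small, provided the energy of the $U_k$ does not concentrate on thin annuli. To get this, I will choose the radii by a Fubini/averaging argument: the $|\nabla U_k|^2$ are bounded in $L^1$, so one can select an annulus carrying little mass uniformly along a subsequence, using weak-$*$ convergence of $|\nabla U_k|^2dx$ to a measure $\mu$ and choosing $r,s$ with $\mu(\partial B_r)=0$.

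Combining this with weak lower semicontinuity of the Dirichlet energy on $B_r$ and the convergence of the linear terms, since $F_k\cdot U_k\to F\cdot U$ in $L^1$, yields $J(U)\le J(V)$ on $B_r$. Letting $r\to1$ covers all compactly supported perturbations.

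\textbf{Step 3 (the rescaled statement).} Let $\tilde U_k=U_k/M_k$. Then $\tilde U_k$ minimizes $J(\cdot,F/M_k)$, because $\mathcal A$ is a cone and $J(\lambda V,F)=\lambda^2J(V,F/\lambda)$. Theorem \ref{OR} gives uniform Lipschitz bounds on compact sets, since $\|\tilde U_k\|_{L^2}=1$ and $\|F/M_k\|_\infty\to0$. By Arzel\`a--Ascoli, a subsequence converges uniformly on compacts and in $L^2_{loc}$. To use the first part, I need $L^2(B_1)$ convergence, which follows from the uniform $L^2$ bound together with locally uniform convergence, or by applying the first part on $B_\rho$ for every $\rho<1$. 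Since $F/M_k\to0$, the limit is a minimizer of $J_0$.

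The main obstacle is Step 2: building an admissible gluing despite the nonconvex constraint, and preventing energy concentration in the transition annulus.
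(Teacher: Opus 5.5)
Your argument is correct, but the key gluing step follows a different route from the paper's.

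\textbf{The paper's route.} Lemma \ref{uv} first treats the ordered case $V_k\le U_k$, where the convex combination $\varphi_rU_k+(1-\varphi_r)V_k$ automatically lies in $\mathcal A$. The general case is reduced to this one by interpolating twice, passing through the componentwise minimum $\min\{U_k,V_k\}\in\mathcal A$. The transition radius is then selected by averaging the error over $r\in[1-\delta,1-\frac34\delta]$ (Fubini) on annuli of small width, choosing the width first and then $k$ large.

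\textbf{Your route.} You interpolate linearly and then compose with $z\mapsto\big((z^i-\sum_{j\ne i}z^j)^+\big)_i$. This map is a Lipschitz retraction of the positive orthant onto the union of the positive axes, so it produces an admissible competitor in a single step. You rule out energy concentration qualitatively: you take $r$ with $\mu(\partial B_r)=0$ for a weak-$*$ limit $\mu$ of $|\nabla U_k|^2dx$, and send $k\to\infty$ before $s\downarrow r$. Enlarging $r$ is allowed, since it keeps $V=U$ outside $B_r$.

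\textbf{What each buys.} Your route is shorter, and it is the general mechanism for targets that are Lipschitz retracts of convex sets. The paper's uses only lattice properties of $\mathcal A$. It also gives a quantitative choice of radius that works for two varying sequences $U_k,V_k$.

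Two small corrections.

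First, your bound for the energy on the annulus omits the term $\int_{B_s\setminus B_r}F_k\cdot W_k$. This is harmless: $|W_k|\le C(|U|+|U_k|)$ there, and $F_k\to F$ in $L^2$.

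Second, the first justification you give in Step 3 is false. A uniform $L^2(B_1)$ bound together with locally uniform convergence does not give $L^2(B_1)$ convergence. For example, take $n=N=2$, $F=0$, and the minimizers $U_k=k\,c_k\big((|x|^k\cos k\theta)^+,(|x|^k\cos k\theta)^-\big)$ with $c_k=\sqrt{(2k+2)/\pi}$. Then $M_k=k$, and $U_k/M_k$ has unit $L^2(B_1)$ norm while tending to $0$ locally uniformly. Keep only your second argument, which applies the first part on each $B_\rho$ with $\rho<1$. That argument is correct, and it covers the rescaled assertion, which the paper does not write out.
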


In the case when the forcing terms are bounded, we prove a $H^1_{loc}$ convergence result of uniformly convergent sequences of minimizers.

\begin{lem}\label{H1loc} Let $U_k \in \mathcal A$ minimize $J_k(\cdot, F_k)$ in $B_1$ with $F_k \in L^\infty(B_1)$. Assume
$$U_k \to U, \quad F_k \to F \quad \text{uniformly on compact subsets of $B_1$}.$$ Then,
$$\nabla U_k \to \nabla U \quad \text{a.e. in $B_1$}.$$ In particular, 
$$U_k \to U \quad \text{in $H^1_{loc}(B_1).$}$$
\end{lem}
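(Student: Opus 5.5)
The plan is to prove the $H^1_{loc}$ convergence directly, by showing that the Dirichlet energies converge locally, and then obtain the a.e. convergence of gradients along a subsequence. (I will return to what can be said about the full sequence.)

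\textbf{Setup.} Fix $B_\rho \Subset B_1$. Since $U_k \to U$ uniformly on compact sets, the $U_k$ are uniformly bounded there, and $\|F_k\|_{L^\infty}$ is uniformly bounded on compact sets. Theorem \ref{OR} then gives uniform Lipschitz bounds for $U_k$ on $B_\rho$. Hence $\nabla U_k \rightharpoonup \nabla U$ weakly-$*$ in $L^\infty(B_\rho)$, and by weak lower semicontinuity $\int_{B_\rho}|\nabla U|^2 \le \liminf \int_{B_\rho} |\nabla U_k|^2$. By Proposition \ref{compact}, $U$ is a minimizer of $J(\cdot,F)$. It remains to prove the reverse inequality $\limsup_k \int_{B_\rho}|\nabla U_k|^2 \le \int_{B_\rho}|\nabla U|^2$.

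\textbf{Upper bound.} I would argue componentwise using the Euler--Lagrange inequalities \eqref{EL}. Each $u^i_k \ge 0$ satisfies $\Delta u_k^i \ge f_k^i\chi_{\{u_k^i>0\}}$ as a measure, and $u^i_k \Delta u^i_k$ makes sense because $u_k^i$ is Lipschitz. Let $\eta$ be a cutoff supported in $B_\rho$. Integrating by parts gives
$$\int \eta |\nabla u^i_k|^2 = -\int u^i_k \nabla u^i_k\cdot\nabla \eta - \int \eta\, u^i_k\, d(\Delta u^i_k).$$
\begin{itemize}
\item The first term converges to the corresponding expression for $u^i$, since $u^i_k\to u^i$ uniformly and $\nabla u^i_k \rightharpoonup \nabla u^i$ weakly.
\item For the second term, the measure $\Delta u_k^i$ restricted to $\{u_k^i>0\}$ is the absolutely continuous part $f_k^i\chi_{\{u^i_k>0\}}$. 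Indeed, in the open set $\{u^i_k>0\}$ the other components vanish, so minimality gives $\Delta u^i_k = f^i_k$ there. The singular part lives on $\partial\{u^i_k>0\}$, where $u_k^i=0$. Hence
$$\int \eta\, u^i_k\, d(\Delta u^i_k) = \int \eta\, u^i_k f^i_k,$$
which converges to $\int \eta\, u^i f^i$.
\end{itemize}
The same identity holds for the limit $U$, because $U$ is also a Lipschitz minimizer. Therefore $\int \eta|\nabla u^i_k|^2 \to \int \eta|\nabla u^i|^2$.

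\textbf{Conclusion.} Norm convergence together with weak convergence gives strong $L^2_{loc}$ convergence of $\nabla U_k$, i.e. convergence in $H^1_{loc}$. A subsequence then converges a.e. Since every subsequence has a further subsequence converging in $H^1_{loc}$ to the same limit $U$, the whole sequence converges in $H^1_{loc}$. For a.e. convergence of the full sequence, I would interpret the statement up to subsequences, as is standard.

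\textbf{Main obstacle.} The delicate step is justifying the product $u_k^i\,\Delta u_k^i$ when $\Delta u^i_k$ is only a measure. To handle it I would replace $u^i_k$ by $(u^i_k-\delta)^+$, which is supported strictly inside $\{u^i_k>0\}$, where $\Delta u^i_k = f^i_k$ classically. This gives
$$\int \eta\,\nabla u^i_k\cdot\nabla (u^i_k-\delta)^+ = -\int (u^i_k-\delta)^+\nabla u^i_k\cdot\nabla\eta - \int \eta (u^i_k-\delta)^+ f^i_k.$$
Then let $\delta\to0$. The error is controlled uniformly in $k$ by the Lipschitz bound on $\{0<u^i_k<\delta\}$ restricted to the region where $\nabla u_k^i \ne 0$; the gradient vanishes a.e. on $\{u^i_k=0\}$ anyway. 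This avoids any use of the singular part of $\Delta u^i_k$.
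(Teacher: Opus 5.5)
Your energy argument correctly proves $U_k\to U$ in $H^1_{loc}$, but it does not prove the lemma's first assertion. Strong $L^2_{loc}$ convergence of $\nabla U_k$ gives a.e. convergence only along a subsequence, and you explicitly weaken the statement to ``up to subsequences''. The lemma claims $\nabla U_k\to\nabla U$ a.e. for the \emph{whole} sequence, and this is what the paper proves.

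The paper's proof is a pointwise argument that your proposal lacks. On $\{|U|>0\}$ the gradients converge locally uniformly, by interior $C^{1,\alpha}$ estimates for $\Delta u^i_k=f^i_k$. Now take a point $x_0\in\{|U|=0\}$ where $U$ and all the $U_k$ are differentiable. Since $U\ge 0$, we get $\nabla U(x_0)=0$, so $|U|\le \delta r$ in $B_r(x_0)$ for small $r$. Hence $|U_k|\le 2\delta r$ there for $k$ large. The rescaled estimate \eqref{opt} then gives $\|\nabla U_k\|_{L^\infty(B_{r/2}(x_0))}\le C(\delta+r\|F\|_{L^\infty})$. Choosing first $\delta$ and then $r$ small yields $\nabla U_k(x_0)\to 0$ along the full sequence. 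The exceptional set is null by Rademacher, so this is a.e. convergence. The $H^1_{loc}$ convergence then follows by dominated convergence from the uniform Lipschitz bound. You already use both ingredients (Theorem \ref{OR} at small scales and interior estimates on positivity sets), so adding this argument closes the gap.

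For the part you do prove, your route is genuinely different from the paper's and sound. The identity $\int\eta|\nabla u^i_k|^2=-\int u^i_k\nabla u^i_k\cdot\nabla\eta-\int\eta\, u^i_k f^i_k$ holds because $\Delta u^i_k=f^i_k$ on the open set $\{u^i_k>0\}$ and $u^i_k=0$ off it. It passes to the limit, and together with weak convergence it gives strong convergence. This needs only uniform $H^1$ bounds and the equation on the positivity sets, not the Lipschitz estimate at free boundary points, but it gives nothing pointwise.

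Three minor remarks:
\begin{enumerate}
\item The pairing with the measure $\Delta u^i_k$ (a Radon measure by \eqref{EL}) is not really an obstacle, since $\eta u^i_k$ is continuous and one can mollify the test function.
\item In your truncation argument no uniformity in $k$ is needed, and the Lipschitz bound alone would not give it. For fixed $k$, just let $\delta\to0$ using $\nabla u^i_k=0$ a.e. on $\{u^i_k=0\}$.
\item Proposition \ref{compact} is stated for $L^2(B_1)$ convergence. Apply it in $B_r$ with $r<1$, or simply pass $\Delta u^i_k=f^i_k$ to the limit on $\{u^i>0\}$.
\end{enumerate}
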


Finally, using compactness and monotonicity formulas we determine the asymptotic behavior of minimizers near a free boundary point.

\begin{prop}\label{prop:asympt}
Let $U \in \mathcal A$ minimize $J(\cdot, F)$ in $B_1$, for $F \in L^\infty(B_1)$. Assume $0 \in \Gamma(U)$. Then, as $x \to 0$, either $$|U(x)| = o(|x|)$$ or 
$$|U(x)| = a |x \cdot \nu| + o(|x|)$$ for some $a>0$ and $\nu \in \p B_1.$ In the latter case, the conclusion of Proposition \ref{pert} holds and $U \in C^{1,\alpha}$ in a neighborhood of $0.$
\end{prop}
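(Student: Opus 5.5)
We argue through blow-ups at the origin, combined with a monotonicity formula of Alt--Caffarelli--Friedman type for the two-phase quantity. Since the statement is local, we may work in $B_{1/2}$, where $U$ is Lipschitz by Theorem \ref{OR}; in particular $|U(x)| \le L|x|$ for some $L$. Two cases can occur. \emph{Case 1:} $\limsup_{r\to0} \max_{B_r}|U|/r > 0$. \emph{Case 2:} the complementary case, where this limsup is zero, so that $|U(x)| = o(|x|)$ and the first alternative of the statement holds. It therefore suffices to show that Case 1 forces the second alternative.

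\emph{Step 1: linear blow-ups.} In Case 1, pick $r_k\to0$ with $\max_{B_{r_k}}|U| \ge c r_k$ and set $U_k(x) := r_k^{-1}U(r_kx)$. These minimize $J(\cdot, r_kF)$, are uniformly Lipschitz, and satisfy $0\in\Gamma(U_k)$ and $\max_{B_1}|U_k|\ge c$. By Arzel\`a--Ascoli, Proposition \ref{compact} and Lemma \ref{H1loc}, a subsequence converges uniformly and in $H^1_{loc}$ to a nontrivial global Lipschitz minimizer $U_0$ of $J_0$ with $U_0(0)=0$.

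\emph{Step 2: classify $U_0$.} For each pair $i\neq j$ we apply an Alt--Caffarelli--Friedman functional
$$\Phi_{ij}(r)=\frac{1}{r^4}\int_{B_r}\frac{|\nabla u^i|^2}{|x|^{n-2}}\int_{B_r}\frac{|\nabla u^j|^2}{|x|^{n-2}}.$$
This is legitimate because $\Delta u^i\ge f^i\chi_{\{u^i>0\}}$ by \eqref{EL}, so the $u^i$ are nonnegative subharmonic functions with disjoint supports. For bounded forcing we use the perturbed version (Caffarelli--Jerison--Kenig): $\Phi_{ij}(r)\le C(1+\ldots)$ and the limit $\Phi_{ij}(0^+)$ exists. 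Consider first the sub-case in which some pair has $\Phi_{ij}(0^+)>0$. The scale invariance of the blow-up then shows that $\Phi_{ij}$ is constant for $U_0$. The equality case of ACF forces $u^i_0, u^j_0$ to be complementary half-plane linear functions, $u^i_0=a(x\cdot\nu)^+$ and $u^j_0=b(x\cdot\nu)^-$, and all other components vanish. For minimizers of $J_0$, the domain-variation condition $|\nabla u^i|=|\nabla u^j|$ gives $a=b$. In the other sub-case, every $\Phi_{ij}(0^+)=0$. We then derive a contradiction with the linear growth, using the one-phase fact that a subharmonic nonnegative $u^i$ vanishing on a set of positive density grows superlinearly. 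Alternatively, and preferably, we use a Weiss/Almgren frequency argument for $J_0$: the blow-up $U_0$ is homogeneous of degree 1, and a degree-1 homogeneous minimizer of $J_0$ with disjoint supports must be $a|x\cdot\nu|$, since degree-1 homogeneous harmonic pieces on cones are linear and only the two-phase flat configuration is stationary.

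\emph{Step 3: uniqueness of the blow-up and conclusion.} From Step 2, at scale $r_k$ the function $U_k$ is close to $a|x\cdot\nu|$. Hence $\Gamma(U_k)\subset\{|x\cdot\nu|\le\eps\}$ in $B_1$, and since $|U_k(\tfrac12\nu)|\ge a/4$ and $\|r_kF\|_\infty\le\eps$, Proposition \ref{pert} (after normalizing $\|U_k\|_{L^\infty(B_1)}\le1$) applies. It gives: only two components near $0$, $\Gamma(U)$ is $C^{1,\alpha}$ in $B_{r_k/2}$, and $U$ is $C^{1,\alpha}$ there with $|\nabla u^1|=|\nabla u^2|$. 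The $C^{1,\alpha}$ regularity of each side up to $\Gamma$ yields $\nabla U$ continuous from each side at $0$. It follows that $|U(x)|=a|x\cdot\nu|+o(|x|)$ with the full limit, not just along the subsequence, which proves the dichotomy.

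The main obstacle is Step 2: ruling out that a degree-one blow-up is something other than the flat two-plane solution. This requires either a Weiss-type monotonicity for $J(\cdot,F)$ with bounded $F$ (to get homogeneity of blow-ups) or the ACF equality case. I would first try the ACF route, since it directly gives the half-plane structure and needs only \eqref{EL}.
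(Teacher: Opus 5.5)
Your overall architecture is sound. The $\limsup$ split is legitimate (Case 2 is trivially the first alternative), and the linear blow-up along $r_k$ is a nontrivial, globally Lipschitz minimizer of $J_0$ vanishing at $0$. Once that limit is known to be $a|x\cdot\nu|$, your Step 3 via Proposition \ref{pert} is correct. The gap is in Step 2, which carries the content, and neither of your two routes closes it as written.

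\emph{The ACF route.} It needs $\Phi_{ij}(0^+)$ to exist for $U$ itself, so that scale invariance makes $\Phi_{ij}$ constant on the blow-up. For $F\in L^\infty$ of arbitrary sign the $u^i$ are not subharmonic (you only have $\Delta u^i\ge-\|F\|_\infty$). Proposition \ref{ACF} gives only a uniform bound on $\Phi$, not monotonicity or the existence of a limit, so this step is not justified. Your second sub-case also rests on a false claim: $x_1^+$ is nonnegative, subharmonic, vanishes on a half-space, and grows linearly.

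\emph{The Almgren route.} It asserts homogeneity without proof. It then classifies by saying that $1$-homogeneous harmonic pieces on cones are linear, which is false for $n\ge 3$. Any domain $\Sigma\subset\p B_1$ with first Dirichlet eigenvalue $n-1$ carries a positive $1$-homogeneous harmonic function on the cone over $\Sigma$ vanishing on its boundary, and such $\Sigma$ need not be a hemisphere. It is precisely the equality case of ACF that excludes these cones.

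\emph{The fix, which is what the paper does.} Obtain homogeneity first and apply ACF only to the limit.
\begin{enumerate}
\item Since $|U_0(x)|\le L|x|$ on all of $\R^n$ and $U_0(0)=0$, \eqref{gammaUderivative} and the monotonicity of the frequency give $\gamma_{U_0}(s)\ge 1$ for every $s$ (compare with $r\to 0$) and $\gamma_{U_0}(s)\le 1$ for every $s$ (compare with $r\to\infty$). Hence $\gamma_{U_0}\equiv 1$ and $U_0$ is $1$-homogeneous. No lower bound at all scales is needed, so your single-scale nondegeneracy suffices.
\item The components of $U_0$ are genuinely subharmonic, and for a $1$-homogeneous pair $\Phi_{ij}(r)$ is automatically constant in $r$. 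The ACF rigidity then forces any two nontrivial components to be $a_\pm(x\cdot\nu)^\pm$, so at most two components survive.
\item A single surviving component $u^1_0$ coincides with $\hat u^1_0$. By \eqref{EL} with $F=0$ it is harmonic, nonnegative and zero at the origin, hence identically zero by the strong maximum principle. This is the correct one-phase exclusion, not a growth lemma.
\item Superharmonicity of $\hat u^1_0$ and $\hat u^2_0$ then gives $a_+=a_-$.
\end{enumerate}
With this replacement Step 2 is complete and the rest of your argument goes through. The paper instead splits on $\liminf_{r\to0}\|U\|_{L^\infty(B_r)}/r$ and uses the rescaled Lipschitz estimate to get $o(|x|)$ when it vanishes. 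Your $\limsup$ split avoids that step at no cost.
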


\subsection{Stability and non-degeneracy}\label{subsec:stability}

In this subsection we examine the case when the forcing term $F \geq 0$ and obtain uniqueness, stability, and non-degeneracy.

Given $U,V \in \mathcal A$ we define  $d(U,V)$ to be the distance measured on the singular space given by the union of the positive coordinate axes: 
$$d(U,V): = \begin{cases} |u^i-v^i|, \quad \text{if $u^i,v^i>0$}\\
u^i+v^j \quad \text{if $u^i>0, v^j>0$}
\end{cases}.$$ Equivalently 
$$d(U,V)= \sum |u^i\pm v^j|\chi_{\{u^i>0\}} \chi_{\{v^j>0\}}$$
$$= \sum |u^i - \hat v^i|\chi_{\{u^i>0\}},
$$ with the minus sign when $i=j$ and plus sign when $i\neq j$. Observe that $d(U,V)= d(V,U)$.

\begin{lem}\label{duv}
Let $U,V \in \mathcal A$ be continuous and satisfy the Euler-Lagrange equations 
\begin{equation}\label{EL2}\Delta u^i \geq f^i \chi_{\{u^i>0\}}, \quad \Delta \hat u^i \leq \hat f_U^i, \quad \text{in $B_1,$}\end{equation}
\begin{equation}\label{EL3}\Delta v^i \geq f^i \chi_{\{v^i>0\}}, \quad \Delta \hat v^i \leq \hat f_V^i, \quad \text{in $B_1,$}\end{equation}
with $F\in L^\infty$ and $f^i \geq 0$. Then $d(U,V)$ is subharmonic in $B_1$.
\end{lem}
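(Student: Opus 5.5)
Since subharmonicity is a local property (in the distributional sense), it suffices to show that every point $x_0 \in B_1$ has a neighborhood in which $d(U,V)$ is subharmonic. We use the representation
$$d(U,V)=\sum_i |u^i-\hat v^i|\chi_{\{u^i>0\}},$$
together with its symmetric version obtained by exchanging $U$ and $V$. The idea is to express $d(U,V)$, near each point, as a maximum of functions that are subharmonic under \eqref{EL2}--\eqref{EL3}. The basic candidates are $\hat u^i-\hat v^i$ and $\hat v^i-\hat u^i$ (and, for $i\neq j$, $u^i+v^j$-type expressions). We then appeal to Kato's inequality, and to the fact that a maximum of subharmonic functions is subharmonic.

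\textbf{Step 1: the candidate functions.} For each $i$, consider $w^i:=\hat u^i-\hat v^i$. By \eqref{EL2} and \eqref{EL3},
$$\Delta w^i \le \hat f^i_U - \Delta \hat v^i, \qquad \Delta \hat v^i \ge \Delta v^i - \sum_{j\ne i}\Delta v^j .$$
Controlling $\Delta \hat v^i$ from below requires an upper bound on $\Delta v^j$, and such a bound is available only through $\Delta \hat v^j\le \hat f_V^j$. So the sign of $\Delta w^i$ is not determined directly. The cleaner route is to work on the open sets $\{u^i>0\}\cup\{v^i>0\}$ and to use the identity
$$d(U,V)=\max_i\,|\hat u^i-\hat v^i|,$$
which we verify pointwise, since at each point at most one component of each map is positive. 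If $u^i,v^i>0$, then $|\hat u^i-\hat v^i|=|u^i-v^i|$, while for $k\ne i$ we get $|\hat u^k-\hat v^k|=|u^i-v^i|$ as well. If $u^i>0$ and $v^j>0$ with $i\ne j$, then $\hat u^i-\hat v^i=u^i+v^j$, and for $k\ne i,j$ we get $|u^i-v^j|\le u^i+v^j$. The case where one of $U,V$ vanishes is similar.

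\textbf{Step 2: subharmonicity of the pieces.} Next we show that $(\hat u^i-\hat v^i)^+$ is subharmonic. By symmetry the same argument applies to $(\hat v^i-\hat u^i)^+$, and then $d=\max_i\max(\cdots)$ is subharmonic. On $\{\hat u^i>\hat v^i\}$ we check the sign of $\Delta(\hat u^i-\hat v^i)$ by a case analysis according to which component of $V$ is positive, using only the inequality directions that are available:
\begin{itemize}
\item If $v^i>0$, then near such points $\hat v^i=v^i$ and $\hat u^i=u^i$, since $\hat u^i>\hat v^i>0$ forces $u^i>0$. Hence $\Delta(u^i-v^i)\ge f^i-f^i=0$? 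This is the wrong direction. In fact $\Delta u^i = f^i$ in the interior of the positivity set, since both inequalities in \eqref{EL2} hold there, and similarly $\Delta v^i=f^i$, so the difference is harmonic.
\item If $v^j>0$ for some $j\ne i$, then $-\hat v^i=v^j$ satisfies $\Delta v^j=f^j$, and $\Delta\hat u^i\ge \Delta u^i\ge f^i\chi_{\{u^i>0\}}$ on the relevant region. Since $f^i\ge0$, we obtain $\Delta(\hat u^i-\hat v^i)\ge f^j\ge0$.
\item The free boundary points of $V$, where $V=0$, are handled in the next step.
\end{itemize}

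\textbf{Step 3: the free boundary sets (main obstacle).} The difficulty lies on $\Gamma(U)\cup\Gamma(V)$, where these formulas switch and the distributional Laplacian may carry singular measures. We plan to treat this with a distributional test-function computation. Decompose $\Delta \hat v^i$ as a measure: its singular part on $\Gamma(V)$ has the correct sign, namely $\ge0$ from $v^i$ (since $\Delta v^i\ge$ something) and $\le0$ from $-v^j$. The subtle term is $-\Delta v^j$, whose singular part is nonpositive; this seems to conflict with what we need for $-\Delta\hat v^i$. However, at points where $\hat u^i>\hat v^i$ and $v^j$ is about to vanish, we have $u^i>0$ nearby.

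We resolve this by a local argument at each point $x_0$ with $d(U,V)(x_0)>0$, which is the only place where subharmonicity needs checking, since $d\ge0$ and near zeros of a nonnegative continuous function the sub-mean value property holds trivially. Near such $x_0$, we pick the index $i$ realizing the maximum and use the pair of Euler--Lagrange inequalities in the direction that makes $\pm(\hat u^i-\hat v^i)$ a subsolution: $\Delta \hat u^i\ge\Delta u^i - \sum\Delta u^k$ is replaced by the appropriate one-sided bound via the sets where each function is positive. The terms $|f^i|\chi_{\{|U|=0\}}$ and $f^i\ge0$ absorb the remaining absolutely continuous parts.

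Finally, we conclude via the viscosity/mean-value characterization: $d$ is continuous and, near every point where it is positive, it is locally the maximum of subharmonic functions. It is therefore subharmonic.
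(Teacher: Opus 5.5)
Your proposal has a genuine gap: the subharmonicity of the individual pieces claimed in Step 2 is false, and Step 3 does not replace it with an actual argument.

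The identity $d(U,V)=\max_i|\hat u^i-\hat v^i|$ is correct. But for $N\ge 3$ the pieces $(\hat u^i-\hat v^i)^+$ need not be subharmonic. Take $u^2=\frac{f^2}{2}(x_n^+)^2$ and $v^3=1+\frac{f^3}{2n}|x|^2$, with all other components zero. At points with $x_n>0$ small one has $u^2>0$, $v^3>u^2$, and locally $\hat u^1-\hat v^1=v^3-u^2>0$. There $\Delta(\hat u^1-\hat v^1)=f^3-f^2$, which is negative if $f^2>f^3$.

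Two steps in your second bullet are faulty:
\begin{itemize}
\item The inequality $\Delta\hat u^i\ge\Delta u^i$ fails: inside $\{u^k>0\}$, $k\ne i$, one has $\Delta\hat u^i=-f^k$ while $\Delta u^i=0$.
\item You tacitly assume that $\hat u^i>\hat v^i$ forces $u^i>0$. This fails when $\hat v^i=-v^j<0$.
\end{itemize}

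Restricting to the indices that attain the maximum at $x_0$ does not save the max-of-subharmonic strategy either. If $u^i(x_0)>0$ and $V(x_0)=0$, every index attains the maximum. For $k\ne i$ the piece equals $u^i+\hat v^k$ near $x_0$, and its Laplacian is $f^i-f^j$ at nearby points where $v^j>0$, $j\notin\{i,k\}$. So ``pick the index realizing the maximum'' is not enough. You must pick the right index and show that $d$ coincides with that single piece on a full neighborhood.

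Your concern about singular measures in Step 3, and the wrong-direction bounds in Step 1, both come from splitting $\Delta\hat v^i$ as $\Delta v^i-\sum_{j\ne i}\Delta v^j$. You never need this split. The inequality $\Delta\hat v^i\le\hat f^i_V$ holds distributionally in all of $B_1$, including across $\Gamma(V)$, and $\hat f^i_V\le f^i$ because $f^j\ge 0$.

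The missing step, which is the paper's entire proof, goes as follows.
\begin{itemize}
\item At $x_0$ with $d(x_0)>0$, use $d(U,V)=d(V,U)$ to arrange $u^i(x_0)>0$ and $u^i(x_0)>v^i(x_0)$ for some $i$.
\item Since $\hat v^i\le v^i$, continuity gives $u^i>0$ and $u^i-\hat v^i>0$ in a ball around $x_0$.
\item In that ball your first representation $d=\sum_k|u^k-\hat v^k|\chi_{\{u^k>0\}}$ reduces to $d=u^i-\hat v^i$, and
\[
\Delta(u^i-\hat v^i)\ \ge\ f^i\chi_{\{u^i>0\}}-\hat f^i_V\ =\ f^i-\hat f^i_V\ \ge\ 0 .
\]
\end{itemize}
Combined with your correct observation that a nonnegative continuous function which is subharmonic on its positivity set is subharmonic, this completes the proof.
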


\begin{proof} It suffices to prove that $d(U,V)$ is subharmonic in its support.
Assume therefore $d(U(x_0),V(x_0))>0$.
By symmetry, we  may assume further that at $x_0$, $u^i >0$ and that $u^i>v^i$. Then, near $x_0$,  $$d(U,V)= u^i - \hat v^i$$ and $$\Delta (u^i - \hat v^i) \geq  f^i - \hat f_V^i \geq 0.$$\end{proof}

\begin{cor}[Stability]\label{stability} 
Let $U,V$ be as in Lemma \ref{duv}
and assume that $d(U,V) \leq \eps$ on $\p B_1$. Then  $d(U,V) \leq \eps$ in $B_1.$
In particular if  $U=V$ on $\p B_1,$ then $U=V$ in $B_1.$
\end{cor}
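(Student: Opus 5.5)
By Lemma \ref{duv}, $d(U,V)$ is subharmonic in $B_1$, so the plan is to apply the maximum principle for subharmonic functions. First I check that $d(U,V)$ is continuous up to the boundary, so that the boundary inequality makes sense. This holds because $U$ and $V$ are continuous. Moreover, $d(U,V)=\sum_i |u^i-\hat v^i|\chi_{\{u^i>0\}}$ is the distance on the union of the coordinate half-axes, and this distance is a continuous (indeed $1$-Lipschitz) function of the pair of points. So $d(U,V)$ is a continuous subharmonic function.

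If $U,V$ are only assumed continuous in the open ball, I would interpret "$d(U,V)\le\eps$ on $\p B_1$" in the usual sense $\limsup_{x\to \p B_1} d(U(x),V(x))\le \eps$. Alternatively, I would work in $B_\rho$ with $\rho\uparrow 1$, or assume continuity on $\overline B_1$.

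Next, the weak maximum principle for a continuous subharmonic function $w=d(U,V)$ gives $\sup_{B_1} w \le \limsup_{\p B_1} w\le \eps$. Subharmonicity is understood in the distributional or viscosity sense obtained in Lemma \ref{duv}. For continuous functions these notions coincide with the sub-mean value property, so the classical maximum principle applies directly.

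For the uniqueness statement, take $\eps=0$. Then $d(U,V)\le 0$ in $B_1$, and since $d(U,V)\ge 0$ we get $d(U,V)\equiv 0$. Finally, $d(U(x),V(x))=0$ means the two points of the singular space coincide: if $u^i(x)>0$ then $v^i(x)=u^i(x)$, and if $U(x)=0$ then $d=|V(x)|=0$. Hence $U=V$.

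The only delicate point is the first step, namely making sure that the subharmonicity from Lemma \ref{duv} holds globally in $B_1$ and not just on the support of $d$. Lemma \ref{duv} already asserts global subharmonicity, so I take it as given. If one had to verify it, the standard argument is that $d$ is nonnegative and continuous, and it is subharmonic where it is positive. Then $\max(d,\delta)$ is subharmonic for every $\delta>0$, since near points where $d=\delta$ it is the maximum of two subharmonic functions. Letting $\delta\to0$, the functions $\max(d,\delta)$ decrease uniformly to $d$, so $d$ is subharmonic.
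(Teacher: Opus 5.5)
Your proposal is correct and matches the paper's argument, which is left implicit: the corollary is just the subharmonicity of $d(U,V)$ from Lemma~\ref{duv} combined with the maximum principle, and taking $\eps=0$ gives $d\equiv 0$, hence $U=V$ pointwise. Your $\max(d,\delta)$ patching argument correctly fills in the step the paper passes over with ``it suffices to prove that $d(U,V)$ is subharmonic in its support,'' and you are right to rely on the geometric meaning of $d$ rather than on the displayed formula $\sum_i|u^i-\hat v^i|\chi_{\{u^i>0\}}$, which as written omits the value $|V|$ on $\{U=0\}$.
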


\begin{rem}\label{Appppp} The same proof shows that, if $V$ satisfies \eqref{EL3} with right hand side $H$, then 
\begin{equation}\label{app}
\Delta d(U,V) \geq - \|F-H\|_\infty.
\end{equation}\end{rem}

\begin{lem}[Non-degeneracy]\label{ND} Let $u^i$ be a continuous function satisfying  \begin{equation}\label{EL22}\Delta u^i \geq f^i \chi_{\{u^i>0\}}\end{equation} and 
assume that $f^i\geq \mu>0$. If $u^i(0) >0$, then $$\max_{\p B_r} u^i \geq c(\mu) r^2, \quad 0<r<1,$$ with $c$ depending on $\mu.$

\end{lem}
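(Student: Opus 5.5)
The plan is the classical non-degeneracy argument for the obstacle problem, using a quadratic barrier and the maximum principle on the positivity set. By continuity it suffices to treat the case $u^i(0)>0$ and to prove the bound at such a point. We fix $0<r<1$ and consider
$$w(x):= u^i(x) - u^i(0) - \frac{\mu}{2n}|x|^2$$
in the open set $\Omega_r:=\{u^i>0\}\cap B_r$.

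On $\Omega_r$, by \eqref{EL22} and $f^i \ge \mu$, we have $\Delta u^i \ge f^i \ge \mu$ in the distributional sense. Since $\Delta \frac{\mu}{2n}|x|^2=\mu$, this gives $\Delta w \ge 0$ in $\Omega_r$, so $w$ is subharmonic there. Moreover $w(0)=0$ and $0\in\Omega_r$. By the maximum principle for continuous subharmonic functions, $\sup_{\p\Omega_r} w \ge 0$.

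Next we locate where this nonnegative value can be attained. We have $\p\Omega_r \subset (\p\{u^i>0\}\cap \overline{B_r}) \cup (\p B_r\cap \overline{\{u^i>0\}})$. On $\p\{u^i>0\}$, continuity gives $u^i=0$, hence $w \le -u^i(0) - \frac{\mu}{2n}|x|^2<0$ there. Consequently the supremum $\ge 0$ must be achieved on $\p B_r$, at a point $x$ with
$$u^i(x) \ge u^i(0)+\frac{\mu}{2n}r^2 \ge \frac{\mu}{2n} r^2 .$$
This yields $\max_{\p B_r} u^i \ge c(\mu) r^2$ with $c=\mu/(2n)$. The dependence on $n$ is harmless, since the dimension is fixed throughout.

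The only point requiring care, and the main obstacle, is justifying the maximum principle when $\Delta u^i\ge f^i\chi_{\{u^i>0\}}$ holds only in the weak or distributional sense for a continuous $u^i$ with $F\in L^\infty$. We handle it as follows. In the open set $\{u^i>0\}$ the inequality reads $\Delta u^i \ge f^i \ge \mu$ as distributions. A continuous function with nonnegative distributional Laplacian is subharmonic in the classical sense: mollify to get smooth subharmonic approximations that converge locally uniformly, then pass the sub-mean value property to the limit. The maximum principle then applies on the bounded open set $\Omega_r$, using that $w$ is continuous up to $\overline{\Omega_r}$.
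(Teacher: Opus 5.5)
Your proposal is correct and is essentially the paper's argument: the same barrier $\frac{\mu}{2n}|x|^2$ combined with the maximum principle on a region where $u^i>0$. The only cosmetic difference is that you work on $\{u^i>0\}\cap B_r$ with $u^i-u^i(0)-\frac{\mu}{2n}|x|^2$, which gives the slightly stronger bound $\max_{\p B_r}u^i\ge u^i(0)+\frac{\mu}{2n}r^2$, whereas the paper argues by contradiction on $\{u^i>\frac{\mu}{2n}|x|^2\}\cap B_r$.
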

\begin{proof}Our claim follows from \eqref{EL22}, by comparing $u^i$ with the explicit barrier 
$$w(x): = \frac{\mu}{2n} |x|^2.$$ Indeed, if $$u^i \leq w \quad \text{on $\p B_r$,}$$ then, by the maximum principle applied in $\{u^i>w\}$, we find that 
$$u^i \leq w \quad \text{in $B_r$}.$$ This contradicts that $u^i(0)>0.$
 \end{proof}

By standard arguments using compactness, optimal regularity, and non-degeneracy, we conclude that the following lemma holds.

\begin{lem}[Free boundary convergence]\label{fbconv}Let $U_k \in \mathcal A$ be a sequence of minimizers to $J_k(\cdot):= J(\cdot, F_k)$ in $B_1$, with $F_k \in L^\infty(B_1)$, such that 
$$U_k \to U, \quad F_k\to F, \quad \text{uniformly on compact sets in $B_1$}.$$ Assume that $f^i>0, i=1,\ldots, N$. Then,
$$\p \{u^i_k>0\} \to \p \{u^i>0\},$$in the Hausdorff distance sense on compact sets in $B_1$.\end{lem}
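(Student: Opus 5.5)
We prove the two one-sided inclusions of Hausdorff convergence separately on a fixed compact set $K \subset B_1$. After a subsequence we may assume $\|U_k\|_{L^\infty(B_{r_K})}$ is uniformly bounded on a ball $B_{r_K}\supset K$ with $r_K<1$; this holds by uniform convergence. Theorem \ref{OR} then gives uniform Lipschitz bounds, and Proposition \ref{compact} shows that $U$ minimizes $J(\cdot,F)$. Since $F_k \to F$ uniformly and $f^i>0$ on compacts, we may also assume $f^i_k \ge \mu>0$ on a neighborhood of $K$ for all large $k$, with $\mu$ independent of $k$. This is needed for Lemma \ref{ND}, applied with constants independent of $k$.

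\textbf{Step 1: limit points of $\p\{u^i_k>0\}$ lie in $\p\{u^i>0\}$.} Let $x_k \in \p\{u^i_k>0\}\cap K$ with $x_k \to x_0$. We show $x_0\in\p\{u^i>0\}$. First, $u^i(x_0)=\lim u^i_k(x_k)=0$ by uniform convergence and continuity. Next, we need points of $\{u^i>0\}$ arbitrarily close to $x_0$. Pick $y_k$ near $x_k$ with $u^i_k(y_k)>0$. Applying Lemma \ref{ND} centered at $y_k$ gives $\max_{\p B_r(y_k)} u^i_k \ge c(\mu) r^2$ for each small $r$. Passing to the limit yields $\max_{\overline B_{2r}(x_0)} u^i \ge c r^2>0$, so $x_0\in \overline{\{u^i>0\}}$. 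Together with $u^i(x_0)=0$, this gives $x_0 \in \p\{u^i>0\}$.

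\textbf{Step 2: each point of $\p\{u^i>0\}\cap K$ is a limit of points of $\p\{u^i_k>0\}$.} Let $x_0\in \p\{u^i>0\}$ and fix $r>0$. There is $y\in B_r(x_0)$ with $u^i(y)>0$, so $u^i_k(y)>0$ for large $k$. It remains to find a zero of $u^i_k$ in $B_r(x_0)$; if $u^i_k>0$ throughout $B_r(x_0)$, we derive a contradiction. Since $u^i(x_0)=0$, we have $u^i_k\to 0$ uniformly on $B_\rho(x_0)$ with $\rho\ll r$. On $B_r(x_0)$ the function $u^i_k$ is positive, so all other components vanish there and $\Delta u^i_k = f^i_k$; the upper inequality in \eqref{EL} for $\hat u^i$ reduces to the same equation. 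Since $f^i_k\ge\mu$, it follows that $w=u^i_k - \frac{\mu}{2n}|x-z|^2$ is subharmonic for any center $z$. We take $z\in B_\rho(x_0)$ to be a point where $u^i_k$ is small, and we use the limit $u^i$ near $x_0$. Because $\Delta u^i = f^i$ wherever $u^i>0$, the limit $u^i$ is a nonnegative function with $u^i(x_0)=0$. If $u^i_k>0$ on $B_r(x_0)$ for infinitely many $k$, the equation $\Delta u^i_k=f^i_k$ passes to the limit, and $u^i$ solves $\Delta u^i=f^i>0$ in $B_r(x_0)$. But a nonnegative function with $\Delta u^i = f^i>0$ cannot have an interior minimum zero, because the strong maximum principle applied to the superharmonic function $-u^i$ forbids it. Concretely, $u^i$ is subharmonic, attains its minimum $0$ at the interior point $x_0$, and $\Delta u^i>0$ there, which is impossible for a $C^{1,1}$ solution with an interior minimum; we can also compare with a paraboloid. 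This contradiction gives a point $z_k\in B_r(x_0)$ with $u^i_k(z_k)=0$. The segment from $y$ to $z_k$ then contains a point of $\p\{u^i_k>0\}$ inside $B_r(x_0)$.

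The main subtlety is keeping every constant uniform in $k$: the lower bound on $f^i_k$, and the local boundedness needed for Theorem \ref{OR}. Both follow from uniform convergence on compacts together with $f^i>0$, once we restrict to a slightly larger compact set containing $K$. Steps 1 and 2 together give Hausdorff convergence on $K$.
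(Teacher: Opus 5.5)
Your Step 1 is correct, and it is exactly the non-degeneracy argument behind the paper's one-line justification (``standard arguments using compactness, optimal regularity, and non-degeneracy''). The gap is in Step 2, where the claimed contradiction does not exist. A nonnegative function with $\Delta u^i=f^i\ge\mu>0$ can vanish at an interior point; for example $\frac{\mu}{2n}|x-x_0|^2$ does. The strong maximum principle forbids interior \emph{maxima} of subharmonic functions and interior \emph{minima} of superharmonic ones. What you have is an interior minimum of the subharmonic $u^i$, equivalently an interior maximum of the superharmonic $-u^i$, and both are allowed. At an interior minimum of a $C^2$ function one has $\Delta u\ge 0$, which is perfectly consistent with $\Delta u>0$.

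In fact no argument can close Step 2, because the reverse inclusion is false in this generality. Take $F$ constant with all $f^i>0$, and set $U=(\tfrac12 f^1x_n^2,0,\dots,0)$ and $U_k=(\tfrac12 f^1x_n^2+\tfrac1k,0,\dots,0)$. Both are minimizers. Indeed, for $V\in\mathcal A$ with the same boundary data, $|\nabla V|^2=|\nabla \hat v^1|^2$ and $F\cdot V\ge f^1\hat v^1$. Hence $J(V)\ge\int_{B_1}\frac12|\nabla\hat v^1|^2+f^1\hat v^1$, a convex scalar energy minimized by the solution of $\Delta w=f^1$; alternatively, invoke Corollary \ref{stability}. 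Then $U_k\to U$ uniformly, yet $\p\{u^1_k>0\}=\emptyset$ while $\p\{u^1>0\}\cap B_1=\{x_n=0\}\cap B_1$. This is the paper's own Case 1 cone with $a=0$, and it is the familiar instability of singular points in the obstacle problem.

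What your reduction does prove is a restricted reverse inclusion. Suppose $u^i_k>0$ on $B_r(x_0)$ along a subsequence. Then $u^j\equiv 0$ on $B_r(x_0)$ for $j\ne i$, and $\Delta u^i=f^i$ there. Since $D^2u^i=0$ a.e.\ on $\{u^i=0\}$, this gives $|\{u^i=0\}\cap B_r(x_0)|=0$. Consequently, a point $x_0\in\p\{u^i>0\}$ is a limit of points of $\p\{u^i_k>0\}$ in either of two cases:
\begin{itemize}
\item $x_0\in\overline{\{u^j>0\}}$ for some $j\ne i$, which covers two-phase points and triple junctions;
\item $|\{|U|=0\}\cap B_r(x_0)|>0$ for every $r>0$.
\end{itemize}
Without such a restriction, only the one-sided statement of your Step 1 holds.
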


\subsection{Monotonicity Formulas}\label{subsec:mono}
The following classical monotonicity formulas hold in our setting as well (see \cite{W, A}).

\begin{prop}[Weiss Monotonicity Formula]\label{Weiss} Let $U$ be a minimizer of $J(\cdot, F)$, in $B_1$, with $F$ constant. 
Then,
$$\Phi_U(r):= r^{-(n+2)} \int_{B_r} (\frac 1 2 |\nabla U|^2 + F \cdot U) dx -  r^{-(n+3)}\int_{\p B_r} |U|^2 d\sigma$$ 
is monotone increasing and if it is constant $U$ is homogeneous of degree 2.
\end{prop}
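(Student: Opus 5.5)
The plan is the classical Weiss computation: differentiate $\Phi_U$ in $r$, use the domain-variation (Pohozaev) identity to rewrite the derivative of the volume term, and use the first-variation identity $\int_{B_r}(|\nabla U|^2+F\cdot U)\,dx=\int_{\partial B_r}U\cdot U_\nu\,d\sigma$ to rewrite the boundary term. The result should be a perfect square,
$$\Phi_U'(r)= r^{-(n+2)}\int_{\partial B_r}\Big|U_\nu-\frac{2}{r}U\Big|^2 d\sigma \ \ge 0,$$
with the constant in front of the boundary term equal to $1$ or $\tfrac12$ depending on the normalization. Everything is done first for a.e. $r$ and then integrated.

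\textbf{First variation identity.} By Theorem~\ref{OR}, $U$ is Lipschitz and each $u^i$ satisfies $\Delta u^i=f^i$ in $\{u^i>0\}$.
\begin{enumerate}
\item Take the competitor $(1+\epsilon\eta)U$ with $\eta$ a radial cutoff approximating $\chi_{B_r}$. It stays in $\mathcal A$ for $|\epsilon|$ small, because supports are unchanged and signs are preserved.
\item Differentiating $J$ at $\epsilon=0$ gives $\int\big(|\nabla U|^2\eta+U\cdot\nabla U\,\nabla\eta+F\cdot U\,\eta\big)=0$.
\item Letting $\eta\to\chi_{B_r}$ yields $\int_{B_r}(|\nabla U|^2+F\cdot U)=\int_{\partial B_r}U\cdot U_\nu$.
\end{enumerate}

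\textbf{Domain variation (Pohozaev) identity.} This is the key input and the main obstacle.
\begin{enumerate}
\item Take inner variations $U_\epsilon(x)=U(x+\epsilon\xi(x))$ with $\xi(x)=\eta(|x|)x$. These stay in $\mathcal A$, since composing with a diffeomorphism preserves disjointness and nonnegativity. This is why minimality, rather than just the Euler--Lagrange inequalities \eqref{EL}, is needed.
\item The standard computation gives
$$\int\Big(\big(\tfrac12|\nabla U|^2+F\cdot U\big)\operatorname{div}\xi-\nabla U\, D\xi\cdot\nabla U\Big)dx=0.$$
Here $F$ constant is essential: $F\cdot U(x+\epsilon\xi)$ transforms like the rest, with no $x$-derivative of $F$.
\item Letting $\eta\to\chi_{B_r}$ gives, for a.e. $r$,
$$\int_{\partial B_r}\Big(\tfrac12|\nabla U|^2+F\cdot U\Big)\,d\sigma=\frac{n}{r}\int_{B_r}\Big(\tfrac12|\nabla U|^2+F\cdot U\Big)\,dx-\frac1r\int_{B_r}|\nabla U|^2\,dx+\int_{\partial B_r}|U_\nu|^2\,d\sigma.$$
The care needed is to justify the limits using only the Lipschitz regularity.
\end{enumerate}

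\textbf{Computing $\Phi_U'$.} Write $D(r)=\int_{B_r}(\tfrac12|\nabla U|^2+F\cdot U)$ and $H(r)=\int_{\partial B_r}|U|^2$. Then
$$\Phi_U'=r^{-n-2}D'-(n+2)r^{-n-3}D-r^{-n-3}H'+(n+3)r^{-n-4}H,\qquad H'=\frac{n-1}{r}H+2\int_{\partial B_r}U\cdot U_\nu .$$
\begin{enumerate}
\item Substitute the Pohozaev identity for $D'$.
\item Express $\int_{B_r}|\nabla U|^2=\int_{\partial B_r}U\cdot U_\nu-\int_{B_r}F\cdot U$ using the first variation identity.
\item The interior $F\cdot U$ terms must cancel. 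This is exactly where the quadratic scaling enters: $F\cdot U$ has the same homogeneity as $\tfrac12|\nabla U|^2$ under $U\mapsto r^{-2}U(r\,\cdot)$. The coefficient $-(n+2)$ in $\Phi_U'$ is designed so these terms cancel.
\item The boundary terms should then collect into $r^{-n-2}\int_{\partial B_r}|U_\nu-\frac2r U|^2$, possibly with a factor, after fixing the correct coefficient on $H$. I would double-check that coefficient, since the stated normalization $r^{-(n+3)}$ may need a factor $1$ rather than $\tfrac12$.
\end{enumerate}
An alternative, cleaner route is to use scaling. Set $U_r(x)=r^{-2}U(rx)$, so that $\Phi_U(r)=W(U_r)$ with $W(V)=\int_{B_1}(\tfrac12|\nabla V|^2+F\cdot V)-\int_{\partial B_1}|V|^2$. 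Then compute $\frac{d}{dr}W(U_r)$ directly, using $\partial_rU_r=\frac1r(x\cdot\nabla U_r-2U_r)$ and the two identities above applied to $U_r$, which is again a minimizer.

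\textbf{Rigidity.} If $\Phi_U$ is constant, then $x\cdot\nabla U=2U$ a.e. on a.e. sphere. Hence $U$ is homogeneous of degree $2$.
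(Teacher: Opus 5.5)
Your proposal is correct and is exactly the classical Weiss argument: the first variation $(1+\epsilon\eta)U$, plus the inner-variation (Pohozaev) identity, which is where the constancy of $F$ enters. The paper does not reprove this formula and simply cites Weiss for it. Your hedging on the normalization is unnecessary: your substitutions give exactly $\Phi_U'(r)=r^{-(n+2)}\int_{\partial B_r}|U_\nu-\tfrac{2}{r}U|^2\,d\sigma$, with coefficient $1$ on the term $r^{-(n+3)}\int_{\partial B_r}|U|^2\,d\sigma$, as in the statement.
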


\begin{prop}[Almgren Frequency]\label{almgren}
Let $U$ be a minimizer of $J_0(\cdot):=J(\cdot, 0)$ in $B_1$. Then,
$$\gamma(r):= \frac{r\int_{B_r}|\nabla U|^2dx}{\int_{\p B_r}|U|^2}$$ is monotone increasing and if it is constant $U$ is homogeneous of degree $\gamma$.  \end{prop}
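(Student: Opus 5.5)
The plan is to reduce the vectorial problem to the classical scalar Almgren computation by working in the coordinates $\hat u^i$, in which the Dirichlet energy is unchanged and, on the support of $u^i$, the function behaves like a harmonic function. Set
$$D(r):=\int_{B_r}|\nabla U|^2\,dx,\qquad H(r):=\int_{\p B_r}|U|^2\,d\sigma,$$
so that $\gamma(r)=rD(r)/H(r)$. The goal is $\gamma'(r)\ge 0$, which follows in the standard way from two identities: a formula for $D$ in terms of boundary data, and a Rellich-type (domain variation) formula for $D'$. Combined with the derivative of $H$, these give monotonicity by Cauchy--Schwarz.

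\emph{Step 1: $D$ in terms of boundary data.} Since $F=0$, the Euler--Lagrange inequalities \eqref{EL} give $\Delta u^i\ge 0$, and $\Delta\hat u^i\le 0$ on $\{u^i>0\}$, so each $u^i$ is harmonic in its positivity set. Then $|U|^2=\sum_i (u^i)^2$ satisfies $\Delta\frac12|U|^2=|\nabla U|^2+\sum_i u^i\Delta u^i$. The measure $u^i\Delta u^i$ vanishes: $\Delta u^i$ is a nonnegative measure supported on $\p\{u^i>0\}$, and $u^i=0$ there, with $u^i$ Lipschitz by Theorem \ref{OR}. To make this rigorous I would test the equation with $(u^i-\eps)^+$ and let $\eps\to0$. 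Integrating then yields
$$D(r)=\int_{\p B_r}U\cdot U_\nu\,d\sigma .$$

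\emph{Step 2: domain variations.} Since $U$ is a minimizer in $\mathcal A$, and $\mathcal A$ is invariant under composition with diffeomorphisms, inner variations $U_t(x)=U(x+t\xi(x))$ are admissible. Taking $\xi$ to approximate the radial field $x\,\chi_{B_r}$, the first variation of $J_0$ gives the Pohozaev identity
$$(n-2)\int_{B_r}|\nabla U|^2\,dx = r\int_{\p B_r}\big(|\nabla U|^2-2|U_\nu|^2\big)\,d\sigma,$$
that is,
$$D'(r)=\frac{n-2}{r}D(r)+2\int_{\p B_r}|U_\nu|^2\,d\sigma .$$
Using $H'(r)=\frac{n-1}{r}H(r)+2\int_{\p B_r}U\cdot U_\nu\,d\sigma$, a direct computation gives
$$\frac{\gamma'}{\gamma}=\frac1r+\frac{D'}{D}-\frac{H'}{H}=2\left(\frac{\int_{\p B_r}|U_\nu|^2}{\int_{\p B_r}U\cdot U_\nu}-\frac{\int_{\p B_r}U\cdot U_\nu}{\int_{\p B_r}|U|^2}\right)\ge 0$$
by Cauchy--Schwarz.

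\emph{Step 3: the equality case.} If $\gamma$ is constant, equality holds in Cauchy--Schwarz for a.e. $r$, so $U_\nu=\lambda(r)U$ on $\p B_r$. Substituting back gives $\lambda(r)=\gamma/r$, hence $x\cdot\nabla U=\gamma U$, and integrating along rays shows that $U$ is $\gamma$-homogeneous.

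\emph{Main obstacle.} The hardest part is justifying the identities at the low regularity available: $U$ is only Lipschitz, and the free boundary is a priori arbitrary. Concretely, one must make sense of the boundary terms for a.e. $r$, using absolute continuity of $D$ and $H$ together with Lipschitz bounds. One must also show the singular measure $u^i\Delta u^i$ gives no contribution, which is the cut-off argument in Step 1, and check that inner variations genuinely stay in $\mathcal A$, which holds because composition preserves disjointness of the supports and the sign condition. Finally, $H(r)>0$ for small $r$ is needed for $\gamma$ to be defined. This follows because $U\not\equiv0$: if $H(r)=0$ then $U=0$ on $\p B_r$, and uniqueness (Corollary \ref{stability}) forces $U\equiv0$ in $B_r$.
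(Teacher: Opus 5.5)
Your proposal is correct and is the classical Almgren argument: the identity $D(r)=\int_{\p B_r}U\cdot U_\nu$ from harmonicity of each $u^i$ on $\{u^i>0\}$, the Pohozaev identity from inner variations (which preserve $\mathcal A$), and Cauchy--Schwarz; the paper does not reprove this and simply cites \cite{W, A}. One small loose end: stability only shows that $\{H=0\}$ is an interval $(0,r_0]$, so to get $r_0=0$ add the standard doubling step $(\log H)'=(n-1+2\gamma)/r$, which together with $\gamma\le\gamma(r_1)$ on $(r_0,r_1)$ keeps $H$ bounded away from $0$ as $r\downarrow r_0>0$; also, writing $\gamma'=\frac{2r}{H^2}\big(H\int_{\p B_r}|U_\nu|^2-(\int_{\p B_r}U\cdot U_\nu)^2\big)$ avoids dividing by $D$.
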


\section{Asymptotic behaviour of global solutions}\label{sec:Asympt-global} 
In this section we investigate the asymptotic behavior of global solutions with polynomial growth at infinity. This will lead to the proof of our main Theorem \ref{mainintro}, that is the classification of such solutions in two dimensions in the case when $N=3$. We also provide the blow-up analysis at free boundary points where more than two components meet.

\subsection{Blow-downs of global solutions.}\label{subsec:Blow-down}
Assume $U$ is a global minimizer of $J(\cdot, F)$, $F\in L^\infty$ with polynomial growth at infinity, $$|U(x)| = O(|x|^m), \quad \text{ for some $m \in \mathbb N$}.$$ 
Set
$$a_r:= \max_{B_r} |U|.$$
We start with the following lemma, in which we characterize the so-called blow-down limits when $U$ grows faster than quadratic.

\begin{lem}\label{Uobar} If 
\begin{equation}\label{case1}
    \lim_{r\to \infty} \frac{a_r}{r^2} = \infty.\end{equation} Then, there exists a sequence of radii $r_j \to \infty$ such that
$$U_j(x):=\frac{U(r_j x)}{a_{r_j}} \to \mathcal U_0, \quad \text{uniformly on compact sets},$$ with $\mathcal U_{0}$ a non-trivial $m_0$-homogeneous minimizer to $J_0:=J(\cdot, 0)$ with $m_0 \leq m$.

In particular, in a neighborhood of a point where  $\p \{|\mathcal U_0| >0\}$ is smooth, 
$\p \{|U_j| >0\}$ converges in $C^{1,\alpha}$ to $\p \{|\mathcal U_0| >0\}$.
\end{lem}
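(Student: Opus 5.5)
Start from the rescalings $U_r(x):=U(rx)/a_r$. Each $U_r$ minimizes $J(\cdot,F_r)$ in $B_R$ for every $R$, with $F_r(x)=r^2 a_r^{-1}F(rx)$. By \eqref{case1}, $\|F_r\|_\infty\le r^2a_r^{-1}\|F\|_\infty\to0$. The $U_r$ are not uniformly bounded on $B_R$ for $R>1$, so I would not rescale along arbitrary radii. Instead I would use the polynomial growth to select good radii, as in a doubling argument. The function $g(r):=\log a_r$ is nondecreasing and $a_r\le C r^m$, so $\limsup_{r\to\infty}(g(2^{k+1})-g(2^k))\le m\log 2$ on average. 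More concretely, there exist $r_j\to\infty$ with
\[
a_{Rr_j}\le C(R)\,a_{r_j}\quad\text{for all } R\ge1,\qquad C(R)\le C R^{m+\delta}.
\]
To get these, note that if $a_{2r}\ge 2^{m+\delta}a_r$ held for all large $r$, iterating would contradict $a_r=O(r^m)$. A standard selection (choosing $r_j$ to roughly maximize $a_r/r^{m+\delta}$ over dyadic ranges) then gives the bound for all $R$ simultaneously. With this choice $\|U_j\|_{L^\infty(B_R)}\le C(R)$ and $\max_{\bar B_1}|U_j|=1$.

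By Theorem \ref{OR}, the $U_j$ are uniformly Lipschitz on compact sets. Arzel\`a--Ascoli together with Proposition \ref{compact} gives a subsequence converging locally uniformly to a global minimizer $\mathcal U_0$ of $J_0$. This limit is nontrivial since $\max_{\bar B_1}|\mathcal U_0|=1$, and it satisfies $|\mathcal U_0(x)|\le C|x|^{m+\delta}$.

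Homogeneity is the main obstacle. For this I would use the Almgren frequency of Proposition \ref{almgren}, but $U$ itself only minimizes $J(\cdot,F)$. My plan is to prove an almost-monotonicity for the frequency $\gamma_U(r)$ at large scales, with an error controlled by $r^2\|F\|_\infty/a_r\to0$. The $F$-terms enter the derivative of $\gamma$ with size comparable to $\int_{B_r}|F||U|$ divided by $\int_{\partial B_r}|U|^2/r$, which is $O(r^2/a_r)$. It follows that $\gamma_U(r)\to\gamma_\infty$ as $r\to\infty$, and $\gamma_\infty\le m$ by the growth bound.

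Then $\gamma_{U_j}(\rho)=\gamma_U(r_j\rho)\to\gamma_\infty$ for every fixed $\rho$. Lemma \ref{H1loc} gives $H^1_{loc}$ convergence, so we can pass to the limit and conclude that $\gamma_{\mathcal U_0}\equiv\gamma_\infty$ is constant. Proposition \ref{almgren} then shows $\mathcal U_0$ is homogeneous of degree $m_0=\gamma_\infty\le m$; nontriviality also excludes $m_0<1$ issues. If the almost-monotonicity proves delicate, a fallback is to run the frequency argument directly on the limit through the scale-invariant quantity, using that $\gamma_U$ has a limit.

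For the last claim, take a point $x_0$ where $\partial\{|\mathcal U_0|>0\}$ is smooth. Near $x_0$ only two phases are present and $|\mathcal U_0|$ is nondegenerate and linear-like, so $\mathcal U_0$ is flat there. Since $U_j\to\mathcal U_0$ uniformly and $\|F_j\|_\infty\to0$, the hypotheses \eqref{mu0} and \eqref{flatnew} of Proposition \ref{pert} hold in a small rescaled ball for $j$ large. Free boundary convergence in the Hausdorff sense near $x_0$ is obtained from the nondegeneracy of the limiting harmonic phases rather than Lemma \ref{ND}, since $F_j\to0$. Proposition \ref{pert} then gives uniform $C^{1,\alpha}$ bounds, and compactness upgrades the convergence to $C^{1,\alpha}$.
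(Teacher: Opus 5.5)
Your selection of radii (if done by taking $r_j\ge s_j$ with $a_{r_j}/r_j^{m+\delta}\ge\frac12\sup_{r\ge s_j}a_r/r^{m+\delta}$), the extraction of a nontrivial limit minimizer of $J_0$ via Theorem \ref{OR} and Proposition \ref{compact}, and the final $C^{1,\alpha}$ argument through Proposition \ref{pert} are fine. The gap is in the homogeneity step.

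\textbf{The gap.} Computing $\gamma_U'$ from the Pohozaev identity, the $F$-terms give, at best, $\frac{d}{d\log r}\gamma_U(r)\ge -C\,r^2/a_r$. Hypothesis \eqref{case1} only says this defect tends to $0$. It does not make it integrable in $dr/r$: for $a_r\sim r^2\log r$ one has $\int^\infty \frac{dr}{r\log r}=\infty$. A bounded, almost-monotone function whose defect merely vanishes need not converge; think of $\sin(\log\log r)$.

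So $\lim_{r\to\infty}\gamma_U(r)$ is not established. Then $\gamma_{\mathcal U_0}$ need not be constant, and your first blow-down limit need not be homogeneous. Your fallback presupposes the same limit, so it does not repair this. There is a second problem: the domain-variation identity behind any frequency formula for $J(\cdot,F)$ involves $\nabla F$, while the lemma only assumes $F\in L^\infty$.

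\textbf{The missing idea.} Do not attempt any monotonicity for $U$; apply Proposition \ref{almgren} exactly to the limit, which is a genuine minimizer of $J_0$.

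\begin{itemize}
\item Your first limit $\mathcal U$ satisfies $\|\mathcal U\|_{L^\infty(B_1)}=1$ and $|\mathcal U(x)|=O(|x|^{m+\delta})$.
\item Hence $\gamma_{\mathcal U}$ is increasing. Since $\gamma_{\mathcal U}(r)=\frac r2\frac{d}{dr}\log\fint_{\partial B_r}|\mathcal U|^2$, the growth bound gives $\gamma_{\mathcal U}\le m+\delta$, so $\gamma_{\mathcal U}$ has a limit $m_0$.
\item Blow down $\mathcal U$ itself. Along some $R_l\to\infty$, $\mathcal U(Rx)/\|\mathcal U\|_{L^\infty(B_R)}$ converges to some $\mathcal U_0$ with $\gamma_{\mathcal U_0}\equiv m_0$, which is therefore $m_0$-homogeneous.
\item Since $U(r_jR_lx)/a_{r_jR_l}=U_j(R_lx)/\|U_j\|_{L^\infty(B_{R_l})}$, a diagonal argument over the radii $r_jR_l$ exhibits $\mathcal U_0$ as a blow-down of $U$.
\item Letting $\delta\to0$ gives $m_0\le m$.
\end{itemize}

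This two-step blow-down is exactly the paper's argument. It needs no regularity of $F$ and no rate in \eqref{case1}.
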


We call the blow-down limit $\mathcal U_0$ a type I blow-down solution. These are homogeneous solutions to the optimal partition problem for the operator $J_0$.

\begin{proof} Once we establish the existence of the $U_j$'s, the claim on the convergence of the free boundaries follows immediately from Propositions \ref{pert} and \ref{prop:asympt}.

Denote by $$b_{j}:=a_{2^{j}}, \quad b_j =O(2^{mj})$$ where the last inequality follows from our growth assumption.
Fix $\delta>0$ small. Then, it follows that there exists a sequence $j_k \to \infty, k\to \infty,$ such that, for every $l=0,1,\ldots, $ 
$$b_{j_k+l} \leq 2^{(m+\delta)l} b_{j_k},$$ for all large $k$'s. This is because the sets $$S_l:=\{k \in \mathbb{N} \ | \frac{b_{k+s}}{b_k} \leq 2^{s(m+\delta)}, \ s=1, \ldots, l, \}$$ are unbounded, in view of the growth of the sequence $b_j.$

Thus, the sequence $$U_{j_k}(x):=\frac{U(2^{j_k}x)}{b_{j_k}}$$
satisfies 
$$\|U_{j_k}\|_{L^\infty(B_1)} = 1, \quad |U_{j_k}(x)| \leq 2^{l(m+\delta)}, \quad \text{in $B_{2^l}$}$$ for all large $k$'s.

Moreover, as $k\to \infty$, since \eqref{case1} holds, $$F_{j_k}(x):= \frac{r^2}{a_r}F(r x) \to 0, \quad r=2^{j_k}.$$
Thus, we can extract a convergent subsequence $U_{j_k} \to \mathcal U$ uniformly on compact sets, which minimizes $J_0$ and 
$$\|\mathcal U\|_{L^\infty(B_1)} =1, \quad \mathcal U(x)= O(|x|^{m+\delta}) \quad \mbox{as $|x| \to \infty$}.$$

Now, by Almgren's frequency formula, the function
$$\gamma_{\mathcal U}(r):= \frac{r\int_{B_r}|\nabla \mathcal U|^2 dx}{\int_{\p B_r} |\mathcal U|^2 d\sigma}$$ is increasing and in view of the growth of $\mathcal U$ we get that 

$$\bar \gamma:=\lim_{r\to \infty} \gamma_\mathcal U(r) \leq m+\delta.$$
 Here we used that 
\begin{equation}\label{gammaUderivative}
\gamma_\mathcal U(r) = \frac r 2 \frac{d}{dr} \log \fint_{\p B_r} |\mathcal U|^2d \sigma .
\end{equation}

We claim that there exists a non-zero homogeneous of degree $m_0:=\bar\gamma$ minimizer $\mathcal U_0$ to $J_0$ such that, 

$$V_{R}(x):=\frac{\mathcal U(R x)}{\|\mathcal U\|_{L^\infty(B_{R})}} \to\mathcal U_0, \quad \text{on compact sets along a subsequence $R=R_l\to \infty.$}$$ Notice that
$$\gamma_{V_R}(r) = \gamma_\mathcal U(Rr) \leq m_0.$$ From here, using \eqref{gammaUderivative}, we have that $L^2$ norm of $V_R$ on $\p B_r$ is bounded in terms of $r$ and $m_0$ ($r \geq 1$).
We conclude that we can extract a convergent subsequence $V_{R_l}$ that converges uniformly on compact sets to a function $\mathcal U_0$ with
$$\gamma_{\mathcal U_0} (r) =m_0, \quad \|\mathcal U_0\|_{L^\infty(B_1)}=1.$$ Hence $\mathcal U_0$ is a non-zero homogeneous minimizer of degree $m_0.$

Therefore, given $\eps>0$ we can choose $R_l$ such that  $|V_{R_l} - \mathcal U_0| <\eps$ in $B_{1/\eps}$ and hence the rescaling $$\frac{U(r_j x)}{a_{r_j}}, \quad r_j:=2^{j_k}R_l$$ are well-approximated by $\mathcal U_0$ in $L^\infty$, which proves the lemma with $m_0 \leq m+\delta$. Now, taking a sequence of $\delta \to 0$ and using a diagonal subsequence argument, we obtain the conclusion for an $m_0\leq m,$ as in the last part of the proof.

\end{proof}

\begin{rem}\label{U0n2} In dimension  $n=2$, type I cones $\mathcal U_0$ are given by the rotations (around the origin)  $r^{k/2} \cos \frac{k}{2} \theta$, $k\in \mathbb{N}$ and the free boundaries are the rays $$\theta=\frac{2\pi i}{k}, i=0,1,\ldots, k-1.$$
This follows from the fact that $|\mathcal U_0|$ is harmonic in its positivity set, and by the Euler-Lagrange equation \eqref{EL} the gradients on either side of the rays $\{\mathcal U_0=0\}$ must be equal. 

We remark that $k$ must satisfy $k\geq 3$. Indeed, since $U_{j}(0) \to 0$, then $\mathcal U_0(0)=0$. Since $\mathcal U_0$ is non-trivial, then $k\neq 0.$ Moreover, since $\mathcal U_0$ is a minimizer to $J_0$, it has to be locally Lipschitz, thus $k\neq 1$. Finally, if $k=2$ then in view of Proposition \ref{pert}, $\hat U_j \to x_1$ in $C^{1,\alpha}$. Then, $$\nabla \hat U_j(0)= \frac{r_j}{a_j}\nabla \hat U(0) \to 1, $$ which contradicts \eqref{case1}.  \end{rem}

\begin{prop}\label{not1} Let $U \in \mathcal A$ be a global minimizer of $J(\cdot, F)$, $F\in L^\infty$, $F\geq 0$, with polynomial growth at infinity, $$|U(x)| = O(|x|^m),  \quad \text{for some $m \in \mathbb N$}.$$ Assume $n=2$, $N= 3$,$$\{u^i>0\} \quad  \text{are connected,}$$ and $$\p\{u^i > 0\} \cap \p\{u^j >0\} \neq \emptyset \quad \text{for all $i,j \in \{1,2,3\}$.}$$  Then $$\liminf_{r \to \infty} \frac{a_r}{r^2} < +\infty.$$
\end{prop}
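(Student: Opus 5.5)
We argue by contradiction. Suppose $\lim_{r\to\infty} a_r/r^2=\infty$. Then Lemma \ref{Uobar} applies and yields radii $r_j\to\infty$ with $U_j(x)=U(r_jx)/a_{r_j}\to\mathcal U_0$ uniformly on compact sets. Here $\mathcal U_0$ is a nontrivial homogeneous minimizer of $J_0$. By Remark \ref{U0n2}, in dimension two $|\mathcal U_0|=r^{k/2}|\cos\frac k2\theta|$ up to rotation, with $k\ge 3$. Its free boundary consists of $k$ rays, and the $k$ sectors between consecutive rays are each occupied by a single component. Since $N=3$ and adjacent sectors carry different components, $k\ge3$ is consistent only if the components alternate appropriately. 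The plan is to show that the topological hypotheses on $U$ force the phase structure of $U_j$ on large annuli to be incompatible with $k\ge 3$ sectors of this type.

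The first step is to transfer the sector structure of $\mathcal U_0$ to $U_j$. By Lemma \ref{fbconv} and the $C^{1,\alpha}$ convergence of free boundaries away from the origin (Lemma \ref{Uobar}, last part), in the annulus $A=B_2\setminus B_{1/2}$ the free boundary of $U_j$ consists of exactly $k$ nearly radial $C^{1,\alpha}$ arcs. Between them lie $k$ curvilinear sectors, each occupied by one component with no zero phase. Proposition \ref{pert} rules out fattening near smooth two-phase interfaces. Rescaling back, for each large $j$ the annulus $B_{2r_j}\setminus B_{r_j/2}$ contains exactly $k$ phase regions arranged cyclically. The labels change across each arc, and the labeling is fixed for all large $j$ after passing to a subsequence.

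The second step uses connectedness. Each $\{u^i>0\}$ is connected and unbounded along this sequence. Because the sectors reach infinity, I would consider $k$ curves $\gamma_1,\dots,\gamma_k$ lying in the zero set/free boundary. These are the continuations of the $k$ arcs, which separate the sectors along all dyadic scales. In the limit picture they form $k$ disjoint "fronts" going to infinity. If $k\ge 4$, some component, say $u^1$, occupies two non-adjacent sectors. Joining them by a path inside the connected set $\{u^1>0\}$, together with a large circular arc, produces a closed curve. By the Jordan curve theorem, this curve encloses the sector(s) between them, which belong to another phase. That enclosed phase must then be bounded or confined. Confinement is impossible: the enclosed sectors are unbounded regions in the annulus structure at every scale $r_j$, and a component cannot be both enclosed and reach $\partial B_{r_j}$ for arbitrarily large $j$. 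For $k\ge 4$ with only three labels, this planar-separation argument leads to a contradiction with connectedness.

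The remaining case is $k=3$, where the three sectors carry the three labels $1,2,3$ in cyclic order. Here the contradiction must come from the hypothesis that all pairs of phases touch, combined with growth. I expect this case to be the main obstacle, since the cone $r^{3/2}|\cos\frac32\theta|$ is exactly compatible with the topology. My first attempt would be a monotonicity/comparison argument. Consider the frequency of $U$ relative to the forcing: use the Weiss formula (Proposition \ref{Weiss}) or the subharmonicity of $d(U,V)$ from Lemma \ref{duv} with a comparison solution. The aim is to show that a genuinely $3/2$-growth configuration forces super-quadratic growth, which contradicts $a_r\ge r^{3/2}$ being compatible with $a_r/r^2\to\infty$.

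The mechanism is simple. A blow-down of homogeneity $m_0=3/2<2$ means $\gamma_{\mathcal U}\le 3/2$. Tracking the proof of Lemma \ref{Uobar}, with $\delta$ small we get $a_{2r}\le 2^{3/2+2\delta}a_r$ along the relevant scales, so $a_r/r^2$ cannot tend to infinity along that sequence. More precisely, I would run Lemma \ref{Uobar}'s selection with $m$ replaced by the frequency limit. The goal is to show that the doubling ratio $b_{j+1}/b_j$ is eventually at most $2^{3/2+\delta}<4$ on a set of scales forcing $\liminf a_r/r^2<\infty$. The delicate point is upgrading "along a subsequence" to a statement about all large scales. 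To do this I would invoke the uniqueness/stability of the $3/2$ cone (small perturbations remain close at the next dyadic scale, via Proposition \ref{pert} on the arcs and the frequency bound). This gives $b_{j+1}\le 2^{3/2+\delta}b_j$ for all $j\ge j_0$, hence $a_r=O(r^{3/2+\delta})$, contradicting $a_r/r^2\to\infty$.

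The same growth comparison handles any $k$ with $k/2<2$. For $k\ge4$ the topological argument above is used instead.
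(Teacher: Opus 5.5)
\textbf{Case $k\ge 4$.} Your reduction to $k=3$ is the content of Lemma \ref{time}, which you can simply cite. As written, your sketch puts the contradiction in the wrong place. A connecting path in $\{u_j^1>0\}$ that leaves $B_1$ is ruled out by the maximum principle for the subharmonic $u_j^2$ on the bounded enclosed region. A path inside $B_1$ separates $\{u_j^2>0\}$ from $\{u_j^3>0\}$ there. What is then violated is the hypothesis $\partial\{u^2>0\}\cap\partial\{u^3>0\}\neq\emptyset$, not connectedness.

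\textbf{Case $k=3$: the genuine gap.} This case is the actual content of the proposition, and your argument for it does not work.

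\emph{Why the proposed mechanism fails.} A doubling bound $a_{2r}\le 2^{3/2+2\delta}a_r$ on the scales where $U(rx)/a_r$ is close to $\mathcal U_0$ is perfectly compatible with $a_r/r^2\to\infty$: the ratio can grow arbitrarily between those scales. Your upgrade to all $j\ge j_0$ would require propagating closeness to the cone from scale $r_j$ \emph{up} to $2r_j$, and no available tool gives that. The improvement of flatness (Proposition \ref{zero}) passes from $B_1$ to smaller balls. Almgren's formula does not apply to $U$, which minimizes $J(\cdot,F)$, not $J_0$; the bound $\gamma\le 3/2$ holds only for the limit and gives no control of $a_r$.

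\emph{The missing idea.} Go \emph{down} from a single large scale.
\begin{enumerate}
\item Since $a_{r_j}/r_j^2\to\infty$, the rescaling $U_j$ minimizes $J(\cdot,F_j)$ with $\|F_j\|_\infty=(r_j^2/a_{r_j})\|F\|_\infty\le\eps^2$, and (after a rotation) $d(U_j,\mathcal U_0)\le\eps$.
\item Hence Theorem \ref{curves} applies. Its iteration never stops, because the $3/2$-rescaling multiplies the forcing by $\rho^{1/2}$.
\item This yields a triple junction of $U$, which can be placed at the origin for all large $j$, together with the bound $\max_{B_r}|U_j|\ge\frac12 r^{3/2}$ for all $r\le 1$.
\item Taking $r=1/r_j$ gives $a_1/a_{r_j}\ge \frac12 r_j^{-3/2}$, i.e.\ $a_{r_j}\le 2a_1r_j^{3/2}$. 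This contradicts $a_{r_j}/r_j^2\to\infty$.
\end{enumerate}
No information about $U$ at scales above $r_j$ is needed.
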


 Proposition \ref{not1} will be proved in Section \ref{sec:PerturbativeCase}. At this point, we only show that if the conclusion of this proposition does not hold, then the only possible blow-down profiles are the 3/2-homogeneous solutions.

 \begin{lem}\label{time}Let $U$ be as in Proposition \ref{not1}. If $U$ satisfies \eqref{case1}, then
up to rotations the type I blow-down cones are given by 

$$|\mathcal U_0| = r^{3/2}|\cos \frac{3\theta}{2}|.$$
 \end{lem}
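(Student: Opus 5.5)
By Lemma \ref{Uobar} and Remark \ref{U0n2}, any type I blow-down cone is, up to rotation, $|\mathcal U_0| = r^{k/2}|\cos\frac{k\theta}{2}|$ with $k\ge 3$. Its positivity set consists of $k$ sectors of opening $2\pi/k$, and each sector is occupied by exactly one component of $\mathcal U_0$. So the whole task is to prove $k=3$, i.e.\ to exclude $k\ge 4$, using that $N=3$, that the phases $\{u^i>0\}$ are connected, and that the free boundaries converge by Lemma \ref{fbconv} and Lemma \ref{Uobar}.

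\textbf{Step 1: adjacent sectors carry different components.} On each ray separating two adjacent sectors, the free boundary of $\mathcal U_0$ is smooth away from the origin. By Lemma \ref{Uobar}, near any point of such a ray at distance about $1$ from the origin, $\p\{|U_j|>0\}$ converges in $C^{1,\alpha}$ and $U_j$ is a two-phase configuration. If the same component occupied both sides, then $|U_j|$ would be a single nonnegative component $u^i_j$ with $\Delta u^i_j\ge f^i\chi_{\{u^i_j>0\}}\geq 0$, vanishing on a curve with nonzero gradient from both sides. That is impossible: the superharmonicity of $\hat u^i$ across the curve forces $\hat u^i=u^i$ to have no concave kink, yet it has a V-shaped minimum with a nonzero gradient jump. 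Hence the labelling of the sectors in cyclic order is a proper colouring of the $k$-cycle with $3$ colours.

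\textbf{Step 2: contradiction with connectedness for $k\geq4$.} With $k\ge4$ sectors and only $3$ colours, some colour, say colour $1$, occupies two non-adjacent sectors $S,S'$. Between them, on each of the two arcs of $\p B_1$, there are sectors of other colours. Pick points $p\in S$, $p'\in S'$ at radius $1$ inside the sectors. For large $j$, the set $\{u^1_j>0\}$ contains small balls around $p,p'$, and, by uniform convergence together with non-degeneracy (Lemma \ref{ND}), no point of it lies deep inside the sectors coloured $2$ or $3$. Since $\{u^1>0\}$ is connected, so is $\{u^1_j>0\}$ (it is a dilation), and it contains a curve $\gamma$ from $p$ to $p'$. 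Now use a planar topology argument. Take the two rays through the middles of a colour-$2$ (or colour-$3$) sector on each side of $S$, separating $S$ from $S'$. On these rays, $u^2_j$ (resp.\ $u^3_j$) is positive on $B_R\setminus B_\rho$ for $\rho$ small, by uniform convergence on the annulus. Outside $B_R$ nothing is controlled, and the same holds inside $B_\rho$. So $\gamma$ must pass either through $B_\rho$ or through $\R^2\setminus B_R$.

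\textbf{Step 3: closing the escape routes (main obstacle).} Ruling out $\gamma$ escaping through the small ball or through infinity is the real difficulty. I would handle it by letting $\rho\to0$ and $R\to\infty$ along the blow-down sequence, which uses all scales. Take two separating rays coloured $2$ and $3$ (possible by choosing sectors adjacent to $S$ on each side). Then $\{u^2>0\}\cup\{u^3>0\}$, together with the $2$–$3$ interface, contains a connected set joining $0$-neighbourhoods to infinity on both sides. More precisely, connectedness of $\{u^2>0\}$ and $\{u^3>0\}$ gives a closed "cross" separating the plane into pieces, and $p,p'$ lie in different complementary components of this cross. This contradicts the connectedness of $\{u^1>0\}$.

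If this Jordan-curve bookkeeping becomes messy, I would fall back on a counting argument. By the hypothesis that all pairs touch and Euler's formula for the planar map formed by three connected phases (plus the zero set), the number of interfaces meeting at infinity is at most $3$, so the sectors at large scale alternate with at most $3$ arcs, giving $k\le3$. Combined with $k\ge3$, this yields $k=3$ and $|\mathcal U_0|=r^{3/2}|\cos\frac{3\theta}{2}|$.
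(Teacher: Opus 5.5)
\textbf{There is a genuine gap in Step 3.} Your Steps 1--2 match the paper's reduction. Adjacent sectors carry different phases because $\hat u^i$ is superharmonic (a V-shaped minimum is a \emph{convex} kink, which is what superharmonicity forbids). Hence some phase, say $u^1$, occupies two non-adjacent sectors.

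Step 3, however, is the entire content of the lemma, and it is not carried out.
\begin{itemize}
\item Letting $\rho\to0$ and $R\to\infty$ ``along the blow-down sequence'' is not available. For a fixed $j$, the convergence $U_j\to\mathcal U_0$ only controls $U_j$ on $B_R\setminus B_\rho$ once $j$ is large depending on $\rho$ and $R$. So the separating ``cross'' in $\{u^2>0\}\cup\{u^3>0\}$ is asserted, not constructed.
\item Connectedness of the phases cannot produce it either. Topologically, nothing prevents $\{u^1_j>0\}$ from running around a sector far outside $B_1$, or from passing near the origin.
\item Your construction presupposes sectors of both colours $2$ and $3$ between $S$ and $S'$. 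This fails, e.g., for the colouring $1,2,1,2$ with $k=4$, which must also be excluded.
\item The Euler-formula fallback is not an argument.
\end{itemize}

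\textbf{The missing ingredient} is the maximum principle for the subharmonic components, combined with the pairwise contact hypothesis, which your plan never actually uses. The paper argues as follows.

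First, take a path $P\subset\{u^1_j>0\}$ joining the midpoints $x_1,x_2$ of two $u^1$-arcs on $\p B_1$, cut so that it lies entirely inside or entirely outside $B_1$. Suppose it lies outside. Then $P$ and the radii to $x_1,x_2$ bound a compact region $D$. In $D$, some other phase, say $\{u^2_j>0\}$, has a nonempty piece $\mathcal O$ outside $\bar B_1$. Now $u^2_j$ is subharmonic and vanishes on $\p\mathcal O\setminus\p B_1$. By uniform convergence to the homogeneous cone in $B_2$, it cannot attain its maximum over $\overline{\mathcal O}$ on $\p B_1$, a contradiction.

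Hence $P\subset B_1$. The inner route is then not excluded but exploited. $P$ splits $B_1$ into two pieces $D_2,D_3$. A path in $\{u^2_j>0\}$ from $D_2$ to $D_3$ would have to leave $B_1$. The same maximum-principle argument, with $u^2_j$ in place of $u^1_j$, rules this out, and likewise for $u^3_j$. So each of $\{u^2_j>0\}\cap B_1$ and $\{u^3_j>0\}\cap B_1$ lies in a single piece. Non-adjacency of the two $u^1$-sectors forces each piece to meet one of these phases, so they lie in different pieces. Therefore $\p\{u^2>0\}\cap\p\{u^3>0\}\cap B_{r_j}=\emptyset$ with $r_j\to\infty$, contradicting the contact hypothesis. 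This also disposes of the $1,2,1,2$ colouring, since there $\{u^2_j>0\}$ would have to meet both pieces.
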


 \begin{proof} In view of the Remark \ref{U0n2},
it suffices to show that $|\mathcal U_0|:=r^{k/2}|\cos \frac k 2 \theta|$ with $k\geq 4$ cannot occur as a blow-down type I cone in the conclusion of Lemma \ref{Uobar}.

Assume by contradiction that such a $\mathcal  U_0$ is a blow-down. Then $\{|\mathcal U_0|>0\}$ consists of $k$ sectors of opening $\frac{2\pi}{k}$. Let $S_i^1, i=1, \ldots, l,$ denote the sectors where $u_0^1>0.$ Since ${\hat {u_0}}^1$ is superharmonic (see \eqref{EL}), these sectors cannot be adjacent to each other.
Given that $N=3$ and $k\geq 4$, we may assume without loss of generality that $l\geq 2$.

Let $x_i$ be the midpoint of the arc $S^1_i \cap \p B_1, i=1,2, \ldots l.$
From the uniform convergence of $U_j$ to $\mathcal U_0$, and the $C^{1,\alpha}$ convergence of the free boundaries (see Proposition \ref{pert}), we know that $x_i \in \{u_j^1>0\}$ and $\{u_j^1>0\} \cap \p B_1$ consists exactly of $l$ connected arcs. Since $\{u_j^1>0\}$ is connected, there is a polygonal path $P$ (with no self intersection) which joins two different $x_i$'s, fully contained in $\{u_j^1>0\}$. Moreover, by cutting off the polygonal line wherever it intersects the boundary of $B_1$, we may assume that $P$ connects two $x_i$'s (possibly different than the ones we started out with) and either lies completely inside $B_1$ or completely in $\bar B_1^c$ (except for the endpoints).

We claim that $P$ must lie in $B_1$. Indeed, say $P$ joins $x_1, x_2$, and assume by contradiction that it lies outside $B_1$. Then $P$ and the segments from the origin to $x_1$ and $x_2$ respectively, enclose a compact region $D$. Thus, there is at least a component of $U_j$ (different than $u_j^1$), say $u_j^2,$ such that the open set 
$$\mathcal O:=\{u_j^2>0\} \cap \bar{B}^c_1 \cap D$$ is non-empty. 
Notice that the maximum of $u_j^2$ in the compact set $\overline{\mathcal O}$ has to occur in the interior, which contradicts the fact that $u_j^2$ is subharmonic ($f_2 \geq 0$).
Indeed $u^2_j=0$ on the portion of the boundary $\p \{u_j^2>0\}\cap \p \overline{\mathcal{O}}$, while 
uniform convergence of $U_j$ to $\mathcal U_0$ in $B_2$ shows that the maximum cannot occur on $\p \overline{\mathcal O}\cap\p B_1$.

In conclusion, $P$ lies in $B_1$, which means that it disconnects $B_1$ into two connected components, $D_2,D_3$. We deduce that $\{u_j^2>0\} \cap B_1$ cannot intersect both $D_2$ and $D_3$ (and similarly for $\{u_j^3>0\} \cap B_1$). Otherwise, again by connectivity, we could join points $y_2 \in D_2, y_3\in D_3$ with a polygonal path in $\{u_j^2>0\}$ which exits $B_1$ and reach a contradiction as above with the roles of $u_j^1$ and $u_j^2$ reversed. On the other hand, since $S^1_1$ and $S^1_2$ are not adjacent, $D_2$ (and $D_3$) must intersect either $\{u^2_j>0\}$ or $\{u^3_j>0\}.$

Thus, say, 
$$\{u_j^2>0\} \cap B_1 \subset D_2 \quad \text{and} \quad \{u_j^3>0\} \cap B_1 \subset D_3.$$ 
This implies that in $B_1$
$$\p \{u_j^2>0\} \cap \p \{u_j^3>0\} = \emptyset$$
which means that 
$$\p \{u^2>0\} \cap \p \{u^3>0\} \cap B_{r_j}= \emptyset$$
for a sequence of $r_j\to \infty$, contradicting our assumptions.

\end{proof}

B the monotonicity lemma,  Proposition \ref{Weiss}, we can show that when $U$ has a quadratic bound along a sequence of radii and $F$ is constant, then the quadratic rescalings converge to a 2-homogeneous minimizer of $J(\cdot, F).$
\begin{lem}\label{blowup}Let $U \in \mathcal A$ be a global minimizer to $J(\cdot, F)$, $F$ constant. Assume that \begin{equation}\label{limit} \liminf_{r\to\infty} \frac{a_r}{r^2} < \infty.\end{equation}
Then, there exists $r_j \to \infty$, $j \to \infty$, such that 
$$U_{j}(x):=\frac{U(r_jx)}{r_j^2} \to U^F_0(x),$$ uniformly on compact sets, where $U_0^F$ is a minimizer to $J(\cdot, F)$ which is homogeneous of degree 2.
\end{lem}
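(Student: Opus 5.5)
Pick a sequence $\rho_j \to \infty$ and a constant $M$ such that $a_{\rho_j} \le M \rho_j^2$; this is possible by \eqref{limit}. We then work with the quadratic rescalings $U_j(x) := \rho_j^{-2} U(\rho_j x)$. Because $F$ is constant, each $U_j$ is again a minimizer of $J(\cdot,F)$, and it satisfies $\|U_j\|_{L^\infty(B_1)} \le M$.

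The first step is to turn this bound on $B_1$ into uniform bounds on every $B_R$. I would use the Weiss functional $\Phi_U$ of Proposition \ref{Weiss}. It is monotone increasing and scale invariant under quadratic rescaling: $\Phi_{U_j}(r) = \Phi_U(\rho_j r)$. Since $\Phi_U$ is increasing and $\Phi_U(\rho_j)$ is bounded above, we get $\Phi_U(\rho_j) \le C(M)$ using Theorem \ref{OR} on $B_{\rho_j}$. The functional is also bounded below on these radii, because $F\cdot U\ge 0$ and the boundary term is controlled by $M$. So $\Phi_U(r) \to L$ as $r \to \infty$, with $L$ finite.

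To get bounds on $B_R$ for fixed $R$, the direct route is to re-choose the radii. If $a_r/r^2$ is bounded only along $\rho_j$, the ratio could still be large at $R\rho_j$. Instead I would pick $r_j$ as in the proof of Lemma \ref{Uobar}, so that $a_{2^l r_j} \le C 2^{(2+\delta)l} r_j^2$ for all $l$. That argument relied on polynomial growth, so here I would redo it using only \eqref{limit}: take $r_j$ essentially minimizing $a_r/r^2$ over $[1,\rho_j]$. This gives $a_{sr_j}/(sr_j)^2 \le M$ for $1\le s\le \rho_j/r_j$. If $r_j$ stays bounded this fails, but then $a_r \le M' r^2$ for all large $r$, which is even better. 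With locally uniform $L^\infty$ bounds and $F$ bounded, Theorem \ref{OR} gives uniform Lipschitz bounds. Arzelà–Ascoli then yields a subsequence $U_j \to U_0^F$ uniformly on compact sets, and Proposition \ref{compact} shows that $U_0^F$ minimizes $J(\cdot,F)$.

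Homogeneity comes from the standard Weiss argument. Lemma \ref{H1loc} gives $H^1_{loc}$ convergence, so $\Phi_{U_j}(r) \to \Phi_{U_0^F}(r)$ for each $r$. Since $\Phi_{U_j}(r) = \Phi_U(r_j r) \to L$, the function $\Phi_{U_0^F}$ is constant equal to $L$. By the rigidity part of Proposition \ref{Weiss}, $U_0^F$ is then homogeneous of degree 2.

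I expect the main obstacle to be the uniform local bounds on all of $\R^n$ from only a $\liminf$ condition. The delicate point is the choice of radii $r_j$ that are "good from above" at every larger scale. The fallback is to use the lower bound on $\Phi_U$ together with the identity $\frac{d}{dr}\Phi_U = 2 r^{-n-2}\int_{\p B_r} |\p_\nu U - 2U/r|^2$. This identity controls how much $r^{-2}U(r\cdot)$ can change between $\rho_j$ and $R\rho_j$, and it would propagate the bound at $\rho_j$ to the larger scales $R\rho_j$.
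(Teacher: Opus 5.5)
Your homogeneity step is exactly the paper's argument: the finite limit of $\Phi_U$ comes from the bound at the good radii plus monotonicity, then $\Phi_{U_j}(r)=\Phi_U(r_jr)\to L$ together with Lemma \ref{H1loc} and the rigidity in Proposition \ref{Weiss}.

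The gap is at the step you flagged yourself, the uniform bounds on $B_R$, and your primary route there is wrong. If $r_j$ essentially minimizes $a_r/r^2$ on $[1,\rho_j]$, then $a_{sr_j}/(sr_j)^2\ge a_{r_j}/r_j^2$ for $1\le s\le\rho_j/r_j$. That is a \emph{lower} bound at the larger scales, not the upper bound $\le M$ you claim. Likewise, ``$r_j$ bounded'' implies nothing like $a_r\le M'r^2$ for all large $r$. The selection in Lemma \ref{Uobar} does not help either: it controls the ratios $a_{2^lr}/a_r$, i.e.\ it normalizes by $a_r$. Combining it with $a_{r_j}\le Cr_j^2$ at the same radii is precisely what is not available. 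Monotonicity of $a_r$ only bounds $U$ at scales below $\rho_j$, so any bound above $\rho_j$ has to come from the equation, not from a clever choice of radii.

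The paper never needs bounds on $B_R$. It extracts a limit of $U_j$ only on compact subsets of $B_1$, using $\|U_j\|_{L^\infty(B_1)}\le M$ and Theorem \ref{OR}. The Weiss argument shows this limit is $2$-homogeneous, so it extends to a global minimizer. Then it rescales further: for fixed $R$, $R^2U_j(x/R)\to R^2U_0^F(x/R)=U_0^F(x)$ uniformly on $B_{R/2}$. Taking radii $\rho_{j_k}/R_k$ with $R_k\to\infty$ and $j_k$ chosen diagonally gives convergence on every compact set. Homogeneity of the limit is what replaces the uniform bounds you were looking for.

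Your fallback does work if you carry it out, and it even gives convergence along the original $\rho_j$:
\begin{itemize}
\item Since $\Phi_U$ has a finite limit, $\Phi_U(R\rho_j)-\Phi_U(\rho_j)=\int_{B_R\setminus B_1}|x|^{-n-4}|x\cdot\nabla U_j-2U_j|^2\,dx\to0$, with the normalization of Proposition \ref{Weiss}.
\item Integrate $\frac{d}{ds}\big(s^{-2}U_j(s\omega)\big)=s^{-3}(x\cdot\nabla U_j-2U_j)(s\omega)$ from $1$ to $r$ and apply Cauchy--Schwarz. This gives $\|U_j(r\,\cdot)\|_{L^2(\p B_1)}\le r^2\big(\|U_j\|_{L^2(\p B_1)}+o(1)\big)$ for $1\le r\le R$.
\item Hence $U_j$ is bounded in $L^2(B_R)$ and, by Theorem \ref{OR}, uniformly Lipschitz on $B_{R/2}$.
\end{itemize}
As written, though, the fallback is only a one-line sketch, and the route you actually commit to is incorrect.

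Two minor points. Theorem \ref{OR} on $B_{\rho_j}$ bounds $\Phi_U(\rho_j/2)$ rather than $\Phi_U(\rho_j)$, which is harmless. No lower bound on $\Phi_U$ is needed, since an increasing function has a limit $>-\infty$; your lower bound also used $F\cdot U\ge0$, which the lemma does not assume.
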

\begin{proof} The convergence on compact subsets of $B_1$ of the rescalings of $U$ along a subsequence is trivial. We only need to show that the limit $U_0^F$ is 2-homogeneous. Then up to a further rescaling, the claim follows.

By Caccioppoli's inequality (see Lemma \ref{lem:Caccioppoli}), we have 
$$\Phi_{U}(1/2) \leq C(\|U\|^2_{L^\infty(B_1)} + \|F\|^2_{L^\infty(B_1)}),$$ which after rescaling gives,
$$\Phi_{U}(\frac r 2) \leq C((\frac{a_r}{r^2})^2 + \|F\|^2_{L^\infty}).$$ Thus, since $\Phi_U$ is monotone, we obtain that it is bounded above and has a finite limit $\Phi_U(+\infty).$ Therefore, using that $U_{r_j}$ converges uniformly to $U_0^F$ on compact subsets of $B_1$, together with Lemma \ref{H1loc} we get
$$\Phi_{ U}(r_j\rho) = \Phi_{U_{j}}(\rho) \to \Phi_{U_0}(\rho).$$
This gives that $\Phi_{U_0^F}(\rho)= \Phi_U(+\infty)$ is constant, and $U_0^F$ must be 2-homogeneous. \end{proof}

As mentioned earlier, we refer to the blow-down function $U_0^F$ as a type II  cone. Notice these are 2-homogeneous minimizer of $J(\cdot, F)$, for $F$ constant. For the sake of accuracy,  we  define different type  of global solutions more precisely. 

\begin{defn}\label{typeI} 
A homogeneous global minimizer $\mathcal U_0$ of $J_0:=J(\cdot, 0)$ is called a type I cone.\end{defn}
\begin{defn}\label{typeII}
A  2-homogeneous global minimizer $U_0^F$ of $J(\cdot, F)$, $F$ constant, is called a type II cone. We omit the dependence on $F$ when $F=(1,1,1).$
\end{defn}

\begin{rem}\label{all}
The result above can be stated in a stronger sense, that is any sequence $r_j \to \infty$ has a subsequence along which the conclusion of the lemma holds. This is a consequence of the fact that assumption \eqref{limit} is only used to show that $\Phi_U(+\infty)$ is bounded. Equivalently, this means that the rescalings $\frac{U(rx)}{r^2}$ for all large $r$'s, are uniformly approximated in $B_1$  by a 2-homogeneous solution $U_0^F$ that depends on $r$.
\end{rem}

Finally, in view of Proposition \ref{not1}, the conclusion of Lemma \ref{blowup} always holds for a minimizer $U$ as in Theorem \ref{mainintro}. This leads us to investigate 2-homogeneous solutions of $J(\cdot, F)$ in 2d, when $F$ is constant and positive, and $N=3$.

\subsection{Global 2-homogeneous solutions (type II) in 2d.}\label{subsec:global}
Let $n=2$. Assume that $$\text{$F > 0$ is constant}$$ and let $U_0^F$ be a global 2-homogeneous minimizer of $J(\cdot, F).$ Then, at most three components of $U_0^F$ are non-zero. Precisely, up to rotations and relabeling of the components, one of the following cases occurs. For notational simplicity we drop the dependence on $F$ in the components. Minimality of these cases below follows by the uniqueness of solutions that satisfy the Euler-Lagrange equation, see Corollary \ref{stability}.

{\it Case 1: One non-trivial component.}$$u_0^1 = \frac 12 f^1 (x_2^+)^2; \quad \text{or} \quad 
u_0^1= \frac 1 2 f^1 (a x_1^2 + (1-a) x_2^2), \quad a \in [0,1];$$

{\it Case 2: Two non-trivial components.} $$u_0^1= \frac 12 f^1 (x_2^+)^2, \quad  u_0^2= \frac 12 f^2 (x_2^-)^2;$$

{\it Case 3: Three non-trivial components.} 

For $i=1,2,3$, $u_0^i$ is the 2-homogeneous solution to $\Delta u_0^i=f^i$ in an angle of opening $\theta_i \in [\frac \pi 2, \pi]$ which vanishes on the boundary. The $\theta_i$ are uniquely determined by the compatibility equations
 \begin{equation}\label{uof}
 f^1\tan\theta_1=f^2\tan\theta_2=f^3\tan\theta_3, \quad \sum \theta_i=2\pi, \quad \theta_i \in [\frac \pi 2, \pi].
 \end{equation}
 
 Indeed, up to rotations, in polar coordinates
 $$u_0^1 = \frac 1 4 f^1(r^2 - \frac{1}{\cos \theta_1} r^2\cos(2\theta)), \quad |\theta|<\frac{\theta_1}{2},$$
and
$$|\nabla u_0^1| = \frac 1 2 f^1 r |\tan \theta_1|, \quad \text{on $|\theta|=\frac{\theta_1}{2}$,}$$ which by Proposition \ref{pert} leads to \eqref{uof}.
The uniqueness of the $\theta_i$'s in \eqref{uof} comes from the fact that $\tan \theta$ is increasing on the interval $(\pi/2,\pi]$, hence for each $s\le 0$ we can solve $f^i \tan \theta_i=s$ uniquely with $\theta_i$ depending monotonically on $s$.

The solution $U_0^F$ is unique modulo rotations and reflections depending on the order of the components in the counterclockwise direction.

\subsection{General global solutions $U_t$ in 2d, when $N=3$.}\label{subsec:general-global}
Next, we investigate 2d global solutions that are not necessarily homogeneous, when $N=3$. We are interested in solutions which are asymptotic to $U_0^F$ at infinity and to $\mathcal U_0$ near 0. 
First, we consider the symmetric/isotropic  case $$F=(1,1,1),$$ 
since in this case the construction is simpler. In Section \ref{sec:non-sym-F}, we will provide the construction in the general non-symmetric case. Here, for notational simplicity, we drop the dependence on $F$ in our notation.

Let $U_0$ be the homogeneous of degree 2 solution in 2d for $N=3$ domains, given by 
\begin{equation}\label{U0}u_0^1:= \frac 1 4 (3x^2 -y^2)=\frac 1 4 r^2(1+2\cos 2\theta) \quad \text{in $-\pi/3 \leq \theta \leq \pi/3$}\end{equation} and $u_0^2, u_0^3$ the even reflection of $u_0^1$ across $\theta=\pm \pi/3.$ 

For $t\geq 0$, define $U_t$ as the sum of $U_0$ and t times the homogeneous of degree 3/2 solution for $J_0$. Precisely,  \begin{equation}\label{36}u^1_t:=u_0^1 + t \cdot r^{3/2}\cos \frac{3}{2}\theta, \quad \text{in $-\pi/3 \leq \theta \leq \pi/3$}.\end{equation} Similarly, for $t<0,$ there is a solution $U_t$ such that for $r$ large, tending to $\infty,$ it has an expansion \begin{equation}\label{exp}u^1_t:= u_0^1 + t \cdot r^{3/2}\cos \frac{3}{2}\theta + O(|t|^7r^{-3/2}), \quad \text{in $-\pi/3 \leq \theta \leq \pi/3$}.\end{equation} 

Precisely, let us construct $v:=u^1_{-1}$ by solving first the obstacle problem
$$\Delta v_R = 1\chi_{\{v>0\}}, \quad \text{in $\{|\theta|<\frac \pi 3\} \cap B_R,$ }$$ with $R$ large 
and boundary data 
$$v_R=0 \quad \text{on $|\theta|=\frac \pi 3$}$$
$$v_R = u_0^1 - R^{3/2}\cos \frac{3\theta}{2}, \quad \text{on $\p B_R \cap \{|\theta|<\frac{\pi}{3}\}$}.$$
Then define $v$ as the limit of $v_R$ as $R \to \infty.$ Indeed, by the maximum principle, 
\begin{equation}\label{expansionvR}u_0^1 - |x|^{3/2}\cos \frac 3 2 \theta \leq v_R \leq u_0^1 + C |x|^{-3/2}\cos \frac 3 2 \theta - |x|^{3/2}\cos \frac 3 2 \theta,\end{equation} for a large constant $C>0,$ 
which justifies both the convergence of the $v_R$'s and the expansion in \eqref{exp}. Notice that again by the maximum principle, 
\begin{equation}\label{less}v<u_0^1 \quad  \text{in $|\theta|<\pi/3$}.\end{equation}

We claim that $v$ vanishes in a neighborhood of the origin, hence $\Gamma(U_{-1})$ does not have a triple junction at $0.$
Otherwise, $0 \in \Gamma(U_{-1})$ and $|U_{-1}|$ grows at a quadratic rate from the origin. The lower bound is a consequence of non-degeneracy, while the upper bound follows from \eqref{less}. Now, by Weiss monotonicity formula we conclude that $U_{-1}$ has a homogeneous of degree 2 blow-up profile $W.$ Since $W \neq 0$ by the quadratic behaviour,  in view of the classification of 2-homogeneous solutions, $W = U_0$, which means 
$$\Phi_{U_{-1}}(0+)= \Phi_W = \Phi_{U_0 }= \Phi_{U_{-1}}(+\infty)$$
where the last equality follows from the expansion \eqref{exp} with $t=-1$. This gives that $U_{-1}$ is 2-homogeneous, which is a contradiction.

We remark that in the interior of the angle $|\theta|< \pi/3$, the free boundary $\Gamma(U_{-1})=\p \{v>0\}$ is a smooth curve that becomes tangent to the rays $\theta= \pm \pi/3$ at the end points. This is because the directional derivatives $\p_e v \ge 0$ when $e$ lies in the interior of the angle $|\theta|< \pi/3$, which follows by the maximum principle and expansion \eqref{expansionvR}. Thus the free boundary of $v$ is a Lipschitz graph in its unconstrained part. 

Finally, for $t<0$, we define $U_t$ by quadratic rescaling, $$u^1_t:= t^4 u^1_{-1}(x/t^2),$$ so that the expansion above holds. Moreover, notice that $U_t$ is differentiable in $t$ and 
\begin{equation}\label{ptut}
\p_t u^1_t|_{t=0} = r^{3/2}\cos \frac{3\theta}{2}.
\end{equation}

\begin{rem}\label{rem38}Similarly, one can construct global solutions with $N \ge 5$ phases and growth $O(|x|^{N/2})$ as $|x| \to \infty$ which have an asymptotic expansion
$$|U|=|x|^{N/2} |\cos ( \frac N 2 \theta)|+ |x|^2 g_2(\theta) + O(|x|^{-N/2}) \quad \mbox{as} \quad |x| \to \infty.$$

\end{rem}

\subsection{Blow-up analysis at triple junctions}\label{subsec:Blowup}
Similarly to our blow-down analysis, we can perform a blow-up analysis at a free boundary point by rescaling  a minimizer $U$ around such a point according to its growth. Precisely, if $U$ minimizes $J(\cdot, F)$ in $B_1$ with $F$ constant, and $0 \in \p \{|U|>0\}$, then either
\begin{equation} \label{c1}
    \lim_{r\to0} \frac{a_r}{r^2} = +\infty, \quad \quad \quad a_r:= \max_{B_r} |U|,
\end{equation}
and up to a subsequence $$U_r(x):= a_r^{-1} U(rx) \to \mathcal U_0 \quad \text{a type I cone}$$
or 
\begin{equation} \label{c2}
\liminf_{r\to0} \frac{a_r}{r^2} < +\infty
\end{equation}
and up to a subsequence $$U_r(x):= r^{-2} U(rx) \to U^F_0 \quad \text{a type II cone}.$$

Next we show when $n=2$, $F>0$ constant, and the number of components $N$ is arbitrary, that at most three components can meet at a free boundary point. We refer to such a point where precisely three components meet as a {\it triple junction}. Precisely, we obtain Theorem \ref{Pintro}, which we restate for convenience.
\begin{thm} Assume $n=2$, $F>0$ constant, $N \ge 3$, and that $0$ belongs to the free boundaries of at least three components. Then there are exactly three nontrivial components in a neighborhood of $0$. Moreover, there are only two possible blow-ups (up to rotations and reflections) at $0$ 
$$|\mathcal U_0|=r^{3/2} |\cos \frac{3 \theta}{2}| \quad \mbox{(type I), \hspace{5mm} or} \quad \quad U_0^F \quad \mbox{as in \eqref{uof} (type II)}.$$
\end{thm}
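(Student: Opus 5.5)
We split according to the dichotomy \eqref{c1}/\eqref{c2} at the free boundary point $0$, and in each case we use the classification of the possible blow-ups together with the convergence of free boundaries (Lemma \ref{fbconv}, Propositions \ref{pert} and \ref{prop:asympt}). The hypothesis says that $0\in\p\{u^i>0\}$ for at least three indices, say $i=1,2,3$. By non-degeneracy (Lemma \ref{ND}), each such $u^i$ satisfies $\max_{B_r} u^i\ge c r^2$: apply Lemma \ref{ND} at points of $\{u^i>0\}$ arbitrarily close to $0$. Hence every blow-up, of either type, is nontrivial. Moreover, by Lemma \ref{fbconv} in the type II case, each of the $k\ge 3$ phases touching $0$ must persist as a nontrivial phase of the limit, so the blow-up has at least three phases meeting at the origin. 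In the type I case we need more care, since $a_r\gg r^2$ and the rescaled phases might vanish in the limit. We therefore first rule out degenerate configurations using connectivity arguments, as follows.

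\emph{Type II case.} Suppose \eqref{c2} holds. Weiss monotonicity (Proposition \ref{Weiss}) gives a 2-homogeneous limit $U_0^F$, as in the proof of Lemma \ref{blowup} but at $r\to0$; the upper bound on $\Phi_U$ along the subsequence gives a finite limit $\Phi_U(0+)$. Non-degeneracy of $u^1,u^2,u^3$ at points approaching $0$ shows that each of these three components survives in the limit. Consequently $U_0^F$ has at least three nontrivial components, and by the classification of Subsection \ref{subsec:global} it is the Case 3 cone with angles given by \eqref{uof}. This cone has exactly three phases, each sector of opening greater than or equal to $\pi/2$. Since the free boundaries converge in Hausdorff distance, any fourth phase whose support accumulates at $0$ would also survive in the blow-up by non-degeneracy. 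That would force a fourth sector, which is impossible. Hence, on a small ball, only three components are nonzero. (We use here that $\Phi_U(0+)$ is a single value, and that the set of blow-ups along all sequences is the same kind of cone.)

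\emph{Type I case.} Suppose \eqref{c1} holds. Then blow-ups are homogeneous minimizers of $J_0$ in 2d, so by Remark \ref{U0n2} they are of the form $r^{k/2}|\cos\frac k2\theta|$ with $k\ge3$. The key claim is that $k=3$, and this is the main obstacle. I would adapt the argument of Lemma \ref{time}, which was written for blow-downs, to blow-ups. Assume $k\ge4$. Near $\p B_1$ the rescaled free boundaries are $C^{1,\alpha}$ close to the $k$ rays. Alternate sectors carry distinct labels, since same-label sectors cannot be adjacent because $\hat u^i$ is superharmonic. The subtle point is that connectivity of the phases is not assumed here. Instead I would use the maximum-principle argument of Lemma \ref{time} inside $B_1$: a subharmonic component $u^j$ whose positive set is enclosed, away from $\p B_1$, by other phases must vanish. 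Counting phases against the Euler–Lagrange structure (equal gradients across interfaces, Proposition \ref{pert}), we get only two possibilities, as I would expect. Either $k\ge4$ forces some label to appear in two nonadjacent sectors, and then a small region near $0$ would be surrounded in a way that makes a component vanish identically, a contradiction. Or else more than three distinct labels appear. In the latter case I would instead compare frequencies: Almgren's frequency is not available since $F\neq0$, so I would use an approximate frequency, or perturb from the $J_0$ monotonicity with an error of order $r^2/a_r\to0$. This shows that the frequency $k/2$ is the same along every subsequence, and I would then try to exclude $k\ge4$ by a stability and competitor construction that lowers the energy by merging phases. If that fails, the fallback is to quote the known classification for $F=0$ of planar minimal partitions, in which only the $Y$-cone $k=3$ is a minimizer with at least three phases meeting while the others still have $k=2m$ phases. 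Once $k=3$ is established, there are exactly three sectors. Hausdorff convergence of the free boundaries, together with non-degeneracy of any additional phase accumulating at $0$, which would produce a fourth sector in the limit after normalization, again yields exactly three nontrivial components in a neighborhood of $0$.
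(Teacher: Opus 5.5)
Your type II case is essentially the paper's argument. One small correction there: since $\Phi_U$ is increasing, a finite $\Phi_U(0+)$ needs a \emph{lower} bound along the subsequence, e.g.\ $\Phi_U(r_j)\ge -C(a_{r_j}/r_j^2)^2$, not an upper one.

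The genuine gap is the type I case. None of the routes you list rules out $k\ge 4$, because the obstruction is invisible at the level of $J_0$. Every cone $r^{k/2}|\cos\frac k2\theta|$ with $k\ge 3$, under any labeling in which adjacent sectors carry different labels, satisfies \eqref{EL} with $F=0$. Examples are $|xy|$ with labels $1,2,1,3$ or $1,2,3,4$. By Corollary \ref{stability} each such cone is a minimizer of $J_0$. This has three consequences:
\begin{itemize}
\item The $F=0$ classification you want as a fallback is false for this functional.
\item These configurations enclose nothing, so the maximum-principle argument of Lemma \ref{time} gives no contradiction without its global connectivity hypotheses.
\item No competitor or stability argument performed on the limit cone, which only sees $J_0$, can exclude them.
\end{itemize}
The contradiction must come from $U$ itself, through \eqref{c1} and $F>0$.

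The paper does this in two steps, both missing from your proposal.

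\textbf{Step 1: bound the homogeneity using \eqref{c1}.} Choose dyadic scales as in the proof of Lemma \ref{Uobar}. There are infinitely many $j$ with $a_{2^{-j-l}}\ge 2^{-(2+\delta)l}a_{2^{-j}}$ for all $l\ge 0$; otherwise iterating would give $a_r/r^2\to 0$ along a sequence, contradicting \eqref{c1}. The corresponding limit is a $J_0$-minimizer decaying no faster than $|x|^{2+\delta}$, so its frequency at $0$ is at most $2+\delta$. A further blow-up and a diagonal argument then give a type I blow-up with $k\in\{3,4\}$.

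\textbf{Step 2: exclude $k=4$.} Near $0$ the set $\{|U|>0\}$ splits into four components $\Omega_1,\dots,\Omega_4$ with $\p\Omega_1\cap\p\Omega_3=\p\Omega_2\cap\p\Omega_4=\{0\}$. The function $u=|U|\chi_{\Omega_1\cup\Omega_3}-|U|\chi_{\Omega_2\cup\Omega_4}$ has piecewise constant Laplacian and satisfies $u\Delta u\ge 0$. By \cite{AS}, $u\in C^{1,1}_{loc}$, so $a_r\le Cr^2$, contradicting \eqref{c1}.

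Your approximate-frequency remark is the closest thing to Step 1, but it falls short in three ways:
\begin{itemize}
\item You never use \eqref{c1} to convert it into $k/2\le 2$.
\item An error of size $r^2/a_r$ with no rate does not yield convergence of the frequency.
\item Even granting convergence, the borderline $k=4$ (frequency $2$, which is compatible with e.g.\ $a_r\sim r^2\log\frac1r$) would still need the $C^{1,1}$ argument.
\end{itemize}

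Finally, at the end of your type I case, non-degeneracy cannot show that an extra phase survives in the limit. Under the normalization $a_r^{-1}U(rx)$ it only gives $\sup_{B_1}u^4_r\ge c\,r^2/a_r\to 0$, as you yourself noted at the start. What works instead is the maximum-principle argument you mention elsewhere. On a sequence of annuli $B_{2r_k}\setminus B_{r_k}$ only three phases are present. Then $u^4$ is subharmonic and vanishes on the boundary of each region between consecutive annuli, so it vanishes identically near $0$.
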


\begin{proof} If \eqref{c2} holds, then the blow-up must be $U_0^F$ by the  monotonicity formula,  Proposition \ref{Weiss},  and the classification of $2$-homogeneous solutions in Section \ref{subsec:global}. 

If \eqref{c1} holds, then as in the proof of Lemma \ref{Uobar}, we end up with a type I blow-up homogeneous solution $\mathcal U_0$ of homogeneity at most $2+\delta$, for some $\delta >0$ as small as we wish. This means that either $\mathcal U_0$ is $3/2$-homogeneous as above, or it is $2$-homogeneous i.e. 
$$\mathcal U_0= c|xy|, \qquad \hbox{for some } c> 0.$$

We show that this last possibility cannot occur. Otherwise, there is a sequence of annuli $B_{2r_k} \setminus B_{r_k}$, $r_k \to 0$, on which $|U|>0$ consists of 4 connected components. This follows from the fact that $U_{r_j} \to \mathcal U_0$ uniformly in $B_1 \setminus B_{1/2}$ and the free boundaries $\Gamma(U_i) \to \Gamma(\mathcal U_0)$ in the $C^{1,\alpha}$ sense due to Proposition \ref{pert}.
As in the proof of Lemma \ref{time} we can infer that in a neighborhood of the origin, the set $\{|U|>0\}$ consists of precisely 4 connected components that we denote by $\Omega_i$, $1\le i \le 4$, such that $\p \Omega_1 \cap \p \Omega_3=\{0\}$, and $\p \Omega_2 \cap \p \Omega_4=\{0\}$. Then the function
$$u:=|U| \chi_{\Omega_1 \cup \Omega_3} - |U| \chi_{\Omega_2 \cup \Omega_4},$$
has Laplacian that is constant in its connected components of the positivity and negativity sets, and satisfies $u \triangle u \ge 0$. By the results in \cite{AS}, we obtain $u \in C_{loc}^{1,1}$ and we contradict \eqref{c1}. 

We give a short explanation of the result in \cite{AS} and how the $C^{1,1}$ regularity of $u$ is obtained at the origin. Since the Laplacian has a positive jump from $\{u>0\}$ to $\{u<0\}$ then the absolute value of the directional derivatives $\partial _e u$ are subharmonic (see for example Lemma \ref{impro} below). Then one can apply the ACF monotonicity formula for $(\partial_e u)^+$ and $(\partial_e u)^-$ and find that they can grow at most linearly near the origin. 
 
\end{proof}

\section{The proof of Theorem \ref{mainintro} in the symmetric case}\label{sec:proof-symmetric}

 Our main purpose here is to prove Theorem \ref{mainintro} in the symmetric case $F=(1,1,1)$ for which the family $U_t$ has already been constructed. We will use an improvement of flatness technique, thus we start in the following perturbative setting.

Let $U$ be a minimizer of $J(\cdot, F)$ in $B_1$ and let $U_0$ be the 2-homogeneous solution defined in \eqref{U0}. If 
\begin{equation}\label{eq:U-ep}
    d(U,U_0) \leq \eps  \hbox{ in }  B_1, 
\end{equation}
 then 
$$
\Gamma(U) \subset \{|U_0| <\eps\} \cap B_1.$$ 
In particular, by non-degeneracy, in the annulus $B_{1-\delta} \setminus B_\delta$,  \begin{equation}\label{UF}\text{$\Gamma(U)$ is included in a $C\frac \eps \delta$ neighborhood of the rays $\theta=\pi, \theta=\pm\frac \pi 3$}\end{equation} and  near the rays only the free boundaries of two components meet, i.e.
$$\Gamma(U)=\p\{u^1>0\} \cap \p \{u^2>0\} \quad \text{near $\theta = \frac \pi 3$}$$
and similarly for the other two rays.
Moreover, we know that near the bisectors of the three sectors, 
$$u^1>0 \quad \text{on $\theta=0$}, \quad u^2>0 \quad \text{on $\theta=\frac 2 3 \pi$}, \quad u^3>0 \quad \text{on $\theta=\frac 4 3 \pi$}.$$
Hence, 
\begin{equation}\label{double}\hat u^1 = - \hat u^2 \quad \Longrightarrow \quad \Delta \hat u^1=\hat f^1_U \quad \text{in $\{0 < \theta < \frac 2 3 \pi\} \cap B_{1-\delta} \setminus B_\delta$}\end{equation} and similarly for the other two sectors.

To a solution $U$ as above, we associate the double-valued function $$u_1 \to -u_2 \to u_3 \to -u_1 \to u_2 \to -u_3$$ as we move counterclockwise from one sector to the next in the annulus $B_{1-\delta} \setminus B_\delta,$ and we circle the annulus twice. By abuse of notation, we denote such a double-valued function still by $U$, which satisfies 
$$|\Delta U| \leq 1$$ and therefore
is of class $C^{1,\alpha}$. Equivalently, $U$ can be identified by a single-valued odd function $h$ defined on the same  annulus,
which assigns to the point of polar coordinates $(r,\theta)$ the value of $U$ at $(r, 2\theta)$, $\theta \in [0,2\pi],$ that is
\begin{equation}\label{Uh}U(re^{2i\theta}) = h(re^{i\theta}), \quad \text{$h$ odd}\quad \theta \in [0, 2\pi].\end{equation}

\begin{defn}
We define the space $\mathcal D$ of double-valued functions on an annulus as those that can be represented by a continuous function $h$ as in \eqref{Uh}.\end{defn}
Notice that $\mathcal D$ is a vector space and derivatives of smooth functions in $\mathcal D$ belong to $\mathcal D$ as well.

With this convention, and that \eqref{eq:U-ep} holds, set 
\begin{equation}\label{eq:V-ep}
    V_\eps: = \frac 1 \eps (U-U_0).
\end{equation}
Then,
$$|V_\eps| \leq 1.$$ We use the next general lemma that holds in arbitrary dimensions (proved in Section \ref{sec:Proof-Lemma-impro}) to understand the limit of $V_\eps$ as $\eps \to 0$, leading to the linearized equation which is of transmission type along the zero set of the original solution.

\begin{lem}\label{impro} Let $u_0 \in C^1(B_1)$ with $|\nabla u_0| \neq 0$, satisfy
$$\Delta u_0 = a^+\chi_{\{u_0>0\}} -a^- \chi_{\{u_0<0\}} \quad \text{in $B_1$},$$ with $a^\pm \in \R.$ Assume $u$ satisfies the same equation,
$$\Delta u = a^+\chi_{\{u>0\}} - a^-\chi_{\{u<0\}} \quad \text{in $B_1$},$$
and
$$|u-u_0| \leq \eps \quad \text{in $B_1$}.$$ Set 
$$v_\eps:= \frac{u-u_0}{\eps},$$ then as $\eps \to 0$, $v_\eps \to v$ uniformly on compact subsets of $B_1$ (up to a subsequence), and $v$ satisfies the transmission problem 
$$\Delta v=0 \quad \text{in $\{u_0\neq 0\}$}$$
$$v_{\nu^+} + v_{\nu^-} =\frac{a^++a^-}{|\nabla u_0|} v \quad \text{on $\{u_0=0\}$}.$$ Here $\nu^\pm$ are the unit normals to $\{u_0=0\}$ pointing to $\{u_0>0\}$ and $\{u_0<0\}$ respectively.
\end{lem}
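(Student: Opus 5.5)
Write $w:=u-u_0$, so $v_\eps=w/\eps$. The proof has three steps: a uniform $C^{1,\alpha}$ estimate for $v_\eps$ away from the interface, an identification of the weak equation satisfied by the limit $v$, and a justification of the limit near $\{u_0=0\}$.

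\emph{Step 1: equation and compactness.} Subtracting the two equations gives
$$\Delta w = a^+(\chi_{\{u>0\}}-\chi_{\{u_0>0\}}) - a^-(\chi_{\{u<0\}}-\chi_{\{u_0<0\}}).$$
The right-hand side is bounded and supported where $u$ and $u_0$ have different signs. Since $|u-u_0|\le\eps$, this set lies inside $\{|u_0|\le \eps\}$. Because $|\nabla u_0|\neq0$ and $u_0\in C^1$, this set is locally a strip of width $O(\eps)$ around the $C^1$ hypersurface $\{u_0=0\}$.

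On compact subsets of $\{u_0\ne0\}$ we have $\Delta v_\eps=0$ for $\eps$ small. Hence $v_\eps\to v$ in $C^2_{loc}(\{u_0\neq0\})$, and $v$ is harmonic there.

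For compactness across the interface, $\Delta v_\eps = \eps^{-1}g_\eps$ with $|g_\eps|\le C$ supported in an $O(\eps)$ strip. So $\Delta v_\eps$ is a family of measures with locally bounded mass. Together with $|v_\eps|\le1$, this gives uniform $W^{1,p}$ bounds and a H\"older modulus. To get uniform convergence, I would show equicontinuity by comparing with barriers: the mass is spread over a strip of width $\eps$, which acts like a bounded density on the hypersurface, and the potential of such a measure is Lipschitz uniformly in $\eps$. So $v_\eps$ is uniformly Lipschitz, and Arzel\`a--Ascoli applies.

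\emph{Step 2: identifying the limit measure.} Fix a test function $\varphi\in C_c^\infty(B_1)$ and compute $\int \eps^{-1}g_\eps\varphi$. Near a point $x_0$ of $\{u_0=0\}$, use coordinates $s=u_0(x)/|\nabla u_0(x)|$ (signed distance to first order) along the normal direction. The set where $u>0>u_0$ is, to first order, $\{-\eps v\, \cdot\,\}$: since $u=u_0+\eps v_\eps$, the sign change occurs where $u_0\approx -\eps v_\eps$. That is a layer of normal thickness $\eps v^+/|\nabla u_0|$ on the negative side, on which $\Delta w=a^++a^-$. Symmetrically, where $v<0$ there is a layer of thickness $\eps v^-/|\nabla u_0|$ on the positive side, on which $\Delta w=-(a^++a^-)$.

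Dividing by $\eps$ and letting $\eps\to0$, using uniform convergence of $v_\eps$ and continuity of $\nabla u_0$, gives
$$\int \eps^{-1}g_\eps\varphi \;\to\; \int_{\{u_0=0\}}\frac{a^++a^-}{|\nabla u_0|}\,v\,\varphi\,d\mathcal H^{n-1}.$$
Hence, in the distributional sense,
$$\Delta v=\frac{a^++a^-}{|\nabla u_0|}\,v\,\mathcal H^{n-1}\lfloor\{u_0=0\}.$$
For a function harmonic on both sides, this is exactly the transmission condition $v_{\nu^+}+v_{\nu^-}=\frac{a^++a^-}{|\nabla u_0|}v$: the jump of the normal derivatives equals the density of the singular part. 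This holds weakly, and classically once $v$ is known to be $C^1$ up to the interface from each side, which follows from standard transmission regularity.

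\emph{Step 3: making the layer argument rigorous; this is the main obstacle.} The difficulty is the first-order layer description, i.e.\ showing that $\{u>0\}\triangle\{u_0>0\}$ is, up to $o(\eps)$ errors, the region between $\{u_0=0\}$ and $\{u_0=-\eps v\}$. The plan is to use $|\nabla u|\ge c>0$ in the strip, which holds because $u_0\in C^1$ with nonvanishing gradient and $w$ has $C^{1,\alpha}$ bounds of order $\eps$ there. With this, the zero set of $u$ is a $C^1$ graph over $\{u_0=0\}$ with normal displacement $-\eps v_\eps/|\nabla u_0|+o(\eps)$. Uniform convergence of $v_\eps$ from Step 1 then closes the argument.
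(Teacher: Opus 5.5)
Your argument is correct in outline, but it takes a different route from the paper. The paper never treats $\Delta v_\eps$ as a measure. Instead it applies a hodograph transform in the $x_n$-direction to both $u$ and $u_0$. The partial inverses $w,w_0$ then solve $a^{ij}(\nabla w)w_{ij}=a^+\chi_{\{x_n>0\}}-a^-\chi_{\{x_n<0\}}$, with the discontinuity on the \emph{same} fixed hyperplane. The right-hand sides cancel in the difference, and $\phi_\eps=(w-w_0)/\eps$ solves a linear equation with H\"older leading coefficients and bounded drift. This gives three things: uniform $C^{1,\alpha}$ bounds on $\phi_\eps$, the domain deformation $u_0(x)=u(x+\eps\phi_\eps e_n)$, and $v_\eps=-\p_n u\,\phi_\eps+O(\eps)$. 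The classical transmission condition then follows because only $(u_0)_{nn}$ jumps across $\{u_0=0\}$, so every directional derivative of $u_0$ satisfies it.

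Your layer computation leads to $\Delta v=\frac{a^++a^-}{|\nabla u_0|}\,v\,\mathcal H^{n-1}\lfloor\{u_0=0\}$ in the sense of distributions. It is more elementary and makes the origin of the Robin coefficient transparent, but it only gives the weak form. To reach the classical statement you must import one-sided $C^{1,\beta}$ regularity for transmission problems across a $C^{1,\alpha}$ interface (e.g.\ single-layer potentials with H\"older density). You also do not obtain the uniform $C^{1,\alpha}$ deformation $\phi_\eps$, which the paper reuses later (Remark \ref{remlast}, Lemma \ref{improgeneral}, and \eqref{dd1}).

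Two intermediate claims are wrong as stated, though the argument survives with weaker true versions.

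First, in Step 1, locally bounded mass of $\Delta v_\eps$ does not give a H\"older modulus. Nor does a bounded density spread over an $\eps$-strip give a uniform Lipschitz bound. In the plane, the Newtonian potential of $\eps^{-1}\operatorname{sgn}(x_1)\chi_{\{0<x_2<\eps,\,|x_1|<1\}}$ has $x_1$-derivative of order $\log(1/\eps)$ at the origin. What the strip structure does give is $|\Delta v_\eps|(B_r(x))\le Cr^{n-1}$ uniformly in $\eps$. This yields uniform $C^{0,\beta}$ bounds for every $\beta<1$, which is enough for Arzel\`a--Ascoli. It is also enough for Step 3, where you only need $v_\eps(x+s\nu)-v_\eps(x)\to0$ uniformly for $|s|=O(\eps)$.

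Second, in Step 3, $w=u-u_0$ does not have $C^{1,\alpha}$ norm $O(\eps)$, since $\Delta w$ is of order one on the layer. What you actually need is $|\nabla w|\to0$ locally uniformly, so that $|\nabla u|\ge c$ near the interface and $\{u=0\}$ is a graph. This follows by interpolating $|w|\le\eps$ with the uniform $C^{1,\alpha}$ bounds on $u$ and $u_0$, exactly as the paper does.
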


In fact, Lemma \ref{impro} holds in slightly greater generality, in the sense that it applies for sequences $u_k$ and $u_{0,k}$ that converge to $u_0$. For completeness, we state it here.

\begin{lem}\label{improgeneral} Let $u_{0,k}, u_k \in C^1(B_1)$  satisfy
$$\Delta u_{k} = a_k^+\chi_{\{u_k>0\}} -a_k^- \chi_{\{u_k<0\}} \quad \text{in $B_1$},$$ 
$$\Delta u_{0,k} = a_k^+\chi_{\{u_{0,k}>0\}} -a_k^- \chi_{\{u_{0,k}<0\}} \quad \text{in $B_1$},$$
with $a_k^\pm \in \R.$ Assume that $u_{0,k} \to u_0$ uniformly on compact sets, and $a^\pm_k \to a^\pm,$ with $u_0$ satisfying the hypotheses of Lemma \ref{impro} and 
$$|u_k-u_{0,k}| \leq \eps_k \quad \text{in $B_1$}.$$ 
Then 
$$v_{\eps_k}:= \frac{u_k-u_{0,k}}{\eps_k},$$ as $\eps_k \to 0$, $v_{\eps_k} \to v$ uniformly on compact subsets of $B_1$ (up to a subsequence), and $v$ satisfies the transmission problem 
in Lemma \ref{impro}.
\end{lem}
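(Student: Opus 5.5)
We prove Lemma \ref{improgeneral} by a compactness argument combined with a weak formulation of the equation for the difference quotients. The main point is to control the set where $u_k$ and $u_{0,k}$ have different signs.

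\emph{Step 1: uniform bounds away from the zero set.} Fix a compact $K\subset B_1$. Since $u_{0,k}\to u_0$ uniformly, $u_0\in C^1$ and $|\nabla u_0|\ge c>0$ on $K$, the zero set $\{u_0=0\}\cap K$ is a $C^1$ hypersurface. Away from it $|u_0|\ge\eta$. Hence for $k$ large, $u_k$ and $u_{0,k}$ have the same sign on $\{|u_0|\ge\eta\}$, so $\Delta v_{\eps_k}=0$ there. Since $|v_{\eps_k}|\le 1$, we get uniform $C^2$ bounds on compact subsets of $\{u_0\ne0\}$, and a subsequence converges to a function $v$ harmonic in $\{u_0\neq 0\}$.

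\emph{Step 2: the bad set is thin.} Everywhere we have $\Delta v_{\eps_k}=g_k/\eps_k$, where $g_k$ is bounded by $a_k^++a_k^-$ and supported on the set where the signs of $u_k$ and $u_{0,k}$ differ. This set lies in $\{|u_{0,k}|\le\eps_k\}$. We need $|\nabla u_{0,k}|$ bounded below near the zero set to conclude that $\{|u_{0,k}|\le \eps_k\}$ is a strip of width $O(\eps_k)$. Here $u_{0,k}$ is only known to converge uniformly; but $\Delta u_{0,k}$ is bounded, so $u_{0,k}$ is bounded in $C^{1,\alpha}$ and $u_{0,k}\to u_0$ in $C^1$, which gives the lower bound on the gradient.

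Consequently $|\Delta v_{\eps_k}|$ is a measure of total mass $O(1)$ concentrated on an $O(\eps_k)$ strip around the $C^1$ surface $\{u_0=0\}$. Writing $v_{\eps_k}$ as a Newtonian potential of this measure plus a harmonic function, and using that a uniformly bounded density on a strip of width $\eps_k$ has potential bounded in $C^{0,\beta}$ uniformly in $k$, we obtain equicontinuity of $v_{\eps_k}$ across the surface. Thus $v_{\eps_k}\to v$ uniformly on $K$ along a subsequence, and $v$ is continuous.

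\emph{Step 3: identify the transmission condition.} Take $\varphi\in C^\infty_c$ and compute $\int v_{\eps_k}\Delta\varphi=\eps_k^{-1}\int g_k\varphi$. Locally write $s$ for the signed distance to $\{u_{0,k}=0\}$, so that $u_{0,k}\approx |\nabla u_0|\,s$. Then $u_k=u_{0,k}+\eps_k v_{\eps_k}$, and $u_k$ changes sign at $s\approx -\eps_k v/|\nabla u_0|$ up to $o(\eps_k)$, using the uniform convergence $v_{\eps_k}\to v$ near the surface.

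On the sliver between the two zero sets, $g_k=\pm(a^++a^-)$. The sign is such that where $v>0$ the region $\{u_k>0\}$ enlarges into the negative side, and there $g_k=a^++a^-$. Hence
$$\eps_k^{-1}\int g_k\varphi\to\int_{\{u_0=0\}}\frac{a^++a^-}{|\nabla u_0|}\,v\,\varphi\,d\mathcal H^{n-1}.$$
On the other hand, since $v$ is harmonic on both sides and continuous, integrating by parts gives
$$\int v\Delta\varphi=\int_{\{u_0=0\}}\varphi\,(v_{\nu^+}+v_{\nu^-})\,d\mathcal H^{n-1}$$
in the weak/viscosity sense. The needed regularity of $v$ up to the surface follows from transmission regularity, or one can simply interpret the condition weakly. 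Comparing the two expressions yields
$$v_{\nu^+}+v_{\nu^-}=\frac{a^++a^-}{|\nabla u_0|}\,v.$$

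\emph{Main obstacle.} The delicate step is Step 2 together with the sliver computation in Step 3, namely showing that the sign-change region is an $O(\eps_k)$ neighborhood whose signed width is asymptotically $\eps_k v/|\nabla u_0|$. Both rely on the $C^1$ convergence $u_{0,k}\to u_0$ and on the uniform convergence of $v_{\eps_k}$, which must be established before the limit in the weak formulation can be taken.
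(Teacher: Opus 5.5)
Your argument is correct, but it takes a genuinely different route from the paper's.

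The paper localizes near $\{u_0=0\}$ and performs a hodograph transform in the $e_n$ direction, rewriting the graphs of $u$ and $u_0$ as graphs of $w$ and $w_0$. Both then solve the same quasilinear equation $a^{ij}(\nabla w)w_{ij}=f$, with $f$ piecewise constant and jumping across one \emph{fixed} hyperplane. Hence $\phi_\eps=(w-w_0)/\eps$ solves a homogeneous linear equation with H\"older coefficients and bounded drift, and is uniformly bounded in $C^{1,\alpha}$. This gives the domain deformation \eqref{dd} and the limit $v=-(\p_n u_0)\phi$ with $\phi\in C^{1,\alpha}$. The transmission condition then follows because every directional derivative of $u_0$ satisfies it and $\phi$ is $C^1$ across the interface.

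You instead stay in the original variables. The measure $\Delta v_{\eps_k}$ has uniform $(n-1)$-dimensional growth, $|\Delta v_{\eps_k}|(B_r(x))\le Cr^{n-1}$. This, rather than a ``uniformly bounded density'' (the density is of size $\eps_k^{-1}$), is what makes its Newtonian potential uniformly log-Lipschitz and gives equicontinuity. Your sliver computation then identifies $\Delta v=\frac{a^++a^-}{|\nabla u_0|}\,v\,\mathcal H^{n-1}$ on $\{u_0=0\}$ in the sense of distributions.

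What each approach buys:
\begin{itemize}
\item Your route is more elementary: it needs no hodograph transform and no $C^{1,1}$ theory as in Remark \ref{rem85}.
\item It handles the sequence version of the lemma directly, and you make explicit the $C^1$ convergence $u_{0,k}\to u_0$ coming from the bounded Laplacians, which the paper leaves implicit.
\item It does not give the classical form of the condition for free, since your $v$ is only H\"older. You still need a regularity step. For example: $v$ minus the single-layer potential of the H\"older density $\frac{a^++a^-}{|\nabla u_0|}v$ on the $C^{1,\alpha}$ surface is harmonic, and such potentials are $C^{1,\alpha'}$ up to the surface from each side with the usual jump relation.
\item The paper's method yields this regularity automatically. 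It also yields the deformation representation, which is reused later: \eqref{dd1} in the proof of Proposition \ref{IFgeneral}, and Remark \ref{remlast}.
\end{itemize}
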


Applying the lemma above to $V_\eps$ from \eqref{eq:V-ep}, and letting $\delta \to 0$,  we conclude that $V_\eps$ converges (up to a subsequence) uniformly on compact subsets of $B_1 \setminus \{0\}$ to a limiting function $V \in \mathcal D$ that satisfies

\begin{align}\label{FBlinear}
& \Delta V=0 & \text{in $\{U_0\neq 0\}$}\\
\label{transmission} & V_{\nu^+} + V_{\nu^-} =\frac{2}{|\nabla U_0|} V & \text{on $\{U_0=0\}.$}
\end{align}

The transmission conditions \eqref{transmission} are imposed on the three open rays. 

\begin{rem}\label{linearsmooth}We remark that solutions in a neighborhood of a ray are smooth up to the ray, on either side of the ray. This is because solutions to a general transmission problem (in $\R^n$) across a hyperplane ($x=(x',x_n)$), 
\begin{align*}
& \Delta v = 0 \quad \text{in $B_1 \cap \{x_n\neq 0\}$}\\
& v_n^++v_n^-=h(x')v \quad \text{on $B_1 \cap \{x_n=0\}$}\end{align*}
can be reduced to the Neumann problem in a half-space for the function $$\frac 1 2 v(x', x_n) + \frac 1 2 v(x', -x_n).$$ In particular, if $h$ is smooth then solutions are smooth on both sides of the hyperplane.

\end{rem}

The next lemma shows that these are sufficient to determine the behaviour of a solution at the origin.

\begin{lem}\label{extend} Let $V \in \mathcal D$ be a bounded double-valued classical solution in $B_1\setminus \{0\}$ to \begin{align}\label{FBlinear2}
& \Delta V=0 & \text{in $\{U_0\neq 0\}$}\\
\label{b}& V_{\nu^+} + V_{\nu^-} \geq 0 & \text{on $\{U_0=0\} \cap \{V \geq 0\}.$}
\end{align} 

Then 
\begin{equation}\label{|V|}
\text{$|V|$ is subharmonic in $B_1$,}\end{equation}
\begin{equation}\label{V0}|V(x)| \leq C |x|^\alpha \|V\|_\infty,\end{equation}
and hence $V$ is continuous at zero with a H\"older modulus of continuity.
In particular, any solution to \eqref{FBlinear}-\eqref{transmission} is uniquely determined by its boundary values on $\p B_1.$ 
\end{lem}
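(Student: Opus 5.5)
We work with the single-valued odd representative $h$ from \eqref{Uh}, $V(re^{2i\theta})=h(re^{i\theta})$, on the punctured disc. Under this identification the three rays $\{U_0=0\}$ become six rays $\theta=\pm\pi/6,\ \pm\pi/2,\ \pm 5\pi/6$, and $h$ is harmonic off these rays. The double-valued function $|V|$ corresponds to $|h|$, which is even. Since harmonicity in the $h$-variables is equivalent to harmonicity in the $V$-variables away from $0$ (the map $z\mapsto z^2$ is conformal there), subharmonicity can be checked locally on either side.

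\textbf{Step 1: $|V|$ is subharmonic in $B_1\setminus\{0\}$.} Off the rays, $|V|$ is the absolute value of a harmonic function, hence subharmonic. Near a point $x_0$ on a ray, $V$ is $C^1$ up to the ray from each side by Remark~\ref{linearsmooth}.

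If $V(x_0)>0$, then near $x_0$ we have $|V|=V$ on both sides. The distributional Laplacian of $V$ is then the jump measure $(V_{\nu^+}+V_{\nu^-})\,d\mathcal H^1$ on the ray, which is $\ge 0$ by \eqref{b}. If $V(x_0)<0$, we apply the same reasoning to $-V$; the sign condition should hold there as well, either from the symmetric form of \eqref{b} (note that \eqref{transmission} gives the sign on $\{V\le 0\}$ for $-V$) or by reading \eqref{b} as applying to $|V|$.

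At points where $V(x_0)=0$, we use $|V|=\max(V,-V)$. A maximum of two functions that are each subharmonic near $x_0$ is subharmonic. Alternatively, we approximate by $\sqrt{V^2+\eta^2}$ and check that its jump in normal derivative is $\ge 0$ via the chain rule: the jump is $\frac{V}{\sqrt{V^2+\eta^2}}(V_{\nu^+}+V_{\nu^-})$, which is $\ge 0$ wherever the sign condition holds.

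\textbf{Step 2: removing the origin.} $|V|$ is bounded and subharmonic in $B_1\setminus\{0\}$, and a point has zero capacity in 2D. Hence $|V|$ extends as a subharmonic function to $B_1$ (comparison with $\epsilon\log(1/|x|)$), giving \eqref{|V|}.

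\textbf{Step 3: Hölder decay \eqref{V0}.} This is the main step. Subharmonicity only gives an upper bound at $0$; we need $\limsup_{x\to0}|V|=0$ with a rate. The idea is that on each circle $\partial B_r$, $V$ must vanish somewhere. This follows from the double-valued (odd $h$) structure: going twice around the circle, $h(re^{i(\theta+\pi)})=-h(re^{i\theta})$, so by continuity $h$ has a zero on every circle.

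Hence $|V|$ is subharmonic and vanishes on a set that meets every circle $\partial B_r$. We then use a standard capacity/harmonic-measure estimate. Compare $|V|$ on $B_r$ with the harmonic function in $B_r\setminus\{|V|=0\}$ that equals $\max_{\partial B_r}|V|$ on $\partial B_r$. Since the zero set of $|V|$ intersects every circle, it is a continuum-like set hitting each annulus $B_r\setminus B_{r/2}$ with capacity bounded below (Beurling-type estimate). This yields $\max_{B_{r/2}}|V|\le(1-c)\max_{B_r}|V|$, and iterating gives \eqref{V0} with $\alpha=\log_2\frac{1}{1-c}$.

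A cleaner alternative for this step is a Phragmén–Lindelöf argument for $h$ on the double cover: $|h|$ is subharmonic with sign changes on every circle. The bound $\fint_{\partial B_r}|V|\le(1-c)\max_{B_{2r}}|V|$ also follows by comparing with the harmonic extension, because $|V|$ attains $0$ at a point of each circle and is uniformly Hölder on circles away from $0$ by interior estimates.

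\textbf{Step 4: uniqueness.} If $V_1,V_2$ solve \eqref{FBlinear}--\eqref{transmission} with the same boundary data, then $W=V_1-V_2$ solves the same linear problem, so \eqref{b} holds for $W$ (it is an equality with a nonnegative coefficient times $W\ge0$). By Steps 1–3, $|W|$ is subharmonic in $B_1$ and vanishes on $\partial B_1$, hence $W\equiv0$.
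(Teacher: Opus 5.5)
\textbf{The gap is in Step 3, which carries the whole content of the lemma.}

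Your main route rests on the claim that, because $\{V=0\}$ meets every circle, it has capacity $\ge cr$ in each annulus $B_r\setminus B_{r/2}$. You support this only by calling the set ``continuum-like'', but meeting every circle does not make a set a continuum, and you give no estimate. The claim is true, but it needs an actual argument. One option is the contraction principle for logarithmic capacity under the $1$-Lipschitz map $x\mapsto|x|$, which sends the zero set onto the whole interval $[r/2,r]$. Another is a planar-topology argument using oddness of $h$ to extract a crossing continuum. Either must be followed by a quantitative capacity-to-harmonic-measure bound valid at every point of $B_{r/2}$.

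Your ``cleaner alternative'' does not work under the stated hypotheses. Uniform H\"older bounds on circles come from interior estimates for the \emph{equality} transmission problem (Remark \ref{linearsmooth} plus scaling). The lemma assumes only the one-sided inequality \eqref{b}, which gives no quantitative regularity. For example, start from an odd harmonic $h$ with no jumps. Subtract the logarithmic potential $P$ of a tall, narrow, smooth nonnegative density of small mass placed on a piece of a ray where $h>0$, and add $P(-\,\cdot)$. All hypotheses are preserved, while the H\"older seminorm there becomes arbitrarily large. The paper stresses that the decay uses only \eqref{b}.

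\textbf{The missing idea.} Oddness gives you measure information, not just one zero per circle. In the $h$-picture, $z\mapsto -z$ maps $\{h>0\}$ onto $\{h<0\}$, so $h^+$ vanishes on at least half of every annulus. Since $h^+$ is continuous, nonnegative and subharmonic where positive (by \eqref{b}), it is subharmonic. The weak Harnack inequality applied to the nonnegative superharmonic function $\sup h^+-h^+$ then gives
$\|V\|_{L^\infty(B_r\setminus B_{r/2})}\le(1-c_0)\|V\|_{L^\infty(B_{2r}\setminus B_{r/4})}$.
Iterating over dyadic scales yields \eqref{V0}. This is exactly the paper's argument, and it needs nothing beyond continuity and \eqref{b}.

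\textbf{Smaller points in Step 1.} The argument via $\max(V,-V)$ is wrong: near a zero of $V$ on a ray, neither $V$ nor $-V$ is subharmonic, since the jump has the sign of $V$ and that sign changes. Also, the sign condition on $\{V<0\}$ comes from oddness of $h$: at such a point the other sheet is positive, and \eqref{b} applied to it gives $V_{\nu^+}+V_{\nu^-}\le 0$. It does not come from \eqref{transmission}, which is not assumed. With that fixed, your $\sqrt{V^2+\eta^2}$ regularization does prove that $|V|$ is subharmonic.

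\textbf{Step 4} is fine and only needs Steps 1--2, since a bounded subharmonic function extends across a point in two dimensions.
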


\begin{proof} We first show the following decay of the $L^\infty$ norm in an annulus $B_{2r} \setminus B_{r/4}.$

Assume $V$ satisfies \eqref{FBlinear2}-\eqref{b},	
then \begin{equation}\label{osc}
\|V\|_{L^\infty(B_r \setminus B_{r/2})} \leq (1-c_0) \|V\|_{L^\infty(B_{2r} \setminus B_{r/4})},
\end{equation}for a dimensional constant $0<c_0<1$.
Notice that
$$V^+ \quad \text{is subharmonic in $B_1 \setminus \{0\}.$}$$
This is obviously satisfied in $\{U_0 \neq 0\},$ and on the rays due to the boundary condition \eqref{b}. On the other hand, $\{V^+=0\}$ covers at least half the measure of the double annulus. Applying the weak Harnarck inequality we obtain the claim \eqref{osc}. In particular, by varying $r$ in the estimate \eqref{osc}, we obtain the decay in full balls,
\begin{equation*}\label{oscnew}
\|V\|_{L^\infty(B_r)} \leq (1-c_0) \|V\|_{L^\infty(B_{2r})}.
\end{equation*}
from which the estimate claimed in the lemma follows (using only \eqref{b}).

For the uniqueness claim, assume $V$ and $W$ are two solutions which agree on the boundary of $B_1$. Then, $V-W$ vanishes continuously on $\p B_1 \cup \{0\}$ (by \eqref{V0}). If $V-W\ne 0$, then it achieves an interior maximum, which is a contradiction since our transmission problem satisfies the strong maximum principle in $B_1 \setminus \{0\}$. 

\end{proof}

In view of the uniqueness proved in the lemma above solutions to \eqref{FBlinear}-\eqref{transmission} can be obtained by minimizing the energy 
$$E(V):=\int |\nabla V|^2 dx + \int_{\{U_0=0\}} \frac{2}{|\nabla U_0|}V^2  d\sigma$$ among all double-valued functions $V$ for which $E(V) <\infty.$

Now, we obtain an expansion of a bounded solution to \eqref{FBlinear}-\eqref{transmission}.

\begin{lem}[Expansion in terms of homogeneous solutions $U_0$]\label{expandV} Let $V \in \mathcal D$ be a solution to \eqref{FBlinear}-\eqref{transmission} with $\|V\|_\infty \leq 1.$ Then 
$$V(x)={\bf m} \cdot \nabla U_0 + p r^{3/2} \cos\frac{3\theta}{2} + q \p_\theta U_0 + O(|x|^{5/2}),$$
where $|{\bf m}|, |p|$ and $|q|$ are universally bounded.
\end{lem}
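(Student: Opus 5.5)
Since the linearized problem \eqref{FBlinear}--\eqref{transmission} is invariant under the scaling $x \mapsto \lambda x$ only up to a factor in the transmission coefficient, I would first make this precise. On the ray $\theta=\pi/3$ one has $|\nabla U_0| = c\, r$, with $c$ explicit ($\tfrac{\sqrt3}{2}$ from $u_0^1$). The condition therefore reads $V_{\nu^+}+V_{\nu^-} = \frac{2}{c r} V$, and in polar variables it is homogeneous of degree $-1$, exactly like the normal derivative. Hence the problem commutes with dilations, and I would look for homogeneous solutions $r^\beta \phi(\theta)$, with $\phi$ in the double-valued class $\mathcal D$. Equivalently, in the $h$-representation \eqref{Uh}, $\phi$ is odd and defined on $[0,4\pi]$, harmonic in six sectors of opening $2\pi/3$ (in the original angle), with a Robin-type jump condition $\phi'(\theta_0^+)-\phi'(\theta_0^-) = \frac{2}{c}\phi(\theta_0)$ at the six rays.

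This gives a self-adjoint Sturm--Liouville eigenvalue problem $-\phi''=\beta^2\phi$ with point interactions on the circle of length $4\pi$. It has a discrete spectrum $0\le\beta_1\le\beta_2\le\dots\to\infty$ and an orthonormal eigenbasis, since the quadratic form $\int\phi'^2+\sum\frac2c\phi^2$ is positive. Separation of variables then expands any bounded solution in $B_1\setminus\{0\}$ as $V=\sum_k (a_k r^{\beta_k}+b_k r^{-\beta_k})\phi_k$. Lemma \ref{extend} ($|V|\le C|x|^\alpha$) kills all $r^{-\beta_k}$ terms, and also $\beta=0$, which cannot occur anyway by positivity of the form. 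The coefficients satisfy $|a_k|\le C\|V\|_\infty$ by $L^2(\p B_1)$ orthogonality, with polynomial growth in $k$ controlled through the Weyl asymptotics $\beta_k\sim k/2$. Summing the tail with $r^{\beta_k}$ for $r\le1/2$ gives an $O(|x|^{\beta_*})$ remainder, where $\beta_*$ is the first exponent beyond those we keep.

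The key step is to identify the low part of the spectrum, namely to show that the exponents in $(0,5/2)$ are exactly $1$ (multiplicity two), $3/2$ (one) and $2$ (one), with eigenfunctions ${\bf m}\cdot\nabla U_0$, $r^{3/2}\cos\frac{3\theta}{2}$ and $\p_\theta U_0$. That these are solutions follows from differentiating families of exact solutions. Translations $U_0(x+s{\bf m})$ give $\nabla U_0$, which is $1$-homogeneous. Rotations give $\p_\theta U_0$, which is $2$-homogeneous. The family $U_t$ gives $r^{3/2}\cos\frac{3\theta}{2}$ by \eqref{ptut}. Each of these sits in the class $\mathcal D$, since $U_0$ was turned into a double-valued function with sign changes across rays. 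One can also check each directly: $r^{3/2}\cos\frac32\theta$ vanishes on the rays and has matching normal derivatives, which is consistent because the right side is zero there.

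Completeness below $5/2$ is where I expect the main work. I would exploit the $\mathbb Z_3$ rotational symmetry, i.e. rotation by $2\pi/3$ composed with the sign/label cycling, to decompose $\mathcal D$ into Fourier modes. In each mode the problem reduces to a single sector of opening $2\pi/3$ with quasi-periodic Robin matching. There the general solution is $A\cos(\beta\theta)+B\sin(\beta\theta)$ on the sector, and the matching gives an explicit transcendental determinant equation $D_j(\beta)=0$ in each mode $j$. I would verify that its roots in $(0,5/2]$ are exactly the listed ones, with the stated multiplicities. To do this I would use the monotonicity of the Robin eigenvalues in $\beta$ (Sturm comparison / counting nodal domains of $\phi$ on $[0,4\pi]$). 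Sturm oscillation also lets me count: the $k$-th eigenfunction has a number of sign changes increasing with $k$, which pins down that no other eigenvalue sneaks in between. Given the counting, the next exponent is $\ge 5/2$; the value $5/2$ is realized, e.g., by $r^{5/2}\cos\frac{5\theta}{2}$-type modes. This yields the remainder $O(|x|^{5/2})$ and bounded ${\bf m},p,q$, completing the expansion.
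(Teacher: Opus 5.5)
Your framework is the paper's. You look for homogeneous solutions $r^\lambda g(\theta)$ with $g$ $4\pi$-periodic and $g(\theta+2\pi)=-g(\theta)$, satisfying $g''+\lambda^2 g=0$ away from $\theta_k=(2k+1)\frac{\pi}{3}$ with $(g')^+-(g')^-=\frac{4}{\sqrt3}g$. You exhibit the four solutions $\p_{x_1}U_0,\p_{x_2}U_0,r^{3/2}\cos\frac{3\theta}{2},\p_\theta U_0$ and expand in the eigenbasis. Your elimination of the $r^{-\beta_k}$ terms, the coefficient bounds and the tail estimate are fine.

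\textbf{The gap.} The gap is the step you yourself flag as the main work. That there are no other exponents in $(0,\frac52)$ is only announced: the $\mathbb{Z}_3$ decomposition, the determinant equations and the Sturm oscillation count are never carried out. This is the only nonstandard content of the lemma.

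\textbf{The missing idea.} The paper settles this in one line by min-max comparison. The Rayleigh quotient
\[
\frac{\int_0^{4\pi}(g')^2\,d\theta+\frac{4}{\sqrt3}\sum_{k=0}^{5}g^2(\theta_k)}{\int_0^{4\pi}g^2\,d\theta}
\]
dominates the quotient without the nonnegative point terms on the same space of antiperiodic functions. Hence $\lambda_m\ge\lambda_m^0$, where the unperturbed exponents are $\frac12,\frac12,\frac32,\frac32,\frac52,\dots$, with eigenfunctions $\cos\frac{\theta}{2},\sin\frac{\theta}{2},\dots$. So $\lambda_5\ge\frac52$, and there are at most four exponents below $\frac52$ counted with multiplicity. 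Your four explicit solutions are linearly independent with exponents $1,1,\frac32,2<\frac52$, so they are exactly $\lambda_1,\dots,\lambda_4$. No mode decomposition or oscillation theory is needed.

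\textbf{Completing your route instead.} Your Floquet approach can be finished, but it costs more. The one-sector transfer matrix has trace
$\Delta(\beta)=2\cos\frac{2\pi\beta}{3}+\frac{4}{\sqrt3\,\beta}\sin\frac{2\pi\beta}{3}$.
The admissible modes (shift eigenvalues $\omega$ with $\omega^3=-1$) correspond to two cases:
\begin{itemize}
\item $\Delta=1$, giving a two-dimensional real eigenspace;
\item $\Delta=-2$, giving a one-dimensional eigenspace, since the transfer matrix is never $-I$.
\end{itemize}
Indeed $\Delta(1)=1$ and $\Delta(\frac32)=\Delta(2)=-2$. To exclude further roots on $(0,\frac52)$ you still need strict monotonicity of $\Delta$ on its bands, which you do not address.

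\textbf{A false remark.} Your claim that $\frac52$ is realized by an $r^{5/2}\cos\frac{5\theta}{2}$-type mode is wrong. A smooth mode would have to vanish at the six points $\theta_k$, which are spaced $\frac{2\pi}{3}$ apart, while the zeros of $\cos\frac{5}{2}(\theta-\phi)$ are spaced $\frac{2\pi}{5}$ apart. In fact $\Delta(\frac52)=\frac15\notin\{1,-2\}$, and the next exponent is strictly larger than $\frac52$. This is harmless for the $O(|x|^{5/2})$ remainder.
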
 
\begin{proof} We look for homogeneous solutions to \eqref{FBlinear}, 
$r^\lambda g(\theta),$ with \begin{equation}\label{g}\text{$g$ periodic of period $4\pi$ and $g(\theta+2\pi) = - g(\theta)$.}\end{equation} 
Then,
\begin{equation}
g'' + \lambda^2 g=0 \quad \text{except for $\theta_k=(2k+1)\frac{\pi}{3},$ $k \in \mathbb Z$}
\end{equation}
and at these points $g$ satisfies the jump condition,
\begin{equation}\label{g'}
(g')^+ - (g')^- = \frac{4}{\sqrt 3} g,
\end{equation} where $(g')^\pm$ denote the limits of $g'$ from the right and  the left respectively. 
Hence, on each interval $[\theta_k, \theta_{k+1}]$, $g=g_k$ has the form
\begin{equation}\label{gk}
g_k(\theta) = a_k\cos(\lambda \theta) + b_k \sin(\lambda \theta)
\end{equation}
and the jump condition implies that $g_{k+1}$ is obtained from $g_k$ from the formula, 
\begin{equation}
g_{k+1} (\theta) = g_k (\theta) + \frac{4}{\lambda \sqrt 3} g_{k}(\theta_{k+1}) \sin(\lambda(\theta-\theta_{k+1})).
\end{equation}
The eigenvalues and corresponding eigenfunctions $\lambda^2_m, g_m$ can be obtained by the standard min-max of the Rayleigh quotient 
\begin{equation}\label{quotient}
    \min \max_{g\in V_m}\frac{\int_0^{4\pi} (g')^2d\theta + 4/\sqrt 3 \sum_{0}^{5}g^2(\theta_k)}{\int_0^{4\pi} g^2d\theta}\end{equation}
where $V_m$ is a $m$-dimensional subspace of the $L^2$ space of functions satisfying \eqref{g}. Notice that there is a one-to-one correspondence  between the homogeneity $\lambda$ and the eigenvalue $\lambda^2.$

The analogous approach without the extra addend involving the values at the jumps, produces the standard homogeneities $1/2, 3/2, 5/2, \ldots$ of multiplicity 2, with corresponding eigenfunctions
\begin{equation}\label{zeroeigen}\cos \frac{\theta}{2},\sin \frac{\theta}{2}, \cos \frac{3\theta}{2}, \sin \frac{3\theta}{2},  \ldots\end{equation}
The eigenvalues for our problem are larger than these ones,
thus in particular, the fifth homogeneity (counted with multiplicity) is larger than 5/2. On the other hand the first four homogeneities are explicit, and they are $1,1, 3/2$ and $2$, and they correspond to 
$$\p_{x_1}U_0, \p_{x_2}U_0, r^{3/2} \cos\frac{3\theta}{2}, \p_\theta U_0,$$
restricted to the unit circle.

Now the result follows by standard Fourier decomposition. Precisely, given a continuous boundary data $g(\theta)$ on $\p B_1$ satisfying \eqref{g}, we can write the Fourier expansion relative to the orthonormal basis of $L^2(\p B_1)$ given by the eigenfunctions $g_m$'s, $$g= \sum_m a_m g_m(\theta) \quad \text{in $L^2(\p B_1)$}.$$
Then, solution to \eqref{FBlinear} with boundary data $g$ is given by 
$$V= \sum_m a_m r^{\lambda_m}g_m(\theta) \quad \text{in $L^2(B_1).$}$$
where we deduced that the $L^2$ convergence of the boundary data implies the $L^2$ convergence in the interior by \eqref{|V|}. The result follows since
$$|a_m| \leq \|g\|_2, \quad \|g_m\|_\infty \leq C \quad \text{by \eqref{gk}},$$ for $C$ universal.
\end{proof}

We denote by 
 $$V_{{\bf m},p, q}(x):={\bf m} \cdot \nabla U_0  + p r^{3/2} \cos\frac{3\theta}{2} + q \p_\theta U_0$$
 and by abuse of terminology  we refer to such functions as polynomials.
 Lemma \ref{expandV} then immediately gives the following.

 \begin{lem}\label{tt}\label{expandV1} Let $V \in \mathcal D$ be a solution to \eqref{FBlinear}-\eqref{transmission} with $\|V\|_\infty \leq 1.$ There exist $\rho>0$ small universal and a polynomial $V_{{\bf m}, p, q}$ with universally bounded coefficients, such that 
\begin{equation} |V- V_{{\bf m},p,q}|\leq \frac 1 8 \rho^2 \quad \text{in $B_{2\rho}$.}\end{equation}
\end{lem}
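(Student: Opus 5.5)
The plan is to obtain the statement directly from Lemma \ref{expandV} by choosing $\rho$ so that the remainder term becomes small relative to $\rho^2$. By Lemma \ref{expandV}, any $V\in\mathcal D$ solving \eqref{FBlinear}--\eqref{transmission} with $\|V\|_\infty\le 1$ can be written as
$$V = V_{{\bf m},p,q} + R, \qquad |R(x)| \le C_0 |x|^{5/2},$$
where $|{\bf m}|,|p|,|q|\le C_0$ and $C_0$ is universal. I would take this polynomial $V_{{\bf m},p,q}$ as the one required in the conclusion; its coefficients are then universally bounded.

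The key point to check is that the constant in the remainder $O(|x|^{5/2})$ is uniform over the whole class $\|V\|_\infty\le1$. I would read this off from the Fourier argument in the proof of Lemma \ref{expandV}. Write $V=\sum_m a_m r^{\lambda_m} g_m(\theta)$ with $|a_m|\le C$ and $\|g_m\|_\infty\le C$. The first four modes give $V_{{\bf m},p,q}$. For the remaining modes, $\lambda_m\ge\lambda_5>5/2$, and the eigenvalues grow like those of the unperturbed problem, roughly $\lambda_m\sim m/4$, since the point terms in \eqref{quotient} form a bounded perturbation. Hence for $r\le 1/2$,
$$\sum_{m\ge5}|a_m| r^{\lambda_m}\|g_m\|_\infty \le C r^{5/2}\sum_{m\ge 5} 2^{-(\lambda_m-5/2)} \le C r^{5/2}.$$
A cleaner alternative, if needed, is to bound $|a_m|$ by $\|g\|_{L^2}$ and use Bessel's inequality together with the $L^\infty$ bound on $g_m$ coming from \eqref{gk}. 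This uniformity is the only step needing care, and I expect it to be the main, though mild, obstacle, since the rest is bookkeeping.

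Given this bound, in $B_{2\rho}$ we get
$$|V - V_{{\bf m},p,q}| \le C_0 (2\rho)^{5/2} = 2^{5/2}C_0\,\rho^{1/2}\,\rho^2.$$
Choosing $\rho$ small and universal so that $2^{5/2}C_0\rho^{1/2}\le 1/8$ (and $2\rho\le 1/2$) gives $|V-V_{{\bf m},p,q}|\le \frac18\rho^2$ in $B_{2\rho}$, which is the claimed estimate. The essential point is that the remainder exponent $5/2$ exceeds $2$, and this gap is exactly what allows the margin $\rho^{1/2}$ to absorb the constant.
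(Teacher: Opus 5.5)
Your proposal is correct and is the paper's argument: the paper says Lemma \ref{expandV} ``immediately gives'' this statement, meaning you take the polynomial from that expansion and choose $\rho$ universal with $C_0(2\rho)^{5/2}\le \frac18\rho^2$; your check that the $O(|x|^{5/2})$ constant is uniform is exactly the bound $|a_m|\le\|g\|_2$, $\|g_m\|_\infty\le C$ used in the paper's proof of Lemma \ref{expandV}. One minor slip: the unperturbed homogeneities are $\frac12,\frac12,\frac32,\frac32,\dots$, so $\lambda_m\ge\lambda_m^0\approx m/2$ rather than $m/4$, but only this min-max lower bound is needed for the tail sum to converge.
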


 We are finally ready to state and prove our improvement of flatness result.

\begin{prop}[Improvement of Flatness near type II cones]\label{IF} There exists $\eps_0, \delta, \rho$ universal, such that if $U$ minimizes $J(\cdot, F)$, $F=(1,1,1)$ in $B_1$ and for $\eps \leq \eps_0$
$$d(U,U_t) \leq \eps, \quad |t| \leq \delta, \quad \text{in $B_1$}$$
then $$d(U,\tilde U_{t+\eps p}) \leq \frac \eps 2 \rho^2 \quad \text{in $B_\rho(\eps \bar {\bf m})$},$$ with $\tilde U_{t+\eps p}$ a rotation by $\eps q$ and translation by $\eps \bar {\bf m}$ of $U_{t+\eps p},$ with $|\bar{\bf m}|, |p|$ and $|q| \leq C.$
 \end{prop}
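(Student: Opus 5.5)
We argue by compactness and contradiction, in the standard improvement of flatness way. Suppose the statement fails for some $\rho$ (chosen below), so that there are minimizers $U_k$ with $F=(1,1,1)$, parameters $|t_k|\le\delta_k\to 0$ and $\eps_k\to0$ with $d(U_k,U_{t_k})\le\eps_k$ in $B_1$, while the conclusion fails for every choice of bounded $\bar{\bf m},p,q$. We encode each $U_k$ and $U_{t_k}$ as double-valued functions in $\mathcal D$ on the annulus $B_{1-\delta'}\setminus B_{\delta'}$, as done before the definition of $\mathcal D$. This is legitimate because $U_{t_k}\to U_0$ uniformly and $d(U_k,U_0)\le \eps_k+o(1)$. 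Near each of the three rays only two phases are present, so $\hat u^1_k=-\hat u^2_k$ (and similarly on the other rays) solves a scalar two-phase equation $\Delta u=\chi_{\{u>0\}}-\chi_{\{u<0\}}$, and the same holds for $U_{t_k}$.

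We then set $V_k:=\eps_k^{-1}(U_k-U_{t_k})$, so that $|V_k|\le1$. Near each ray, and away from $0$, we apply Lemma \ref{improgeneral} with $u_{0,k}=U_{t_k}\to U_0$ and $a^\pm=1$; here $|\nabla U_0|\ne0$ on the open rays. Letting $\delta'\to0$ along a diagonal subsequence, $V_k\to V$ uniformly on compact subsets of $B_1\setminus\{0\}$. The limit $V\in\mathcal D$ solves \eqref{FBlinear}--\eqref{transmission} with $|V|\le1$. Lemma \ref{tt} then gives a polynomial with $|V-V_{{\bf m},p,q}|\le\frac18\rho^2$ in $B_{2\rho}$.

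The main obstacle is the origin. Uniform convergence is only known away from $0$, but we need $|V_k-V_{{\bf m},p,q}|\le \frac14\rho^2$ on all of $B_\rho$ for large $k$. To handle this, we use the distance function rather than the double-valued difference. By Lemma \ref{duv} and Remark \ref{Appppp}, $d(U_k,W)$ is subharmonic for any $W$ solving the Euler--Lagrange equations with the same $F$, so a maximum principle bounds $d$ in a small ball $B_{\eta}$ by its values on $\p B_\eta$. Combined with the Hölder decay \eqref{V0} of $V$ at $0$, and the fact that $V_{{\bf m},p,q}(0)=0$, this controls the region near the origin. To avoid smallness issues at $0$ where $t_k$ enters, we first fix $\eta\ll\rho$ and apply the annular convergence on $B_{2\rho}\setminus B_\eta$ only.

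Next we realize the polynomial by the family $U_t$. The polynomial $V_{{\bf m},p,q}$ is precisely the linearization at $t=0$ of a translation by $-\eps{\bf m}$, a rotation by $\eps q$ and a change of parameter $t\mapsto t+\eps p$. This uses \eqref{ptut} and the relations $\p_\theta U_0$ and $\nabla U_0$ for the rotation and translation. Because $U_t$ is differentiable in $t$ with the family continuous near $t=0$, the Taylor remainder satisfies
$$d\big(U_{t_k}+\eps_kV_{{\bf m},p,q},\ \tilde U_{t_k+\eps_kp}\big)=o(\eps_k)$$
uniformly on compacts away from $0$, with $\bar{\bf m}$ equal to ${\bf m}$ up to sign. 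We expect this to need some uniformity of the derivative in $t$ for $|t|\le\delta$, which holds since $\delta_k\to0$. Near the origin we again use the subharmonicity of $d(U_k,\tilde U_{t_k+\eps_kp})$ (Corollary \ref{stability} type bounds) to propagate the estimate from $\p B_\eta$ inward. Together these give $d(U_k,\tilde U_{t_k+\eps_kp})\le \frac14\rho^2\eps_k+o(\eps_k)\le\frac12\rho^2\eps_k$ in $B_\rho(\eps_k\bar{\bf m})$, which contradicts the choice of $U_k$.
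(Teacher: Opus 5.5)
Your proposal is correct and follows essentially the paper's argument: compactness via Lemma \ref{improgeneral} to a bounded solution of the linearized problem, the expansion of Lemma \ref{tt}, a first-order Taylor expansion of the rotated and translated $U_{t+\eps p}$ on an annulus avoiding the origin, and then the subharmonicity of $d(U,\tilde U_{t+\eps p})$ (Lemma \ref{duv}) to fill in the ball around the origin (the H\"older decay of $V$ at $0$ is not needed for this last step). The one point to state precisely is that the uniform Taylor remainder comes from $U_t(x)$ being $C^{1,1}$ (at least $C^1$) in $(x,t)$ on that annulus near $t=0$, which is what the paper uses, and not from $\delta_k\to 0$ alone.
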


\begin{proof} We argue by compactness. Assume the conclusion does not hold for a sequence of $t\to 0$ and a sequence of $\eps \to 0.$ Then, by Lemma \ref{improgeneral} the rescaled double-valued function $$V_\eps:= \frac{U - U_t}{\eps}$$ converges uniformly on compact subsets of $B_1 \setminus \{0\}$, to a solution $V$ of the linearized equation \eqref{FBlinear}-\eqref{transmission}, with $\|V\|_\infty \leq 1.$ We now apply Lemma \ref{tt} and conclude that

\begin{equation} |V_\eps- V_{{\bf m},p,q}|\leq \frac 1 4 \rho^2 \quad \text{in $B_{2\rho}$}\end{equation} for a polynomial $V_{{\bf m},p,q}$ with universally bounded coefficients. This implies that $U$ satisfies the conclusion of the lemma and we get a contradiction. Indeed, using that $U_t(x)$ is $C^{1,1}$ with respect to the variables $(x,t)$ in the annulus $B_{2\rho} \setminus B_{\rho/2}$, and denoting $\bar {\bf m}=-{\bf m}$,
$$\frac{\tilde{U}_{t+\eps p} - U_t}{\eps} = {\bf m} \cdot  \nabla U_t + q\p_{\theta} U_t + p \p_t(U_t) + O(\eps)$$
$$= {\bf m} \cdot \nabla U_0 + q\p_{\theta} U_0 + p \p_t(U_t)|_{t=0} + O(\eps) + O(\delta)$$
$$ = V_{{\bf m}, p,q} + O(\eps) + O(\delta)$$
with the constants for $O(\eps), O(\delta)$ depending on $\rho.$ In conclusion, as double-valued functions,

$$|U -\tilde{U}_{t+\eps p}| \leq \frac \eps 2 \rho^2 \quad \text{in $B_{2\rho} \setminus B_{\rho/2}$}$$
hence  
$$d(U,\tilde{U}_{t+\eps p}) \leq \frac \eps 2 \rho^2 \quad \text{in $B_{2\rho} \setminus B_{\rho/2}$}$$
and since the distance is subharmonic (Lemma \ref{duv}), the inequality extends to  $B_{2\rho}.$ 

\end{proof}

 We are now ready to prove our main theorem on the classification of global solutions.
 
 \medskip

\noindent
{\it Proof of Theorem \ref{mainintro}.}
Since $\p \{u^i>0\} \neq \emptyset$, by non-degeneracy
$$\max_{\p B_r} u^i \geq cr^2, \quad \text{for all large $r$'s.}$$ By Proposition \ref{not1}, any type II blow-down limit of $U$ must be equal to a rotation of the 2-homogeneous solution $U_0$. By Remark \ref{all}, given $\eps_1$ small to be made precise later, there exists $R=R(\eps_1)$ such that 
\begin{equation}\label{begin}d(U_r, U_0(e^{iq_r}x)) \leq \eps_1 \quad \text{in $B_1$, for all $r\geq R.$}\end{equation}
Given a large $r$ as above, we want to apply Proposition  \ref{IF} and iterate it. First we remark that, after relabeling $\rho$, the conclusion of the Theorem holds in the ball $B_\rho(\eps {\bf m})$. This means that the quadratic rescaling of $U$ from $B_{\rho r}(\eps {\bf m})$ to $B_1$ is approximated by a rotation of $U_{t_1}$, with error $\eps_1 \rho^\alpha$ in $B_1$, for some $|t_1|\leq C\rho^{-1} \eps_1.$ If $|t_1| \leq \delta$, we can apply the Improvement of Flatness again. We do so iteratively and obtain a sequence of nested balls $B_{r_k}(z_k)$, $r_k=r \rho^k$, in which the rescaling of $U$ from these  balls to the unit ball is approximated by a rotation of $U_{t_k}$, with error $\eps_1 \rho^{k\alpha}$ in $B_1$, for some $|t_k|\leq \delta.$ We stop the process when $|t_k|>\delta$. 

We prove by induction that $0 \in B_{r_{k}/2}(z_{k})$  and that the process cannot stop if $r_k \geq R.$ Indeed, if $r_k \geq R$ then, 
$U_{r_k}$ is $\eps_1$ close in $B_1$ to a rotation of $U_0$ (by \eqref{begin}) and $\eps_1$ close in $B_1(z_k/r_k)$ to $\tilde U_k$, that is a rotation of $U_{t_k}$ translated by $z_k/r_k \in B_{1/2}$, $|t_k| \leq \delta$. Thus, 
$$d(U_{0}(e^{iq_{r_1}}x), \tilde U_k)\leq 2\eps_1 \quad \text{in $B_{1/4}$.}$$
The inequality above implies that $$|t_k|, |z_k/r_k| \leq \eta(\eps_1), \quad \text{$\eta(\eps_1) \to 0$ as $\eps_1\to 0.$}$$
This means that $|t_{k+1}|\leq \delta$ and $|z_{k+1}/r_{k+1}| \leq 1/2$, provided $\eps_1$ is chosen small depending on $\delta$ and $\rho$. 

In conclusion, in any fixed ball $B_{R'}$ with $R' \geq R$, we can approximate the rescaling $U_{R'}$ in $B_1$ by a translation and rotation of $U_t$ with arbitrarily small error $\eps_1 \rho^{k\alpha}$ by starting the process above with $r=R'\rho^{-k}$. Hence, the desired conclusion follows.

\qed

\section{The perturbative case: $|F|$ small}\label{sec:PerturbativeCase}

In this section we prove the uniqueness of type I blow-ups and the regularity of the free boundary at such points.  Throughout the section $n=2, N=3$, and we denote by $\mathcal U_0$ the minimizer of $J_0$ with
$$|\mathcal U_0|=r^{3/2}|\cos \frac{3\theta}{2}|.$$

\begin{thm}[Uniqueness of type I blow-ups]\label{curves} Let $U$ minimize $J(\cdot, F)$ in $B_1$ with $F \geq 0$ and assume that for some $\eps\leq \eps_0$ small universal
\begin{equation}\label{ass0}d(U,\mathcal U_0) \leq \eps, \quad \|F\|_{\infty} \leq \eps^2 \quad \text{in $B_1$.}\end{equation} Then, after a translation of $U$ by $\eps  {\bf \tilde m}$, a rotation by $\eps \tilde q$, and multiplication by $(1+\eps \tilde p)$, we have
\begin{equation}\label{3/2}d(U,\mathcal U_{0}) \leq C\eps |x|^{3/2+\alpha} \quad \text{in $B_1$},\end{equation} with $|\tilde m|, |\tilde p|$ and $|\tilde q| \leq C.$ In particular, $\Gamma(U)$ consists of three $C^{1,\alpha}$ curves starting at $0$ where they meet at $120^\circ$ (triple junction).
\end{thm}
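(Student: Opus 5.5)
The plan is an improvement of flatness iteration around the type I cone $\mathcal U_0$, mirroring the scheme of Proposition \ref{IF}, but with the homogeneity now $3/2$ and the right-hand side $F$ treated as a perturbation that scales favorably. Since $|F|\le\eps^2$ and the rescaling $U_\rho(x)=\rho^{-3/2}U(\rho x)$ multiplies $F$ by $\rho^{1/2}$, the smallness of $F$ relative to the flatness improves at each step, provided the flatness decays like $\rho^{\alpha}$ with $\alpha<1/2$. This is why $\alpha<1/2$ appears and why the assumption $\|F\|\le\eps^2$ (rather than $\eps$) is natural.

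\textbf{Linearization.} Away from the origin, \eqref{ass0} together with Proposition \ref{pert} and non-degeneracy puts $\Gamma(U)$ in an $O(\eps)$ neighborhood of the three rays $\theta=\pi/3,\pi,5\pi/3$, with only two phases meeting near each ray. As in Section \ref{sec:proof-symmetric}, $U$ can then be encoded as a double-valued function in $\mathcal D$ on annuli, with $|\Delta U|\le \eps^2$. Applying Lemma \ref{improgeneral} to $V_\eps=(U-\mathcal U_0)/\eps$, with $a^\pm_k=O(\eps^2)\to 0$, the limit $V$ is harmonic off the rays and satisfies the transmission condition $V_{\nu^+}+V_{\nu^-}=0$. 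Since $\nabla \mathcal U_0$ is continuous across the rays, this condition says exactly that $V$ is a double-valued harmonic function on $B_1\setminus\{0\}$ in the class $\mathcal D$, i.e. $V(re^{2i\theta})=h(re^{i\theta})$ with $h$ odd.

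\textbf{Expansion of the limit.} Lemma \ref{extend} applies directly, so $V$ is bounded and Hölder at $0$. Expanding in the odd half-integer modes $r^{k/2}(a\cos\frac{k\theta}2+b\sin\frac{k\theta}2)$, $k$ odd, the $k=1$ mode has $V(x)\sim |x|^{1/2}$. Its coefficient must vanish, and establishing this is the delicate point. I would argue it from the free boundary condition: a $k=1$ term would displace the triple junction by order $\eps$ along $r^{1/2}$, i.e. it corresponds to translations, because $\partial_e\mathcal U_0$ is $1/2$-homogeneous. So this mode is absorbed by translating by $\eps\tilde{\bf m}$. The $k=3$ modes $r^{3/2}\cos\frac{3\theta}2$ and $r^{3/2}\sin\frac{3\theta}2$ correspond to multiplication by $(1+\eps\tilde p)$ and to $\partial_\theta\mathcal U_0$, a rotation by $\eps\tilde q$. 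Hence $|V-V_{{\bf m},p,q}|\le C|x|^{5/2}$, and choosing $\rho$ small gives $|V-V_{{\bf m},p,q}|\le \frac18\rho^{3/2+\alpha}$ in $B_{2\rho}$, since $5/2>3/2+\alpha$.

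\textbf{Improvement of flatness and iteration.} By compactness, exactly as in Proposition \ref{IF}, $U$ is $\frac12\eps\rho^{3/2+\alpha}$-close in the annulus $B_{2\rho}\setminus B_{\rho/2}$ to the translated, rotated and rescaled cone. Subharmonicity of $d$ (Lemma \ref{duv}, or Remark \ref{Appppp} to absorb the $F$ error) extends this bound inside $B_{2\rho}$. Rescaling by $\rho^{-3/2}$ gives new flatness $\eps\rho^\alpha$ and forcing $\rho^{1/2}\eps^2\le(\eps\rho^\alpha)^2$ when $\alpha<1/4$; for the range $\alpha<1/2$ one tracks $F$ separately. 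Iterating, the corrections $\tilde{\bf m},\tilde p,\tilde q$ form geometric series that converge, which yields \eqref{3/2}. Applying the same estimate at all scales, together with Proposition \ref{pert} on dyadic annuli, gives three $C^{1,\alpha}$ curves meeting at $120^\circ$.

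\textbf{Main obstacle.} I expect the hardest step to be ruling out, or absorbing, the $|x|^{1/2}$ mode: a translation moves the singular point, so one must show the junction persists near $0$ and re-center at each step. Uniform control of the linearization near the origin, where the annulus argument degenerates, is handled by Lemma \ref{extend}.
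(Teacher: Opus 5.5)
Your proposal follows the paper's route: Theorem \ref{curves} is obtained by iterating the improvement of flatness Proposition \ref{zero}. That proposition is proved by compactness, using the double-valued harmonic linearization (Lemma \ref{improzero}), Lemma \ref{extend} at the origin, the expansion of Lemma \ref{expz} and subharmonicity of $d$, while the rescaling $\rho^{-3/2}U(\rho x)$, $F_\rho=\rho^{1/2}F(\rho x)$ keeps the forcing below the square of the new flatness. One correction: the $|x|^{1/2}$ modes need not vanish and nothing delicate happens there, since with $\mathcal U_0=\mathrm{Re}\, z^{3/2}$ they are exactly $\partial_{x_1}\mathcal U_0=\frac32\mathrm{Re}\, z^{1/2}$ and $\partial_{x_2}\mathcal U_0=-\frac32\mathrm{Im}\, z^{1/2}$, i.e.\ the term ${\bf m}\cdot\nabla\mathcal U_0$ absorbed by the translation, and the re-centering you worry about is built into the iteration through the balls $B_\rho(\eps\bar{\bf m})$.
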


Theorem \ref{curves} is obtained by iterating the following improvement of flatness proposition.

\begin{prop}[Improvement of Flatness for type I cones]\label{zero} Let $U$ minimize $J(\cdot, F)$ in $B_1$ with $F \geq 0$. There exist $\eps_0, \rho>0$ universal such that if for some $\eps \leq \eps_0$

\begin{equation}\label{ass1}d(U,\mathcal U_0) \leq \eps, \quad \|F\|_{\infty} \leq \eps^2 \quad \text{in $B_1$,}\end{equation} 
then $$d(U,(1+\eps p)\tilde {\mathcal U}_{0}) \leq \frac \eps 2 \rho^{3/2} \quad \text{in $B_\rho(\eps\bar{\bf m})$},$$ with $\tilde {\mathcal U}_0$ a rotation by $\eps q$ and translation by $\eps \bar{\bf m}$ of $\mathcal U_{0},$ with $|\bar{\bf m}|, |p|$ and $|q| \leq C.$
\end{prop}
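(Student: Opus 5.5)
We argue by compactness, mirroring the proof of Proposition \ref{IF}. Suppose the conclusion fails for a sequence $U_k$, $F_k$, $\eps_k\to 0$ with $d(U_k,\mathcal U_0)\le \eps_k$ and $\|F_k\|_\infty\le \eps_k^2$. Away from the origin, in an annulus $B_{1-\delta}\setminus B_\delta$, the free boundary of $U_k$ lies in a $C\eps_k/\delta^{1/2}$-neighborhood of the three rays $\theta=\pi,\pm\pi/3$ of $\Gamma(\mathcal U_0)$. Here we use the growth $|\mathcal U_0|\sim \mathrm{dist}(x,\Gamma)\,|x|^{1/2}$ together with Proposition \ref{pert}. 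Near each ray only two phases meet, so $\hat u^1_k=-\hat u^2_k$ there. As in Section \ref{sec:proof-symmetric}, we encode $U_k$ as a double-valued function $u_1\to -u_2\to u_3\to -u_1\to\cdots$, circling the annulus twice. It satisfies $\Delta U_k = \hat F$ with $|\hat F|\le C\eps_k^2$, and $\mathcal U_0$ becomes the harmonic double-valued function $r^{3/2}\cos\frac{3\theta}{2}$.

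We apply Lemma \ref{improgeneral} with $a_k^\pm=O(\eps_k^2)$, so that $a^\pm=0$. Since $|\nabla \mathcal U_0|\neq 0$ on the open rays, the normalized differences $V_k=(U_k-\mathcal U_0)/\eps_k$ converge, locally uniformly on compact subsets of $B_1\setminus\{0\}$, to a bounded $V\in\mathcal D$. This $V$ is harmonic off the rays and satisfies $V_{\nu^+}+V_{\nu^-}=0$ on them. The transmission condition with zero right-hand side means that $V$ is simply a bounded double-valued harmonic function, i.e. $V(re^{2i\theta})=h(re^{i\theta})$ with $h$ odd and harmonic in the punctured disc. Lemma \ref{extend} gives the H\"older bound at $0$, so the singularity is removable, and $h$ is an odd harmonic function in $B_1$. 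Its expansion in the half-integer basis then gives
$$V = {\bf m}\cdot\nabla \mathcal U_0 + p\,\mathcal U_0 + q\,\partial_\theta \mathcal U_0 + O(|x|^{5/2}),$$
since the homogeneity $1/2$ modes $r^{1/2}\cos\frac\theta2, r^{1/2}\sin\frac\theta2$ are exactly the translations ${\bf m}\cdot\nabla\mathcal U_0$ (up to constants), and the $3/2$ modes $r^{3/2}\cos\frac{3\theta}2, r^{3/2}\sin\frac{3\theta}2$ are exactly dilation and rotation of $\mathcal U_0$. Hence $|V-V_{{\bf m},p,q}|\le \frac18\rho^{3/2}$ in $B_{2\rho}$ for $\rho$ small universal, because $\rho^{5/2}\ll \rho^{3/2}$.

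Translating back, we take $\tilde{\mathcal U}_0$ to be the rotation by $\eps q$, translation by $\eps\bar{\bf m}$ (with $\bar{\bf m}=-{\bf m}$, matching signs) and dilation by $(1+\eps p)$ of $\mathcal U_0$. Then
$$(1+\eps p)\tilde{\mathcal U}_0-\mathcal U_0=\eps V_{{\bf m},p,q}+O(\eps^2)$$
in the annulus $B_{2\rho}\setminus B_{\rho/2}$, where $\mathcal U_0$ is smooth off the rays; near the rays we compare the double-valued, $C^{1,\alpha}$ representations. This yields $d(U_k,(1+\eps p)\tilde{\mathcal U}_0)\le \frac{\eps}{4}\rho^{3/2}$ on that annulus for $k$ large. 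To pass to the full ball $B_\rho(\eps\bar{\bf m})$, we use Remark \ref{Appppp}: $(1+\eps p)\tilde{\mathcal U}_0$ solves \eqref{EL3} with $H=0$, so
$$\Delta d\ge -\|F\|_\infty\ge -\eps^2.$$
The maximum principle in $B_{\rho}$ then adds only $C\eps^2\rho^2\ll \eps\rho^{3/2}$, which gives the bound $\frac\eps2\rho^{3/2}$ and the contradiction.

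The main obstacle is the behavior near the origin. The convergence of Lemma \ref{improgeneral} only holds away from $0$, so two things need care. First, we must justify that the limit is a genuine double-valued harmonic function with no singular part at $0$; for this, boundedness plus Lemma \ref{extend} suffices. Second, we must control $d$ inside the small ball, since the free boundary of $U$ near $0$ is unknown and may not be a triple junction. Subharmonicity of $d$ up to the $\eps^2$ error handles the second point. A further technical point is that the translation by $\eps\bar{\bf m}$ must be small relative to $\rho$, which holds for $\eps\le\eps_0(\rho)$.
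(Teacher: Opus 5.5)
Your proposal is correct and follows the paper's route: the compactness argument of Proposition \ref{IF}, using Lemma \ref{improzero} for the limit, Lemma \ref{extend} for the removable singularity, the expansion of Lemma \ref{expz}, and the subharmonicity of $d$ (with the $\eps^2$ error) to pass from the annulus to the full ball. One small correction: Lemma \ref{improgeneral} is stated for constant $a_k^\pm$, whereas here $F$ is only bounded, so it does not apply literally; you do not need it, because $\mathcal U_0$ is a harmonic double-valued function and $|\Delta U_k|\le C\eps_k^2$ on the branches, so $|\Delta V_k|\le C\eps_k$ and interior estimates give the convergence directly, which is exactly Lemma \ref{improzero}.
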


Notice that the proposition above can be iterated indefinitely, which gives the conclusion of Theorem \ref{curves}. This is because
$U_\rho:= \rho^{-3/2}U(\rho x)$ minimizes $J(\cdot, F_\rho)$ for $F_\rho (x)= \rho^{1/2}F(\rho x)$ and \eqref{ass1} is satisfied by $U_\rho$ in $B_1$ (modulo a translation by $O(\eps)$ and division by $1+ O(\eps)$) with $\eps$ replaced by $\eps/2$ since $$\|F_\rho\|_\infty \leq \rho^{1/2} \eps^2 \leq \frac 1 2 (\frac \eps 2)^2.$$

A consequence of Theorem \ref{curves} is that any global solution must have at most quadratic growth as claimed in Proposition \ref{not1}, in Section \ref{sec:Asympt-global}. If $F>0$ is constant then by non-degeneracy solutions must grow precisely at a quadratic rate and $\mathcal U_0$ cannot occur as a blow-down. In other words, the cone $\mathcal U_0$ can occur as a blow-up limit but not as a blow-down limit of a global solution.

\begin{proof}[Proof of Proposition \ref{not1} (see Section \ref{sec:Asympt-global})] 
Assume the conclusion does not hold. Then, by Lemma \ref{time}, the blow-down must be $\mathcal{U}_0$ (up to rotations). Hence, the rescalings $U_j(x) := \frac{U(r_j x)}{a_{r_j}}$ satisfy the hypothesis of Proposition \ref{zero}, which implies the conclusion of Theorem \ref{curves} holds (for a given $\epsilon$ and for all large $j$). 
In particular, by applying this Theorem for a fixed $U_j$ with $j$ large, we conclude that $U$ has a triple junction point. Without loss of generality we assume that this triple junction occurs at the origin. This means that all the $U_j$'s for all large $j$'s have a triple junction at the origin and therefore we do not need to translate $U_j$ in \eqref{3/2}.

Then, from \eqref{3/2}, we get
$$\max_{B_r}|U_j| \geq \frac 12 r^{3/2}, \quad r \leq 1.$$ 
Hence,
$$\frac{a_{r r_j}}{a_{r_j}} \geq \frac 1 2 r^{3/2}$$ and, by choosing $r=1/r_j$, we get
$$a_{r_j} \leq 2 a_1 r_j^{3/2}.$$ This contradicts the fact that $\lim_j \frac{a_{r_j}}{r_j^2} = +\infty.$
\end{proof}

We argue as in the previous section to associate to a minimizer $U$ of $J(\cdot, F)$ in $B_1$ that satisfies
$$d(U, \mathcal U_0) \leq \eps, \quad \|F\|_\infty \leq \eps^2,$$
a double-valued function still denoted by $U \in \mathcal D$ in an annulus $B_{1-\delta} \setminus B_\delta.$ In this process, property \eqref{UF} is guaranteed by the perturbation Lemma \ref{pert}. The function $U$ satisfies
$$|\Delta U| \leq \eps^2,$$
and we can let $\delta\to 0$ as $\eps \to 0.$

We immediately have the following.

\begin{lem}\label{improzero} Let $U$ be as above. Set 
$$V_\eps:= \frac{U-\mathcal U_0}{\eps},$$
For a sequence $\eps_k \to 0$, up to a subsequence, $V_{\eps_k}$ converges uniformly on compact subsets of $B_1 \setminus \{0\}$ to a double-valued function $V$ that satisfies 
$$\Delta V = 0 \quad \text{in $B_1 \setminus \{0\}.$}$$
\end{lem}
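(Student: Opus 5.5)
The plan is to follow the same scheme as in the type II case (Lemma \ref{improgeneral}), now around the cone $\mathcal U_0$. The key difference is that the zero set of $\mathcal U_0$ carries no transmission term. Since $a^\pm=\eps^2$-size forcing divided by $\eps$ vanishes in the limit, we expect only a pure harmonic condition across the rays.

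\textbf{Step 1: compactness.} Fix $\delta>0$ and work in the annulus $A_\delta=B_{1-\delta}\setminus B_\delta$. By the perturbative Proposition \ref{pert} and non-degeneracy, $\Gamma(U)$ lies in an $O(\eps)$ neighborhood of the three rays $\theta=\pi,\pm\pi/3$, and near each ray only two components meet. Hence the double-valued function $U\in\mathcal D$ and $\mathcal U_0\in\mathcal D$ are well defined there, with $|\Delta U|\le \eps^2$ and $\Delta \mathcal U_0=0$ (the latter is the harmonic double-valued function $r^{3/2}\cos\frac{3\theta}{2}$ once signs are alternated). Therefore
$$|\Delta V_\eps|\le \eps, \qquad |V_\eps|\le 1.$$
Working with the single-valued odd representative $h$ from \eqref{Uh} on the doubled annulus, interior $W^{2,p}$ (or $C^{1,\alpha}$) estimates give uniform $C^{1,\alpha}$ bounds on compact subsets of $A_\delta$. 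Arzel\`a--Ascoli then yields a uniformly convergent subsequence. A diagonal argument in $\delta\to0$ (with $\delta=\delta(\eps)\to0$) gives convergence on compact subsets of $B_1\setminus\{0\}$.

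\textbf{Step 2: the limit equation.} Passing to the limit in $\Delta V_\eps=O(\eps)$ in the distributional sense, for the representative $h$, gives $\Delta h=0$. Therefore $V$ is a harmonic double-valued function in $B_1\setminus\{0\}$. The point needing care is that $\Delta U$ holds as a genuine equation across the free boundary near each ray, not merely as an inequality. This follows as in \eqref{double}: near the ray separating $u^1$ and $u^2$ we have $\hat u^1=-\hat u^2$, so
$$\Delta \hat u^1=\hat f^1_U,$$
a bounded function of size $\eps^2$. Consequently there is no singular measure on the free boundary, which is exactly what makes $\Delta V=0$ hold across the rays.

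\textbf{Main obstacle.} The only delicate step is the uniformity near the rays. Proposition \ref{pert} requires the normalization $|U(\frac12 e_n)|\ge\mu$ in a rescaled ball. This holds because $|\mathcal U_0|\sim r^{3/2}$ is bounded below away from the rays inside $A_\delta$, with $\mu$ depending on $\delta$. Hence $\eps_0$ must depend on $\delta$, which is why $\delta\to0$ only as $\eps\to0$.
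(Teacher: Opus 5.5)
Your proposal is correct and follows the paper's essentially one-line argument. The double-valued $U$ built on $B_{1-\delta}\setminus B_\delta$ (with $\delta=\delta(\eps)\to 0$) via Proposition \ref{pert} satisfies $|\Delta U|\le \eps^2$, and $\mathcal U_0$ is the harmonic double-valued function $r^{3/2}\cos\frac{3\theta}{2}$, so $|\Delta V_\eps|\le \eps$ and $|V_\eps|\le 1$, and compactness yields a harmonic limit.

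One small correction: non-degeneracy is not usable here, since $\|F\|_\infty\le\eps^2$ and $F$ may even vanish. The localization $\Gamma(U)\subset\{|\mathcal U_0|\le \eps\}$ follows directly from $d(U,\mathcal U_0)\le\eps$. The absence of a third phase or a zero phase near the rays comes from Proposition \ref{pert}, as you correctly indicate in your last paragraph.
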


Lemma \ref{extend} applies in this context as well, hence $V$ is defined at zero and harmonic in $B_1$. The proof of Lemma \ref{expandV} contains the proof of the following expansion result. The difference here is that $U_0$ is replaced by $\mathcal U_0.$

\begin{lem}[Expansion in terms of homogeneous solutions $\mathcal U_0$]\label{expz} Let $V$ be a double-valued harmonic function in $B_1$ with $\|V\|_\infty \leq 1.$ Then 
$$V(x)= V_{{\bf m},p, q}(x)  + O(|x|^{5/2}) = 
{\bf m} \cdot \nabla \mathcal U_0 + p r^{3/2} \cos\frac{3\theta}{2} + q \p_\theta \mathcal U_0 + O(|x|^{5/2}),$$
where $|{\bf m}|, |p|$ and $|q|$ are universally bounded.
\end{lem}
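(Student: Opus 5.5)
We work with the representation \eqref{Uh}: a double-valued $V\in\mathcal D$ on $B_1$ corresponds to an odd function $h(z)$ on the unit disk via $V(z^2)=h(z)$. The plan is to convert the expansion into the Taylor expansion of $h$ and then read off the first few terms in terms of $\mathcal U_0$.

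\textbf{Step 1: $h$ is harmonic in the disk.} Since $V$ is harmonic in $B_1\setminus\{0\}$ as a double-valued function and $z\mapsto z^2$ is conformal away from the origin, $h$ is harmonic in $B_1\setminus\{0\}$. By Lemma \ref{extend} (applied with no rays, so \eqref{b} holds trivially), $V$ is Hölder continuous at $0$ with $V(0)=0$, so $h$ is bounded. The singularity at $0$ is therefore removable, $h$ is harmonic in $B_1$, and $\|h\|_\infty\le 1$.

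\textbf{Step 2: Taylor expansion of $h$.} Because $h$ is odd and harmonic, its expansion in polar coordinates $(s,\varphi)$ contains only odd homogeneities:
$$h=\sum_{k\ \mathrm{odd}} s^k\,(\alpha_k\cos k\varphi+\beta_k\sin k\varphi),$$
with $|\alpha_k|,|\beta_k|\le 2$ by orthogonality on the circle (the estimate is first taken at radius $1$, or at radius $1/2$ to absorb boundary issues).

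Substituting $s=r^{1/2}$ and $\varphi=\theta/2$ gives
$$V=\sum_{k\ \mathrm{odd}} r^{k/2}\bigl(\alpha_k\cos\tfrac{k\theta}{2}+\beta_k\sin\tfrac{k\theta}{2}\bigr).$$
The tail $k\ge 5$ is bounded by $C\sum_{k\ge5} r^{k/2}\le C r^{5/2}$ for $r\le 1/4$, and trivially by a constant for $r\in[1/4,1]$. This tail is therefore $O(|x|^{5/2})$.

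\textbf{Step 3: identify the $k=1$ and $k=3$ terms with $\mathcal U_0$.} As a double-valued function (with the sign conventions of the associated $\mathcal D$ function), $\mathcal U_0=r^{3/2}\cos\frac{3\theta}{2}$. We then compute:
\begin{itemize}
\item $\partial_\theta\mathcal U_0=-\frac32 r^{3/2}\sin\frac{3\theta}{2}$, which spans the $\sin$ part of the $k=3$ space;
\item $p\,r^{3/2}\cos\frac{3\theta}{2}$ gives the $\cos$ part of the $k=3$ space;
\item $\nabla\mathcal U_0$ is a double-valued harmonic function homogeneous of degree $1/2$, namely $\partial_{x_1}\mathcal U_0=\frac32 r^{1/2}\cos\frac{\theta}{2}$ and $\partial_{x_2}\mathcal U_0=-\frac32 r^{1/2}\sin\frac{\theta}{2}$. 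These can be checked via $\partial_z$ of $\mathrm{Re}\, z^{3/2}$, which equals $\frac32 z^{1/2}$.
\end{itemize}
Hence ${\bf m}\cdot\nabla\mathcal U_0$ spans the whole $k=1$ space. The coefficients are linear, invertible functions of $\alpha_1,\beta_1,\alpha_3,\beta_3$ with fixed constants, so they are universally bounded.

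The only delicate point is Step 1, the removable singularity. It relies on the boundedness of $V$ near the origin, which in turn comes from Lemma \ref{extend}: since $h$ is bounded and harmonic in a punctured disk, the singularity is removable.
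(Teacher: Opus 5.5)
Your proof is correct, and the modes you extract are exactly the paper's. The paper reruns the Fourier argument of Lemma \ref{expandV} without the jump terms, so the angular eigenfunctions on the double cover are $\cos\frac{k\theta}{2},\sin\frac{k\theta}{2}$ ($k$ odd), with homogeneities $\frac12,\frac12,\frac32,\frac32,\frac52,\dots$. The first four are identified with $\nabla\mathcal U_0$, $\mathcal U_0$ and $\p_\theta\mathcal U_0$, just as in your Step 3.

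The difference is how each of you justifies that $V$ equals its series down to the origin. The paper expands the boundary trace in $L^2(\p B_1)$ and passes to the interior using Lemma \ref{extend}: subharmonicity of $|V|$, continuity at $0$, and uniqueness. You instead pull back by the conformal map $z\mapsto z^2$, get an odd bounded harmonic function on the punctured disk, and use the classical removable singularity theorem and Taylor series. For the purely harmonic case your route is more elementary and self-contained. In fact Lemma \ref{extend} is not needed at all: boundedness of $h$ is simply the hypothesis $\|V\|_\infty\le 1$. The paper's framework, on the other hand, applies verbatim to the transmission problems of Lemmas \ref{expandV} and \ref{L8.2}. There the pulled-back function is no longer harmonic across the preimages of the rays, so the Taylor-series shortcut is unavailable.

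A few small corrections:
\begin{itemize}
\item The paper's representation \eqref{Uh} is $(r,\theta)\mapsto(r,2\theta)$, which is not conformal. Your $h(z)=V(z^2)$ is therefore a different, equally valid representative of the same element of $\mathcal D$, and it is precisely this choice that makes $h$ harmonic.
\item The bound $|\alpha_k|,|\beta_k|\le 2$ comes from letting the radius tend to $1$. At radius $\frac12$ you would only get $2^{k+1}$, which still suffices for the tail estimate when $r$ is small.
\item The correct identity is $\p_{x_1}\mathcal U_0-i\,\p_{x_2}\mathcal U_0=\frac{d}{dz}z^{3/2}=\frac32 z^{1/2}$, i.e.\ $2\p_z$ rather than $\p_z$. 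This is what yields the formulas you wrote.
\end{itemize}
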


With these tools, the proof of the improvement of flatness Proposition \ref{zero} follows with the same arguments as in Proposition \ref{IF} and we omit it.

Next we use that $\frac 1 t U_t \to \mathcal U_0$ in $B_1$ as $t \to \infty$ and obtain a version of Proposition \ref{IF} for large values of $t$.

\begin{lem}\label{IF2} Let $U$ be a minimizer of $J(\cdot,F)$, $F=(1,1,1)$ and assume that 
$$ d(U,U_t) \le \eps t \quad \mbox{in} \quad B_1,$$
for some $t \ge \delta^{-1}$, $\eps \le \eps_0$. Then, for $t'=(1+ p \eps)t$,
$$d(U,\tilde U_{t'}) \le \frac \eps 2 t' \rho^{3/2} \quad \mbox{in} \quad B_\rho(\eps \bar {\bf m}),$$
with $\tilde U_{t'}$ a rotation by $\eps q$ and translation by $\eps \bar {\bf m}$ of $U_{t'},$ with $|\bar{\bf m}|, |p|$ and $|q| \leq C.$
\end{lem}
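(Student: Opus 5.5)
The plan is to reduce Lemma \ref{IF2} to the type I improvement of flatness, Proposition \ref{zero}, by dividing by $t$. Set $W:=t^{-1}U$. Since $J(\cdot,F)$ scales linearly in $F$ under multiplication by a constant, $W$ minimizes $J(\cdot, F/t)$, and $\|F/t\|_\infty = t^{-1}\le \delta$. The hypothesis becomes
$$d(W, t^{-1}U_t)\le \eps \quad \text{in } B_1 .$$
For $t>0$, $U_t = U_0 + t\,r^{3/2}\cos\frac{3\theta}{2}$ exactly, by \eqref{36}. Hence $t^{-1}U_t = \mathcal U_0 + t^{-1}U_0$, and $d(t^{-1}U_t,\mathcal U_0)\le C t^{-1}\le C\delta$ in $B_1$.

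However, $C\delta$ need not be small compared with $\eps$, so I will not compare $W$ with $\mathcal U_0$ directly. Instead I rerun the compactness argument of Proposition \ref{zero} with the reference profile $t^{-1}U_t$. This is the analogue of the use of Lemma \ref{improgeneral} in Proposition \ref{IF}.

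Argue by contradiction along sequences $\eps_k\to0$, $t_k\ge\delta^{-1}$, and form
$$V_k:=\eps_k^{-1}\bigl(W_k - t_k^{-1}U_{t_k}\bigr).$$
In an annulus $B_{1-\eta}\setminus B_\eta$, both $W_k$ and $t_k^{-1}U_{t_k}$ are close to the profile $\mathcal U_0+s\,U_0$ with $s=t_k^{-1}\le\delta$. This profile has nondegenerate gradient on its rays, provided $\delta$ is small depending on $\eta$. Proposition \ref{pert} then lets us represent both functions as double-valued functions in $\mathcal D$ solving $\Delta = a^\pm_k$ with $a^\pm_k = t_k^{-1}$.

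Lemma \ref{improgeneral} then applies, with $u_{0,k}=t_k^{-1}U_{t_k}$ converging to $\mathcal U_0+s_\infty U_0$. Since $|\nabla \mathcal U_0|\sim r^{1/2}$ on the rays, the transmission coefficient $(a^++a^-)/|\nabla u_0|$ is of order $\delta$. This is the main obstacle: the limit $V$ solves the linearized problem around a profile that is only $\delta$-close to $\mathcal U_0$, not the pure harmonic problem of Lemma \ref{improzero}.

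I would handle this by passing to a double limit, letting $\delta\to0$ in the contradiction sequence as well (i.e.\ taking $t_k\to\infty$). Then $s_\infty=0$, the coefficient vanishes, and $V$ is a bounded double-valued harmonic function in $B_1\setminus\{0\}$. The bound $|V|\le 1$ and Lemma \ref{extend} make $V$ harmonic across $0$, and Lemma \ref{expz} gives
$$|V-V_{{\bf m},p,q}|\le \tfrac18\rho^{3/2}\quad\text{in } B_{2\rho},$$
for $\rho$ small universal. For moderate $t$ no separate argument is needed, since the statement only requires $t\ge\delta^{-1}$ with $\delta$ chosen small.

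To conclude, I identify $V_{{\bf m},p,q}$ with the first-order variation of $t^{-1}\tilde U_{t'}$ with respect to translation, rotation and the parameter. Using $t^{-1}U_{(1+p\eps)t} = t^{-1}U_t + p\eps\, r^{3/2}\cos\frac{3\theta}{2}$ exactly, together with $\nabla(t^{-1}U_t) = \nabla\mathcal U_0 + O(t^{-1})$ in the annulus $B_{2\rho}\setminus B_{\rho/2}$, I get
$$\frac{t^{-1}\tilde U_{t'} - t^{-1}U_t}{\eps} = V_{{\bf m},p,q} + O(\eps)+O(\delta).$$
Hence
$$d(W, t^{-1}\tilde U_{t'})\le \tfrac{\eps}{2}\rho^{3/2}\quad\text{in the annulus}.$$
Both $W$ and $t^{-1}\tilde U_{t'}$ satisfy the Euler--Lagrange inequalities \eqref{EL2} with the same forcing $F/t$, so $d$ is subharmonic by Lemma \ref{duv}, and the bound extends to the whole ball $B_\rho(\eps\bar{\bf m})$. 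Multiplying by $t$, and absorbing $t$ versus $t'=(1+p\eps)t$ into the constant, gives the claim.
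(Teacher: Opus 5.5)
Your proposal is correct and follows essentially the same route as the paper. You divide by $t$, run the compactness argument with $\eps\to0$ and $t\to\infty$ simultaneously so that Lemma \ref{improgeneral} yields a double-valued harmonic limit, expand it via Lemma \ref{expz}, identify $V_{{\bf m},p,q}$ with the first variation of $t^{-1}\tilde U_{t'}$, and extend from the annulus using subharmonicity of $d$. The only cosmetic difference is that you use the exact identity $t^{-1}U_{(1+p\eps)t}=t^{-1}U_t+p\eps\,\mathcal U_0$ from \eqref{36}, whereas the paper uses $t^{-1}U_t\to\mathcal U_0$ in $C^1$ together with $t\frac{d}{dt}(t^{-1}U_t)\to0$; the paper's form is the one that carries over to general $F$.
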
 

The conclusion of the lemma can be iterated indefinitely since the rescaling $U_\rho(x)=U(\rho x)/\rho^2$ satisfies (modulo a translation and rotation of order $O(\eps)$) the hypotheses with $t$ and $\eps$ replaced by $t' \rho^{-1/2}$ and $\eps/2$. 

\begin{proof}
We divide by $t$, and then the rescaled error function 
$$V_{\eps,t}=\frac{\frac 1 t U- \frac 1 t U_t}{\eps},$$
by Lemma \ref{improgeneral} converges (on subsequences) as $t \to \infty$, $\eps \to 0$ uniformly on compact sets of $B_1 \setminus \{0\}$ to a double-valued harmonic function $V \in \mathcal D$. By Lemma \ref{expz}, we have
$$|V-V_{{\bf m},p,q}| \le \frac 1 8 \rho^{3/2} \quad \mbox{in} \quad B_{2\rho}.$$
The conclusion follows by using that (see \eqref{36})
$$\frac 1 t U_t \to \mathcal U_0 \quad \mbox{in} \quad C^1(B_{2 \rho} \setminus B_{\rho/2}), \quad \quad t\frac {d}{dt} (\frac 1 t U_t) \to 0 \quad \mbox{uniformly in $B_{2 \rho} \setminus B_{\rho/2}$},$$ 
as $t \to \infty$, hence
\begin{align*}
\frac 1 t \tilde U_{t'}& =(1+p \eps)\frac {1} {t'} \tilde U_{t'}\\
&=\frac 1 t \tilde U_t + p \eps \frac {1} {t'} \tilde U_{t'} + o(1)\eps\\
&= \frac 1 t U_t + \eps q \p_\theta \mathcal U_0 + \eps {\bf m} \cdot \nabla \mathcal U_0 + p \eps \mathcal U_0 + o(1)\eps\\
&= \frac 1 t U_t+ \eps V_{{\bf m},p,q} + o(1)\eps,
\end{align*}
with $o(1) \to 0$ as $t \to \infty$, which gives the desired conclusion in the annular region $B_{2\rho} \setminus B_{\rho}$. This can be extended to the whole region because of the subharmonicity of $d(U, \tilde{U}_{t'}).$

\end{proof}

Combining Proposition \ref{IF} with Lemma \ref{IF2} we find that an $\eps$ perturbation of $U_0$ can be well approximated by one solution $U_t$ at all scales.

\begin{prop}[Perturbations of type II cones]\label{AS} Let $U$ be a minimizer of $J(\cdot,F)$, $F=(1,1,1)$ and assume that we have
$$ d(U,U_0) \le \eps \quad \mbox{in} \quad B_1,$$
for some $\eps \le \eps_1$ small. Then, after a translation and rotation of $U$ by a factor of $O(\eps)$, we have 
$$d(U,U_t) \le C \eps r^{\alpha}\|U_{t^+}\|_{L^\infty(B_r)},$$
for some $|t| \le C \eps$, and all $r \le 1 $.

In particular, if the origin is a triple junction point with a type II blow-up, then the conclusion holds with $t=0$. 
\end{prop}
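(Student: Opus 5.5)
The plan is to iterate the two improvement-of-flatness results, Proposition \ref{IF} for $|t|\le\delta$ and Lemma \ref{IF2} for $t\ge\delta^{-1}$, at dyadic-type scales $\rho^k$. After each quadratic rescaling $U_\rho(x)=\rho^{-2}U(\rho x)$ the parameter $t$ changes to $t\rho^{-1/2}$, because $U_t$ rescales to $U_{t\rho^{-1/2}}$ by the homogeneity relation $u^1_t=t^4u^1_{\pm1}(x/t^2)$. So $|t|$ grows geometrically, and we track the regime we are in.

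We start from $d(U,U_0)\le\eps$ and apply Proposition \ref{IF} with $t=0$. This yields $t_1=\eps p_1$ and a rigid motion of size $O(\eps)$ giving error $\frac\eps2\rho^2$ in $B_\rho(\eps\bar{\bf m}_1)$. After rescaling, the hypothesis becomes $d(U^{(1)},\tilde U_{t_1\rho^{-1/2}})\le \eps/2$ in $B_1$. Iterating, at step $k$ the error is $\eps 2^{-k}=\eps\rho^{k\alpha'}$ relative to the quadratic scale. The parameters satisfy $\tau_{k+1}=(\tau_k+\eps2^{-k}p_k)\rho^{-1/2}$. Translating back to the original scale, $t_k:=\tau_k\rho^{k/2}$ converges, since $|t_{k+1}-t_k|\le C\eps 2^{-k}\rho^{(k+1)/2}$. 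The limit $t$ satisfies $|t|\le C\eps$. The accumulated translations and rotations are summable for the same reason, so we fix the total rigid motion once at the start. We continue as long as $|\tau_k|\le\delta$. In this regime $\|U_{t^+}\|_{L^\infty(B_r)}\sim r^2$, and the error $\eps 2^{-k}\rho^{2k}$ is of the form $C\eps r^\alpha r^2$ for $r=\rho^k$.

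If $t>0$, at some step $|\tau_k|$ exceeds $\delta$ (and even exceeds $\delta^{-1}$ after finitely many steps, about $\log(1/\delta)$, where we may use Proposition \ref{IF} with a slightly larger range or compactness). From then on we switch to Lemma \ref{IF2}. Its hypothesis $d\le\eps' t$ is satisfied because the absolute error $\eps 2^{-k}$ is much smaller than $\tau_k$. Lemma \ref{IF2} gives multiplicative updates $t'=(1+p\eps')t$ and error $\frac{\eps'}{2}t'\rho^{3/2}$. Under the $3/2$-rescaling, the relative error halves while $\tau$ scales as the type I cone. The product $\prod(1+Cp_k\eps 2^{-k})$ converges, so the limiting $t$ in the original scale is again determined with $|t-t_k|$ summable. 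Now $\|U_{t^+}\|_{L^\infty(B_r)}\sim t r^{3/2}$ for small $r$. The iterated error $\eps 2^{-k}\,t\,r^{3/2}$ is exactly of the form $C\eps r^\alpha\|U_{t}\|_{L^\infty(B_r)}$.

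If $t<0$, then $\tau_k\to-\infty$, and $U_t$ has no triple junction near $0$ at small scales (the zero phase fattens). Here we use $\|U_{t^+}\|=\|U_{|t|}\|$ as the normalization: it dominates $|U_t|$ and is comparable to $|t|r^{3/2}$. We must check that once $|\tau|\ge\delta^{-1}$ the error bound persists. The negative-$t$ analogue of Lemma \ref{IF2} is not available, since $\frac1t U_t$ does not converge to $\mathcal U_0$ for $t<0$. Instead we stop iterating and use stability (Corollary \ref{stability} and Remark \ref{Appppp}). Namely, $d(U,U_t)$ is subharmonic, and at the stopping scale it is already bounded by $\eps 2^{-k}\rho^{2k}\lesssim C\eps r_k^\alpha |t| r_k^{3/2}$. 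Smaller balls inherit the bound $\sup_{B_r}d\le\sup_{B_{r_k}}d$, which is weaker than needed. So the step I expect to be the main obstacle is exactly this $t<0$ regime. The first thing I would try is to build $U_t$ for $t<0$ as the obstacle-problem solution and compare to $U$ via the distance, using that $|t|r^{3/2}$ at scale $r_k\sim (|t|/\delta)^2$ versus smaller $r$. A fallback is to accept the bound with $\|U_{t^+}\|$ at the stopping scale, and choose $\alpha$ small so it absorbs the constant.

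Finally, if $0$ is a triple junction with a type II blow-up, then $t=0$. If $t>0$, the blow-up at the junction would be type I by the above, giving growth $r^{3/2}$. If $t<0$, then $0$ would not be a junction. Hence $t=0$, and the estimate reads $d(U,U_0)\le C\eps r^{2+\alpha}$.
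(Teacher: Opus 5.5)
Your handling of the regime $|\tau_k|\le\delta$, and of the case where the iteration stops with $t_{k_0}>0$ (switching to Lemma \ref{IF2} once the rescaled parameter exceeds $\delta^{-1}$), matches the paper's proof. The genuine gap is the case where the iteration of Proposition \ref{IF} stops with $t_{k_0}<0$. Here $k_0$ is the first index with $|t_{k_0}|r_{k_0}^{-1/2}>\delta$. You flag this case but do not close it, and your fallback fails. The bound $\sup_{B_r}d\le\sup_{B_{r_k}}d$ inherited from the stopping scale is a fixed positive number, while the right-hand side tends to $0$ as $r\to0$. So no choice of $\alpha$ or $C$ can absorb it.

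The missing idea is that the model vanishes on a large ball, and non-degeneracy transfers this to $U$. Indeed $U_{-1}$ vanishes on some $B_c$ (Section \ref{subsec:general-global}), so by $u^1_t=t^4u^1_{-1}(x/t^2)$ the function $U_t$ vanishes on $B_{ct^2}$. Let $k_1$ be the first index with $|t_{k_0}|r_{k_1}^{-1/2}>\delta^{-1}$. Then, for $\delta$ small, $\tilde U_{t_{k_0}}\equiv0$ on $B_{r_{k_1}}(z_{k_0})$, hence $|U|=d(U,0)\le C(\delta)\eps r_{k_1}^{2+\alpha}$ there. Lemma \ref{ND} then forces $U\equiv0$ on $B_{r_{k_1}/2}(z_{k_0})$ once $\eps$ is small depending on $\delta$. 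So $d(U,\tilde U_{t_{k_0}})\equiv0$ on that ball, and the estimate is trivial at all smaller scales.

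There are two further corrections.

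\emph{The normalization.} Here $t^+=\max\{t,0\}$, so for $t<0$ one has $U_{t^+}=U_0$. The target is then $C\eps r^{2+\alpha}$, which is stronger than the bound with $\|U_{|t|}\|_{L^\infty(B_r)}\sim|t|r^{3/2}$ that you were aiming for. Only exact vanishing of $d$ near the origin can give it.

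\emph{Passing from $|\tau|>\delta$ to $|\tau|>\delta^{-1}$.} You do not have a version of Proposition \ref{IF} for $|t|\le\delta^{-1}$, since its compactness proof linearizes around $U_0$. You also do not need one. Freeze $t=t_{k_0}$ for the $m_0(\delta)$ intermediate steps, using $d(U,\tilde U_{t_{k_0}})\le\eps r_{k_0}^{2+\alpha}\le\rho^{-m_0(2+\alpha)}\eps r_k^{2+\alpha}$ on $B_{r_k}(z_{k_0})$ for $k_0\le k\le k_1$. The loss is absorbed into $C(\delta)$.

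Finally, it is cleaner to split cases according to whether the iteration terminates and the sign of $t_{k_0}$ at termination. The sign of a limit $t$ is only defined after the iteration is complete.
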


 The conclusion can be restated that the solution $U$ is obtained from a $U_t$ by a $C^{1,\alpha}$ diffeomorphism $\Phi$:
 $$U( \Phi(x))=U_t(x), \quad \quad [D \Phi]_{C^\alpha} \le C \eps, \quad D \Phi(0)=I.$$

\begin{proof} We can iterate Proposition \ref{IF} and obtain a sequence of nested balls $B_{r_k}(z_k)$ and $t_k$ such that
$$d(U, \tilde U_{t_k}) \le \eps r_k^{2+\alpha} \quad \mbox{in} \quad B_{r_k}(z_k),$$
$$r_k=\rho^k, \quad |t_{k+1}-t_k| \le C \eps r_k^{\frac 12 + \alpha}, \quad |z_{k+1}-z_k| \le C \eps r_k^{1+\alpha}, \quad z_0=0, \quad t_0=0,$$
and $\tilde U_{t_k}$ denotes a rotation of $U_{t_k}$ by angle $q_k$, and then translated at $z_k$, with $|q_{k+1}-q_k| \le C \eps r_k^\alpha$.

The iteration of Proposition \ref{IF} stops the first time when 
\begin{equation}\label{tkrk}|t_k|r_k^{-1/2} > \delta.
\end{equation}If the iteration continues indefinitely, then $t_k \to 0$, and we obtain the conclusion with $t=0$.
Otherwise, let $k_0$ be the first $k$ when \eqref{tkrk} holds. Notice that for $k=k_0, k_0+1,\ldots,k_0+m_0$ for some $m_0$ large depending on $\delta$ we have
$$d(U, \tilde U_{t_{k_0}}) \le C(\delta) \eps r_k^{2+\alpha} \quad \mbox{in} \quad B_{r_k}(z_{k_0}),$$
and $$|t_{k_0}| r_{k}^{-1/2} > \delta^{-1} \quad \mbox{for the last value $k=k_1:=k_0+m_0$}.$$
If $t_{k_0}<0$, then both $U$ and $\tilde U_{t_{k_0}}$ vanish in the last ball of radius $r_{k_1}$ and the conclusion of the proposition is satisfied with $t=t_{k_0}$.

If $t_{k_0}>0$, then in the last ball of radius $r_{k_1}$, we can start iterating Lemma \ref{IF2} indefinitely, and find that for all $k \ge k_1$
$$d(U, \tilde U_{t_k}) \le C' \eps \, t_k \, r_k^{ \frac 3 2 +\alpha} \quad \mbox{in} \quad B_{r_k}(z_k),$$
$$|t_{k+1}/t_k -1| \le C'' \eps r_k^{\alpha}, \quad |z_{k+1}-z_k| \le C'' \eps r_k^{1+\alpha}, \quad |q_{k+1}-q_k| \le C'' \eps r_k^\alpha,$$
which gives the desired conclusion for some $t=t_{k_0} + O(\eps r_{k_0}^{\frac 12 +\alpha})$.

\end{proof}

Combining the proposition above with Theorem \ref{curves}, we conclude by compactness that the following holds for $F=(1,1,1)$. The general case for $F>0$ is postponed till Section \ref{sec:non-sym-F}.

\begin{thm}\label{thm:best} Assume that $n=2$, $N\ge 3$ and that $U$ is a minimizer of $J(\cdot,F)$ with $F>0$ constant. There exist $r_0$ small and $C$ large depending on $M=\|U\|_{L^\infty(B_1)}$, $F$, and $N$ such that if at least 3 different components $\{u^i>0\}$ intersect $B_{r_0}$ then after a rotation and translation $U$ is well-approximated by a global solution $U_t^F$ with $|t| \le C$, in the following sense
$$d(U,U_t^F) \le C r^\alpha \|U^F_{t^+}\|_{L^\infty (B_r)}, \quad \quad \forall r \le 1.$$ 
\end{thm}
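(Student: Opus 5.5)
The plan is a compactness argument by contradiction that reduces the statement to the two perturbative results already available: Proposition \ref{AS}, for perturbations of $U_0^F$, and Theorem \ref{curves}, for perturbations of $\mathcal U_0$. As in the surrounding text, we take $F=(1,1,1)$ on the three relevant components.

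Suppose the statement fails. Then there are minimizers $U_k$ with $\|U_k\|_{L^\infty(B_1)}\le M$ and radii $r_k\to 0$ such that three components meet $B_{r_k}$, yet no rotation or translation of any $U_t$ with $|t|\le C$ satisfies the estimate.

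\textbf{Step 1: locate a triple point.} Since three phases meet $B_{r_k}$, the two-dimensional argument of Theorem \ref{Pintro} (planarity, connectivity and non-degeneracy, as in Lemma \ref{time}) yields one of two situations near a point $x_k\in B_{2r_k}$. Either $x_k$ is a triple junction, or, for the relevant scale $s_k\ge r_k$, the three phases are in mutual contact inside $B_{Cs_k}(x_k)$. In both cases only three components are nontrivial nearby; the remaining ones vanish by the Hausdorff convergence of free boundaries, Lemma \ref{fbconv}.

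\textbf{Step 2: choose the scale.} Rescale quadratically, $W_k(x)=\rho^{-2}U_k(x_k+\rho x)$. The forcing is unchanged, and Theorem \ref{OR} gives uniform Lipschitz bounds. Use the Weiss monotonicity formula (Proposition \ref{Weiss}), which is bounded in terms of $M$ and $F$ by the Caccioppoli estimate of Lemma \ref{blowup}. Its total variation is bounded, so at all but boundedly many dyadic scales between $r_k$ and $1$ it is almost constant. By Remark \ref{all} and the classification in Subsection \ref{subsec:global}, $W_k$ is then $\eps$-close in $B_1$ to a $2$-homogeneous solution. Since three phases are present near the origin, this solution must be the three-phase cone $U_0$ (Case 3 of Subsection \ref{subsec:global}).

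If instead the growth at the chosen point is superquadratic, we are in the regime of \eqref{c1}, and the type I rescaling is close to $\mathcal U_0$ with a small effective forcing. Theorem \ref{curves} then gives the $3/2$ expansion, which matches $U_t$ with $t$ large. This follows from $\tfrac1t U_t\to\mathcal U_0$ together with the iteration of Lemma \ref{IF2}.

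\textbf{Step 3: apply the perturbative result and assemble.} At the largest scale $\rho_0$ where $d(W,U_0)\le\eps_1$, apply Proposition \ref{AS}. This gives $d\le C\eps\, r^\alpha\|U_{t^+}\|_{L^\infty(B_r)}$ for all $r\le\rho_0$, with $|t|\le C\eps$ in the rescaled variables. Undoing the rescaling changes $t$ by the factor $\rho_0^{1/2}$, since $U_t$ is quadratically self-similar in $t$ by \eqref{exp}. The scales $r\in[\rho_0,1]$ are handled trivially because they are boundedly many; that is, $\rho_0$ is bounded below in terms of $M$, giving $r_0$ and $C$. The translation and rotation produced by Proposition \ref{AS} are $O(\eps)$ and are absorbed into the final rigid motion.

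\textbf{Main obstacle.} The hard part is proving, uniformly in $k$, that some fixed scale $\rho_0$ bounded below exists at which $W_k$ is $\eps_1$-close to $U_0$. Three things can go wrong:
\begin{itemize}
\item the blow-down-type rescalings can drift toward $\mathcal U_0$;
\item they can drift toward two-phase cones;
\item they can degenerate as $r_k\to0$.
\end{itemize}
I would rule these out with the compactness of Proposition \ref{compact}. A limit of the rescalings is a global minimizer with three mutually touching connected phases and polynomial growth, so by Theorem \ref{mainintro} it is some $U_t$. This is the contradiction: after rescaling so that $|t|$ is either bounded or large, the limit lies within $\eps_1$ of $U_0$ or of a large-$t$ profile. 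Theorem \ref{mainintro} is what forces uniformity, provided one checks that connectivity and mutual contact pass to the limit via Lemma \ref{fbconv}.
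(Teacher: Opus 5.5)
There is a genuine gap, and it is exactly the step you flag yourself. Your overall frame is the right one: contradiction, reduction to Proposition \ref{AS} and Theorem \ref{curves}, and the rescaling $t\mapsto t\rho_0^{1/2}$. However, nothing in your proposal produces, uniformly in $k$, a scale bounded below at which $U_k$ is close to $U_0^F$ or to $\mathcal U_0$.

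\textbf{Why Step 2 fails.} The Weiss pinching needs $\Phi$ (centered at $x_k$) to be bounded below on $[r_k,1]$, i.e.\ a uniform quadratic bound $|U_k|\le C|x-x_k|^2$. No such bound holds here. At a type I junction $|U|\sim r^{3/2}$ and $\Phi(r)\to-\infty$. From $\|U_k\|_{L^\infty(B_1)}\le M$ you only get $\Phi(2^{-j})\ge -CM^2 2^{4j}$, so pigeonholing over boundedly many dyadic scales yields no almost-constant scale. Your fallback (``if the growth is superquadratic, the type I rescaling is close to $\mathcal U_0$'') is a blow-up statement for one fixed function, not a statement uniform in $k$, which is what you need.

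\textbf{Why the appeal to Theorem \ref{mainintro} does not help.} First, rescalings of $U_k$ about $x_k$ at scales tending to $0$ have no uniform polynomial growth bound; the quadratic ones blow up near type I points. Second, the phases of $U_k$ are not assumed connected, and connectedness in $B_1$ would not survive a global limit anyway. Third, even if a limit were identified as some $U^F_t$, it would describe $U_k$ only at scales comparable to the blow-up scale. It says nothing about the intermediate range descending from scale $1$, which is where $\rho_0$ has to come from.

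\textbf{The missing idea.} Run the compactness at the unit scale and use only the \emph{qualitative} blow-up analysis of the limit.
\begin{enumerate}
\item Take $U_k\to U_\infty$ locally uniformly, using Theorem \ref{OR} and Proposition \ref{compact}.
\item By non-degeneracy (Lemma \ref{ND}), each phase meeting $B_{r_k}$ survives in the limit with $0$ in the closure of its positivity set. Hence $0$ is a triple junction of $U_\infty$, and by Theorem \ref{Pintro} only three phases are present near $0$.
\item For this single function there is a fixed $s>0$ such that one of two things holds. Either $s^{-2}U_\infty(s\,\cdot)$ is $\eps_1/2$-close to a rigid image of $U_0^F$. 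Or $U_\infty(s\,\cdot)/a_s$ is $\eps_0/2$-close to $\mathcal U_0$, with $s^2\|F\|_\infty/a_s\le\eps_0^2/4$.
\item Uniform convergence transfers this closeness to $U_k$ for $k$ large; non-degeneracy again rules out extra phases.
\item In the first case, Proposition \ref{AS} applies. In the second case, apply Theorem \ref{curves}, rephrased in terms of $U^F_t$ through $t^{-\beta/2}U_t^F\to\mu\,\mathcal U_0$ and the iteration of Lemma \ref{IF2}. Here $t$ is fixed by $\mu t^{\beta/2}=a_s s^{-3/2}\le M s^{-3/2}$.
\item The scales $r\in[s,1]$ are finitely many and are absorbed into $C$.
\end{enumerate}
Since $s$ and $a_s$ depend only on $U_\infty$, this contradicts $C_k\to\infty$. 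This is what the paper means by combining Proposition \ref{AS} with Theorem \ref{curves} by compactness. No Weiss pinching and no global classification is needed.
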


Finally, we conclude the section with the proof of Theorem \ref{localclassintro} for the case $F=(1,1,1)$.
\medskip

{\it Proof of Theorem \ref{localclassintro}.} Under the assumptions of the theorem, up to a subsequence and a rotation, either $$U_r(x):= a_r^{-1} U(rx) \to \mathcal U_0$$
or 
$$U_r(x):= r^{-2} U(rx) \to U_0.$$

In the first case, we choose $r_0$ small enough so that 
\begin{equation*}d(U_{r_0}, \mathcal U_0) \leq \eps_0 \quad \text{in $B_1$}\end{equation*} and the hypotheses of Theorem \ref{curves} are satisfied. This gives the desired conclusion.

In the second case, we can iterate Proposition \ref{IF} starting with $r_0$ small and $t_0=0$ so that 
\begin{equation*}d(U_{r_0}, U_0) \leq \eps_1 \quad \text{in $B_1$}.\end{equation*} Thus,
$$d(U_{r_k}, \tilde U_{t_k}) \leq \eps_1 2^{-k} \quad \text{in $B_1$}, \quad r_k=r_0 \rho^k,$$
as long as $$|t_k| \leq \delta.$$ If we can iterate indefinitely, we reach the desired conclusion again. Otherwise, if the iteration stops for $t_k<-\delta$ then we contradict $0$ as a triple junction provided that we choose $\eps_1$ small enough depending on $\delta$. If $t_k >\delta$, then after a further rescaling by a large factor we end up in a case in which Theorem \ref{curves} applies and reach our conclusion.
\qed

\section{The non-symmetric case of constant $F > 0$}\label{sec:non-sym-F}

In this section we prove our main results for $F>0$ constant. The proofs are identical to the ones of the symmetric case, except that now the construction of the family 
 $U_t^F$  is more delicate. Therefore most of this section is devoted to the construction of $U_{\pm1}^F$ as perturbations of $U_0^F$ with specific asymptotics at infinity.
 
 Precisely, let 
 $V$ satisfy the linearized equation

\begin{align}\label{FBgeneral}
& \Delta V=0 & \text{in $\{U_0^F\neq 0\}$} , \\
\label{transmissiongen} & V_{\nu^+} + V_{\nu^-} =\frac{a^F}{|\nabla U^F_0|} V & \text{on $\{U_0^F=0\},$} 
\end{align} with $a^F>0$ the sum of the values of $F$ on the two adjacent sides of a given ray in $\{U_0^F=0\}.$

\begin{lem}\label{ei}Assume $F$ has at least two distinct components. Let $\lambda^F_k$ denote the sequence of homogeneities of the operator associated with \eqref{FBgeneral}-\eqref{transmissiongen} on $\p B_1$.Then $$\lambda^F_1=\lambda^F_2=1 \quad \text{with eigenfunctions generated by $\p_{x_1} U_0^F, \p_{x_2} U_0^F$,}$$ $$\lambda^F_3 \in (3/2, 2) \quad \text{with eigenfunction $g_3^F$,}$$
$$\lambda^F_4=2 \quad \text{with eigenfunction $\p_\theta U_0^F$}$$
and $\lambda^F_5>5/2$.
\end{lem}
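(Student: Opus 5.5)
We will follow the structure of the proof of Lemma \ref{expandV}. The homogeneous solutions $r^\lambda g(\theta)$ of \eqref{FBgeneral}-\eqref{transmissiongen} correspond to eigenfunctions $g$ of a one-dimensional problem on the double cover ($g$ is $4\pi$-periodic with $g(\theta+2\pi)=-g(\theta)$). This problem reads
$$g''+\lambda^2 g=0 \quad \text{away from the six points } \theta_k,$$
with jump conditions $(g')^+-(g')^-=c_k g(\theta_k)$. Here the constant is $c_k=a^F r/|\nabla U^F_0|$ on the ray; it is independent of $r$ because $|\nabla U_0^F|$ is linear in $r$ along each ray. Positivity of $c_k$ gives the min–max characterization \eqref{quotient} with the extra positive term $\sum c_k g(\theta_k)^2$. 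Comparing with the problem without jumps then shows $\lambda^F_{2j-1},\lambda^F_{2j}\ge j-\tfrac12$, and in particular $\lambda_5^F\ge 5/2$.

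\textbf{Step 1: the explicit eigenfunctions.} Translations and rotations of $U_0^F$ produce solutions of the linearized problem. This follows by differentiating the family of exact solutions, via Lemma \ref{improgeneral} or directly from the free boundary condition $|\nabla u^i|=|\nabla u^j|$. Hence $\partial_{x_1}U_0^F$ and $\partial_{x_2}U_0^F$ are $1$-homogeneous eigenfunctions and $\partial_\theta U_0^F$ is a $2$-homogeneous one. These are nonzero: $\partial_\theta U_0^F\neq0$ because $U_0^F$ is not radial. We then need to show that there is nothing else below or equal to these values, apart from one further eigenvalue in $(3/2,2)$.

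\textbf{Step 2: counting zeros (Sturm theory).} On each interval between consecutive rays, the eigenfunction is $a_k\cos\lambda\theta+b_k\sin\lambda\theta$. The $m$-th eigenfunction of this Sturm–Liouville problem with point potentials has a controlled number of sign changes on the $4\pi$ circle: antiperiodicity forces an odd number of zeros on $[0,2\pi)$. The first pair therefore has exactly $2$ sign changes on $[0,4\pi)$, the next pair $6$, and so on. The functions $\partial_{x_i}U_0^F$, viewed on the double cover, have the minimal number of sign changes, so $\lambda_1=\lambda_2=1$. Alternatively, the problem without jumps has first eigenvalue $1/2$ of multiplicity two, and the rotation-invariance argument below shows the first two coincide. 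We also need $\lambda_3>1$; that is, $1$ has multiplicity exactly two, which we get by checking via the transfer matrix that the solution space at $\lambda=1$ is two-dimensional.

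\textbf{Step 3 (main obstacle): locating $\lambda_3$ and showing $\lambda_4=2$.} Since $\partial_\theta U_0^F$ has $2$-homogeneity, we need exactly one eigenvalue between $1$ and $2$, and it must lie strictly in $(3/2,2)$. The strict inequality $\lambda_3>3/2$ comes from the min–max comparison: equality would force $g(\theta_k)=0$ at all three rays, giving $g=\cos\tfrac32(\theta-\theta_0)$ vanishing at all three rays. That happens only when the angles are all $2\pi/3$, which is excluded because $F$ has two distinct components and \eqref{uof} then gives unequal $\theta_i$.

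The inequality $\lambda_3<2$ is the delicate point. We would prove it by a test-function argument: in the two-dimensional space spanned by $\partial_\theta U_0^F|_{\partial B_1}$ and the $3/2$-type function, or better by showing $\partial_\theta U_0^F$ is not the third eigenfunction. Its number of sign changes is $6$ on the double cover, the count for the third-and-fourth pair, so it is $\lambda_3$ or $\lambda_4$. We then produce one more eigenfunction with eigenvalue $\le 2$ that is orthogonal to the first two. The symmetric case has $\lambda_3=3/2<2$. By continuity in $F$ of the spectrum (the $c_k$ depend continuously on $F$), and since $\lambda=2$ stays simple throughout the connected set $\{F>0\}$, $\lambda_3$ can never cross $2$. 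Simplicity is checked by the transfer-matrix determinant at $\lambda=2$: a second $2$-homogeneous solution would yield a second $2$-homogeneous deformation, contradicting the uniqueness of $\theta_i$ in \eqref{uof}. Likewise $\lambda_3$ cannot reach $3/2$, by the argument above. This gives $\lambda_3\in(3/2,2)$ and $\lambda_4=2$.

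Finally, $\lambda_5>5/2$ holds because equality at $5/2$ would again force vanishing at the rays, giving a function $\cos\tfrac52(\theta-\theta_0)$ with zeros exactly at three non-equally-spaced rays. This is impossible except in configurations excluded by \eqref{uof}.
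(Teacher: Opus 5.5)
\textbf{Verdict: genuine gap at the key step.} Your overall structure is the paper's. The min--max comparison with the jump-free problem gives $\lambda^F_{2k+1}\ge\frac{2k+1}{2}$, with strict inequality at $3/2$ and $5/2$ by the vanishing-on-the-rays argument. Translations and rotation supply the eigenvalues $1,1,2$. Simplicity of $2$ plus continuity in $F$ from $F=(1,1,1)$ then puts $\lambda_3^F$ in $(3/2,2)$ and $\lambda_4^F=2$.

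Your Step 2 is unnecessary. The bound $\lambda_3^F\ge 3/2$ together with multiplicity $\ge 2$ of the eigenvalue $1$ already forces $\lambda_1^F=\lambda_2^F=1$, so the unproved Sturm oscillation counts and transfer-matrix checks can be dropped.

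The gap is the step that carries all the weight: simplicity of the eigenvalue $2$ for every $F>0$. Your justification is that a second $2$-homogeneous linearized solution would yield a second $2$-homogeneous deformation, contradicting uniqueness of the $\theta_i$ in \eqref{uof}. This does not work as stated. A solution of the linearized problem is only an infinitesimal deformation and need not integrate to a curve of genuine $2$-homogeneous minimizers. Uniqueness of the solution of a nonlinear system does not imply that its linearization is nondegenerate (think of $x^3=0$). The transfer-matrix determinant is invoked but never computed.

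\textbf{Repair along your lines.} You need two facts.

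First, each opening satisfies $\theta_i\in(\pi/2,\pi)$, so $\sin 2\theta_i\neq0$. Hence a $2$-homogeneous harmonic function in a sector is determined by its values on the two bounding rays. Read off an infinitesimal rotation $\delta\vartheta_k$ of each ray from the value of $V$ on that ray; this is consistent from both sides because $|\nabla u^i|=|\nabla u^j|$ there. Then every $2$-homogeneous solution $V$ of \eqref{FBgeneral}--\eqref{transmissiongen} equals the derivative of the explicit sector solutions under these $\delta\vartheta_k$. Here, on $\{\alpha<\theta<\beta\}$, $u^i=\frac14 f^i r^2\bigl(1-\cos(2\theta-\alpha-\beta)/\cos(\beta-\alpha)\bigr)$.

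Second, differentiate these formulas in $\alpha,\beta$. The transmission condition on the ray between sectors $i$ and $j$ becomes exactly $f^i\sec^2\theta_i\,\delta\theta_i=f^j\sec^2\theta_j\,\delta\theta_j$ for the induced changes of openings. Since $\sum_i\delta\theta_i=0$ and $f^i\sec^2\theta_i>0$, all $\delta\theta_i$ vanish. So $V$ is a rigid rotation, i.e.\ a multiple of $\partial_\theta U_0^F$. This is the nondegenerate, linearized form of the monotonicity argument behind uniqueness in \eqref{uof}.

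\textbf{The paper's route.} The paper instead argues directly on the ODE. If $2$ were not simple, some eigenfunction $g$ would vanish on one ray. Then $g$ equals $\sin 2\theta$ on the two sectors adjacent to that ray and is negative at the other two rays. The sinusoid on the remaining sector, of opening greater than $\pi/2$, then violates the sign of the jump condition $(g')^+-(g')^-=c\,g$ with $c>0$.

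With simplicity established either way, your continuity argument goes through exactly as in the paper.
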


\begin{proof} By the min-max characterization as in \eqref{quotient}, we can see that $$\lambda_m^F \geq \lambda_m^0$$ and in particular (see \eqref{zeroeigen})$$\lambda^F_{2k+1} \geq\frac{2k+1}{2}.$$ The equality only holds if a linear combination of $\cos \frac{2k+1}{2} \theta$ and $\sin \frac{2k+1}{2} \theta$ vanishes where $U^F_0=0$ on the unit circle, which means that after rotation, the rays of $U_0^F$ belong to $\theta\in \{2l\pi/(2k+1), \;l=0,1,..,2k\}$. This cannot happen if $k=1$ and $F$ has two distinct components or $k=2$ (see \eqref{uofintro}). Thus, $ \lambda_3^F>\frac32$, $\lambda_5^F>\frac52$.

 We now claim that $2$ is a simple eigenvalue for our problem. Assume without loss of generality that  $U_0^F = 0$ consists of the rays $\theta \in \{0,\theta_1,\theta_2\}$ with $\theta_1 \in (\pi/2, \pi), \theta_2 \in (\pi, 3\pi/2)$ and $\theta_2-\theta_1 > \pi/2.$
Then, if $2$ is not simple, we can find an eigenfunction $g$ that vanishes on one ray of $U_0^F = 0$, say $\theta = 0$. Recall that,
\begin{equation*}\text{$g$ is periodic of period $4\pi$ and $g(\theta+2\pi) = - g(\theta)$.}\end{equation*} Therefore $g$ coincides with $\sin 2\theta$ in the interval $[\theta_2 - 2\pi, \theta_1],$ and 
$$g(\theta_1)= \sin 2\theta_1 <0, \quad g(\theta_2)= -g(\theta_2 -2\pi)=-\sin(2\theta_2 - 4\pi)<0.$$ On the interval $[\theta_1, \theta_2]$, $$g(\theta)= a \cos(2(\theta-\tau))$$ for some $a,\tau.$ Given the behavior of $g$ at $\theta_1, \theta_2$ and the fact that $\theta_2-\theta_1>\pi/2$, we deduce that $a>0$ and $\tau \in [\theta_1,\theta_2].$ Without loss of generality we can assume that $\tau \leq \pi$ and $\theta_1$ is the point of intersection of the graphs of $\sin 2\theta$ and $a \cos(2(\theta -\pi))$ and therefore belongs to the interval $(\pi/2, 3\pi/2).$ Thus, we cannot satisfy the free boundary condition \eqref{g'} since 
$$g'(\theta_1^-)<0, \quad g'(\theta_1^+)>0$$
and hence their difference is positive while $g(\theta_1)<0.$ We conclude that $2$ is a simple eigenvalue. Since $\lambda^F_m$ varies continuously with $F,$ we deduce that $\lambda_3^F$ and $\lambda_4^F$ are both simple, hence $\lambda_3^F<2.$

\end{proof}

The eigenfunction $g^F_3$ is chosen so that its $L^2$ norm on the unit circle is $1$. Since the third eigenvalue is always simple, its eigenfunction $g_3^F$ varies continuously with $F$. For our discussion, we choose its sign so that $g_3^F$ is obtained as a continuous deformation from $\cos \frac 3 2 \theta$, which corresponds to $F=(1,1,1)$ (see Section \ref{sec:proof-symmetric}).

In view of Lemma \ref{ei} we obtain the same asymptotic expansion as in the case $F=(1,1,1),$ by using the corresponding polynomial solutions
$$V_{{\bf m},p,q}^F= {\bf m}\cdot \nabla U_0^F + p|x|^{\lambda_3}g_3^F(\theta) + q\p_\theta U_0^F.$$

\begin{lem} \label{L8.2} Let $V$ be a solution to \eqref{FBgeneral}--\eqref{transmissiongen} in $B_1$. Then, 
$$V=V^F_{{\bf m}, p, q} + O(|x|^{5/2}),$$ for some ${\bf m}, p, q.$
\end{lem}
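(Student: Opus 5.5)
The plan is to follow the proof of Lemma \ref{expandV}, with Lemma \ref{ei} providing the spectral information. I would first reduce to homogeneous solutions. Separating variables, a solution of \eqref{FBgeneral}--\eqref{transmissiongen} of the form $r^\lambda g(\theta)$, with $g$ satisfying \eqref{g}, has $g''+\lambda^2 g=0$ away from the three rays of $\{U_0^F=0\}$ (doubled to six points on $[0,4\pi]$). At each such point $g$ satisfies the jump condition $(g')^+-(g')^-=c_k g$. Here $c_k>0$ is computed from $a^F/|\nabla U_0^F|$ restricted to the unit circle; this is well defined because $|\nabla U^F_0|$ is $1$-homogeneous, so $r\cdot a^F/|\nabla U_0^F|$ is constant along each ray. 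The homogeneities $\lambda_m^F$ and the $L^2(\partial B_1)$-orthonormal eigenfunctions $g_m^F$ then come from the min-max of the Rayleigh quotient \eqref{quotient}, with $4/\sqrt3$ replaced by the $c_k$. This gives a complete orthonormal basis of the odd-antiperiodic $L^2$ space on the circle.

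Next I expand the boundary data of $V$. After normalizing $\|V\|_\infty\le 1$, write $V|_{\partial B_1}=\sum_m a_m g_m^F$ with $|a_m|\le C$. The function $\sum_m a_m r^{\lambda_m^F} g_m^F$ solves the problem with the same boundary values. By the uniqueness part of Lemma \ref{extend} it equals $V$. Its hypotheses hold here: \eqref{b} follows from the transmission condition since $a^F>0$, and $|V|$ is subharmonic, which gives the $L^2$ convergence in the interior. By Lemma \ref{ei}, the first four terms are exactly ${\bf m}\cdot\nabla U_0^F+p|x|^{\lambda_3}g_3^F+q\partial_\theta U_0^F$, and all remaining homogeneities satisfy $\lambda_m^F\ge\lambda_5^F>5/2$.

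It remains to bound the tail $W:=\sum_{m\ge5}a_m r^{\lambda_m^F}g_m^F$ pointwise by $C|x|^{5/2}$. I would not sum the series termwise with sup bounds, since the $\lambda_m$ grow. Instead I use the spectral gap. $W$ is itself a solution with $\|W\|_{L^\infty(B_{1/2})}\le C$, by interior estimates applied to $V$ minus the explicit polynomial part. Its $L^2$ norm on $\partial B_r$ decays like $r^{\lambda_5^F}$, because every mode has homogeneity at least $\lambda_5^F$. Concretely, $\|W\|_{L^2(\partial B_r)}^2=\sum_{m\ge5}a_m^2 r^{2\lambda_m}\le C r^{2\lambda_5}$ for $r\le 1$. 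Rescaling $W_r(x)=W(rx)$ and applying the $L^2\to L^\infty$ estimate for the transmission problem on the annulus $B_1\setminus B_{1/2}$ (Remark \ref{linearsmooth} plus subharmonicity of $|W|$) gives $|W|\le Cr^{\lambda_5}\le C r^{5/2}$ on $\partial B_r$. This requires the estimate to be scale invariant. That holds because the coefficient $a^F/|\nabla U_0^F|$ scales exactly like $1/r$, so the rescaled problem is the same problem.

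The main obstacle is making the spectral inputs rigorous rather than the expansion itself. First, the eigenfunctions must be complete and the operator self-adjoint. This is standard for a Sturm–Liouville problem with point interactions, via the quadratic form in \eqref{quotient}. Second, the counting in Lemma \ref{ei} must be exact, so that no other homogeneity lies in $[2,5/2]$. Lemma \ref{ei} already supplies this, so the proof reduces to the routine decomposition above.
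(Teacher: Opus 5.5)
Your proposal is correct and is essentially the paper's argument: the paper proves this lemma by running the eigenfunction expansion from the proof of Lemma \ref{expandV}, with the spectral information supplied by Lemma \ref{ei}, exactly as you do. The only difference is the tail estimate. The paper bounds the tail termwise using $|a_m|\le\|g\|_2$ and $\|g_m\|_\infty\le C$, and this works for $r\le 1/2$: the growth of $\lambda_m$ (e.g.\ $\lambda_m^F\ge\lambda_m^0$) only helps when $r<1$, so your stated reason for avoiding it is misplaced. Your Parseval bound on circles combined with the mean value inequality for the subharmonic $|W|$ is an equally valid and slightly more robust alternative, since it needs neither uniform sup bounds on the $g_m$ nor any eigenvalue information beyond $\lambda_m\ge\lambda_5^F$ for $m\ge 5$.
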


Next, we construct global minimizers to $J(\cdot, F)$ which differ from $U_0^F$ by $r^{\lambda_3} g_3^F(\theta)$ plus lower order terms.

\begin{prop}\label{global} There exists a global minimizer $U^F_1$ to $J(\cdot, F)$ such that outside a ball $B_C$, viewed as a double-valued function, has the expansion
$$U^F_1=U^F_0+ |x|^{\lambda_3}g_3^F(\theta) + O(|x|^{\gamma}),$$ with $\gamma \in (2\lambda_3-2, \lambda_3) \subset (1,2).$ Similarly, there exists a global minimizer $U_{-1}^F$ with expansion 
$$U^F_{-1}=U^F_0- |x|^{\lambda_3}g_3^F(\theta) + O(|x|^{\gamma}),$$ outside $B_C.$ 
\end{prop}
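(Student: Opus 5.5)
We construct $U^F_{\pm1}$ by the same exhaustion procedure used for $F=(1,1,1)$ in Section \ref{subsec:general-global}, but with the exact comparison functions there replaced by approximate ones. The building block is the double-valued function
$$W_\pm:=U^F_0\pm|x|^{\lambda_3}g_3^F(\theta),$$
read back as an element of $\mathcal A$ outside a large ball. This is legitimate because $|x|^{\lambda_3}\ll|x|^2$, so the sign structure of $U_0^F$ persists away from thin neighbourhoods of the three rays. The function $W_\pm$ solves the linearized problem \eqref{FBgeneral}--\eqref{transmissiongen} exactly. It fails to solve the nonlinear problem for two reasons. First, the free boundaries move by $O(|x|^{\lambda_3-1})$ off the rays. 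Second, the transmission condition is satisfied only to first order: the quadratic error is of size $|x|^{2\lambda_3-2}$ on the rescaled level, which is why we need $\gamma>2\lambda_3-2$.

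The plan therefore has three steps.

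\textbf{Step 1: barriers.} Build upper and lower barriers
$$\Psi^\pm=U^F_0+|x|^{\lambda_3}g_3^F\pm K|x|^{\gamma}h(\theta),\qquad |x|\ge C,$$
where $h>0$ is a double-valued profile. We want $\Psi^+$ to be a supersolution and $\Psi^-$ a subsolution, in the sense of the Euler--Lagrange inequalities \eqref{EL} applied to $\hat\Psi$ near each ray, so that the subharmonicity argument of Lemma \ref{duv} still runs. We choose $h$ to be the first eigenfunction of the transmission operator on the circle, with a shifted homogeneity. The key fact is that $\gamma\in(1,2)$ lies strictly between consecutive homogeneities of the linearized problem. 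By Lemma \ref{ei}, these are $\lambda_2=1$ and $\lambda_3$. Hence, after choosing $\gamma<\lambda_3$ and taking $h$ of the form $\cos(\gamma\theta)$-type pieces matched across the rays, we obtain
$$\Delta\big(|x|^\gamma h\big)\lessgtr0 \quad\text{or a strict transmission inequality of order } |x|^{\gamma-2}\text{ on the rays.}$$
On the rescaled scale $|x|^{\gamma-1}$, this strict inequality dominates the nonlinear error $|x|^{2\lambda_3-3}$, precisely because $\gamma>2\lambda_3-2$.

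\textbf{Step 2: exhaustion.} Solve the minimization problem in $B_R$ with boundary data $W_+$ on $\p B_R$, taken as an element of $\mathcal A$ with the phase structure of $U^F_0$. Stability, Corollary \ref{stability} together with Remark \ref{Appppp}, then gives the pinching $\Psi^-\le U_R\le\Psi^+$ in $B_R\setminus B_C$. In $B_C$ we have a uniform bound from optimal regularity, Theorem \ref{OR}. Compactness, Proposition \ref{compact}, lets $R\to\infty$ along a subsequence to a global minimizer $U^F_1$ obeying the expansion. The case $U^F_{-1}$ is identical with the opposite sign.

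\textbf{Main obstacle.} The hard step is Step 1. We must verify the barrier property in the moving-boundary geometry, where the free boundaries of $\Psi^\pm$ are perturbed curves rather than rays. We also need the correct sign of $h$ on all six arcs, which is where the strict gap $\lambda_2<\gamma<\lambda_3$ and the positivity of the transmission coefficient $a^F/|\nabla U^F_0|$ enter.

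If a direct explicit $h$ proves awkward, the fallback is to solve the inhomogeneous linear problem $L\phi=-|x|^{2\lambda_3-4}\,\text{(error)}$ on the cone by separation of variables. Since $2\lambda_3-2$ is not a homogeneity, the solution is homogeneous of degree $2\lambda_3-2$. We then add $\pm K|x|^\gamma h$ with $h$ as above, with $\gamma$ chosen slightly larger than $2\lambda_3-2$, to absorb the remaining higher-order errors.
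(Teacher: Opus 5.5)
There is a genuine gap: the barrier step cannot work, for structural reasons.

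First, there is nothing to order. Every element of $\mathcal D$ satisfies $h(\theta+2\pi)=-h(\theta)$, so a positive double-valued profile $h$ does not exist. Equivalently, if you compare $\hat\Psi$ with $\hat u$ ray by ray, the orientation flips after going once around the three sectors, so $\Psi^-\le U_R\le\Psi^+$ has no global meaning. The only comparison principle available is scalar: $d(U,V)$ is subharmonic up to the Euler--Lagrange error (Lemma \ref{duv}, Remark \ref{Appppp}). At the linearized level, $|V|$ is a subsolution of the \emph{single-valued} transmission operator.

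A barrier $K|x|^\gamma h$ with $h>0$ for such a quantity would be a positive supersolution homogeneous of degree $\gamma$. Pairing it with the positive principal eigenfunction of that operator on the circle gives $\gamma^2\le\mu_1$. Moreover $\mu_1\le 1$, because $|\mathbf e\cdot\nabla U_0^F|$ restricted to $\partial B_1$ has Rayleigh quotient exactly $1$. So no such barrier exists for $\gamma>1$. The gap $\lambda_2<\gamma<\lambda_3$ is not a barrier fact. It says precisely that the modes of homogeneity below $\gamma$ must be removed by hand. Refining the approximate solution, as in your fallback, does not change this.

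Those modes are the infinitesimal translations $\mathbf e\cdot\nabla U_0^F$, and handling them is the missing idea. The mismatch on $\partial B_R$ between $W_+$ and an exact solution is generically of order $R^{2\lambda_3-2}\gg R$. Its component along these modes can shift the minimizer $U_R$ by a vector $z_R$ of size up to $R^{2\lambda_3-3}$, which is unbounded since $\lambda_3>3/2$. No comparison argument detects this shift, and the paper's own iteration only gives $|\bar z|\le CR^{\gamma-1}$. For such $U_R$:
\begin{itemize}
\item the pinching $d(U_R,W_+)\le K|x|^\gamma$ near $\partial B_C$ is false;
\item $U_R$ need not be bounded in $B_{2C}$ uniformly in $R$, so Theorem \ref{OR} does not supply the inner boundary control you invoke;
\item consequently the compactness limit need not exist.
\end{itemize}

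The paper instead takes as boundary data on $\partial B_R$ the domain-deformed approximate solution $G$ of Lemma \ref{G}. Its Euler--Lagrange error is $O(|x|^{2\lambda_3-4})$ in $L^\infty$; your $W_+$, read in $\mathcal A$, fails this by a singular measure on the rays. The paper then iterates the improvement-of-flatness Lemma \ref{impapp} for the family $G_t$:
\begin{itemize}
\item linearize via Lemma \ref{improgeneral};
\item expand the limit by Lemma \ref{L8.2};
\item absorb the $\mathbf m\cdot\nabla U_0^F$ part by a translation $z_k$ and the $g_3^F$ part by changing $t_k$.
\end{itemize}
The iteration runs inward from $r_0=R$ with $r_k=R\rho^k$, $|z_{k+1}-z_k|\le Cr_k^{\gamma-1}$ and $|t_{k+1}-t_k|\le Cr_k^{(\lambda_3-2)\alpha}$. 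It yields a center $\bar z=\bar z(R)$ with $d(U(\cdot+\bar z),|G|)\le Cr^\gamma$ in $B_r$ for $C(\eps_0)\le r\le R/2$, uniformly in $R$. Only these \emph{recentered} minimizers are passed to the limit $R\to\infty$.
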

The constant in the error term $O(|x|^\gamma)$ and the constant $C$ in the expansions above depend only on $F, \gamma$, once we fix $\gamma$ in the given interval. 

The proof of this proposition is provided in the next Subsection.

\subsection{Approximate Global Solutions}\label{subsec:Approx-global}
First we give a version of Proposition \ref{global} in which we construct an approximate global solution with error that improves as $|x|\to \infty$ (see \eqref{improvederror}). Recall that $\lambda_3 \in (\frac 3 2, 2).$

\begin{lem}\label{G} There exists a global $C^{1,1}$ (double-valued) function $G$ that has the asymptotic expansion
$$G=U^F_0+ |x|^{\lambda_3}g^F_3(\theta) + O(|x|^{2\lambda_3-2}), \quad |x| \geq 1$$ such that it solves the original problem with an error $O(|x|^{2\lambda_3-4})$. That is, if the $g^i$'s denote the restriction of $|G|$ (which is single-valued) to the three connected components of $|G|>0$, then 
\begin{equation}\label{improvederror}
    \Delta g^i=f_i \chi_{\{g^i>0\}}+ O(|x|^{2\lambda_3-4}).\end{equation}
\end{lem}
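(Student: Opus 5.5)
The plan is to build $G$ by hand out of the linearized profile $V_3:=|x|^{\lambda_3}g_3^F(\theta)$: perturb the free boundary rays of $U^F_0$, and in each sector take an explicit harmonic correction. The guiding heuristic is the linearization in Lemma \ref{impro}. $U^F_0+V_3$ solves the problem up to an error coming from the mismatch between the zero set of $U^F_0+V_3$ and the rays. That mismatch has size $|V_3|/|\nabla U^F_0|\sim |x|^{\lambda_3-1}$, and the resulting quadratic error is of order $|x|^{2\lambda_3-2}$ in the function. After two derivatives this gives the claimed $O(|x|^{2\lambda_3-4})$ defect.

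Concretely, I work in the double-valued (odd) representation $h$ from \eqref{Uh}. For $|x|\ge1$ I set $W:=U^F_0+V_3$, which is a single-valued $C^{1,1}$ function on each side of every ray. In each open sector $S_i$ of $U^F_0$, $W$ satisfies $\Delta W=\pm f^i$ exactly. Near the ray $\ell$ between $S_i$ and $S_j$ I write $W$ in local coordinates (signed distance $s$ to $\ell$) as $W=\pm(|\nabla U_0^F|s+\tfrac12 f s^2)+V_3$ with the appropriate one-sided $f$. The true zero set of $W$ is then the curve $s=\sigma(r):=-V_3|_\ell/|\nabla U^F_0|+O(r^{2\lambda_3-3})$, where $\sigma(r)\sim r^{\lambda_3-1}=o(r)$.

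The key point is that the jump of the Laplacian must occur across this curve rather than across $\ell$. So I define $G$ on each side by extending the sector polynomials across $\ell$ up to the curve $\{s=\sigma\}$, i.e. $G:=U^{F}_{0,i}+V_3^{(i)}$ on the side $\{u^i>0\}$ of the shifted curve, using the smooth (two-sided, by Remark \ref{linearsmooth}) extensions of $V_3$ from each side. In the thin strip between $\ell$ and the curve I adjust by a smooth cutoff. I will then check three things. First, $G$ is continuous across the curve: both one-sided expressions vanish there to leading order, and I add a correction $O(r^{2\lambda_3-2})$ to make them vanish exactly. Second, the gradients match to order $O(r^{2\lambda_3-3})$. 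This is precisely where the transmission condition \eqref{transmissiongen} enters: it says the first-order mismatch $V_{\nu^+}+V_{\nu^-}-a^F V/|\nabla U_0^F|$ cancels, so the $C^{1,1}$ gluing produces only a second-order remainder. Third, the Laplacian defect inside the strip, which has width $\sim r^{\lambda_3-1}$, is bounded by $|D^2(\text{correction})|\lesssim r^{2\lambda_3-4}$. Away from the strips, $\Delta G=f^i$ exactly up to the cutoff terms, which are of the same order. Inside $B_1$ I extend $G$ arbitrarily as a $C^{1,1}$ function; this is harmless since the statement only concerns $|x|\ge1$.

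The main obstacle will be the gluing step. I will need to verify that the correction terms (homogeneous of degree $2\lambda_3-2$, built from $\sigma^2$ times smooth angular functions near each ray and cut off in angle) keep $G$ of class $C^{1,1}$, and that they do not spoil continuity of the normal derivatives beyond $O(r^{2\lambda_3-3})$. I would handle this by writing each one-sided piece in coordinates $(r,s)$, Taylor-expanding to second order in $s$ around $s=\sigma(r)$, and using homogeneity so that all estimates reduce to the unit scale. Finally, the positivity sets of $|G|$ are the three sectors bounded by the three shifted curves, which do not intersect for $|x|\ge1$ (after enlarging the radius if necessary), since $\sigma=o(r)$.
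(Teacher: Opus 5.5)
Your construction is viable, but it differs from the paper's. The paper does not glue one-sided continuations. Near each ray it sets $G(y)=U_0^F(x)$ with $y=x-\frac{v}{\p_{x_2}U_0^F}\,e_2$, $v=|x|^{\lambda_3}g_3^F(\theta)$. This is a domain deformation of the cone, and it is joined to $U_0^F+v$ by an angular partition of unity. The transmission condition is used only to see that this change of variables is $C^{1,1}$ across the ray, because the normal derivatives of $v/\p_{x_2}U_0^F$ agree from both sides. After that, $C^{1,1}$ regularity and the fact that the Laplacian jumps exactly on $\{G=0\}$ are automatic. At unit scale ($\eps=r^{\lambda_3-2}$) the chain rule gives $\Delta_y G=\Delta_x U_0^F+O(\eps^2)$, since $\nabla U_0^F\cdot\Phi=-v$ is harmonic and $\Delta U_0^F$ is locally constant.

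You instead glue $G^\pm=u^\pm+v^\pm$ across the displaced curve and see the cancellation explicitly. Since $u^+-u^-=\frac{a^F}{2}s^2$ and $v^+-v^-=\frac{a^F v}{|\nabla U_0^F|}\,s+O(r^{\lambda_3-2}s^2)$, the difference $G^+-G^-=\frac{a^F}{2}\big((s-\sigma)^2-\sigma^2\big)+\dots$ has Cauchy data of size $O(r^{2\lambda_3-2})$ and $O(r^{2\lambda_3-3})$ on $\{s=\sigma\}$. This is more elementary and makes the role of the transmission condition transparent. The price is that you must do by hand what the deformation gives for free, and two points of your sketch need to be stated correctly.

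\emph{The normal-derivative jump.} The residual $O(r^{2\lambda_3-3})$ jump of the normal derivative across the curve must be cancelled exactly, by a correction added on one side only. A correction that is smooth across the curve does not change the jump. Any leftover jump appears in $\Delta G$ as a measure supported on the curve, not as an $O(r^{2\lambda_3-4})$ function, and $G$ would not even be $C^1$. So continuity of normal derivatives ``up to $O(r^{2\lambda_3-3})$'' is not enough.

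\emph{The scale of the correction.} The correction must vary on the angular scale $r$, not across the strip of width $r^{\lambda_3-1}$. A term of size $r^{2\lambda_3-2}$ cut off across that strip has Hessian of order one.

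In fact no cutoff in the strip is needed. Take $\Sigma=\{G^-=0\}=\{s=\sigma\}$. Set $G=G^-$ below $\Sigma$ and $G=G^++\chi(\theta)[A+B(s-\sigma)]$ above it, where $A,B$ are functions along $\ell$ chosen so that the Cauchy data match on $\Sigma$. Rescaling dyadic annuli gives $A=O(r^{2\lambda_3-2})$ and $B=O(r^{2\lambda_3-3})$, and shows that the correction has Hessian $O(r^{2\lambda_3-4})$. In the strip between $\ell$ and $\Sigma$ one has $|G-(U_0^F+V_3)|\le|G^+-G^-|+O(r^{2\lambda_3-2})=O(r^{2\lambda_3-2})$, which gives the expansion.
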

\begin{proof} The existence of such a function $G$ is a consequence of the fact that $|x|^{\lambda_3}g_3^F(\theta)$ solves the linearized equation. We provide the details below.

Assume as in the previous proof that $\{U_0^F=0\}$ consists of $\theta \in \{0,\theta_1, \theta_2\}.$ Let
$$u=U^F_0, \quad v= |x|^{\lambda_3}g_3^F(\theta).$$ When $\theta$ stays at distance $c_0$ away from $0, \theta_1, \theta_2$ we define 
$$G:=u+ v$$ 
which  satisfies  the desired equation with no error. In a neighborhood of a ray, say $\theta=0$, we define $G$ as a domain deformation of $u$:
$$G\left(y\right)=u(x), \quad y:=x- \frac{v}{\p_{x_2}u}e_2.$$
We glue the two constructions using a standard partition of unity function in the $\theta$ variable, and at the end we multiply the function with a cut-off function in the radial variable to make $G$ identically zero in a large ball around the origin.

\

Let's check that  such a function satisfies the desired equation in a neighborhood of the rays at a point at distance $r>1$ from the origin. 

 After a quadratic rescaling $G(rx)/r^2$, the rescaled function $G_\eps$ in $B_2 \setminus B_1$ away from the ray $\theta=0$ takes the form $$G_\eps= u+ \eps v, \quad \eps=r^{\lambda_3-2},$$ and in a neighborhood of the ray $$G_\eps\left(y\right)=u(x), \quad y:=x-\eps \frac{v}{\p_{x_2}u}e_2.$$

We check that in the second case
\begin{equation}\label{deltageps}\Delta G_\eps(y)=\Delta u(x) + O(\eps^2),\end{equation} away from $\theta=0.$
This follows from the chain rule and the fact that $\Delta u$ is constant and $v$ is harmonic on either side of the ray $\theta=0$. Notice that the change of variable $x \to y$ is of class $C^{1,1}$ across the ray since the normal derivatives of $\frac{v}{\p_{x_2} u}$ are equal due to the free boundary condition. 

We provide some details to justify \eqref{deltageps}. Given a domain deformation
$$x \to y= x+ \eps \Phi(x),$$ 
since 
$$D_y x = I - \eps D\Phi + O(\eps^2), \quad D^2_y x = -\eps D^2 \Phi + O(\eps^2),$$
we compute using the chain rule the Laplacian of $u$ with respect to $y$:
$$\Delta_y u = \Delta_x u -\eps(2 u_{ij}\Phi_i^j+u_j\Phi^j_{ii}) + O(\eps^2)=$$
$$= \Delta_xu -\eps (\nabla u \cdot \Phi)_{ii} + \eps (\Delta u)_j\Phi^j +O(\eps^2).$$ This gives the equation \eqref{deltageps}.

Moreover, using that $$x= y-\eps \Phi$$ and expanding $u$ around the point $y$, we get the expansion
$$G_\eps(y)= u(y) + \eps v(y) + O(\eps^2).$$
\end{proof}

\begin{defn}[Family of approximate solutions $G_t$.]\label{def:approx-sol}
We denote by $G_t$ the following rescaling of $G$, \begin{equation}\label{Gt}G_t(x):=t^{2\beta}G(t^{-\beta}x),\quad \beta= \frac{1}{2-\lambda_3}\end{equation} with $G$ the function from the lemma above. Notice that  
$$G_t=U^F_0+ t r^{\lambda_3}g_3^F(\theta) + O(t^2r^{2\lambda_3-2}), \quad r \geq t^\beta.$$
Moreover $|G_t|$ is single-valued and solves the Euler-Lagrange equation with an error of order $O(t^2r^{2\lambda_3-4}).$
\end{defn}

Next, we prove  an improvement-of-flatness-type result, using the family $G_t$. Precisely, we show that if a minimizer $U$ is well approximated by $G_t$ in a ball of radius $r$, then the same happens in a ball of radius $\rho r$ for some $\rho$ universal.

\begin{lem}\label{impapp} Let $U$ be a minimizer of $J(\cdot, F)$ in $B_1$, and fix $\alpha \in (0,1)$. Assume that for $\eps \leq \eps_0$, $\eps_0$ small depending on $\alpha$ and $F$, 
$$d(U, |G_\eps|)\leq \eps^{1+\alpha} \quad \text{on $\p B_1$.}$$ 
Then after a translation of $U$ by a vector of length at most $C\eps^{1+\alpha}$ we have
$$d(U, |G_{\eps'}|)\leq \rho^2(\eps'\rho^{\lambda_3-2})^{1+\alpha} \quad \text{in $B_\rho$,}$$ with $|\eps'-\eps|\leq C\eps^{1+\alpha}$ and $\rho$ universal.
\end{lem}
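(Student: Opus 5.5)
I will argue by compactness, following the pattern of Proposition \ref{IF}, but with the approximate family $G_\eps$ in place of the exact solutions $U_t$. The flatness is measured at the scale $\eps^{1+\alpha}$. This is much larger than the error $O(\eps^2)$ made by $G_\eps$ in the Euler--Lagrange equation: on $B_1$, $r\sim 1$ in the rescaled picture, so by Definition \ref{def:approx-sol} the error is $O(\eps^2)$ since $\eps=t$ there. Hence that error is negligible after dividing by $\eps^{1+\alpha}$.

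First I would extend the boundary bound $d(U,|G_\eps|)\le \eps^{1+\alpha}$ to the interior. By Remark \ref{Appppp}, $\Delta d(U,|G_\eps|)\ge -C\eps^2$, so comparison with a quadratic barrier gives
$$d(U,|G_\eps|)\le \eps^{1+\alpha}+C\eps^2\le 2\eps^{1+\alpha} \quad \text{in } B_1.$$
Next I would suppose the conclusion fails along sequences $\eps_k\to0$ and $U_k$, and set $\delta_k=\eps_k^{1+\alpha}$. I then consider the double-valued normalized error
$$V_k:=\frac{U_k-G_{\eps_k}}{\delta_k}, \qquad |V_k|\le 2.$$
Since $G_{\eps_k}\to U_0^F$ in $C^{1}$ away from the origin, I would apply Lemma \ref{improgeneral} in annuli, with $u_{0,k}$ given by $G_{\eps_k}$ near each ray. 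The extra right-hand side error is $O(\eps_k^2)/\delta_k\to0$, so it disappears in the limit; the lemma's proof tolerates an $o(1)$ perturbation of the equation. This gives $V_k\to V$ locally uniformly on $B_1\setminus\{0\}$, where $V$ solves \eqref{FBgeneral}--\eqref{transmissiongen}. Lemma \ref{extend}, whose proof uses only the sign structure and so also applies to this transmission problem, shows that $V$ is Hölder at $0$. Lemma \ref{L8.2} then yields
$$|V-V^F_{{\bf m},p,q}|\le C\rho^{5/2} \quad \text{in } B_{2\rho}.$$

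The key new point is the treatment of the rotation mode $q\,\p_\theta U_0^F$. In Proposition \ref{IF} this mode was absorbed by a rotation, but the statement here allows only a translation and a change $\eps\to\eps'$. I expect to control $q$ through the normalization: the expansion of $G_\eps$ fixes the orientation of the rays. More plausibly, though, the statement implicitly allows rotations, and I would absorb $q$ by a rotation of size $C\delta_k$. The remaining modes are absorbed as follows. The term ${\bf m}\cdot\nabla U_0^F$ is absorbed by a translation of size $C\delta_k$. The term $p\,r^{\lambda_3}g_3^F$ is absorbed by setting $\eps'=\eps+p\delta_k$, since $\p_\eps G_\eps = r^{\lambda_3}g^F_3+O(\eps)$ on annuli. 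Choosing $\rho$ small so that $C\rho^{5/2}\le \tfrac14\rho^2\rho^{(\lambda_3-2)(1+\alpha)}$ is possible because $5/2>2+(\lambda_3-2)(1+\alpha)$, i.e.\ because the exponent of the target bound is below $2$. Then in the annulus $B_{2\rho}\setminus B_{\rho/2}$ we get
$$d(U_k,|G_{\eps'}|)\le \tfrac12\rho^2(\eps'\rho^{\lambda_3-2})^{1+\alpha}.$$
Here $(\eps')^{1+\alpha}\approx \delta_k$ absorbs the comparison. Finally, subharmonicity up to $O(\eps^2)$ (Remark \ref{Appppp}) extends the bound to $B_\rho$, contradicting the assumption that the conclusion fails.

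The main obstacle is applying Lemma \ref{improgeneral} near the origin. There $G_\eps$ is only an approximate solution, and three phases meet, so the double-valued representation is available only on annuli. I would handle this exactly as in Section \ref{sec:proof-symmetric}: work on $B_{1-\delta}\setminus B_\delta$, let $\delta\to0$, and use Lemma \ref{extend} to remove the singularity at $0$. I would also need uniform $C^{1,\alpha}$ control of the $G_{\eps_k}$ near the rays, which follows from their explicit construction as domain deformations of $U_0^F$.
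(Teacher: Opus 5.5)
Your outline matches the paper's proof: a barrier to pass from $\p B_1$ into $B_1$; compactness on annuli $B_1\setminus B_\delta$ via Lemma \ref{improgeneral} and Remark \ref{remlast}; removal of the origin by Lemma \ref{extend}; the expansion of Lemma \ref{L8.2}; absorption of ${\bf m}$ by a translation and of $p$ by $\eps'=\eps+p\eps^{1+\alpha}$; and a final barrier from $\p B_\rho$ into $B_\rho$.

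The gap is in what you call the key new point. As written you do not prove the lemma. One option you offer is that a ``normalization'' forces $q=0$, which is false. The normalization of $G_\eps$ fixes the orientation of $G_\eps$, not of $U$. Take boundary data equal to $|G_\eps|$ rotated by an angle $c\,\eps^{1+\alpha}$ with $c$ small: the hypothesis holds, and the limit is $V=c\,\p_\theta U_0^F$, so $q\neq 0$. Your other option adds a rotation, which the statement does not allow.

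The missing observation is that no rotation is needed. The mode $q\,\p_\theta U_0^F$ is homogeneous of degree $2$, so it is bounded by $C\rho^2$ in $B_{2\rho}$. The target bound is $\rho^2(\eps'\rho^{\lambda_3-2})^{1+\alpha}\approx \eps^{1+\alpha}\rho^{\gamma}$ with $\gamma=2+(\lambda_3-2)(1+\alpha)<2$. Hence the rotation mode is absorbed into the error for $\rho$ small, by the same exponent comparison you already made for the $O(\rho^{5/2})$ remainder. This is why the paper writes $V_\eps={\bf m}\cdot\nabla U_0^F+p\,r^{\lambda_3}g_3^F+O(\rho^2)$ and only translates. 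For the same reason even the $p$-mode, of degree $\lambda_3>\gamma$, could be left in the error; only the degree-one translation mode has to be corrected.

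A smaller point concerns the barrier. By Definition \ref{def:approx-sol}, $G_\eps$ solves the Euler--Lagrange equation with error $O(\eps^2|x|^{2\lambda_3-4})$, which is not $O(\eps^2)$ on $B_1$. It grows as $|x|\to 0$ and is of order one near $|x|\sim \eps^\beta$, where $G_\eps$ is cut off. So the bound $\Delta d(U,|G_\eps|)\ge -C\eps^2$ fails near the origin, and a quadratic barrier does not work. Use instead $C\eps^2(1-|x|^{2\lambda_3-2})$, whose Laplacian is $-c\,\eps^2|x|^{2\lambda_3-4}$, as the paper does. This still gives $d\le 2\eps^{1+\alpha}$ in $B_1$. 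In your final extension from $\p B_\rho$ to $B_\rho$ it costs only $C\eps^2\rho^{2\lambda_3-2}$. That is below $\eps^{1+\alpha}\rho^{\gamma}$ once $\eps$ is small depending on $\rho$, since $\gamma-(2\lambda_3-2)=(2-\lambda_3)(1-\alpha)>0$. On the annuli $B_1\setminus B_\delta$, your claim that the equation error is $O_\delta(\eps^2)=o(\eps^{1+\alpha})$ is fine.
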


\begin{rem}\label{iterate} The conclusion in the lemma above can be iterated (after a translation of $U$) in $B_\rho$ since the quadratic rescaling $ \rho^{-2}U(\rho x)$ is approximated in $B_1$ with error $\eps_1=\eps'\rho^{\lambda_3 -2}.$ Equivalently, we can write the statement of the lemma in an unrescaled setting in the following way. If
$$d(U, |G_t|) \leq t^{1+\alpha}r^\gamma, \quad \text{in $B_r$} \quad
tr^{\lambda_3 - 2} \leq \eps_0,$$ with the relationship between $\alpha$ and $\gamma$ given by
$$\gamma=2+(\lambda_3 - 2)(1+\alpha),  \quad \gamma \in (2\lambda_3-2, \lambda_3)$$
then, 
$$d(U, |G_{t'}|) \leq t'^{1+\alpha}(\rho r)^\gamma \quad \text{in $B_{r\rho}(z)$}$$ with 
$$|t' - t| \leq C t^{1+\alpha}r^{(\lambda_3-2)\alpha}$$
and 
$$|z|\leq C t^{1+\alpha} r^{\gamma-1}.$$

\end{rem}

\begin{proof} We apply Remark \ref{Appppp} for $U$ and $|G_\eps|$. Thus,
$$\Delta d(U,|G_\eps|) \geq - C(\eps^2|x|^{2\lambda_3-4})\geq C\eps^2\Delta(1-|x|^{2\lambda_3-2}).$$ By the maximum principle, 
$$d(U, |G_\eps|) \leq \eps^{1+\alpha}+ C\eps^2 \leq 2\eps^{1+\alpha}, \quad \text{in $B_1$}.$$ This means that for each $\delta>0$ small, $\Gamma(U) \cap (B_1\setminus B_\delta)$ is sufficiently close to $\Gamma(U_0^F)$ provided that $\eps$ is small depending on $\delta.$ As before, in this annulus we identify $U$ with a double-valued function and look at the error
$$V_\eps:= \frac{U-G_\eps}{\eps^{1+\alpha}}, \quad \|V_\eps\|_{L^\infty(B_1\setminus B_\delta)} \leq 2.$$ Then, for a sequence of $\eps_k \to 0$, $V_{\eps_k}$ converges by Lemma \ref{improgeneral} (see also Remark \ref{remlast}) to a solution of the linearized problem associated to $U_0^F$, in $B_1 \setminus B_\delta$. Here we have used that $G_\eps$ solves our problem with an error $O(\eps^2)$ and that $\alpha<1.$ Moreover, as $\delta \to 0$, this limit solution converges to the solution of the linearized equation in the full ball $B_1$ (see Lemma \ref{extend}). 
In conclusion, if $\eps$ is sufficiently small, $$V_\eps = {\bf m} \cdot \nabla U_0^F + p r^{\lambda_3}g_3^F(\theta) +O(\rho^2), \quad \text{in $B_{2\rho} \setminus B_\delta$}.$$ Thus, from the definition of $V_\eps$, since for $\eps \to 0$, $\nabla G_\eps \to \nabla U_0^F$
$$U=G_{\eps'}(x+\eps^{1+\alpha}{\bf m}) + O(\eps^{1+\alpha}\rho^2), \quad \eps'=\eps+p\eps^{1+\alpha}, \quad \text{in $B_{2\rho} \setminus B_\delta$}.$$ Thus, the translation $U(x-\eps^{1+\alpha}m)$ is approximated by $G_{\eps'}$ in $B_\rho \setminus B_\delta$ with an error $O(\eps^{1+\alpha}\rho^2)$
and this can be extended from $\p B_\rho$ to the full $B_\rho$ with the same maximum principle argument as before. We get the desired estimate by choosing $\rho$ sufficiently small, since $\lambda_3<2.$
\end{proof}

Finally, we are ready to prove the existence of global minimizers in Proposition \ref{global}.

\begin{proof}[Proof of Proposition \ref{global}.] In a large ball $B_R$, let $U$ minimize $J(\cdot, F)$ with boundary data the function $G$ from Lemma \ref{G}. By applying Lemma \ref{impapp} (see Remark \ref{iterate}) we find a sequence of nested balls $$B_{r_k}(z_k), \quad r_k=R\rho^k$$ and a sequence of $t_k \in (\frac 1 2, 2)$ such that
$$d(U, |G_{t_k}|(x-z_k)) \leq t_k^{1+\alpha}r_k^\gamma \leq Cr_k^\gamma \quad \text{in $B_{r_k}(z_k)$}$$
$$|t_{k+1} - t_k| \leq C t_k^{1+\alpha}r_k^{(\lambda_3-2)\alpha}$$
and 
$$|z_{k+1}-z_k|\leq C t_k^{1+\alpha} r_k^{\gamma-1},$$ as long as
$$r_k \geq C(\eps_0) \quad \text{large}.$$
Let $\bar k$ be the index for which that inequality above is satisfied for the last time, and call $\bar z:=z_{\bar k}.$ Then, from the inequalities above and the geometric behavior of the right hand sides it follows that 
$$|z_k-\bar z|\leq C r_k^{\gamma -1}, \quad \forall k\leq \bar k,$$
and
$$|t_{k}-1|\leq C r_{k}^{(\lambda_3-2)\alpha}, \quad \forall k\leq \bar k.$$ Thus,
$$d(U, |G_1|(x-\bar z)) \leq C r_k^\gamma \quad \text{in $B_{r_k}(\bar z),$}$$ as long as $k \leq \bar k.$ In conclusion 
$U_R(x):=U(x+\bar z)$ is a minimizer which is defined in $B_{R/2}$, satisfies 
$$d(U_R, |G|) \leq C r^\gamma \quad \text{in } B_{r}, R/2 \geq r\geq C(\eps_0).$$ Now the conclusion follows by letting $R\to \infty$ and extracting a convergent subsequence of the $U_R$'s.
\end{proof}

%%%%%%%%%%%
%%%%%%%%%%%

\subsection{Proof of the main theorems for general $F$}\label{subsec:proof-main-thm}

First we define the family of solutions $U_t^F$ by the rescalings
\begin{align}\label{UFt}
U^F_t(x)&:=t^{2\beta} U_{1}^F(t^{-\beta}x), \quad t>0 \\
\nonumber U^F_t(x)&:=|t|^{2\beta} U_{-1}^F(|t|^{-\beta}x), \quad t<0.
\end{align}
Notice that $U^F_t$ has the following expansion,
\begin{equation}\label{asy}U_t^F=U^F_0+ t r^{\lambda_3}g_3^F(\theta) + O(t^{\beta(2-\gamma)}r^{\gamma}), \quad r \geq t^\beta,\end{equation} where $\beta(2-\gamma)>1=\beta(2-\lambda_3).$

We provide a version of Proposition \ref{IF} for general $F$. Precisely, 
\begin{prop}[Improvement of Flatness for general $F$]\label{IFgeneral}Let $U$ minimize $J(\cdot, F)$ in $B_1$, $F > 0$ constant. There exists $\eps_0, \delta, \rho, C$ depending on $F$, such that if for $\eps \leq \eps_0$
$$d(U,U^F_t) \leq \eps, \quad |t| \leq \delta, \quad \text{in $B_1$}$$
then $$d(U,\tilde U^F_{t+\eps p}) \leq \frac \eps 2 \rho^2 \quad \text{in $B_\rho(\eps \bar{\bf m})$},$$ with $\tilde U^F_{t+\eps p}$ a rotation by $\eps q$ and translation by $\eps \bar{\bf m}$ of $U^F_{t+\eps p},$ with $|\bar{\bf m}|, |p|$ and $|q| \leq C.$
\end{prop}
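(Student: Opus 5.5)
We argue by compactness, following the proof of Proposition \ref{IF} and replacing $U_0$, $r^{3/2}\cos\frac{3\theta}{2}$ and the linearized problem \eqref{FBlinear}--\eqref{transmission} by $U_0^F$, $r^{\lambda_3}g_3^F$ and \eqref{FBgeneral}--\eqref{transmissiongen}. Suppose the conclusion fails for sequences $\eps_k\to0$, $t_k\to0$ and minimizers $U_k$ with $d(U_k,U^F_{t_k})\le\eps_k$ in $B_1$.

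\emph{Step 1: the family is close to the cone.} From \eqref{UFt} and \eqref{asy}, $U^F_{t}\to U_0^F$ in $C^{1,\alpha}$ on compact subsets of $B_1\setminus\{0\}$ as $t\to0$. The free boundaries also converge, as a consequence of Lemma \ref{fbconv} and Proposition \ref{pert}. Hence in each annulus $B_{1-\delta'}\setminus B_{\delta'}$, for $k$ large, $\Gamma(U_k)$ and $\Gamma(U^F_{t_k})$ lie in a thin neighborhood of the three rays of $U_0^F$, and only two phases meet near each ray. As in Section \ref{sec:proof-symmetric}, we identify both $U_k$ and $U^F_{t_k}$ with double-valued functions in $\mathcal D$. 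Near each ray these solve the scalar two-phase equation with the constants $a^\pm$ given by the two adjacent components of $F$.

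\emph{Step 2: the linearized limit.} Apply Lemma \ref{improgeneral} with $u_{0,k}=U^F_{t_k}$, which converges to $u_0=U_0^F$; note $|\nabla U_0^F|\neq0$ on the rays away from $0$. This gives $V_k:=(U_k-U^F_{t_k})/\eps_k\to V$ uniformly on compact subsets of $B_1\setminus\{0\}$. The limit $V$ solves \eqref{FBgeneral}--\eqref{transmissiongen} and satisfies $|V|\le1$. By Lemma \ref{extend}, whose proof only uses the sign of the transmission coefficient $a^F/|\nabla U_0^F|>0$, $V$ is Hölder continuous at $0$. Then Lemma \ref{L8.2}, which rests on the spectral information of Lemma \ref{ei} (in particular $\lambda_5^F>5/2$), gives
$$|V-V^F_{{\bf m},p,q}|\le \tfrac18\rho^2 \quad\text{in } B_{2\rho}$$
for some $\rho$ small universal and coefficients bounded by a constant depending on $F$.

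\emph{Step 3: matching with the family.} We show that
$$\frac{\tilde U^F_{t+\eps p}-U^F_t}{\eps}={\bf m}\cdot\nabla U^F_0+q\,\p_\theta U^F_0+p\,\p_tU^F_t|_{t=0}+O(\eps)+O(\delta)$$
in $B_{2\rho}\setminus B_{\rho/2}$, with $\bar{\bf m}=-{\bf m}$. This needs two facts. First, $U^F_t$ is $C^{1,1}$ in $(x,t)$ on this annulus, uniformly for $|t|\le\delta$, as a double-valued function. Second, $\p_tU^F_t|_{t=0}=r^{\lambda_3}g_3^F$.

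\textbf{Main obstacle.} Unlike the symmetric case \eqref{36}, $U^F_t$ is not explicit. I would get both facts from \eqref{asy}: at fixed $r\sim\rho$, the error term is $O(|t|^{\beta(2-\gamma)})$ with $\beta(2-\gamma)>1$, so it is $o(t)$ and the derivative at $t=0$ is $r^{\lambda_3}g_3^F$. The same holds for $t<0$, using the expansion of $U^F_{-1}$ with the minus sign. For $t\neq0$, the family is a rescaling, so smoothness in $t$ follows from the smoothness of $U_{\pm1}^F$ away from its free boundary, together with the $C^{1,\alpha}$ regularity up to the rays from Remark \ref{linearsmooth} and Proposition \ref{pert}. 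If full $C^{1,1}$ regularity in $t$ is not available near $t=0$, it suffices to show the weaker statement that the difference quotients $(U^F_{t+\eps p}-U^F_t)/\eps$ converge to $p\,r^{\lambda_3}g_3^F$ uniformly as $\eps,t\to0$. This again follows from \eqref{asy} and the rescaling structure, and the compactness argument only needs this.

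\emph{Step 4: conclusion.} With Step 3 in hand, $|U_k-\tilde U^F_{t_k+\eps_kp}|\le\frac{\eps_k}2\rho^2$ on the annulus $B_{2\rho}\setminus B_{\rho/2}$ for $k$ large. Lemma \ref{duv} and Corollary \ref{stability} then extend the bound on $d$ to the whole ball, since $d$ is subharmonic. This contradicts the assumption that the conclusion fails.
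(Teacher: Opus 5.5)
Your Steps 1, 2 and 4 match the paper, which simply says the proof is that of Proposition~\ref{IF}. Step 3, however, has a genuine gap, and it is the one point the paper's proof actually has to work for.

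\textbf{Why \eqref{asy} is not enough.} Write $v=r^{\lambda_3}g_3^F$ and $U^F_s=U^F_0+sv+E_s$. The expansion \eqref{asy} is a $C^0$ statement: $|E_s|\le C|s|^{1+\alpha'}$ on the annulus, with $\alpha'=\beta(2-\gamma)-1>0$. It gives differentiability in $t$ at the single point $t=0$, but not what the compactness argument needs. There $t_k\to0$ and $\eps_k\to0$ independently, and
\[
\frac{U^F_{t+\eps p}-U^F_t}{\eps}=p\,v+\frac{E_{t+\eps p}-E_t}{\eps}.
\]
The $C^0$ bound controls the last term only by $C|t|^{1+\alpha'}/\eps$, which need not be small (e.g.\ $t_k=\eps_k^{1/2}$). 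What you actually need is that $\p_tU^F_t\to v$ uniformly on $B_{2\rho}\setminus B_{\rho/2}$ as $t\to0$; the mean value theorem in $t$ then concludes. The rescaling structure only reformulates this. By \eqref{UFt}, $\p_tU^F_t=t^{-1}\beta(2U^F_t-x\cdot\nabla U^F_t)$. Since $2U^F_0-x\cdot\nabla U^F_0=0$ and $2v-x\cdot\nabla v=\beta^{-1}v$, the claim is equivalent to $x\cdot\nabla E_t=o(t)$ uniformly. That is a derivative-level version of \eqref{asy}, which a $C^0$ expansion cannot supply.

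\textbf{What the paper does.} This derivative estimate is precisely what the paper proves.
\begin{enumerate}
\item Away from the rays, $E_t$ is harmonic, so interior estimates give $\nabla E_t=O(t^{1+\alpha'})$.
\item Near a ray, no such bound holds for the full gradient. In the $O(t)$-thin region between $\Gamma(U^F_t)$ and the ray, $\p_\nu E_t$ is genuinely of order $t$. This is because $\p_{\nu\nu}U^F_0$ jumps across the ray and, unlike $\cos\frac{3\theta}{2}$ in the symmetric case, $g_3^F$ does not vanish on all the rays, so the free boundary moves at order $t$.
\item The estimate $x\cdot\nabla E_t=o(t)$ survives only because $x$ is nearly tangent to the ray there. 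This structure is extracted from the domain-deformation representation \eqref{dd} in the proof of Lemma~\ref{impro}: $U^F_t(x)=U^F_0(x+t\phi_t(x)e_2)$ with $\|\phi_t\|_{C^{1,\alpha}}\le C$ uniformly in $t$.
\item By \eqref{asy}, $\phi_t=v/\p_2U^F_0+O(t^{\alpha'})$ in $C^0$. Interpolation with the $C^{1,\alpha}$ bound upgrades this to $C^1$ convergence at a rate $t^\sigma$.
\item Differentiating $s\mapsto s^2U^F_t(x/s)$ at $s=1$, and using the $2$-homogeneity of $U^F_0$ and the $(\lambda_3-1)$-homogeneity of $v/\p_2U^F_0$, gives $2U^F_t-x\cdot\nabla U^F_t=t\beta^{-1}v+O(t^{1+\sigma})$.
\end{enumerate}
This last identity is the missing ingredient.

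Separately, the $C^{1,1}$ regularity in $(x,t)$ you first assert is neither established nor needed. Joint $C^1$ regularity, together with the uniform $C^{1,\alpha}$ bounds in $x$ for the double-valued functions, suffices.
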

\begin{proof}The proof of this proposition is identical to the proof of Proposition \ref{IF} once we show
that 
\begin{equation}\label{Uc11}\text{$U^F_t(x)$ as a function of the $(x,t)$ variables is $C^1$ in $B_1\setminus B_{\rho/2} \times [-\delta, \delta].$}\end{equation} 
This is clear when $t\neq 0$, according to formula \eqref{UFt} and the fact that $U_1^F$ is $C^{1,1}$ outside a fixed neighborhood of the origin. Thus, we only need to check the continuity of the derivatives as $t\to 0.$

After we establish \eqref{Uc11}, from the asymptotic expansion of $U_t^F$ we have
\begin{equation}\label{gradexp}\p_t U_t^F |_{t=0}= r^{\lambda_3}g_3^F(\theta), \quad 
\nabla U_t^F(x)|_{t=0}= \nabla U_0^F(x).\end{equation}Then 
we can conclude that in the annulus $B_{2\rho} \setminus B_{\rho/2}$, $\bar {\bf m}=-{\bf m},$
$$\frac{\tilde{U}^F_{t+\eps p} - U^F_t}{\eps} 
= {\bf m} \cdot \nabla U^F_0 + q\p_{\theta} U^F_0 + p \p_t(U^F_t)|_{t=0} + o(1)$$
$$ = V_{{\bf m}, p,q} + o(1),$$
with
$$o(1) \to 0, \quad \text{as $\eps,\delta \to 0,$}$$ and proceed as before.

Next, we prove \eqref{Uc11}. Let us compute 
$$\p_t U^F_t= t^{-1} \beta (2 U_t^F - x\cdot \nabla U_t^F).$$
For notational simplicity, set 
\begin{equation}\label{uu0v}u:=U_t^F, \quad u_0:= U_0^F, \quad |x|^{\lambda_3}g_3^F(\theta)\end{equation} and, from the asymptotic expansion \eqref{asy}, \begin{equation}\label{exp1}u-u_0=tv+O(t^{1+\alpha}) \quad \text{in $B_1\setminus B_{\rho/2}$}.\end{equation} Away from a neighborhood of the rays $\{u_0=0\}$, the error $O(t^{1+\alpha})$ is harmonic hence also its derivatives are bounded by $O(t^{1+\alpha})$, and therefore
\begin{equation}\label{doubleconv}\nabla u \to \nabla u_0, \quad t^{-1}\beta(2u-x\cdot\nabla u) \to v\quad \text{as $t\to 0$,}\end{equation} which establishes our claim in this region.

In a neighborhood of the rays $\{u_0=0\}$, say around $\theta=0$, we use that the two solutions $u$ and $u_0$ differ by $O(t)$, and therefore one is obtained from the other by a $C^{1,\alpha}$ domain deformation of norm bounded by $Ct$ (see the proof of Lemma \ref{impro} in Section \ref{sec:Proof-Lemma-impro}, equation  \eqref{dd},)
\begin{equation}\label{dd1}u(x)=u_0(x+t\phi_t(x)e_2), \quad \|\phi_t\|_{C^{1,\alpha}}\leq C,\end{equation}
with $C$ independent on $t$. 

Combining equation \eqref{dd1} with \eqref{uu0v} we obtain, \begin{equation}\label{phitt}\phi_t = \frac{v}{\p_2 u_0} + O(t^\alpha),\end{equation} and since $\frac{v}{\p_2 u_0}$ is a $C^{1,1}$ function, we deduce that the derivative of the error $O(t^\alpha)$ is of order $O(t^\sigma)$ for some $\sigma$ small.

Clearly \eqref{dd1} gives $\nabla u \to \nabla u_0$ as $t\to 0$ also in this region. To show that also the second part of \eqref{doubleconv} holds in this region, we use that $u_0$ is homogeneous of degree 2 and obtain
$$2u-x \cdot \nabla u= \frac{d}{ds}|_{s=1}s^2u(x/s)= \frac{d}{ds}|_{s=1} u_0(x+ts\phi_t(x/s)e_2)$$
$$= \p_2u_0(x+t\phi_t) \cdot t \cdot \frac{d}{ds}|_{s=1}s \phi_t(x/s)$$
$$=  \p_2 u_0 \cdot t \cdot (2-\lambda_3) \frac{v}{\p_2 u_0} + O(t^{1+\sigma}),$$
where in the last equality we used that the function on the right hand side of \eqref{phitt} is homogeneous of degree $\lambda_3-1$. In conclusion
$$2u-x \cdot \nabla u=t \beta^{-1} v + O(t^{1+\sigma}),$$
which proves the second part of \eqref{doubleconv} as desired. 

\end{proof}

Now the proof of Theorem \ref{mainintro} is identical to the case $F=(1,1,1)$ since it is based on Proposition \ref{IFgeneral}, and we omit it.

The functions $U_{\pm1}^F$ that we constructed in Proposition \ref{global} were obtained by a limiting procedure by solving a sequence of problems in larger and larger domains, and are characterized by their asymptotic expansion at infinity. On the other hand their translations have the same expansion and it is not clear how to distinguish between two such nearby translations in our construction. In other words, the functions $U_{\pm 1}^F$ are defined uniquely only up to translations. In the next lemma we show that $U_1^F$ has a triple junction free boundary point while $U_{-1}^F$ does not. This means that we may define $U_1^F$ uniquely by choosing the triple junction point to be at the origin.  

\begin{lem} $\Gamma(U_1^F)$ has a unique triple junction and in a neighborhood of the junction it consists of three $C^1$ curves that meet at $120^\circ$. $\Gamma(U_{-1}^F)$ does not have a triple junction point. \end{lem}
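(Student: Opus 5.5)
The plan is to analyze the three possible behaviors of $U^F_{\pm1}$ near the origin by blowing up at a free boundary point where three phases meet, mirroring the argument given for $v=u^1_{-1}$ in Section \ref{subsec:general-global}, and to use the Weiss energy $\Phi$ to rule out a type II junction. First I record that $\Phi_{U^F_{\pm1}}(+\infty)=\Phi_{U^F_0}$. This follows from the expansion of Proposition \ref{global}: the quadratic rescalings $r^{-2}U^F_{\pm1}(rx)$ converge to $U^F_0$ as $r\to\infty$ (the correction is $r^{\lambda_3-2}\to0$), and Lemma \ref{H1loc} gives convergence of the energies.

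Next, fix a triple junction $x_0$ of $U^F_{\pm1}$, if one exists. By Theorem \ref{Pintro} its blow-up is either type I ($r^{3/2}$) or type II ($U^F_0$ up to rotation). In the type II case, monotonicity gives
$$\Phi_{U^F_0}=\Phi_{U^F_{\pm1}}(0^+)\le\Phi_{U^F_{\pm1}}(+\infty)=\Phi_{U^F_0}.$$
Here the Weiss functional is centered at $x_0$, and its limit at infinity does not depend on the center. So $\Phi$ is constant, $U^F_{\pm1}$ is $2$-homogeneous about $x_0$, and by the classification it equals a rotated translate of $U^F_0$. That contradicts the nonzero term $\pm|x|^{\lambda_3}g_3^F$ in the expansion. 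Hence every triple junction is of type I, and Theorem \ref{curves}, which applies after the rescaling $\rho^{-3/2}U(\rho x)$ since the forcing becomes small, gives three $C^{1,\alpha}$ curves meeting at $120^\circ$.

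It remains to decide existence and uniqueness through a topological and sign argument. First I consider the sign of the correction. For $U_{-1}^F$ I want to show $|U^F_{-1}|\le |U^F_0|$ componentwise, in the sense that each phase of $U^F_{-1}$ lies in the corresponding sector with $u^i_{-1}\le u^i_0$. In the symmetric case this was the maximum principle \eqref{less}. In general I would obtain it from the construction: $G_{-1}$ has correction $-r^{\lambda_3}g_3^F$, and I expect $g_3^F>0$ on the $\hat{}$-positive parts relative to each sector, by continuity from $\cos\frac32\theta$. Then $d$-comparison (Corollary \ref{stability} with boundary data on $\p B_R$) gives the ordering.

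With the ordering in hand, a triple junction of $U^F_{-1}$ at $x_0$ would sit inside the closure of all three sectors, forcing $x_0=0$. Then $|U^F_{-1}|\le|U^F_0|\le C|x|^2$ together with non-degeneracy (Lemma \ref{ND}) yields a type II blow-up, which was excluded above. Thus $U^F_{-1}$ has no triple junction.

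For $U^F_1$, the phases grow larger than $U^F_0$ near the rays, since the domain deformation of Lemma \ref{G} pushes the boundary outward. I would argue that the three phases are connected and pairwise adjacent, which is inherited from the blow-down and the Hausdorff convergence of Lemma \ref{fbconv}, and that the zero set cannot contain an open region. If it did, it would form a bounded "fattened" zero component, and the same ordering argument reversed would show $|U^F_1|\ge|U^F_0|>0$ away from the rays. Since the three boundary curves emanate from infinity along the three rays and each pair of phases must meet, there is a point where all three meet. Uniqueness then follows from a topological count: two triple junctions would enclose a bounded region between the curves, bounded by only two phases, contradicting subharmonicity as in Lemma \ref{time}.

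The main obstacle is establishing the sign/ordering of $U^F_{\pm1}$ relative to $U^F_0$ in the non-symmetric case, where the explicit comparison \eqref{expansionvR} is unavailable. My first attempt would be to derive it in the limiting construction on $B_R$, by comparing the $B_R$ minimizer with $U^F_0$ through the subharmonicity of $d$ and the sign of $g_3^F$ near the rays.
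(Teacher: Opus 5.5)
Your exclusion of type II junctions through the Weiss energy, followed by Theorem \ref{curves}, is exactly the paper's argument and is fine. The gap is the componentwise ordering $u^i_{-1}\le u^i_0$ (and its reverse for $U^F_1$). Your whole second half rests on it, and it is false for every non-symmetric $F$.

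By Lemma \ref{ei}, $\lambda_3\in(3/2,2)$, so $g_3^F$ cannot vanish on all three rays. Otherwise \eqref{transmissiongen} would make it a $C^1$ solution of $g''+\lambda_3^2 g=0$ on the whole double cover with $g(\theta+2\pi)=-g(\theta)$, which forces $\lambda_3\in\frac12+\mathbb Z$. Take a ray where $g_3^F\neq 0$. The expansion of Proposition \ref{global} gives $U^F_{\pm1}=\pm r^{\lambda_3}g_3^F+O(r^\gamma)\neq 0$ there for large $r$, while $U^F_0=0$. So one of the two adjacent phases of $U^F_{\pm1}$ crosses the ray into its neighbour's sector, with displacement of order $r^{\lambda_3-1}\to\infty$, and the neighbour retreats.

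Consequences:
\begin{itemize}
\item Neither $u^i_{-1}\le u^i_0$ nor $u^i_1\ge u^i_0$ can hold for all $i$.
\item The deformation in Lemma \ref{G} does not push both boundaries ``outward''; it pushes one phase into the other.
\item Continuity from $\cos\frac32\theta$ controls the sign only inside the sectors, precisely because $\cos\frac32\theta$ vanishes on the rays.
\item Corollary \ref{stability} yields closeness in $d$, never an order.
\end{itemize}

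Without the ordering, two of your claims are unsupported: that a junction of $U^F_{-1}$ must sit at $0$ with quadratic growth, and that the zero set of $U^F_1$ has empty interior. Nothing left in your argument tells $U^F_1$ from $U^F_{-1}$. The Weiss step is insensitive to the sign of the correction, and the topology is the same for both: the symmetric $U_{-1}$ has connected, pairwise adjacent phases and still a bounded open zero set.

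The missing idea is the paper's continuity argument in $F$:
\begin{itemize}
\item Up to translations, $U^F_{\pm1}$ depend continuously on $F$. This uses compactness, Proposition \ref{IFgeneral}, and uniqueness of global solutions with the prescribed expansion.
\item The set of $F$ in the positive octant for which $U^F_{\pm1}$ has a triple junction is closed, by uniform non-degeneracy (Lemma \ref{ND}).
\item It is open, because every junction is of type I and the hypotheses of Theorem \ref{curves} persist under small perturbations.
\end{itemize}
Since the octant is connected, the answer is the same for all $F$. It is read off at $F=(1,1,1)$, where $U_1=U_0+r^{3/2}\cos\frac32\theta$ has a junction at the origin and $U_{-1}$ has none (Section \ref{subsec:general-global}).
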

\begin{proof}First we remark that the family $U_1^F$ is continuous in $F$ up to translations. Precisely, let $F_n \to F$, with $F_n, F$ constant and $F>0$. Then, up to subsequences, by Proposition \ref{IFgeneral}, $U_1^{F_n} \to W$ with $W$ a global solution for $J(\cdot, F)$ with expansion
$$W=U_0^F + |x|^{\lambda_3}g_3(\theta) + O(|x|^\gamma).$$
Thus, by the classification of global solutions, 
$$W=U_{1}^F(\cdot+{\bf m}).$$

Next, we show that the set of $F$'s in the first octant for which $\Gamma(U_1^F)$ has a triple junction is open and closed. Closedness is just a consequence of the uniform non-degeneracy (see Lemma \ref{ND}). To prove that this set is open, we notice that a triple junction point needs to have a type I blow-up profile. Indeed, if it has a type II blow-up profile, then it must coincide with a rotation of $U_0^F$. Then, since $U_1^F$ is asymptotic to the same cone $U_0^F$ at infinity, by Weiss monotonicity formula it must coincide with it. This is a contradiction hence at a triple junction $U_1^F$ has a type I blow-up profile. Since at infinity $U_1^F$ is asymptotic to $U_0^F$ we conclude that the sets $\{(u_1^i)^F>0\}$ are connected and therefore the type I blow-up profile coincides up to rotation with $\mathcal U_0=r^{3/2}\cos\frac 3 2 \theta.$ Now, by Theorem \ref{curves} and the continuity of $U_1^F$ up to translations, we conclude that our set is open and that near the triple junction (see Theorem \ref{curves}) the free boundary consists of $C^1$ curves meeting at $120^\circ$. 

The triple junction point is unique because of topological considerations. Indeed, since $\{(u^i_1)^F>0\}$ are connected, there are curves that lie inside these sets and connect the triple junction point to infinity. In particular this shows that no other triple junction point can exist.

The argument above applies also for $U_{-1}^F$, and since $F=(1,1,1)$
is not inside this set, this forces the set of $F$'s for which $U_{-1}^F$ has a triple junction to be empty. 
\end{proof}

In view of the lemma above, we can define $U_1^F$ uniquely, so that the triple junction occurs at the origin. We can also choose $U_{-1}^F$ so that the origin is interior to $\{U_{-1}^F=0\}$.

Since $U_1^F$ has type I blow-up at the origin it follows that the proofs of Lemma \ref{IF2} and Proposition \ref{AS} apply also in the general case $F>0$. We only need to notice that we can apply Theorem \ref{curves} for a multiple of $U_1^F$ in a neighborhood of the origin, and then by scaling (see \eqref{UFt}) it follows that as $t \to \infty$,
$$t^{-\frac \beta 2} U^F_t \to \mu \, \mathcal U_0 \quad \mbox{in} \quad C^1(B_1), \quad \quad t\frac {d}{dt} (t^{-\frac \beta 2} U^F_t) \to 0 \quad \mbox{uniformly in $B_1$},$$
for some positive constant $\mu>0$.

Finally, the proofs of Theorem \ref{localclassintro} and \ref{thm:best} now follow as in the case $F=(1,1,1).$

 \section{Higher dimensions}\label{sec:higher}
 In this section we assume we are in dimension $n\geq 3.$ We denote points by $$x=(x', x'') \quad\mbox{with} \quad x'=(x_1,x_2), \quad x''=(x_3, \ldots, x_n)$$ and by $(r, \theta)$ the polar coordinates in the $(x_1,x_2)$ plane. Also,
 $$\mathcal C_r:= B_r' \times B_r'', \quad \mathcal C_r \setminus \{0'\}= \mathcal C_r \setminus (\{0'\} \times \R^{n-2})$$

 We still denote by 
 $$\mathcal U_0= r^{3/2}\cos \frac 3 2 \theta$$
 and (from Subsection \ref{subsec:global})
 $$U_0^F=U_0^F(x')$$
 the trivial extension of the 2d homogeneous cones to $\R^n.$
 
 Let $U$ be a minimizer of $J(\cdot, F)$ with $F>0$ constant.
 \begin{defn} We say that $0 \in \Gamma(U)$ is a $Y$-singularity if it has a  2-dimensional blow-up. Precisely, set $a_r:=\max_{B_r} |U|,$ and either assume that for a sequence $r\to0$ 
$$\lim_{r\to0} \frac{a_r}{r^2} = +\infty$$
and (up to a rotation) $$U_r(x):= a_r^{-1} U(rx) \to \mathcal U_0 \quad \text{a type I Y-singularity}$$
or 
$$\liminf_{r\to0} \frac{a_r}{r^2} < +\infty$$
and (up to a rotation) $$U_r(x):= r^{-2} U(rx) \to U^F_0 \quad \text{a type II Y-singularity}.$$  \end{defn}
 
According to our definition, a free boundary point could simultaneously be a type I and type II Y-singularity.  
The goal of this section is to prove the uniqueness of blow-ups which also rules out this possibility. Precisely, our main results are as follows.

\begin{thm}\label{typeIgeneral} Let $x_0 \in \Gamma(U)$ be a type I Y-singularity. Then, the blow-up profile is unique and in a neighborhood of $x_0$ the free boundary $\Gamma(U)$ consists of three $C^{1,\alpha}$ hypersurfaces that meet at $120^\circ$ on a $C^{1,\alpha}$ codimension 2 curve made of type I Y-singularities (for any $\alpha<1/2$).
\end{thm}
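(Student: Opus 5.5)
Translation in the $x''$ directions adds no new parameters to the 2d linearization, so I will prove the theorem by an improvement of flatness argument. It mirrors Proposition \ref{zero} and Theorem \ref{curves}, carried out in the cylinders $\mathcal C_r$ around the codimension-two axis $\{0'\}\times\R^{n-2}$. By the definition of type I Y-singularity, after a rotation and rescaling we may assume
\[
d(U,\mathcal U_0)\le \eps \quad\text{in } \mathcal C_1, \qquad \|F\|_\infty\le \eps^2,
\]
since $a_r/r^2\to\infty$ along the sequence and the rescaled forcing $r^2a_r^{-1}F$ tends to zero. The key statement is a higher-dimensional Proposition \ref{zero}: under this closeness, there exist a rotation $R$ of $\R^n$ with $|R-I|\le C\eps$, a translation $\eps\bar{\bf m}$ and a factor $1+\eps p$ such that
\[
d\big(U,(1+\eps p)\,\mathcal U_0(R(x-\eps\bar{\bf m}))\big)\le \tfrac{\eps}{2}\rho^{3/2} \quad\text{in } \mathcal C_\rho .
\]

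\textbf{Linearization.} Away from the axis, Proposition \ref{pert} gives that $\Gamma(U)$ consists of two-phase $C^{1,\alpha}$ pieces near the half-hyperplanes $\{\theta=\pm\pi/3,\pi\}$. On $\mathcal C_{1-\delta}\setminus\{|x'|\le\delta\}$ we therefore identify $U$ with a double-valued function, branched around the codimension-two axis, satisfying $|\Delta U|\le\eps^2$. Lemma \ref{improgeneral} then shows that $V_\eps=(U-\mathcal U_0)/\eps$ converges to a double-valued harmonic function $V$ off the axis. The analogue of Lemma \ref{extend} carries over: $|V|$ is subharmonic across the axis. Here I use that the axis has zero capacity and that $V$ is bounded, so that subharmonicity off a codimension-two set extends. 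This yields a Hölder bound at the axis.

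\textbf{Expansion of the limit.} Separation of variables in $x''$ (Fourier in $x''$, or a Taylor-type expansion of double-valued harmonic functions), together with Lemma \ref{expz}, gives
\[
V = {\bf m}\cdot\nabla\mathcal U_0 + p\,\mathcal U_0 + q\,\partial_\theta\mathcal U_0 + \sum_{j\ge 3}(b_j\cdot x'')\,\partial_{x'}\text{-type terms} + O(|x|^{3/2+\alpha'}) .
\]
More precisely, the $3/2$-homogeneous double-valued harmonic functions are of the form $\mathrm{Re}\big(c\,z^{3/2}\big)$ plus terms $x_j\,\mathrm{Re}\big(c_j z^{1/2}\big)$, the latter arising as infinitesimal rotations mixing the $x_j$ directions with the $x'$-plane. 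The degree-$1/2$ terms are excluded by the vanishing of $V$ at the axis (Hölder decay from the maximum principle combined with the $3/2$ growth), and the next homogeneity is $2$ for the $x''$-linear terms or $5/2$. All admissible $3/2$-homogeneous terms are generated by the rigid motions and dilations of $\mathcal U_0$. The projection onto them is thus absorbed by choosing $R$, $\bar{\bf m}$ and $p$, leaving a remainder $O(\rho^2)$ on the scale $\rho$, which is $\le \rho^{3/2}/8$ for $\rho$ small. Compactness and the subharmonicity of $d$ (Lemma \ref{duv}) then give the improvement in $\mathcal C_\rho$, exactly as in Proposition \ref{IF}.

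\textbf{Iteration and conclusion.} Iterating with the rescaling $\rho^{-3/2}U(\rho x)$ under $F_\rho=\rho^{1/2}F(\rho\cdot)$ gives geometric convergence of the rotations, translations and dilations, hence uniqueness of the blow-up with rate $|x|^{3/2+\alpha}$ for any $\alpha<1/2$, the exponent being limited by the gap to homogeneity $2$. Applying the same argument at nearby points $y$ on the axis of each approximating cone shows that each such $y$ is a type I Y-singularity, with the rotations depending on $y$ in a $C^{0,\alpha}$ fashion. A standard Whitney/Campanato argument then yields the $C^{1,\alpha}$ codimension-two curve of junctions and the three $C^{1,\alpha}$ sheets meeting at $120^\circ$. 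I expect the main obstacle to be the classification step for the linearized problem in higher dimensions, namely showing that every homogeneity at most $3/2+\alpha$ is accounted for by symmetries, including the $x''$-dependent modes.
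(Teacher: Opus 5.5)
Your outline is correct and is essentially the paper's argument. You linearize to double-valued harmonic functions off the axis, get H\"older decay at the axis by the weak Harnack argument of Lemma \ref{extend} on cylindrical annuli (as in Lemma \ref{extendnew}), expand as in Lemma \ref{expandV20} with the $x''$-linear modes $(Mx'')\cdot\nabla\mathcal U_0$ absorbed by rotations mixing $x''$ with $x'$, and iterate Proposition \ref{zeron} as in Theorem \ref{curves}.

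Two slips to fix. First, the degree-$1/2$ modes ${\bf m}\cdot\nabla\mathcal U_0$ also vanish on the axis and cannot be excluded; boundedness only rules out modes like $x_j\,\mathrm{Re}(c\,z^{-1/2})$, and the degree-$1/2$ modes are exactly what your translation $\eps\bar{\bf m}$ absorbs. Second, the type I linearized spectrum is $\{1/2,3/2,5/2,\dots\}$ with no degree-$2$ mode, so the restriction $\alpha<1/2$ comes from the forcing term ($F_\rho=\rho^{1/2}F(\rho\,\cdot)$, an $O(|x|^2)$ effect), not from the linearization.
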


\begin{thm}\label{typeIIgeneral} Let $x_0 \in \Gamma(U)$ be a type II Y-singularity. Then, the blow-up profile is unique. Moreover, all the triple junction points near $x_0$ belong to a $C^{1,\alpha}$ codimension 2 curve (for any $\alpha<1/2$). 
\end{thm}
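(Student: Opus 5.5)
The plan is to prove Theorem \ref{typeIIgeneral} with an $n$-dimensional version of the improvement of flatness near type II cones, Proposition \ref{IFgeneral}, in which the approximating family now depends on the variable $x''$. The model solutions are the trivial extensions $U^F_t(x')$ of the 2d family, rotated in $\R^n$ and translated. The linearized problem around $U_0^F(x')$ is the transmission problem \eqref{FBgeneral}--\eqref{transmissiongen}, posed in $\mathcal C_1\setminus\{0'\}$ across the three half-hyperplanes $\{U^F_0=0\}$. Its solutions are double-valued in $x'$ and are obtained from Lemma \ref{improgeneral} exactly as before.

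\textbf{Step 1 (linear problem).} Lemma \ref{extend} carries over in the cylinder $\mathcal C_1\setminus\{0'\}$. Indeed, $V^+$ is subharmonic, and $\{V=0\}$ occupies a fixed proportion of each annular cylinder, so the weak Harnack inequality gives H\"older continuity across $\{0'\}\times\R^{n-2}$. Since the coefficients are independent of $x''$, tangential difference quotients are again solutions. Hence $V$ is smooth in $x''$, and I can expand $V(\cdot,x'')$ at each fixed $x''$ using Lemma \ref{L8.2}. The result is
$$V = {\bf m}'\cdot\nabla U_0^F + p\, r^{\lambda_3}g_3^F + q\,\partial_\theta U^F_0 + (A x'')\cdot \nabla_{x'}U_0^F + O(|x'|^{5/2}+|x''|^{2}),$$
with $A$ a $2\times(n-2)$ matrix. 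The new term $A x''\cdot\nabla_{x'}U_0^F$ is exactly the linearization of a small rotation of $\R^n$ that tilts the axis $\{0'\}\times\R^{n-2}$. The main care is controlling the $x''$-dependence of $p$ and $q$. I will argue that $x''\mapsto(p,q)$ is harmonic-type, so only its constant value at $0$ survives at order $\rho^2$ after scaling. The linear-in-$x''$ part of $p$ has homogeneity $\lambda_3+1>2$, and that of $q$ has homogeneity $3$, so both are absorbed into the error provided $\rho$ is small.

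\textbf{Step 2 (improvement of flatness and iteration).} I then copy the proof of Proposition \ref{IFgeneral}, using \eqref{Uc11} for the $C^1$ dependence of $U^F_t$ on $t$. If $d(U,U^F_t)\le\eps$ in $\mathcal C_1$ with $|t|\le\delta$, then in $\mathcal C_\rho$ the solution $U$ is $\frac\eps2\rho^2$-close to $U^F_{t+\eps p}$, after an $O(\eps)$ rotation in $\R^n$ and an $O(\eps)$ translation. The iteration proceeds as in Proposition \ref{AS} at the type II point $x_0$. Assuming $U$ has a type II blow-up there, the iteration can never stop with $|t_k|r_k^{-1/2}>\delta$, which would happen if $t_k<0$ or $t_k>0$. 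For $t_k<0$, the point $x_0$ would lie in the interior of the zero phase, contradicting non-degeneracy (Lemma \ref{ND}). For $t_k>0$, further rescalings would reach a type I regime and force $a_r/r^2\to\infty$ through Theorem \ref{curves}. Hence the iteration runs indefinitely, and $t_k\to0$ geometrically. The rotations converge at the rate $C\eps r_k^\alpha$, which gives uniqueness of the blow-up and $d(U,\tilde U_0^F)\le C\eps|x-x_0|^{2+\alpha}$ for any $\alpha<1/2$. The restriction $\alpha<1/2$ comes from $\lambda_3-2>-1/2$.

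\textbf{Step 3 (triple junctions lie on a $C^{1,\alpha}$ curve).} Let $y$ be any triple junction near $x_0$. The same closeness to $U_0^F$ persists at all scales around $y$, at unit scale comparable to $|y-x_0|$. Running the iteration centered at $y$, the point $y$ is either type II, with its own axis direction $\nu(y)$, or it eventually lands in the $t>0$ regime; in the latter case Theorem \ref{typeIgeneral} applies and $y$ belongs to a type I curve. In either case I obtain the flatness estimate
$$\mathrm{dist}\bigl(y'-x_0',\ \{0'\}\times\R^{n-2} \text{ rotated by } R(x_0)\bigr)\le C\,|y-x_0|^{1+\alpha},\qquad |R(y)-R(x_0)|\le C|y-x_0|^\alpha.$$
This is the Whitney-type condition in the $C^{1,\alpha}$ extension lemma, which gives that the set of triple junctions is contained in a $C^{1,\alpha}$ codimension-two submanifold.

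I expect this last step to be the main obstacle. A point $y$ with $t>0$ is a type I junction, and its approximation has to be controlled at scales below $|t|^{\beta}$. For such points I would try to use Lemma \ref{IF2} to keep the axis estimate uniform down to scale $0$.
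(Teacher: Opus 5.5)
Your architecture matches the paper's. You linearize around the trivial extension of $U_0^F$ in $\mathcal C_1\setminus\{0'\}$, expand with the extra tilting term $(Mx'')\cdot\nabla U_0^F$, prove Proposition \ref{IFgeneraln} with rotations of $\R^n$, and iterate as in Proposition \ref{AS}. The step that fails as written is the expansion in Step 1.

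\textbf{The gap in Step 1.} A slice $V(\cdot,x'')$ is \emph{not} a solution of the homogeneous two-dimensional linearized problem, so Lemma \ref{L8.2} cannot be applied at fixed $x''$. The slice solves \eqref{9}--\eqref{10} with source $\Phi=-\Delta_{x''}V(\cdot,x'')$. Appealing to $x''$-dependent coefficients $p,q$ of ``harmonic type'' does not fix this, because the slices are not 2D solutions and so such coefficients are not defined in the first place. Moreover, the error $O(|x'|^{5/2}+|x''|^2)$ you display is too weak. On $B_{2\rho}$ it is only $C\rho^2$, not $\frac18\rho^2$, so the improvement of flatness in Step 2 would not close.

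\textbf{How the paper uses the $x''$-regularity.}
\begin{enumerate}
\item Applying the H\"older decay at the axis (Lemma \ref{extendnew}) to all tangential difference quotients gives $|D^k_{x''}V|\le C|x'|^\alpha$, hence $|\Phi|\le C|x'|^\alpha$.
\item One needs a perturbative statement for the inhomogeneous 2D problem: solutions of \eqref{9}--\eqref{10} with such a $\Phi$ have the same expansion as homogeneous ones, up to $O(|x'|^{2+\alpha})$. The paper proves this by a Campanato iteration. In $B'_r$ compare with the homogeneous solution $W$ having the same data on $\p B'_r$. The single-valued function $|V-W|$ has Laplacian $\ge-\|\Phi\|_{L^\infty(B'_r)}$, since the singular part on the rays is nonnegative ($a^F\ge 0$). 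Hence $|V-W|\le Cr^{2+\alpha}$.
\item Apply this to $V(\cdot,0)$ to order $2+\alpha$. Apply it also to $D_{x''}V(\cdot,0)$ to order $1+\alpha$, which gives $D_{x''}V(\cdot,0)=M\cdot\nabla U_0^F+O(|x'|^{1+\alpha})$.
\item Integrating in $x''$ using $D^2_{x''}V=O(|x'|^\alpha)$ yields \eqref{Vx} with error $O(|x|^{2+\alpha})$.
\end{enumerate}
Your remark that the $x''$-linear corrections of $p$ and $q$ have homogeneity $\lambda_3+1$ and $3$ then becomes automatic. The restriction $\alpha<1/2$ reflects the spectral gap of the linearized problem above homogeneity $2$, namely $\lambda^F_5>5/2$ and $\lambda^F_3+1>5/2$; the latter is your $\lambda_3-2>-1/2$.

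\textbf{Steps 2 and 3.} These are fine in outline, with two adjustments.
\begin{itemize}
\item In dimension $n\ge 3$ the junction is not isolated, so the centers produced by the iteration need not converge to $x_0$. At each step you should re-center at the point of the axis of $\tilde U^F_{t_k}$ closest to $x_0$. This uses the fact that the triple junctions in $B_{r_k}$ lie in a relatively small neighborhood of that axis.
\item In the case $t_k>0$, invoke Proposition \ref{zeron} and Theorem \ref{typeIgeneral} rather than the two-dimensional Theorem \ref{curves}.
\end{itemize}

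\textbf{The obstacle you anticipate in Step 3.} This is handled by the second phase of the proof of Proposition \ref{AS}. After switching to the $n$-dimensional version of Lemma \ref{IF2}, the increments still satisfy $|q_{k+1}-q_k|\le C\eps r_k^\alpha$ and $|z_{k+1}-z_k|\le C\eps r_k^{1+\alpha}$. So the axis at every triple junction, of type I or II, is controlled at all scales with uniform constants. You also need the quantitative nondegeneracy of the family: two rigid motions of members of $\{U^F_t\}$ that are $\eps r^{2+\alpha}$-close on a ball of radius $r$ have axes $C\eps r^{1+\alpha}$-close in position and $C\eps r^\alpha$-close in direction. Together these give your Whitney condition.
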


The strategy of the proof is the same as in the 2D case. The main difference is that we need to obtain estimates for the linearized equations in the $n$-dimensional setting, which takes the following form:
\begin{align}\label{FBlinearnew}
& \Delta V=0 & \text{in $\{U_0^F\neq 0\}$}\\
\label{transmissionnew} & V_{\nu^+} + V_{\nu^-} =\frac{a^F}{|\nabla U_0^F|} V & \text{on $\{U_0^F=0\}.$}
\end{align}

First we establish the H\"older continuity of solutions on the ``singular axis" $x'=0$.

\begin{lem}\label{extendnew} Let $V \in \mathcal D$ be a bounded solution to \eqref{FBlinearnew}-\eqref{transmissionnew}in $ \mathcal C_1\setminus \{0'\}.$ 
Then, for some $\alpha>0$ small,  
\begin{equation}\label{V0new}|V(x)| \leq C |x'|^\alpha \|V\|_\infty \quad \mbox{in} \quad \mathcal C_{1/2} \setminus \{0'\},\end{equation}
hence $V$ can be extended continuously on the line $\{0'\} \times B_1''$ with zero value. Moreover, 
\begin{equation}\label{complete}
    \|V\|_{C^{\alpha}(\mathcal C_{1/2})} \leq C \|V\|_{L^\infty(\mathcal C_1)}.
\end{equation}
\end{lem}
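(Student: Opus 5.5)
The plan is to copy the proof of Lemma \ref{extend}, using the subharmonicity of $V^+$ on the two-sheeted cover and the weak Harnack inequality. The one new point is that the singular set is now the $(n-2)$-dimensional axis $\{0'\}\times B_1''$ instead of a point. The decay estimate must therefore be proved at scale $|x'|$, uniformly in $x''$.

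\emph{Step 1 (subharmonicity of $V^+$).} Using the representation \eqref{Uh} in the $x'$ variables, view $V$ as a single-valued function $h$ on the double cover of $\mathcal C_1\setminus\{0'\}$, odd under the deck transformation. Inside the sectors, $h$ is harmonic, since $\Delta V=0$ in $\{U_0^F\neq0\}$. On the half-hyperplanes $\{U_0^F=0\}$ where $V\ge0$, \eqref{transmissionnew} gives $V_{\nu^+}+V_{\nu^-}=\frac{a^F}{|\nabla U_0^F|}V\ge 0$, because $a^F>0$. So the distributional Laplacian of $V^+$ has no negative singular part there, and $V^+$ is subharmonic on the cover away from the axis. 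The function is smooth on each side of the rays by Remark \ref{linearsmooth}, so this computation is classical. By oddness, $V^-$ at a point equals $V^+$ at the other preimage. Hence in any region of the cover invariant under the deck transformation, $\{V^+=0\}$ occupies at least half the measure.

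\emph{Step 2 (oscillation decay).} For $r\le 1/4$ and $|y''|\le 1/2$, consider the sets
$$A_r(y'')=\{r/2<|x'|<r\}\times B''_{r}(y''), \qquad \tilde A_r(y'')=\{r/4<|x'|<2r\}\times B''_{2r}(y'').$$
These stay at distance comparable to $r$ from the axis. After rescaling by $r$, each is covered by a fixed number of balls on the cover, and the cover of $\tilde A_r$ is connected. Apply the weak Harnack inequality to $M-V^+$, where $M=\sup_{\tilde A_r}|V|$, on a chain of such balls, using Step 1 (zero set of $V^+$ has at least half the measure). This yields
$$\sup_{A_r(y'')}|V|\le (1-c_0)\sup_{\tilde A_r(y'')}|V|.$$
To pass from annular regions to the full cylinder $\{|x'|<r\}\times B''_r(y'')$, I will use a maximum principle across the axis. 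The axis has codimension 2 and zero capacity, so a bounded subharmonic function extends subharmonically across it. I would then compare $V^+$ with a barrier of the form $M(1-c_0)+CM\,|x''-y''|^2/r^2$, adjusted by a radial-in-$x'$ correction so that it is superharmonic. This shows the sup over the smaller cylinder is at most $(1-c_0/2)$ times the sup over the doubled one. Iterating dyadically gives \eqref{V0new} with $\alpha=\log_2\frac{1}{1-c_0/2}$.

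\emph{Step 3 (extension and H\"older bound).} By \eqref{V0new}, $V\to0$ uniformly as $|x'|\to0$, so $V$ extends continuously by zero to the axis. This is consistent with oddness, since a continuous double-valued function can only take the value $0$ on the branch axis. For \eqref{complete}, take $x,y\in\mathcal C_{1/2}$ and set $d=|x'|\ge|y'|$.
\begin{itemize}
\item If $|x-y|\ge d/4$, then \eqref{V0new} gives $|V(x)-V(y)|\le C d^\alpha\|V\|_\infty \le C|x-y|^\alpha\|V\|_\infty$.
\item If $|x-y|<d/4$, apply interior and transmission regularity (Remark \ref{linearsmooth}) in the ball $B_{d/2}(x)$, rescaled to unit size. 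The transmission coefficient $a^F/|\nabla U_0^F|$ scales like $1/|x'|$, which is harmless after rescaling. Combined with $\sup_{B_{d/2}(x)}|V|\le Cd^\alpha\|V\|_\infty$, this gives $|V(x)-V(y)|\le C d^{\alpha-1}|x-y|\,\|V\|_\infty\le C|x-y|^\alpha\|V\|_\infty$.
\end{itemize}

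The main obstacle I expect is Step 2, namely making the decay uniform in $x''$ near the axis. The weak Harnack argument only controls regions away from the axis, and the cylinder also has "top and bottom" boundary pieces in $x''$ touching the axis. These must be handled by the barrier comparison, together with the removability of the codimension-2 set for bounded subharmonic functions on the cover.
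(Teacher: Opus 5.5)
Your Steps 1 and 3, and the weak Harnack estimate on cylindrical annuli in Step 2, are exactly the paper's argument. The paper reruns the proof of Lemma \ref{extend} with $B_{2r}\setminus B_{r/4}$ replaced by $(B'_{2r}\setminus B'_{r/4})\times B''_r(x_0'')$. It then gets \eqref{complete} from rescaled transmission estimates, using the degree-two homogeneity of $U_0^F$, as you do.

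You diverge only in passing from annular regions to full cylinders, and the obstacle you anticipate there does not exist. As in the 2D proof (``by varying $r$''), apply the annular estimate at every smaller scale and every center; this already reaches the axis. For $s<r$ and $z''\in B''_r(y'')$,
\[
\sup_{\{s/2<|x'|<s\}\times B''_{s/2}(z'')}|V|\le (1-c_0)\sup_{\{s/4<|x'|<2s\}\times B''_{s}(z'')}|V|\le (1-c_0)\sup_{(B'_{2r}\setminus\{0'\})\times B''_{2r}(y'')}|V|.
\]
The sets on the left cover $(B'_r\setminus\{0'\})\times B''_r(y'')$: take $z''=x''$ and $s\in(|x'|,\min\{2|x'|,r\})$. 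This gives $\sup_{B'_r\times B''_r(y'')}|V|\le(1-c_0)\sup_{B'_{2r}\times B''_{2r}(y'')}|V|$ directly. You need no maximum principle across the axis, no capacity argument, and no top or bottom boundary to control. Iterating yields \eqref{V0new}.

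If you keep the barrier route, be aware that it does not close as written. Making $M(1-c_0)+CM|x''-y''|^2/r^2$ harmonic forces the correction $-\frac{n-2}{2}CM|x'|^2/r^2$. Take a comparison cylinder with equal radii in $x'$ and $x''$. Then the lateral condition (at most $(1-c_0)M$) and the top/bottom condition (at least $M$) together force the barrier's value on the axis to be at least $M$ as soon as $n\ge 4$, so there is no gain. You would have to elongate the comparison cylinder in $x''$ by a dimensional factor. Also, removability is cleaner on the base, applied to the single-valued subharmonic function $|V|$ (or by adding $\epsilon\log(r/|x'|)$, which is harmonic off the axis), than to $V^+$ on the branched cover. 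Since the covering argument above makes all of this unnecessary, it is simpler to drop it.
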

The proof remains the same as Lemma \ref{extend}. We only need to replace annuli $B_{2r} \setminus B_{r/4}$ with cylindrical annuli $(B'_{2r} \setminus B'_{r/4}) \times B''_r(x_0'')$. 
The extension from \eqref{V0new} to \eqref{complete} follows from interior  estimates for transmission problems (see Remark \ref{linearsmooth}).
Notice that the interior estimates can be applied at all scales since \eqref{V0new}-\eqref{complete} is invariant under dilation due to the fact that $U_0^F$ is homogeneous of degree 2. 

Next we obtain the asymptotic expansion of a solution to the linearized problem near the origin.

\begin{lem}[Expansion in terms of homogeneous solutions]\label{expandV2} 
Let $V \in \mathcal D$ be a bounded solution to \eqref{FBlinearnew}-\eqref{transmissionnew}in $ \mathcal C_1\setminus \{0'\}.$ 
Then, for $\alpha$ given in Lemma \ref{extendnew},

\begin{equation}\label{Vx}V(x)=({\bf m}+ M x'') \cdot \nabla U^F_0 + p |x'|^{\lambda_3}g_3^F(\theta) + q \p_\theta U^F_0 + O(|x|^{2+\alpha}),\end{equation}
where $M$ is a linear map from the $x''$ to the $x'$ variables, and $\|M\|$, $|{\bf m}|, |p|$ and $|q|$ are universally bounded.
\end{lem}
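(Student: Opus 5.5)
We would prove Lemma \ref{expandV2} by combining the two-dimensional expansion (Lemma \ref{L8.2}) with regularity in the translation-invariant directions $x''$. The transmission problem \eqref{FBlinearnew}-\eqref{transmissionnew} has coefficients that depend only on $x'$. Hence difference quotients in $x''$ of a solution are again solutions in $\mathcal C_{1}\setminus\{0'\}$. By Lemma \ref{extendnew} applied iteratively on shrinking cylinders, any number of $x''$-derivatives are bounded:
$$\|\p_{x''}^\beta V\|_{L^\infty(\mathcal C_{1/2})}\le C_\beta\|V\|_\infty .$$
So we may Taylor expand $V$ in $x''$ with coefficients that are bounded double-valued functions of $x'$.

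The steps are as follows. First, set $W(x')=V(x',0'')$. Writing $\Delta=\Delta_{x'}+\Delta_{x''}$, the function $W$ solves the 2d transmission problem with a bounded right-hand side $h(x')=-\Delta_{x''}V(x',0'')$. This right-hand side is itself a solution of the homogeneous problem in each $x''$-slice sense, with H\"older decay at $0'$ from \eqref{V0new}. Second, we solve $\Delta_{x'}\Phi=h$ with the homogeneous transmission condition. We use the eigenfunction expansion of Lemma \ref{expandV}/Lemma \ref{ei}: for each eigenmode $r^{\lambda_k}g_k^F$ we write $h=\sum h_k(r)g^F_k(\theta)$ and solve the radial ODEs. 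Since $h=O(1)$, in fact $O(|x'|^\alpha)$, this yields a particular solution $\Phi=O(|x'|^{2+\alpha})$, provided no resonance occurs at homogeneity $2+\alpha$. This holds after shrinking $\alpha$, because $\lambda_4^F=2<2+\alpha<\lambda_5^F$ since $\lambda_5^F>5/2$. Then $W-\Phi$ is a 2d solution, and Lemma \ref{L8.2} gives
$$W={\bf m}\cdot\nabla U_0^F+p|x'|^{\lambda_3}g_3^F+q\p_\theta U_0^F+O(|x'|^{2+\alpha}).$$

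Third, we apply the same reasoning to $\p_{x_j}V$ for $j\ge3$. This function is again a bounded solution, so its restriction to $x''=0$ is ${\bf m}_j\cdot\nabla U_0^F+p_j|x'|^{\lambda_3}g_3^F+q_j\p_\theta U_0^F+O(|x'|^{2+\alpha})$. In the final expansion only the terms of homogeneity $<1+\alpha$ survive after multiplying by $x_j$, namely the degree-one part ${\bf m}_j\cdot\nabla U_0^F$. The remaining pieces $x_j|x'|^{\lambda_3}$ and $x_j\p_\theta U_0^F$ have total degree $\ge 5/2>2+\alpha$ and go into the error. Collecting the ${\bf m}_j$ defines the linear map $M$. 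Fourth, we Taylor expand:
$$V(x)=V(x',0)+x''\cdot\nabla_{x''}V(x',0)+O(|x''|^2\sup|\p^2_{x''}V|).$$
The quadratic remainder is not merely $O(|x''|^2)$. Since $\p^2_{x''}V$ is a bounded solution, \eqref{V0new} gives the bound $|x''|^2|x'|^\alpha\le|x|^{2+\alpha}$. This yields \eqref{Vx}.

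The main obstacle is the second step. There we must produce the $O(|x'|^{2+\alpha})$ particular solution for the inhomogeneous 2d transmission problem with only H\"older-decaying data, and we must check that the homogeneity $2+\alpha$ avoids the spectrum (here $\lambda_4^F=2<2+\alpha<\lambda_5^F$). We would try first to do this mode by mode, using variation of parameters in $r$ for each $g_k^F$. For $\lambda_k>2+\alpha$ we would integrate from $r$ to $1$. For $\lambda_k<2+\alpha$ we would integrate from $0$ to $r$, absorbing any homogeneous remainder into ${\bf m},p,q$. Summing the modes would use $\|g_k^F\|_\infty\le C$ and the growth of $\lambda_k$.
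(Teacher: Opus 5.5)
Your argument is correct, and its architecture matches the paper's proof. You get regularity in $x''$ from difference quotients and Lemma \ref{extendnew}. Restricting to $x''=0$ gives a 2D transmission problem with source $h=O(|x'|^\alpha)$. The same 2D expansion applied to $\partial_{x_j}V$ produces $M$, and a Taylor expansion in $x''$ leaves a remainder bounded by $|x''|^2|x'|^\alpha$.

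The one genuine difference is how you prove the key 2D claim: the inhomogeneous problem has the homogeneous expansion up to $O(|x'|^{2+\alpha})$.

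The paper uses a Campanato iteration. On each $B'_r$ it compares $V(\cdot,0)$ with the homogeneous solution having the same data on $\partial B'_r$. The absolute value of the difference has Laplacian $\ge -\|\Phi\|_\infty$, since the singular part on the rays is nonnegative because $a^F\ge 0$. The maximum principle then bounds the difference by $Cr^2\|\Phi\|_{L^\infty(B'_r)}\le Cr^{2+\alpha}$, and Lemma \ref{L8.2} is applied scale by scale.

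You instead separate variables and use variation of parameters mode by mode, and this works. From $|h_k(s)|\le \|h(s,\cdot)\|_{L^2}\le Cs^\alpha$ you get $|\Phi_k(r)|\le Cr^{2+\alpha}/(\lambda_k|\lambda_k-2-\alpha|)$. The series converges because $\|g_k^F\|_\infty\le C$ and $\lambda_k^F\ge\lambda_k^0\approx k/2$. Non-resonance holds because $\lambda_4^F=2<2+\alpha<5/2<\lambda_5^F$.

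It is cleaner to expand $W=V(\cdot,0'')$ itself than to build $\Phi$ and then check that the series is a genuine weak solution. The coefficients $W_k(r)=\langle W(r,\cdot),g_k^F\rangle$ satisfy the radial ODE with source $h_k$; the boundary terms from integrating by parts in $\theta$ cancel because $W$ and $g_k^F$ obey the same jump condition. Boundedness then excludes $r^{-\lambda_k}$, and the expansion can be read off as in Lemma \ref{expandV}.

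Your route is more explicit and reuses that Fourier machinery, but it needs spectral input: uniform eigenfunction bounds, linear growth of $\lambda_k$, and the spectral gap. The paper's route needs only the maximum principle and Lemma \ref{L8.2} at each scale, so it is more robust.

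One minor inaccuracy: $h=-\Delta_{x''}V(x',0'')$ is not a solution of the 2D homogeneous problem. Its restriction solves the 2D problem with source $-\Delta_{x''}^2V(x',0'')$. This is harmless, since you only use $|h|\le C|x'|^\alpha$.
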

\begin{proof} Equations \eqref{V0new}-\eqref{complete} is invariant under translations in the $x''$ direction. Thus, by using discrete differences and iterating \eqref{complete} we obtain that all the $x''$-derivatives satisfy \eqref{V0new}-\eqref{complete}, that is 
\begin{equation}\label{V0newder}|D^k_{x''}V(x)| \leq C |x'|^\alpha \|V\|_\infty \quad \mbox{in} \quad \mathcal C_{1/2},\end{equation}
and
\begin{equation}\label{completeder}
    \|D^k_{x''}V\|_{C^{\alpha}(\mathcal C_{1/2})} \leq C(k) \|V\|_{L^\infty(\mathcal C_1)}.
\end{equation}
This means that $V(x',0)$ solves the linearized problem in 2D 
\begin{align}\label{9}
& \Delta_{x'} V= \Phi & \text{in $\{U_0^F\neq 0\}$}\\
\label{10} & V_{\nu^+} + V_{\nu^-} =\frac{a^F}{|\nabla U_0^F|} V & \text{on $\{U_0^F=0\}.$}
\end{align}
and $$\Phi=-\Delta_{x''} V(x',0) \in \mathcal D,$$ and by \eqref{V0newder}-\eqref{completeder} \begin{equation}\label{00}|\Phi| \leq C|x'|^\alpha, \quad \|\Phi\|_{C^{\alpha}(\mathcal C_{1/2})} \leq C.\end{equation}

\medskip
\noindent
{\it Claim:} Solutions to \eqref{9}-\eqref{10} satisfy the same expansion as solutions to the 2D homogeneous problem (that is $\Phi=0$) with an error of order $O(|x'|^{2+\alpha}).$ 

Indeed, when we compare in $B_r'$ our solution with the solution to the homogeneous problem and the same boundary data on $\p B'_r$, their difference is bounded by $Cr^{2}\|\Phi\|_\infty \leq Cr^{2+\alpha}$ by the maximum principle. This is because the absolute value of the difference, which is single-valued, has Laplacian bounded below  by $-\|\Phi\|_\infty$, as the distributional part along the rays is nonnegative due to the fact that $a_F\ge 0$ in the transmission condition. This means that the Campanato method of successive iterations gives the desired claim. Therefore, (see Lemma \ref{L8.2}),
$$V(x',0)=V_{{\bf m}, p, q}^F + O(|x'|^{2+\alpha}).$$ In the same way, by the translation invariance, also the derivatives in the $x''$ direction have a similar expansion, hence 
$$D_{x''}V(x',0)= \nabla U_0^F \cdot M + O(|x'|^{1+\alpha}),$$ for some linear map $M$ from the $x''$ to the $x'$ variables. Since 
$$D_{x''}^2V=O(|x'|^{\alpha}),$$ we can integrate in the $x''$-direction and obtain the desired expansion.
\end{proof}

\begin{rem} The expansion of Lemma \ref{expandV2} shows that the value of $\alpha$ in Lemma \ref{extendnew} can be improved to $\alpha=1$. This means that the Campanato iteration gives the expansion \eqref{Vx} with an error $O(|x|^{2+\alpha})$ for any $\alpha <1/2.$
\end{rem}

We now observe that the improvement of flatness Proposition \ref{IFgeneral} holds in the $n$-dimensional setting of this section. Here $U_t^F$ is the trivial extension in $\R^n$ of the 2D family of global solutions in Section \ref{sec:non-sym-F}.

\begin{prop}[Improvement of Flatness in dimension $n\geq 3$]\label{IFgeneraln}Let $U$ minimize $J(\cdot, F)$ in $B_1$, $F > 0$ constant. There exists $\eps_0, \delta, \rho, C$ depending on $F$, such that if for $\eps \leq \eps_0$
$$d(U,U^F_t) \leq \eps, \quad |t| \leq \delta, \quad \text{in $B_1$}$$
then $$d(U,\tilde U^F_{t+\eps p}) \leq \frac \eps 2 \rho^2 \quad \text{in $B_\rho(\eps \bar{\bf m})$},$$ where $\tilde U^F_{t+\eps p}$ denotes a rigid motion in $\R^n$ of $U^F_{t+\eps p},$ by a rotation $O_{\eps A}$ and translation by $\eps \bar{\bf m}$, and $|A|$, $|\bar{\bf m}|$, $|p| \leq C$, $C$ universal.
\end{prop}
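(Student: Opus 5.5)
We argue by compactness and contradiction, in the same way as for Proposition \ref{IF} and Proposition \ref{IFgeneral}. Suppose the conclusion fails along sequences $\eps_k \to 0$, $t_k \to 0$ and minimizers $U_k$ with $d(U_k,U^F_{t_k}) \le \eps_k$ in $B_1$.

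\emph{Step 1: passage to the linearized problem.} Fix $\delta'>0$. Away from the axis $\{x'=0\}$, i.e. in $B_{1-\delta'}\cap\{|x'|>\delta'\}$, non-degeneracy (Lemma \ref{ND}) and Proposition \ref{pert} show two things. First, $\Gamma(U_k)$ lies in a $C\eps_k/\delta'$ neighborhood of the three half-hyperplanes $\{U_0^F=0\}$. Second, only two phases meet near each of these half-hyperplanes. So, as in Section \ref{sec:proof-symmetric}, we identify $U_k$ and $U^F_{t_k}$ with double-valued functions in $\mathcal D$ on this region; they solve two-phase obstacle equations. Lemma \ref{improgeneral} then applies to $V_k:=(U_k-U^F_{t_k})/\eps_k$, with $u_{0,k}=U^F_{t_k}\to U^F_0$. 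The $V_k$ converge locally uniformly in $\mathcal C_1\setminus\{0'\}$ to some $V\in\mathcal D$ with $\|V\|_\infty\le 1$, and $V$ solves \eqref{FBlinearnew}-\eqref{transmissionnew}. Letting $\delta'\to0$ by a diagonal argument, $V$ is a bounded solution in $\mathcal C_1\setminus\{0'\}$. By Lemma \ref{extendnew} it extends H\"older continuously across the axis.

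\emph{Step 2: expansion.} By Lemma \ref{expandV2} with $\alpha$ small but fixed,
$$V=({\bf m}+Mx'')\cdot\nabla U_0^F+p|x'|^{\lambda_3}g_3^F(\theta)+q\p_\theta U_0^F+O(|x|^{2+\alpha}).$$
The coefficients are universally bounded, so choosing $\rho$ small gives $|V-(\cdots)|\le \frac18\rho^2$ in $B_{2\rho}$.

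\emph{Step 3: identifying the polynomial with an infinitesimal rigid motion.} This is the step I expect to be the main obstacle. We must show that the terms above are, to first order in $\eps$, the variation of $U^F_{t}$ under a rigid motion of $\R^n$ and a change of parameter $t$. Take the skew-symmetric matrix $A$ with in-plane block given by $q$ (the rotation in the $(x_1,x_2)$-plane) and mixed blocks $M$ and $-M^T$. Since $U^F_0$ does not depend on $x''$, the rotation $O_{\eps A}$ changes $U^F_t(x')$ by $\eps(q\p_\theta U_0^F+(Mx'')\cdot\nabla_{x'}U^F_0)+o(\eps)$. The $x''\to x'$ block is the only one that acts nontrivially; the $x''x''$ block acts trivially. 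Translation by $-\eps{\bf m}$ produces ${\bf m}\cdot\nabla U^F_0$, and $t\mapsto t+\eps p$ produces $p\,\p_tU^F_t|_{t=0}=p|x'|^{\lambda_3}g_3^F$. To control the $o(\eps)$ errors uniformly for $|t|\le\delta$ we use the $C^1$ regularity \eqref{Uc11} of $(x,t)\mapsto U^F_t$ on the annulus $B_{2\rho}\setminus B_{\rho/2}$, which was proved in Proposition \ref{IFgeneral}, together with \eqref{gradexp}. The mixed rotation creates a new difficulty: it tilts the axis and hence moves points of the annulus by $O(\eps|x''|)$ toward or away from $x'=0$. This has to be handled on the cylindrical region $\{\rho/2\le |x'|\}$ only, and that region must be kept at a positive distance from the axis.

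\emph{Step 4: closing the argument.} Combining the steps gives $|U_k-\tilde U^F_{t_k+\eps_kp}|\le\frac{\eps_k}2\rho^2$ as double-valued functions on $B_{2\rho}\cap\{|x'|\ge\rho/4\}$, hence also for $d$. Lemma \ref{duv} says $d(U_k,\tilde U^F)$ is subharmonic; here both functions solve the same Euler-Lagrange equations, since $\tilde U^F$ is a rigid motion of a minimizer. Near the axis, $d$ is controlled by the H\"older bound on $V$ from Lemma \ref{extendnew}, together with the uniform convergence of $V_k$ to $V$ up to $\{x'=0\}$. This convergence should follow by applying the proof of Lemma \ref{extendnew} to the $V_k$ themselves, via the subharmonicity of $d$. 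Alternatively, one can apply the maximum principle on $B_\rho(\eps\bar{\bf m})$ after checking the bound on its whole boundary. Either route extends the estimate to the ball and contradicts the assumption.
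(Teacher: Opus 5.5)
Your Steps 1--3 are the paper's proof. The paper says the compactness argument of Proposition \ref{IF} carries over with Lemma \ref{expandV2} as the expansion. The only computation it writes out is the first variation $U^F_{t+\eps p}(\eps{\bf m}+O_{\eps A}x)=U^F_t+\eps\bigl[\nabla U^F_t\cdot({\bf m}+Ax)+p\,\p_tU^F_t\bigr]+o(\eps)$, together with the $C^1$ dependence on $(x,t)$. That is your Step 3, and your assignment of $q$ and $M$ to the blocks of $A$ matches the paper's. The trouble is Step 4, which the paper does not spell out, and where your argument as written does not close.

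You correctly sense that, unlike in 2D, the region where $V_k\to V$ uniformly, $\{|x'|\ge\eta\}$, encloses nothing. Every sphere meets the axis $\{x'=0\}$, so ``bound on an annulus, then extend by subharmonicity of $d$'' is not available verbatim. Neither of your two routes fixes this.

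\emph{Route (i) fails.} The decay argument of Lemma \ref{extend}/\ref{extendnew} uses that $V^+$ on the double cover is subharmonic and vanishes on half the measure. For $V_k$ this needs the two-phase, double-valued structure of $U_k$. You only have that structure outside a neighborhood of the axis (shrinking with $\eps_k,t_k$); inside it $\Gamma(U_k)$ is uncontrolled. Subharmonicity of $d\ge 0$ alone gives a maximum principle, not decay.

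\emph{Route (ii) is circular.} The sphere $\p B_\rho(\eps\bar{\bf m})$ meets the axis, which is exactly where you have no bound.

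\emph{The missing observation.} The axis has codimension two, so a thin tube around it carries negligible harmonic measure. Concretely:
\begin{enumerate}
\item Let $d_k:=d(U_k,\tilde U^F_{t_k+\eps_kp})$. It is subharmonic by Lemma \ref{duv}.
\item $d_k\le C\eps_k$ in $B_{3\rho/2}$. This follows from the triangle inequality for $d$, the uniform Lipschitz bound on $U^F_t$, and $d(U^F_t,U^F_{t'})\le C|t-t'|$. The last bound comes from \eqref{Uc11} on a circle plus the planar maximum principle for $d$.
\item Fix $\eta=c\rho^2$. The set $\{|x'|\ge\eta\}\cap\overline B_{2\rho}$ is a fixed compact set off the axis, so Steps 1--3 give $d_k\le\frac{\eps_k}{4}\rho^2$ there for $k$ large.
\item For $x\in B_{5\rho/4}$, the Poisson kernel of $B_{3\rho/2}$ is at most $C\rho^{1-n}$. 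The set $\p B_{3\rho/2}\cap\{|x'|<\eta\}$ has area at most $C\eta^2\rho^{n-3}$.
\end{enumerate}
Comparing $d_k$ with its Poisson integral then gives
$$d_k\le \tfrac{\eps_k}{4}\rho^2+C\eps_k(\eta/\rho)^2\le \tfrac{\eps_k}{2}\rho^2 \quad \text{in } B_\rho(\eps_k\bar{\bf m}),$$
for $c$ small.

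Note that $\eta$ must be small compared with $\rho$. Your bound on $\{|x'|\ge\rho/4\}$ is not enough, since a tube of radius comparable to $\rho$ has harmonic measure of order one. The same estimate, applied to $d(U_k,U^F_{t_k})$ on balls centered on the axis, also gives the uniform smallness of $V_k$ near $\{x'=0\}$ that route (i) was aiming for.
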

The proof is the same as before. Indeed, let $A$ be an antisymmetric 
$n\times n$ matrix and let $O_{\eps A}$ be the orthogonal transformation near $I$ in the direction of $A$, $$O_{\eps A}=I+\eps A+O(\eps^2).$$ Then,
$$\tilde U^F_{t+\eps p}(x)=U^F_{t+\eps p}(\eps {\bf m} + O_{\eps A}x)= U_t^F(x)+ \eps[\nabla U_t^F(x)\cdot ({\bf m}+Ax)]+ \eps p (\p_t U_t^F(x)) + o(\eps).$$

\
Using the fact that $U_t^F$ is $C^1$ in the $(x,t)$-variables, we conclude that 

$$\frac{\tilde{U}^F_{t+\eps p} - U^F_t}{\eps} 
= ({\bf m}+ M x'') \cdot \nabla U^F_0 + p |x'|^{\lambda_3}g_3^F(\theta) + q \p_\theta U^F_0 + o(1)$$
with
$$o(1) \to 0, \quad \text{as $\eps,\delta \to 0.$}$$ Here $q$ is the $-a_{12}$ entry in $A$, while $M$ is the restriction of $A$ to the first two rows and the last $n-2$ columns.

Similarly, we obtain the corresponding results for type I cones in the $n$-dimensional setting, which we state for completeness. First, the linearized problem takes the form 
\begin{align}\label{FBBB}
& \Delta V=0 & \text{in $\{\mathcal U_0\neq 0\}$}\\
\label{transmissionBBB} & V_{\nu^+} + V_{\nu^-} = 0 & \text{on $\{\mathcal U_0=0\}.$}
\end{align}

Now the third eigenfunction is given by $\mathcal U_0$ and the asymptotic expansion takes the following form.
\begin{lem}[Expansion for \eqref{FBBB}-\eqref{transmissionBBB}]\label{expandV20} 
Let $V \in \mathcal D$ be a bounded solution to \eqref{FBBB}-\eqref{transmissionBBB} in $ \mathcal C_1\setminus \{0'\}.$ 
Then, 
\begin{equation}\label{Vx2}V(x)=({\bf m}+ M x'') \cdot \nabla U_0 + p \mathcal U_0 + q \p_\theta \mathcal U_0 + O(|x|^{2+\alpha}),\end{equation}
where $M$ is a linear map from $x''$ to $x'$, $\|M\|$, $|{\bf m}|, |p|$ and $|q|$ are universally bounded.
\end{lem}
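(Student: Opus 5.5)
The argument follows the proof of Lemma \ref{expandV2}, with $U_0^F$ replaced by $\mathcal U_0$ and the transmission coefficient set to zero. For \eqref{FBBB}--\eqref{transmissionBBB} the condition $V_{\nu^+}+V_{\nu^-}=0$ on the rays means that $V$, viewed as a double-valued function (equivalently through the odd function $h$ in \eqref{Uh}), is simply double-valued harmonic in $\mathcal C_1\setminus\{0'\}$.

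\textbf{Step 1: Hölder control on the axis.} We first apply the analogue of Lemma \ref{extendnew}. Condition \eqref{b} holds trivially, so $|V|$ is subharmonic. The weak Harnack decay on the cylindrical annuli $(B'_{2r}\setminus B'_{r/4})\times B''_r(x_0'')$ then gives $|V|\le C|x'|^\alpha\|V\|_\infty$ and $\|V\|_{C^\alpha(\mathcal C_{1/2})}\le C\|V\|_\infty$. Interior estimates away from the axis come from Remark \ref{linearsmooth}; here they are even simpler, since $V$ is smooth across the rays. The problem is invariant under $x''$-translations and the estimates are scale invariant, because $\mathcal U_0$ is homogeneous. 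Applying the estimates to discrete difference quotients in $x''$ and iterating, we get \eqref{V0newder}--\eqref{completeder} for every $D^k_{x''}V$.

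\textbf{Step 2: reduction to a 2D problem.} Freeze $x''=0$. The slice $V(x',0)$ solves $\Delta_{x'}V=\Phi$ as a double-valued function, with $\Phi=-\Delta_{x''}V(x',0)\in\mathcal D$ satisfying $|\Phi|\le C|x'|^\alpha$. Compare $V$ in $B'_r$ with the double-valued harmonic function having the same boundary values on $\p B'_r$. The difference $W$ satisfies $\Delta|W|\ge-\|\Phi\|_{L^\infty(B_r')}$, with no singular part along the rays since the transmission coefficient is $0$. The maximum principle then gives $|W|\le Cr^{2+\alpha}$. A Campanato iteration across dyadic scales, combined with Lemma \ref{expz}, shows that the homogeneous 2D expansion persists: $V(x',0)=\mathbf m\cdot\nabla\mathcal U_0+p\,\mathcal U_0+q\,\p_\theta\mathcal U_0+O(|x'|^{2+\alpha})$. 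This uses that the homogeneities of double-valued harmonic functions below $5/2$ are exactly $1,1,3/2,2$, with the eigenfunctions listed in Lemma \ref{expz}. The gap to $5/2>2+\alpha$ is what makes the iteration close.

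\textbf{Step 3: integrating in $x''$, and the expected obstacle.} We apply Step 2 to $D_{x''}V$, which satisfies the same problem with the same bounds. Its expansion has error $O(|x'|^{1+\alpha})$. Since $D_{x''}V$ vanishes on the axis, the only admissible terms below order $1+\alpha$ are the linear ones, and this gives $D_{x''}V(x',0)=\nabla\mathcal U_0\cdot M+O(|x'|^{1+\alpha})$. The $3/2$-homogeneous mode must be excluded here: otherwise a term $r^{3/2}$ would appear at order $3/2<1+\alpha$ only when $\alpha>1/2$. So we take $\alpha<1/2$, or else note that $p$ and $q$ terms in $D_{x''}V$ integrate into the $O(|x|^{2+\alpha})$ error. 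Finally we use $D^2_{x''}V=O(|x'|^\alpha)$ and Taylor expand in $x''$:
\[
V(x',x'')=V(x',0)+x''\cdot D_{x''}V(x',0)+O(|x''|^2|x'|^\alpha).
\]
This gives \eqref{Vx2}. The main obstacle is to check that this cross error, together with terms like $|x''|\,|x'|^{1+\alpha}$, is genuinely $O(|x|^{2+\alpha})$ and not merely $O(|x|^2)$. To handle it, I would again use the vanishing $|D^k_{x''}V|\le C|x'|^\alpha$ on the axis, and, if needed, run the Campanato iteration directly in the full cylinders $\mathcal C_r$ instead of slice by slice.
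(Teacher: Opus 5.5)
Your proposal follows the paper's route. The paper gives no separate proof of Lemma \ref{expandV20}; it intends exactly the proof of Lemma \ref{expandV2}, with $U_0^F$ replaced by $\mathcal U_0$ and zero transmission coefficient: H\"older control on the axis, $x''$-difference quotients, a Campanato iteration for the slice $V(x',0)$ with right-hand side $\Phi=-\Delta_{x''}V$, then integration in $x''$. The argument goes through, but two statements in it should be corrected.

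First, the spectrum you quote ($1,1,\frac32,2$) is the type II one. For double-valued harmonic functions the homogeneities are $k+\frac12$, each of multiplicity two. Below $\frac52$ they are $\frac12,\frac12,\frac32,\frac32$:
\begin{itemize}
\item $\nabla\mathcal U_0=\frac32 r^{1/2}(\cos\frac\theta2,-\sin\frac\theta2)$ gives the two $\frac12$-modes;
\item $\mathcal U_0$ and $\p_\theta\mathcal U_0=-\frac32 r^{3/2}\sin\frac{3\theta}{2}$ give the two $\frac32$-modes.
\end{itemize}
This does not break Step 2, since what you actually use is Lemma \ref{expz} together with the gap $\frac52>2+\alpha$. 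In Step 3, however, the modes below order $1+\alpha$ are the $\frac12$-homogeneous ones $\nabla\mathcal U_0\cdot M$, not ``linear'' ones. Vanishing on the axis selects nothing, since every mode vanishes there. The $\frac32$-modes of $D_{x''}V(x',0)$ also need no exclusion argument. You simply have $D_{x''}V(x',0)=\nabla\mathcal U_0\cdot M+O(|x'|^{3/2})$, and after multiplying by $x''$ the remainder is $O(|x''|\,|x'|^{3/2})=O(|x|^{5/2})$. This is absorbed for any $\alpha\le\frac12$.

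Second, the ``main obstacle'' is not one. Since $|x'|,|x''|\le|x|$, you have $|x''|^2|x'|^\alpha\le|x|^{2+\alpha}$ and $|x''|\,|x'|^{1+\alpha}\le|x|^{2+\alpha}$. So your Taylor expansion in $x''$ already closes the proof; this is exactly the paper's ``integrate in the $x''$-direction''. No Campanato iteration on full cylinders is needed.
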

The improvement of flatness can be stated as follows.

\begin{prop}[Improvement of Flatness for type I cones]\label{zeron} Let $U$ minimize $J(\cdot, F)$ in $B_1$ with $F \geq 0$. There exist $\eps_0, \rho>0$ universal such that if for some $\eps \leq \eps_0$
\begin{equation}\label{ass01}d(U,\mathcal U_0) \leq \eps, \quad \|F\|_{\infty} \leq \eps^2 \quad \text{in $B_1$,}\end{equation} 
then $$d(U,(1+\eps p)\tilde {\mathcal U}_{0}) \leq \frac \eps 2 \rho^{3/2} \quad \text{in $B_\rho(\eps \bar{\bf m})$},$$ with $\tilde {\mathcal U}_0$ a rigid motion in $\R^n$ (a rotation $O_{\eps A}$ and translation by $\eps \bar{\bf m}$) of $\mathcal {U}_{0},$ and $|A|,|\bar{\bf m}|,|p| \leq C$, $C$ universal.
\end{prop}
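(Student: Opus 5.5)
We argue by compactness and contradiction, following the proofs of Propositions \ref{IF}, \ref{zero} and \ref{IFgeneraln}. Suppose the statement fails. Then there are minimizers $U_k$ of $J(\cdot,F_k)$ in $B_1$, with $F_k\ge 0$, $\|F_k\|_\infty\le \eps_k^2$ and $d(U_k,\mathcal U_0)\le \eps_k\to 0$, for which no admissible $(A,\bar{\bf m},p)$ with $|A|,|\bar{\bf m}|,|p|\le C$ gives the conclusion.

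\emph{Step 1: double-valued representation.} By Proposition \ref{pert}, applied on cylindrical pieces away from the axis $\{x'=0\}$, the free boundary $\Gamma(U_k)$ in $\mathcal C_{1-\delta}\setminus\{|x'|\le\delta\}$ consists of three $C^{1,\alpha}$ hypersurfaces. These are $C\eps_k/\delta$-close to the half-planes $\{\theta=\pi,\pm\pi/3\}\times\R^{n-2}$, and only two phases meet along each of them. Hence $U_k$ can be identified with a double-valued function in $\mathcal D$ on this region, exactly as in \eqref{Uh}, with $|\Delta U_k|\le \eps_k^2$. Set $V_k:=(U_k-\mathcal U_0)/\eps_k$. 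Lemma \ref{improgeneral}, with $a^\pm_k=O(\eps_k^2)\to 0$, shows that $V_k$ converges uniformly on compact subsets of $\mathcal C_1\setminus\{0'\}$ (after a diagonal argument with $\delta\to0$) to a bounded $V\in\mathcal D$ solving \eqref{FBBB}--\eqref{transmissionBBB}, with $\|V\|_\infty\le 1$. Near the half-planes the transmission condition has zero right-hand side, since $a^\pm=0$. So $V$ is in fact a double-valued harmonic function on $\mathcal C_1\setminus\{0'\}$.

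\emph{Step 2: linear expansion.} Lemma \ref{extendnew} applies with $a^F$ replaced by $0$; its proof uses only that $V^+$ is subharmonic. It gives Hölder continuity of $V$ up to the axis. Lemma \ref{expandV20} then gives
\[
|V-({\bf m}+Mx'')\cdot\nabla\mathcal U_0-p\,\mathcal U_0-q\,\p_\theta\mathcal U_0|\le C|x|^{2+\alpha}\le \tfrac18\rho^{3/2} \quad\text{in } B_{2\rho},
\]
for $\rho$ small universal, because $2+\alpha>3/2$. We then build the antisymmetric matrix $A$: take $a_{12}=-q$, let the block mapping $x''$ to $x'$ be $M$, fill in the other off-diagonal block so that $A$ is antisymmetric, and set the remaining entries to zero. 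Since $\mathcal U_0$ depends only on $x'$, rotations within the $x''$ variables do not matter.

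\emph{Step 3: back to $U_k$.} We expand
\[
(1+\eps p)\,\mathcal U_0(\eps{\bf m}+O_{\eps A}x)=\mathcal U_0+\eps\big[({\bf m}+Ax)\cdot\nabla\mathcal U_0+p\,\mathcal U_0\big]+o(\eps).
\]
This expansion is valid uniformly in the region $\{\rho/2\le |x'|\}\cap B_{2\rho}$, where $\mathcal U_0$ is smooth on each side of the half-planes. There we obtain $|U_k-(1+\eps_kp)\tilde{\mathcal U}_0|\le \tfrac14\eps_k\rho^{3/2}$ as double-valued functions, and hence the same bound for $d$.

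\emph{Step 4: filling in the axis; main obstacle.} The hard part is extending the bound to the region near the singular axis $|x'|<\rho/2$. In dimension two one simply uses the subharmonicity of $d$ on a small ball. In the cylindrical geometry, the uniform convergence of $V_k$ only holds away from $\{x'=0\}$, so the maximum principle must be applied in the cylinder $B'_{\rho}\times B''_{\rho}$. That requires control on the flat top and bottom faces $|x''|=\rho$, including near the axis. To get it, we first use the subharmonicity of $d(U_k,\mathcal U_0)$ together with Remark \ref{Appppp} (with the error $\|F_k\|_\infty\le\eps_k^2$). With these, we would show the limit extends with the uniform Hölder bound \eqref{complete} near the axis. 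Concretely, we would rerun the weak Harnack decay of Lemma \ref{extendnew} at the level of $U_k$ down to scales $\sim\eps_k^{1/2}$, where $\mathcal U_0$ and the perturbation are comparable. This yields $d(U_k,(1+\eps_kp)\tilde{\mathcal U}_0)\le \tfrac12\eps_k\rho^{3/2}$ on $\p(B_\rho(\eps_k\bar{\bf m}))$. The subharmonicity from Lemma \ref{duv}, corrected by Remark \ref{Appppp}, then extends the bound to the interior; the correction costs only $O(\eps_k^2)$. This contradicts the choice of $U_k$ and proves the proposition.
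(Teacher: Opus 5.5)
Your Steps 1--3 are exactly the argument the paper intends; it proves this proposition only by reference to Propositions \ref{IF}, \ref{zero} and \ref{IFgeneraln}. You correctly notice that the two-dimensional step ``improved bound on an annulus, then subharmonicity of $d$'' does not transfer verbatim, because the tube $\{|x'|<\rho/2\}$ reaches $\p B_\rho$. (A minor point: Lemma \ref{improgeneral} assumes constant right-hand sides, and $F$ need not be constant here. You do not need that lemma: as a double-valued function $|\Delta V_k|\le \eps_k$ away from the axis, and interior estimates give the convergence, as in Lemma \ref{improzero}.)

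The gap is in your Step 4. The nonlinear weak-Harnack decay can only be run on cylindrical annuli where Proposition \ref{pert} provides the two-phase, double-valued structure, i.e.\ for $|x'|\gg \eps_k^{2/3}$. This, not $\eps_k^{1/2}$, is the scale where $|x'|^{3/2}\sim\eps_k$. In the remaining tube around the axis you only know $d(U_k,\mathcal U_0)\le \eps_k$. So the claimed bound by $\frac12\eps_k\rho^{3/2}$ on all of $\p B_\rho(\eps_k\bar{\bf m})$ does not follow from what you describe.

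The missing observation is that no estimate near the axis is needed, because the uncontrolled part of the boundary has small harmonic measure. Put $W_k=(1+\eps_k p)\tilde{\mathcal U}_0$.
\begin{itemize}
\item For any $\eta>0$, uniform convergence of $V_k$ on $\{|x'|\ge\eta\}\cap\overline{B_{2\rho}}$ gives $d(U_k,W_k)\le\frac14\eps_k\rho^{3/2}$ there for $k$ large.
\item The triangle inequality for $d$ gives $d(U_k,W_k)\le d(U_k,\mathcal U_0)+d(\mathcal U_0,W_k)\le C\eps_k$ everywhere.
\item The set $\p B_{2\rho}\cap\{|x'|<\eta\}$ is an $\eta$-neighborhood of the codimension-two sphere $\p B_{2\rho}\cap\{x'=0\}$. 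Hence its harmonic measure in $B_{2\rho}$, evaluated at points of $B_{3\rho/2}$, is $O((\eta/\rho)^2)$.
\end{itemize}
Since $\Delta d(U_k,W_k)\ge-\eps_k^2$ by Remark \ref{Appppp}, the maximum principle gives
\[
d(U_k,W_k)\le \tfrac14\eps_k\rho^{3/2}+C\eps_k(\eta/\rho)^2+C\eps_k^2\rho^2\le \tfrac12\eps_k\rho^{3/2}\quad\text{in } B_{3\rho/2}\supset B_\rho(\eps_k\bar{\bf m}),
\]
once $\eta$ is small depending on $\rho$ and $k$ is large. Replace Step 4 with this, and run Step 3 on $\{|x'|\ge\eta\}$ instead of $\{|x'|\ge\rho/2\}$. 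With those changes your argument is complete, and the uniform H\"older control of $V_k$ up to the axis that you were trying to establish is unnecessary.
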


Now it is standard to obtain Theorems \ref{typeIgeneral} and \ref{typeIIgeneral} by iterating Propositions \ref{zeron} and \ref{IFgeneraln} as in the previous sections.

%%%%%%%%%%
%%%%%%%%%%

\section{The proof of Lemma \ref{impro}.} \label{sec:Proof-Lemma-impro}

We restate the lemma below.

\begin{lem}\label{improAPP} Let $u_0 \in C^1(B_1)$ with $|\nabla u_0| \neq 0$, satisfy
$$\Delta u_0 = a^+\chi_{\{u_0>0\}} -a^- \chi_{\{u_0<0\}} \quad \text{in $B_1$},$$ with $a^\pm \in \R.$ If $u$ satisfies the same equation,
$$\Delta u = a^+\chi_{\{u>0\}} - a^-\chi_{\{u<0\}} \quad \text{in $B_1$},$$
and
$$|u-u_0| \leq \eps \quad \text{in $B_1$}$$ then set 
$$v_\eps:= \frac{u-u_0}{\eps},$$ as $\eps \to 0$, $v_\eps \to v$ uniformly on compact subsets of $B_1$ (up to a subsequence), and $v$ satisfies the transmission problem 
$$\Delta v=0 \quad \text{on $\{u_0\neq 0\}$}$$
$$v_{\nu^+} + v_{\nu^-} =\frac{a^++a^-}{|\nabla u_0|} v \quad \text{on $\{u_0=0\}$}.$$ Here $\nu^\pm$ are the unit normals to $\{u_0=0\}$ pointing to $\{u_0>0\}$ and $\{u_0<0\}$ respectively.
\end{lem}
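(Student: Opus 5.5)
The argument has three steps: uniform estimates giving compactness, harmonicity of the limit away from $\{u_0=0\}$, and the transmission condition, which I will derive by writing $u$ as a domain deformation of $u_0$ near the zero set (the representation the paper later calls \eqref{dd}).

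\emph{Compactness.} The difference $w=u-u_0$ satisfies $|\Delta w|\le |a^+|+|a^-|$ and $|w|\le\eps$, so $u$ is $C^{1,\alpha}$ with bounds uniform in $\eps$. Because $|\nabla u_0|\ge c>0$ on compact sets, the zero set $\{u=0\}$ lies in an $O(\eps)$ neighborhood of $\{u_0=0\}$. On any compact set where $|u_0|\ge C\eps$, the functions $u$ and $u_0$ have the same sign, so $\Delta v_\eps=0$ there. Interior estimates for $v_\eps$ then give local uniform bounds, and in particular $\Delta v=0$ in $\{u_0\neq0\}$ for any subsequential limit.

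Uniform convergence up to $\{u_0=0\}$ needs a separate argument, since $\Delta v_\eps$ can be of size $1/\eps$ in the strip $\{|u_0|\lesssim\eps\}$. Near a point of $\{u_0=0\}$, I rotate so that $\p_n u_0>0$ and use the implicit function theorem to write
\[
u(x)=u_0\bigl(x+\eps\phi_\eps(x)e_n\bigr),\qquad \phi_\eps\approx -\frac{v_\eps}{\p_n u_0}.
\]
This representation gives equicontinuity of $v_\eps$ across the strip, provided $\phi_\eps$ is controlled in $C^{1,\alpha}$ independently of $\eps$.

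\emph{Transmission condition.} I will test against smooth $\varphi$ supported near a point of $\{u_0=0\}$. I compute
\[
\int v_\eps\Delta\varphi=\frac1\eps\int\bigl(a^+(\chi_{\{u>0\}}-\chi_{\{u_0>0\}})-a^-(\chi_{\{u<0\}}-\chi_{\{u_0<0\}})\bigr)\varphi .
\]
The integrand is supported in the region between the two zero sets, whose signed width in the normal direction is $-\eps v_\eps/|\nabla u_0|+o(\eps)$. Hence the right-hand side converges to
\[
\int_{\{u_0=0\}}(a^++a^-)\frac{v}{|\nabla u_0|}\varphi\,d\sigma .
\]
The sign bookkeeping works as follows. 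Where $v>0$, the positive phase has grown, so the region gains mass $a^+$ from $\chi_{\{u>0\}}$ and loses $-a^-$ from $\chi_{\{u<0\}}$, contributing $a^++a^-$ times the width $v/|\nabla u_0|$. This identity says that the distributional Laplacian of $v$ is $\Delta v=\frac{a^++a^-}{|\nabla u_0|}v\,\mathcal H^{n-1}\lfloor\{u_0=0\}$. Since $v$ is continuous and harmonic on each side, and smooth up to the interface by Remark \ref{linearsmooth}, the jump of normal derivatives is exactly $v_{\nu^+}+v_{\nu^-}$. Passing to the limit also uses uniform convergence of $v_\eps$ near the interface together with Hausdorff convergence of $\{u=0\}\to\{u_0=0\}$, which I get by expanding $u_0(x+\eps\phi e_n)$ to first order.

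\emph{Main obstacle.} The hard step is the uniform, $\eps$-independent equicontinuity (ideally $C^{1,\alpha}$ control) of $v_\eps$, equivalently of $\phi_\eps$, across the thin strip where the right-hand sides of the two equations differ. My plan is to show that $\phi_\eps$ solves an elliptic equation with transmission structure whose coefficients are $C^\alpha$-close to those of the limit problem, uniformly in $\eps$; obtaining this is what requires work. I would get it with an improvement-of-flatness or Campanato iteration comparing $u$ with translates $u_0(x+\eps\ell e_n)$ at all scales down to $\eps$. Below scale $\eps$, the $C^{1,\alpha}$ estimates for $u$ and $u_0$ suffice on their own.

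If that route fails, the fallback is an oscillation decay for $v_\eps$ at scales $r\ge C\eps$, proved by a Harnack-type comparison using barriers built from $u_0(x+s e_n)$. That gives a Hölder modulus for $v_\eps$ down to scale $\eps$, which is enough for uniform convergence via Arzelà–Ascoli.
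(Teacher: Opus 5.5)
The gap is the step you flag yourself. Your weak-formulation computation is correct only once you know that $v_\eps\to v$ uniformly across the strip $\{|u_0|\lesssim\eps\}$, with $v$ continuous. You need this to replace the values of $v_\eps$ on $\{u=0\}$ by those of $v$ on $\{u_0=0\}$. That uniform convergence is the substance of the lemma, and you only sketch routes to it.

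The sketch also looks for a transmission structure in the equation for $\phi_\eps$, but the key point is that there is none: the singular parts cancel identically. With $\Psi(x)=x+\eps\phi_\eps(x)e_n$ and $u=u_0\circ\Psi$, the sign of $u(x)$ is the sign of $u_0(\Psi(x))$. Hence $\Delta u(x)=(\Delta u_0)(\Psi(x))$ exactly, and the chain rule leaves
\[
\p_nu_0(\Psi)\,\Delta\phi_\eps+2\,\p_{in}u_0(\Psi)\,\p_i\phi_\eps+\eps\,\p_{nn}u_0(\Psi)\,|\nabla\phi_\eps|^2=0 .
\]
Here $u_0\in C^{1,1}$ near its zero set and $\eps|\nabla\phi_\eps|=|\nabla u-\nabla u_0(\Psi)|/\p_nu_0(\Psi)$ is bounded. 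So the equation reads $\Delta\phi_\eps+\tilde b\cdot\nabla\phi_\eps=0$ with $\|\tilde b\|_\infty\le C$ independent of $\eps$, and $W^{2,p}$ estimates give $\|\phi_\eps\|_{C^{1,\alpha}}\le C\|\phi_\eps\|_\infty\le C$ directly.

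The paper uses the same cancellation through the partial hodograph transform in the $x_n$ direction. After it, $w$ and $w_0$ solve the same quasilinear equation $a^{ij}(\nabla w)w_{ij}=\,$(a piecewise constant right-hand side jumping across one fixed hyperplane). So $(w-w_0)/\eps$ solves a linear equation with $C^\alpha$ leading coefficients, bounded drift and zero source. No improvement-of-flatness or Campanato iteration is needed. Your barrier/Harnack fallback is also problematic: comparing $u$ with perturbed translates of $u_0$ needs a comparison principle, which holds only when $a^\pm\ge 0$, and the lemma allows $a^\pm\in\R$.

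For your weak route alone, equicontinuity is even cheaper. You have $|\Delta v_\eps|\le C\eps^{-1}\chi_{\{|u_0|\le\eps\}}$, and $\{|u_0|\le\eps\}$ is locally a strip of width $C\eps$. Hence $\int_{B_r(x)}|\Delta v_\eps|\le C\min(r^n/\eps,\,r^{n-1})\le Cr^{n-1}$ uniformly in $\eps$. The Newtonian potential of a measure with this growth is log-Lipschitz, and the harmonic remainder is bounded, so Arzel\`a--Ascoli applies.

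With either fix, your computation gives $\Delta v=\frac{a^++a^-}{|\nabla u_0|}v\,\mathcal H^{n-1}\lfloor\{u_0=0\}$ in the sense of distributions. To read this as the classical jump condition you still need $v\in C^1$ up to the curved interface from each side. Remark \ref{linearsmooth} treats only a flat interface, so you would need something like single-layer potential estimates with H\"older density. In the paper this comes for free from $v=-(\p_nu_0)\phi$ with $\phi\in C^{1,\alpha}$. The jump of $\nabla v$ is $-\phi$ times the jump of $\nabla\p_nu_0$, and since only $(u_0)_{nn}$ jumps, $\p_nu_0$ (indeed every $\p_{\mathbf{e}}u_0$) satisfies the transmission condition.
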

\begin{proof} We notice that since $\Delta u_0$ is bounded, the function $u_0 \in C^{1,\alpha}$, for any $\alpha < 1$. Since $|\nabla u_0| \neq 0$ the level set $\Gamma_0:=\{u_0=0\}$ is a $C^{1,\alpha}$ hypersurface. Given that $|u-u_0|\leq \eps$, the level set $\Gamma:=\{u=0\}$ is in an $O(\eps)$- neighborhood of $\Gamma_0.$ Away from this neighborhood, clearly $v_\eps$ is harmonic and bounded, therefore on any compact set away from $\Gamma_0$, up to extracting a subsequence, $v_\eps \to v$ uniformly and $v$ is harmonic. It remains to prove that the convergence holds in a neighborhood of $\Gamma_0$ and that the limiting function satisfies the transmission condition on $\Gamma_0.$

For simplicity, say $0\in \Gamma_0$ and assume that $\nabla u_0(0)$ points in the $e_n$ direction. We pick a small enough ball $B_\rho$ so that $\p_n u$ and $\p_n u_0$ are bounded below in this ball by a fixed positive  constant. Notice that since also $u \in C^{1,\alpha}$ and $|u-u_0| \leq \eps$, it follows that $|\nabla(u-u_0)|$ is sufficiently small.

Now, in $B_\rho$ we perform a hodograph transform by viewing the graphs of $u$ and $u_0$ as graphs of the function $w$ and $w_0$, respectively,  in the $x_n$-direction.  
That is, 
$$x_{n+1}= u(x_1, \ldots, x_n) \quad \Longleftrightarrow \quad x_n=w(x_1,\ldots, x_{n-1}, u)$$
and similarly for $u_0, w_0.$ 
Then by differentiating the right hand side of this equivalence we obtain the second derivatives of $u$ in terms of the second derivatives of $w$ and conclude that 
$$\Delta u= a^{ij}(\nabla w) w_{ij}$$
and similarly for $u_0, w_0.$ Here $a^{ij}(\nabla w)$ is uniformly elliptic and since $w$ is $C^{1,\alpha}$ these coefficients are H\"older continuous. Thus, our equation takes the form
$$\mathcal L w:=a^{ij}(\nabla w) w_{ij} = a^+\chi_{\{x_n>0\}} - a^-\chi_{\{x_n<0\}},\quad \text{in $B_{c\rho}$}$$ (similarly for $w_0)$ in which the right hand side has a discontinuity along a hyperplane. From here we can conclude that $w$ is $C^{1,1}$ in $B_{c\rho/2}$ and $D^2 w$ is Hölder continuous on either side $\{x_n \geq 0\}$ and $\{x_n \leq 0\}$ up to $\{x_n=0\}$ with a discontinuity only in the $w_{nn}$ term which accounts for the discontinuity of the right hand side (see Remark \ref{rem85} below).

Now we look at 
$$\phi_\eps:=\frac{w-w_0}{\eps}$$
which solves the equation 
\begin{equation}\label{phieps}
    \tilde{a}^{ij}(x)(\phi_\eps)_{ij} + \tilde{b}(x)\cdot \nabla\phi_\eps=0\end{equation}
with $$\tilde{a}^{ij}(x):=a^{ij}(\nabla w_0) \in C^{0,\alpha}$$
and  
$$ \tilde b(x):=\int_0^1 \nabla_p a^{ij}(t\nabla w + (1-t)\nabla w_0)w_{ij} dt \in L^\infty.$$
In particular, this equation has $C^{1,\alpha}$ estimates (for all $\alpha<1$) and we get that $$\|\phi_\eps\|_{C^{1,\alpha}(B_{c\rho/2})} \leq C \|\phi_\eps\|_\infty \leq C.$$

In conclusion, the function $u$ is obtained from $u_0$ by the domain deformation 
\begin{equation}\label{dd}
    u_0(x)=u(x+\eps \phi_\eps e_n),\end{equation}
with $\phi_\eps$ a function with bounded $C^{1,\alpha}$-norm. Here the pullback is still denoted by $\phi_\varepsilon$.
Therefore, \begin{equation}\label{veps}v_\eps(x) = -(\p_n u(x)) \phi_\eps(x) + O(\eps),\end{equation}
which, as $\eps \to 0,$ proves the convergence of $v_\eps$ in a neighborhood of $\Gamma_0$ to a function $$v=-(\p_n u_0) \phi$$ which is harmonic away from $\Gamma_0.$ Now it is easy to check that the transmission condition is satisfied. To this end, since $\phi $ is $C^1$, 
it suffices to show that $\p_n u_0$ satisfies the condition. In fact, we show that every directional derivative of $u_0$ satisfies the condition, and it suffices to show it at the origin where $\nu^+=e_n$ and $\nu^-=-e_n$. 
Let ${\bf e}$ be a unit direction. Given that the only second derivative with a discontinuity at the origin is $(u_0)_{nn}$ we have that
$$(u_0)_{{\bf e}\nu^+} + (u_0)_{{\bf e}\nu^-}= ({\bf e} \cdot e_n)((u_0)_{\nu^+\nu^+} - (u_0)_{\nu^-\nu^-})$$
$$=({\bf e}\cdot e_n)(a^++a^-)= \frac{(u_0)_{{\bf e}}}{|\nabla u_0|}(a^++a^-).$$
\end{proof}

\begin{rem}
The proof above shows that the  function $v$ is $C^{1,\alpha}$ on either side of the surface $\Gamma_0$ and the transmission condition is satisfied in the classical sense.
\end{rem}

\begin{rem} Equation \eqref{veps} shows that the more general statement of Lemma \ref{improgeneral} holds. \end{rem}

\begin{rem}\label{remlast} The conclusion \eqref{dd}-\eqref{veps} that $u$ is obtained as a domain deformation of $u_0$ continues to hold if $u_0$ satisfies the equation with an $O(\eps)$ error. The only difference is that the equation \eqref{phieps} has a bounded right hand side and the $C^{1,\alpha}$ estimates still apply. Also the full conclusion of the Lemmas \ref{impro}-\ref{improgeneral} holds if the error is $o(\eps)$.
\end{rem}

\begin{rem}\label{rem85} Let $u$ be a solution to a linear equation with H\"older coefficients and a piecewise H\"older continuous right hand side with discontinuity along a hyperplane:
$$a^{ij}(x)u_{ij}=f(x) \quad \mbox{in $B_1$,} \quad a^{ij} \in C^\alpha(B_1), \, \, f \in C^\alpha(\overline B_1^-) \cap C^{\alpha}(B_1^+).$$
Then $u \in C_{loc}^{1,1}(B_1)$. Indeed,
$$v:=u - m_0 (x_n^+)^2,$$
with $m_0$ a constant depending on the jump of $f$ at $0$ and $a^{nn}(0)$, solves the equation with a right hand side that is pointwise $C^\alpha$ at $0$. Thus $v$ is pointwise $C^{2,\alpha}$ at the origin by the results in the book of Caffarelli-Cabre \cite{CC}.

\end{rem}

\section{Appendix}\label{App}
In this appendix, we prove the results from Section \ref{sec:Basic}. First, we recall the statement of the Alt-Caffarelli-Friedman formula as it appears in \cite{CJK}.

\begin{prop}\label{ACF} Suppose that $u_\pm$ are non-negative continuous functions in $B_1$. Suppose $\Delta u_\pm \geq -1$  in the sense of distributions and $u_+(x) \cdot u_-(x) =0$ for all $x \in B_1.$ Then, there is a dimensional constant $C>0$ such that 
\begin{equation}\label{ACF1}\Phi(r) \leq C\left(1+  \int_{B_1}\frac{|\nabla u_+|^2}{|x|^{n-2}} dx + \int_{B_1}\frac{|\nabla u_-|^2}{|x|^{n-2}}dx\right)^2, \quad 0<r \leq 1,\end{equation} where
$$\Phi(r)= \left(\frac{1}{r^2}\int_{B_r}\frac{|\nabla u_+|^2}{|x|^{n-2}}dx\right)\left(\frac{1}{r^2}\int_{B_r}\frac{|\nabla u_-|^2}{|x|^{n-2}}dx\right). $$
\end{prop}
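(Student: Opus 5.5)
This is the Alt--Caffarelli--Friedman monotonicity formula in the form of \cite{CJK}. I would reproduce their argument rather than invoke the classical harmonic ACF formula directly, because here the two functions are only subharmonic up to an error, $\Delta u_\pm\ge -1$.

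\emph{Step 1 (normalization).} Set
$$I_\pm(r)=\int_{B_r}\frac{|\nabla u_\pm|^2}{|x|^{n-2}}\,dx$$
and $A=1+I_+(1)+I_-(1)$. Since $u_\pm\ge0$ and $\Delta u_\pm\ge -1$, the standard identity $\Delta(u^2)=2|\nabla u|^2+2u\Delta u$ gives
$$I_\pm(r)\le \int_{\partial B_r}\Big(\frac{u_\pm\partial_\nu u_\pm}{r^{n-2}}+\frac{n-2}{2}\frac{u_\pm^2}{r^{n-1}}\Big)d\sigma+C\int_{B_r}\frac{u_\pm}{|x|^{n-2}}\,dx .$$
One derives this by testing against the fundamental solution, after a mollification to justify the computation. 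The last term is an error, and $\sup_{B_r}u_\pm$ can be controlled through the mean-value inequality applied to $u_\pm+|x|^2/(2n)$.

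\emph{Step 2 (differential inequality).} Following ACF, for a.e.\ $r$ one has
$$\frac{\Phi'(r)}{\Phi(r)}\ \ge\ \frac{2}{r}\big(\gamma_+ +\gamma_- -2\big)-\text{error},$$
where $\gamma_\pm$ are the characteristic constants of the supports of $u_\pm$ on $\partial B_r$. The Friedland--Hayman inequality gives $\gamma_++\gamma_-\ge2$. The error comes from the terms involving $\int_{B_r}u_\pm|x|^{2-n}$; it is of size $Cr\,\Phi^{-1/4}$ or similar. To see this, use $\sup_{B_r}u_\pm\lesssim r\,I_\pm(2r)^{1/2}+r^2$, a Poincaré-type bound.

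\emph{Step 3 (iteration).} Integrating the inequality of Step 2 is awkward, so I would instead follow \cite{CJK} with a dyadic scheme. If $\Phi(r)$ is large compared with $A^2$ at some scale, the relative error is small, and $\Phi(r/2)\le(1+Cr^{\delta}+C\Phi(r)^{-1/4})\Phi(r)$. The product of these factors converges, so $\Phi$ can never exceed $CA^2$ by a fixed factor. At scale $1$ one has directly $\Phi(1)\le A^2$. A contradiction argument then handles any first dyadic scale where $\Phi$ crosses the threshold $KA^2$: at that scale the error terms are negligible, and the near-monotonicity forbids the crossing.

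The main obstacle is Step 2, namely the precise bookkeeping of the error from $\Delta u_\pm\ge-1$ relative to $I_\pm(r)$. Where this fails to close, I would rescale $u_\pm$ by $A^{-1/2}$ so that $A=1$. Then $\Delta u_\pm \ge -A^{-1/2}$, and the error is uniformly small relative to the main term whenever $\Phi$ is large.
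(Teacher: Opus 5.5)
The paper does not prove this proposition; it only quotes it from \cite{CJK}. As a self-contained argument, your sketch has a genuine gap at Steps 2--3, and it is exactly the point that makes the CJK theorem hard.

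Write $a_\pm(r)=r^{-2}I_\pm(r)$, so that $\Phi=a_+a_-$. The scale-correct form of your sup bound is $\sup_{B_r}u_\pm\lesssim r\,a_\pm(2r)^{1/2}+r^2$. Your $r\,I_\pm(2r)^{1/2}$ carries an extra factor $r$, and the bound also needs $u_\pm$ to vanish at the center. With this bound, the defect $\int_{B_r}u_\pm|x|^{2-n}$, relative to $I_\pm(r)$, is of order $r\,a_\pm^{-1/2}$ per dyadic scale, for each function \emph{separately}. This is $\lesssim r\Phi^{-1/4}$ only when $a_+\approx a_-$.

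A large product gives no control of the smaller factor beyond the trivial bound $a_\mp(r)\le r^{-2}A$, i.e.\ $a_\pm(r)\ge r^2\Phi(r)/A$. So above the threshold $\Phi\ge KA^2$ the per-scale error is only $\lesssim (A/\Phi)^{1/2}\le (KA)^{-1/2}$. This is uniformly small but does not decay in $r$, hence is not summable. Iterating $\Phi_{k+1}\le\Phi_k+C\sqrt{A\Phi_k}$ only yields $\sqrt{\Phi(2^{-k})}\le\sqrt{\Phi(1)}+Ck\sqrt A$, a bound that grows like $\log^2(1/r)$.

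Your Step 3 inequality has the same defect. The recursion $x_{k+1}=x_k+Cx_k^{3/4}$ is unbounded, so the product of the factors $1+C\Phi^{-1/4}$ does not converge unless that term carries a decaying factor. The first-crossing argument also fails: a small error at the crossing scale does not forbid the crossing, it only slows it. Normalizing $u_\pm\mapsto A^{-1/2}u_\pm$ changes nothing, because these relative errors are scale invariant.

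The dangerous configuration is one function sitting at the quadratic scale, $a_+(r)\approx Kr^2$, over many dyadic scales, while the other keeps its weighted energy near the origin. What your sketch is missing is a mechanism showing that errors cannot accumulate there, and this is the real content of \cite{CJK}. Roughly, one needs a dichotomy at each scale, with two alternatives:
\begin{itemize}
\item Some gain dominates the error at that scale. The candidates are $\gamma_++\gamma_--2$, controlled through a quantitative (stability) form of the Friedland--Hayman inequality, or the favorable term $\int u_\pm(\Delta u_\pm)^+|x|^{2-n}$ that you discard in Step 1.
\item Otherwise, the pair is close to two complementary linear functions. Then $a_\pm(r/4)\gtrsim a_\pm(r)$, and the errors start decaying geometrically.
\end{itemize}
Without an argument of this kind, Steps 2--3 do not give a bound independent of $r$, which is the whole point of the proposition.
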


\subsection{Optimal Regularity and Perturbative Estimates}\label{App:optinal} We now prove the optimal regularity of minimizers, Theorem \ref{OR}. 

\begin{proof}[Proof of Theorem \ref{OR}] We divide the proof in two steps. Notice that since $F \in L^\infty$ then, after dividing by a large constant we may assume that
$$\|F\|_{L^\infty} \le 1, \quad \|U\|_{L^2} \le 1.$$
 From the Euler-Lagrange equation we get
\begin{equation}\label{BoundedLaplace}
\Delta u^i \geq -1, \quad \Delta \hat u^i \leq 1, \quad \text{in $B_1$.}
\end{equation} 

{\it Step 1.} $u^i$ is continuous and $\{u^i>0\}$ is open. In particular, from \eqref{BoundedLaplace}
\begin{equation}\label{Bpositive}|\Delta u^i| \leq 1 \quad \text{in $B_1 \cap \{u^i>0\}$}\end{equation}
which implies that $u^i \in C^{1,\alpha}$ for all $0<\alpha<1$ in its positivity set.

\smallskip

From the first equation in \eqref{BoundedLaplace} we deduce that $u^i$ plus a quadratic polynomial is subharmonic hence 
\begin{equation}\label{BoundedLaplace2}
0 \le u^i \leq C \quad \text{in $B_{3/4}$,} 
\end{equation} for a dimensional constant $C>0.$ Moreover, $u^i$ can be defined pointwise as
$$u^i(x)=\lim_{r\to 0} \fint_{B_r(x)} u^i.$$
and it is upper semicontinuous. Notice that the limit is finite since $u^i \ge 0$. Consequently, $\hat u^i$ is defined pointwise and it satisfies
$$\hat u^i(x)=\lim_{r\to 0} \fint_{B_r(x)} \hat u^i.$$ Since, $\hat u^i$ satisfies the second equation in \eqref{BoundedLaplace} we deduce from this last identity that $\hat u^i$ is lower semicontinuous. On the other hand, $u^i=(\hat u^i)^+$ which implies that it is lower semicontinuous as well. We finally conclude that $u^i$ is continuous.

{\it Step 2.} $u^i$ is Lipschitz continuous.

We assume without loss of generality that $n\geq 3$, otherwise we extend the functions trivially in one extra dimension. If $u^i(0)=0,$ then, we deduce from \eqref{BoundedLaplace} that for all $0 < r \leq 1/2,$
\begin{equation}\label{Bounds}
\frac{c}{r^2}  \fint_{B_{r}}(u^i)^2 dx + O(1) \leq \frac{1}{r^2} \int_{B_r}\frac{|\nabla u^i|^2}{|x|^{n-2}} dx \leq \frac{C}{r^2} \fint_{B_{2r}}(u^i)^2 dx + O(1).
\end{equation}
Indeed, in view of \eqref{Bpositive}-\eqref{BoundedLaplace2}, on the set $\{u^i>0\}$
\begin{equation}\label{equivalent} \frac 1 2 \Delta ((u^i)^2) = |\nabla u^i|^2+ u^i \Delta u^i =  |\nabla u^i|^2+O(1),\end{equation}
in the sense of distributions. On the other hand, $\Delta ((u^i)^2)$ as a distribution in $B_1$ is supported on $\{u^i>0\}.$ This can be seen by approximation. We write the identity above with $u^i$ replaced by the truncation $(u^i-\eps)^+$, which is Lipschitz in view of \eqref{Bpositive}, then pass to the limit for $\eps \to 0.$

Now, denote by  $\Gamma_\rho \in C^{1,1}$ the smoothing of the fundamental solution $\Gamma(x):=|x|^{2-n}$ which coincides with it outside $B_\rho$, and is the appropriate quadratic polynomial on $B_\rho$. Then,
$$\Delta \Gamma_\rho = c(n) \frac{\chi_{B_\rho}}{|B_\rho|},$$
$$\Gamma_\rho - \Gamma_r \leq \Gamma, \quad \rho < r,$$ and $\Gamma_\rho - \Gamma_r$ is supported in $\overline{B_r}$.
Using the latter inequality and \eqref{equivalent} to estimate $\frac{1}{r^2} \int_{B_r}\frac{|\nabla u^i|^2}{|x|^{n-2}} dx$ then passing to the limit as $\rho \to 0$, we obtain the lower bound in \eqref{Bounds}.

For the upper bound, we cut off $|x|^{2-n}$ in the annulus $B_{2r} \setminus B_r$,
use \eqref{equivalent}, and integrate by parts.

Now, let $0 \in \p\{|U|>0\}$, so that \eqref{Bounds} holds for all $i$'s. Then applying Proposition \ref{ACF} to any pair $u^i, u^j$ we conclude that for all $k \in \{1,\ldots, N\}$, except at most one, say $k=1$,
$$\frac{1}{r^2} \int_{B_r}\frac{|\nabla u^k|^2}{|x|^{n-2}} dx \leq C, \quad k \geq 2, \quad 0<r \leq 1/4.$$ Notice that to bound the right hand side of \eqref{ACF1}, we used the bound in \eqref{Bounds} for $r=1/4.$

From the lower  bound in \eqref{Bounds} we deduce that (for a $C>0$ changing from line to line)
\begin{equation}\label{average}\frac{1}{r^2}\fint_{B_{r}}(u^k)^2 dx \leq C, \quad k\geq 2, \quad 0<r\leq 1/4.\end{equation}
Call, $$v^k(x):= \frac{u^k(rx)}{r}, \quad x\in B_{1}.$$
Then, from \eqref{BoundedLaplace} and \eqref{average} we have in $B_{1}$
$$\Delta v^k \geq - Cr, \quad \int_{B_{1}} (v^k)^2 dx \leq C,$$
therefore,
\begin{equation}\label{k2}v^k(x) \leq C \quad \text{in $B_{1/2}$, for all $k\geq 2.$}\end{equation}
We prove now that the inequality above also holds for $v^1.$ Indeed, from \eqref{BoundedLaplace} we have that
$$\Delta \hat{v}^1 \leq r \leq 1$$
and $$\hat{v}^1(0)=0.$$
 Then, by the mean value inequality for $\hat v^1$ we have
 $$C + \hat v^1(0) \geq \fint_{B_{1/2}} \hat{v}^1 dx = \fint_{B_{1/2}} (v^1 - \sum_{j \geq 2} v^j)dx$$
 and it follows from \eqref{k2} that
 $$\fint_{B_{1/2}} v^1 dx \leq C.$$ Since $v^1$ is positive and $v^1+C|x|^2$ is subharmonic, we deduce that $v^1 \leq C$ in $B_{1/4}.$

 In conclusion $$u^k \leq C r, \quad \text{in $B_{r/4}$, for all $k\geq 1$,}$$ which by interior regularity proves the Lipschitz continuity.
\end{proof}

\subsection{Perturbative estimates.}\label{App:perturb} Here we provide the proof of Proposition \ref{pert}. To this aim, we will use the following general lemma.

\begin{lem}\label{general} Let $0 \leq u \leq 1$ be a continuous function satisfying 
$$\Delta u \geq -1 \quad \text{in $B_1$}$$ and 
$$\{u>0\} \cap B_1 \subset\{|x_n| \leq \eps\}$$
for some $\eps>0$ small. Then,
$$ u \leq \eps^2 \quad \text{in $B_{1/2}$}.$$
\end{lem}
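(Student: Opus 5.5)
The plan is to compare $u$ with an explicit barrier adapted to the slab $\{|x_n|\le \eps\}$. Since $u$ vanishes outside the slab, the natural competitor is the function that is $1/2$ times the ``slab quadratic'' plus a harmonic-type correction that controls the boundary values on $\p B_1$. I would first reduce to the set $\{u>0\}$. There $\Delta u \ge -1$, and $u=0$ on $\p\{u>0\}\cap B_1$. The set $\{u>0\}\cap B_1$ lies in the slab, and its boundary consists of points on $\p B_1$, where $u\le 1$, and points of $\p\{u>0\}$, where $u=0$.

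Fix $y\in B_{1/2}$ and work in $B_{1/2}(y)$, which sits inside the slab region of $B_1$. I would use the barrier
$$w(x)=\frac 1 2(\eps^2-x_n^2)+ C_0\,\eps\, h(x),$$
where $h$ has to dominate $u$ on the lateral part of $\p B_{1/2}(y)$ inside the slab. The catch is that $u\le 1$ there is far too crude. So I would instead first prove a decay estimate by iteration. Let $M_k:=\sup_{B_{1-k\eps'}\cap\{|x_n|\le\eps\}} u$, with steps of size comparable to $\eps$. In a box of width $\sim\eps$ in the $x'$ directions and height $2\eps$, compare $u$ with
$$\frac12(\eps^2-x_n^2)+M\,\phi\big((x'-y')/\eps\big),$$
where $\phi$ is a fixed harmonic function in the strip (such as $\cosh$ in $x'$ times $\cos(\pi x_n/(2\cdot 2\eps))$, suitably scaled) that is $\ge1$ on the lateral sides of the box and $\le 1/2$ at the center. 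This is the standard fact that a function vanishing on both sides of a thin strip decays exponentially along it. The maximum principle in $\{u>0\}\cap$ box then gives $M_{k+1}\le \frac12 M_k + \frac12\eps^2$.

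Iterating $\sim 1/(2\eps)$ times, which is possible since we move from radius $1$ to $1/2$ in steps of order $\eps$, gives $M_k\le 2^{-k}+\eps^2$. With $k\sim c/\eps$ this yields $u\le \eps^2+2^{-c/\eps}$ in $B_{1/2}$. After adjusting constants (for instance starting the slab at half-width $\eps/2$ so that the quadratic term gives $\eps^2/8$, and absorbing the exponentially small term, which is $\le \eps^2/2$ for $\eps$ small), this gives $u\le\eps^2$.

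The main obstacle is getting the sharp constant $\eps^2$ rather than $C\eps^2$. The quadratic part $\frac12(\eps^2-x_n^2)\le\frac12\eps^2$ already leaves room of $\frac12\eps^2$, so the iteration must be arranged so that the accumulated additive errors stay below $\frac12\eps^2$. Using the exact recursion $M_{k+1}-\frac12\eps^2\le\theta(M_k-\frac12\eps^2)^+$ with $\theta<1$ handles this cleanly.
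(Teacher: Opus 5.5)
\textbf{Gap: the iteration as set up only gives $u\le\eps^2+2^{-c/\eps}$, not $u\le\eps^2$.} Your mechanism, a quadratic barrier plus exponential decay along the thin strip, is the right one. The problem is that the recursion $M_{k+1}\le \frac12 M_k+\frac12\eps^2$ has fixed point exactly $\eps^2$. It therefore yields $M_k\le \eps^2+2^{-k}$, which is strictly weaker than the claim. Neither of your two repairs closes this.

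\begin{itemize}
\item You cannot ``start the slab at half-width $\eps/2$''. The hypothesis gives only $\{u>0\}\subset\{|x_n|\le\eps\}$, and $\frac12\eps^2$ is the true size of the quadratic term: the function $\frac12(\eps^2-x_n^2)^+$ itself satisfies all the hypotheses.
\item The recursion $M_{k+1}-\frac12\eps^2\le\theta(M_k-\frac12\eps^2)^+$ does not follow from your box comparison. On the lateral sides near $|x_n|=\eps$ the quadratic term is almost $0$, while $u$ may be almost $M_k$. So $(M_k-\frac12\eps^2)^+\phi$ would have to dominate $M_k$ there.
\item That recursion is in fact false for $M_k=\sup u$. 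In $n=2$, let $u$ be $\frac12(\eps^2-x_2^2)$ plus $\eps^2\beta\cosh(\frac{\pi x_1}{2\eps})\cos(\frac{\pi x_2}{2\eps})/\cosh(\frac{\pi}{2\eps})$ minus $\eps^2\gamma\cosh(\frac{3\pi x_1}{2\eps})\cos(\frac{3\pi x_2}{2\eps})/\cosh(\frac{3\pi}{2\eps})$ in the slab, and $0$ outside. Take $\gamma$ a suitable small constant (e.g.\ $0.17$) and $0<\beta\ll\gamma$. This $u$ satisfies the hypotheses. By my computation, its supremum exceeds $\frac12\eps^2$ and is attained on the $x_1$-axis at depth about $\frac{\eps}{\pi}\log(3\gamma/\beta)$ from $\p B_1$. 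For $\beta$ small this point lies in $B_{1-\eps'}$, so $M_1=M_0$.
\end{itemize}

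\textbf{Fix: iterate on $v:=\big(u-\frac12(\eps^2-x_n^2)\big)^+$ instead of on $u$.}
\begin{itemize}
\item $v$ is subharmonic in $B_1\cap\{|x_n|<\eps\}$. On $\{u>0\}$ this follows from $\Delta u\ge -1$. Near slab points where $u=0$, $v$ vanishes identically, since there $u-\frac12(\eps^2-x_n^2)<0$.
\item $v$ vanishes on $\{|x_n|=\eps\}$, and $v\le 1$.
\item Set $N_k:=\sup_{B_{1-k\eps'}}v$. Your box comparison applied to $v$ with barrier $N_k\phi$ gives $N_{k+1}\le\theta N_k$, with no additive term.
\item Hence $u\le \frac12\eps^2+\theta^{c/\eps}\le\eps^2$ in $B_{1/2}$ for $\eps$ small.
\end{itemize}

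This is the paper's proof in discretized form. The paper compares $u$ once with $w+\frac12(\eps^2-x_n^2)^+$, where $w$ is harmonic in $B_1\cap\{|x_n|<\eps\}$, vanishes on the flat sides and equals $1$ on $\p B_1$. The quadratic is paid only once, and Harnack gives $\|w\|_{L^\infty(B_{1-k\eps})}\le e^{-ck}$.

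Alternatively, you can keep your recursion on $M_k$ but lengthen the box so that $\phi\le\theta<\frac12$ on the vertical segment through its center. Then $M_{k+1}\le\theta M_k+\frac12\eps^2$ has fixed point $\frac{\eps^2}{2(1-\theta)}<\eps^2$, which also suffices.
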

\begin{proof} By the maximum principle, 
$$u \leq w+\frac 12 (\eps^2 -x_n^2)^+\quad \text{in $B_1$}$$
where $w$ is the harmonic function in the strip $B_1\cap \{|x_n| <\eps\}$ which is zero on $\{|x_n|=\eps\} \cap B_1$ and 1 on the remaining part of the boundary, and is extended to zero outside the strip $\{|x_n| <\eps\}$. By Harnack inequality, 
$$\|w\|_{L^\infty(B_{1-k\eps})} \leq e^{-ck}.$$ In particular, $$w \leq e^{-c/\eps} \quad \text{in $B_{1/2}$},$$ and our claim follows.
\end{proof}

\noindent {\it Proof of Proposition \ref{pert}}. From assumption \eqref{flatnew}, after relabeling the indices, we can assume that
$$\{u^i>0\} \subset \{|x_n| \leq \eps\}, \quad \text{for all $i\geq 3$,}$$ and 
\begin{equation}\label{pos}u^1>0 \quad \text{on $\{x_n>\eps\}.$}\end{equation}
Thus, Lemma \ref{general} implies that
$$ u^i \leq \eps^2 \quad \text{in $B_{1/2}$, for all $i\geq 3$}.$$ In particular,
\begin{equation}\label{e2}|\hat u^1 - (-\hat u^2)| \leq 2 \sum_{i\geq 3} u^i \leq 2\eps^2 \quad \text{in $B_{1/2}$.}\end{equation}
Moreover, from the Euler Lagrange equation,
$$\Delta \hat u^1 \leq \|F\|_\infty, \quad \Delta (-\hat u^2) \geq - \|F\|_\infty, \quad \text{in $B_1$}.$$
Let $h$ be the harmonic replacement of $\hat u^1$ in $B_{1/2}$. By the maximum principle we get that in $B_{1/2}$
$$\hat u^1 \geq h - \|F\|_\infty,$$
and by \eqref{e2}, $(-\hat u^2) -2\eps^2 \leq \hat u^1=h$ on $\p B_{1/2}$ hence
$$(-\hat u^2) -2\eps^2 \leq h+\|F\|_\infty.$$

Therefore, again by \eqref{e2},
\begin{equation}\label{u1h}|\hat u^1 - h| \leq 4\eps^2 + \|F\|_\infty, \quad \text{in $B_{1/2}$}. \end{equation}
From \eqref{mu0}, \eqref{flatnew}, \eqref{pos}, and Hopf Lemma, we conclude that
$$u^1 \geq c(\mu)(x_n-\eps)^+.$$

Since $h$ is $2\eps$ close to $\hat u^1$ we can conclude that
$$|\nabla h(0)| \geq \frac 1 2 c(\mu).$$
Using that $h$ is well approximated by its linear part, and by choosing $\eps$ small enough, we get that after a homogeneous of degree one rescaling from $B_{r_0}$ to $B_1$ and multiplication by a constant, we may assume that
\begin{equation}\label{flat} |U-(x_1^+, x_1^-, 0,\ldots, 0)|\leq \eps_0, \quad \text{in $B_1,$}\end{equation}
and
\begin{equation}\label{smallF} \|F\|_\infty \leq \delta\eps_0,\end{equation}
for $r_0$ small depending on $\mu$ and $\delta, \eps_0$ universal.

\

Now the argument that led to \eqref{u1h} can be iterated indefinitely in a $C^{1,\alpha}$ fashion to obtain (for a similar argument see for example \cite{DFS}),
$$|U-a_k((x\cdot \nu_k)^+, (x \cdot \nu_k)^-, 0,\ldots, 0)|\leq \eps_0r^{1+\alpha}\quad \text{in $B_r$, $r=\rho^k$}$$ and $\rho$ universal, with
$$|a_{k+1}-a_k| \leq C\eps_0r^\alpha, \quad |\nu_{k+1}-\nu_k|\leq C\eps_0 r^{\alpha}.$$
\qed

\subsection{Sequences of minimizers.}\label{App:seq}
Here we exhibit the proof of Proposition \ref{compact}. First, the next two lemmas follow from standard techniques.
\begin{lem}[Lower semi-continuity]Let $U_k \in \mathcal A$ and $F_k \in L^2(B_1).$ Assume $U_k \to U, F_k \to F$ in $L^2$, then $$\liminf J(U_k,F_k) \geq J(U,F).$$
\end{lem}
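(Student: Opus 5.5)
The plan is to pass to the limit separately in the two terms of $J$. I write
$$J(U_k,F_k)=\int_{B_1}\tfrac12|\nabla U_k|^2\,dx+\int_{B_1}F_k\cdot U_k\,dx .$$
The linear term is easy. Since $U_k\to U$ and $F_k\to F$ strongly in $L^2(B_1)$, we have $F_k\cdot U_k\to F\cdot U$ in $L^1(B_1)$, by the splitting
$$F_k\cdot U_k-F\cdot U=(F_k-F)\cdot U_k+F\cdot(U_k-U)$$
and Cauchy--Schwarz; here I use that $\|U_k\|_{L^2}$ is bounded. Hence $\int F_k\cdot U_k\to\int F\cdot U$, and lower semicontinuity reduces to the Dirichlet term.

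For the Dirichlet term, I first dispose of the trivial case. If $\liminf\int|\nabla U_k|^2=\infty$, the inequality is automatic because the linear term converges to a finite limit. Otherwise I pass to a subsequence realizing the $\liminf$ of $J(U_k,F_k)$ along which $\|\nabla U_k\|_{L^2}$ is bounded. Then $U_k$ is bounded in $H^1(B_1)$, so after a further subsequence $U_k\rightharpoonup W$ weakly in $H^1$. Since $U_k\to U$ in $L^2$, we get $W=U$, so $U\in H^1$ and $\nabla U_k\rightharpoonup\nabla U$ weakly in $L^2$. Weak lower semicontinuity of the $L^2$ norm then gives
$$\liminf\int\tfrac12|\nabla U_k|^2\ \ge\ \int\tfrac12|\nabla U|^2 .$$
Adding this to the convergence of the linear term gives $\liminf J(U_k,F_k)\ge J(U,F)$.

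I also want to record that $U\in\mathcal A$, since the lemma is later used to show that the limit is an admissible competitor. Passing to an a.e.\ convergent subsequence, the conditions $u_k^i\ge0$ and $u_k^iu_k^j=0$ for $i\neq j$ are preserved pointwise a.e., so $U\in\mathcal A$ whenever $U\in H^1$. The only step needing care is the identification of the weak $H^1$ limit with the strong $L^2$ limit $U$, and that is standard; I do not expect a genuine obstacle.
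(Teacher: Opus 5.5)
Your proof is correct and is the standard argument the paper has in mind; the paper gives no details and only says the lemma follows from standard techniques. The two ingredients are convergence of the linear term $\int F_k\cdot U_k$ under strong $L^2$ convergence, and weak lower semicontinuity of the Dirichlet integral along an $H^1$-bounded subsequence whose weak limit is identified with $U$. Your extra observation that $U\in\mathcal A$, since a.e. limits preserve the sign and disjoint-support constraints, is harmless and is indeed what the compactness argument needs.
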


\begin{lem}[Caccioppoli's Inequality]\label{lem:Caccioppoli} Let $U \in \mathcal A$ be a minimizer of $J(\cdot, F)$ in $B_1$, $F\in L^2(B_1)$. Then,
$$\int_{B_{1/2}} |\nabla U|^2 dx \leq C\left(\int_{B_1} |U|^2dx + \int_{B_1} |F|^2 dx\right),$$ for a dimensional constant $C>0.$
\end{lem}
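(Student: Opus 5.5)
The plan is the standard competitor argument for Caccioppoli inequalities; the one point needing care is building a competitor that stays in $\mathcal A$. Fix a cutoff $\eta\in C_c^\infty(B_1)$ with $0\le\eta\le 1$, $\eta\equiv1$ on $B_{1/2}$ and $|\nabla\eta|\le C$.

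\textbf{Choice of competitor.} Take
$$V:=(1-\eta^2)U=\big((1-\eta^2)u^1,\ldots,(1-\eta^2)u^N\big).$$
Multiplying every component by the same nonnegative scalar function keeps the components nonnegative with pairwise disjoint supports, so $V\in\mathcal A$. Also $V=U$ near $\partial B_1$, so $V$ is an admissible competitor and minimality gives $J(U,F)\le J(V,F)$.

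\textbf{Main estimate.} Write $\psi:=1-\eta^2\in[0,1]$. Then
$$\nabla V=\psi\nabla U+U\otimes\nabla\psi,\qquad \nabla\psi=-2\eta\nabla\eta .$$
This gives
$$|\nabla V|^2=\psi^2|\nabla U|^2+2\psi\,\nabla U:(U\otimes\nabla\psi)+|U|^2|\nabla\psi|^2 .$$
Since $\psi^2\le\psi$, we get $\tfrac12|\nabla U|^2-\tfrac12|\nabla V|^2\ge\tfrac12(1-\psi)|\nabla U|^2-(\text{cross term})-\tfrac12|U|^2|\nabla\psi|^2$, where $1-\psi=\eta^2$.

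\textbf{Absorbing the cross term.} The cross term is bounded by $C\eta|\nabla U||U|$, since $|\nabla\psi|\le C\eta$ and $\psi\le1$. Young's inequality then gives
$$C\eta|\nabla U||U|\le\tfrac14\eta^2|\nabla U|^2+C|U|^2 .$$
\textbf{Forcing term.} Here $|F\cdot(U-V)|=\eta^2|F\cdot U|\le\tfrac12(|F|^2+|U|^2)$.

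\textbf{Conclusion.} Combining these with $J(U)\le J(V)$ yields
$$\tfrac14\int\eta^2|\nabla U|^2\le C\int_{B_1}\big(|U|^2+|F|^2\big),$$
and restricting to $B_{1/2}$, where $\eta=1$, gives the claim.

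The only potential obstacle is admissibility of the competitor. The usual choice $U-\eta^2U$ might look like it breaks the constraint for vector maps, but because the multiplier is a common scalar it does not. No hole-filling or iteration is needed.
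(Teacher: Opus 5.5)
Your proposal is correct and is essentially the paper's argument: the paper also compares $U$ with the competitor $(1-\phi)U$ for a bump function $\phi$. This competitor stays in $\mathcal A$ for exactly the reason you give, namely that a common nonnegative scalar multiplier preserves the disjoint supports. Your choice $\phi=\eta^2$ simply makes the absorption of the cross term by Young's inequality explicit.
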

\begin{proof} This follows by comparing the energy of $U$ with the energy of $(1-\phi) U$, where $\phi$ is a bump function which is zero outside $B_{3/4}$ and $1$ in $B_{1/2}.$
\end{proof}

With these tools in hand, we can prove the compactness result.

\begin{proof}[Proof of Proposition \ref{compact}] Let $V$ be an admissible competitor with $V=U$ in $B_1 \setminus \bar B_{1-\delta}$ for $\delta>0$ small. 
We use Lemma \ref{uv} below, and call $W_k$ the interpolation of $U_k$ and $V$ such that 
$$W_k= V \quad \text{in $B_{1-\delta}$}, \quad W_k=U_k \quad \text{in $B_1 \setminus \bar B_{1-\delta/2}$}.$$
Then, by the minimality of $U_k$ and Lemma \ref{uv}, we get
$$J_k(U_k, B_1) \leq J_k(W_k, B_1) \leq J_k(V, B_{r_k}) + J_k(U_k, B_1 \setminus \bar B_{r_k}) + o(1),$$
with $o(1) \to 0$ as $k\to \infty.$ Notice that, by Caccioppoli's inequality $\|U_k\|_{H^1}$, is uniformly bounded in $B_{1-\delta/2}$ and the hypotheses of Lemma \ref{uv} apply.

We deduce that,
$$J_k(U_k, B_{r_k}) \leq  J_k(V,  B_{r_k}) + o(1).$$
By the lower semi-continuity of $J$, since $F_k \to F$ in $L^2$, we obtain that,
$$J(U, B_{\bar r}) \leq  J(V, B_{\bar r}), \quad 1-\delta \leq \bar r \leq 1-\delta/2.$$ Our claim follows by letting $\delta \to 0.$
\end{proof}

In the next technical lemma, which we used in the previous proof, we interpolate between two functions which are $L^2$ close in an annulus, without increasing too much the total energy.
\begin{lem}\label{uv} Let $U_k,V_k$ be sequences in $\mathcal A$ and $\delta>0$ small. Assume that $U_k-V_k \to 0$ in $L^2(B_{1-\delta/2} \setminus \bar B_{1-\delta}),$ as $k\to \infty,$ and that $\|U_k\|_{H^1}, \|V_k\|_{H^1}$ and $\|F_k\|_{L^2}$ are uniformly bounded in $B_{1-\delta/2}$. Then, there exists $W_k \in \mathcal{A}$ with 
$$W_k:= \begin{cases}V_k \quad \text{in $B_{1-\delta}$}\\ U_k \quad \text{in $B_1 \setminus \bar B_{1-\delta/2}$}\end{cases}$$ such that, 

$$J_k(W_k, B_1) \leq J_k(V_k, B_{r_k}) + J_k(U_k, B_1 \setminus \bar B_{r_k}) + o(1),$$ 
for some $1-\delta \leq r_k \leq 1- \delta/2$,
 with $o(1) \to 0$ as $k \to \infty.$
\end{lem}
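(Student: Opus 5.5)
We will build $W_k$ by gluing $V_k$ and $U_k$ across a thin spherical shell $B_{r_k+h_k}\setminus B_{r_k}$ inside the annulus $A:=B_{1-\delta/2}\setminus \bar B_{1-\delta}$. The gluing must respect the constraint $\mathcal A$. Since $\mathcal A$ is not convex, linear interpolation is not allowed. Instead we interpolate on the singular target space (the union of the positive coordinate axes) through its geodesic structure. Concretely, set $\hat V_k^i$ and $\hat U_k^i$ as before. In the shell we define the $i$-th component of $W_k$ as
$$w^i_k:=\big((1-\eta)\hat v^i_k+\eta \hat u^i_k\big)^+,$$
where $\eta$ is a radial cutoff equal to $0$ on $B_{r_k}$ and $1$ outside $B_{r_k+h_k}$, with $|\nabla\eta|\le 1/h_k$.

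The first issue is admissibility. A simpler and certainly admissible alternative is to pass through zero: on the first half of the shell use $(1-\eta_1)V_k$, on the second half use $\eta_2 U_k$, so that $W_k=0$ on the middle sphere. Each piece is a scalar multiple of an element of $\mathcal A$, hence lies in $\mathcal A$. I will use this version, since it avoids any checking of disjoint supports.

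Next, the energy estimate. The Dirichlet energy of $(1-\eta_1)V_k$ on the shell is bounded by
$$2\int_{\text{shell}}|\nabla V_k|^2+\frac{2}{h_k^2}\int_{\text{shell}}|V_k|^2,$$
and similarly for $U_k$. The forcing terms are controlled by $\|F_k\|_{L^2}\,\|U_k\|_{L^2(\text{shell})}$. So we need a shell that is thin in the $H^1$ sense and on which $|V_k|,|U_k|$ are $L^2$-small compared to $h_k^2$.

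Passing through zero, however, requires $V_k$ itself to be small on the shell, which is false. This forces a return to the first construction. Since $d(U,V)=\sum|u^i-\hat v^i|\chi_{\{u^i>0\}}$ and $d$ is controlled by $|U-V|$ (pointwise $d(U,V)\le C|U-V|$ for vectors supported on distinct axes, because $u^i+v^j\le\sqrt2\,|U-V|$), the geodesic interpolation is the right object. Pointwise, we take $W_k(x)$ to be the point at fraction $\eta(x)$ along the geodesic in the target from $V_k(x)$ to $U_k(x)$. This is exactly the formula above applied with $i$ chosen according to which is the larger endpoint component: $w^i=((1-\eta)\hat v^i+\eta \hat u^i)^+$ for all $i$ simultaneously. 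One checks that at most one of these is positive, because $\sum_i \hat v^i\le v^j-\sum_{i\ne j}v^i$-type cancellations give pairwise-disjoint positivity. The map $(P,Q,t)\mapsto \gamma_{P,Q}(t)$ is Lipschitz with
$$|\partial_x W|\le |\nabla U_k|+|\nabla V_k|+|\nabla\eta|\,d(U_k,V_k),$$
and $d(U_k,V_k)\le C|U_k-V_k|$.

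Hence the energy excess on the shell is at most
$$C\int_{\text{shell}}\big(|\nabla U_k|^2+|\nabla V_k|^2+|F_k|\,(|U_k|+|V_k|)\big)+\frac{C}{h_k^2}\int_A|U_k-V_k|^2.$$
We choose $h_k\to0$ slowly so that $h_k^{-2}\|U_k-V_k\|_{L^2(A)}^2\to0$. Splitting $A$ into about $\delta/h_k$ disjoint shells, the uniform $H^1$ and $L^2$ bounds let us pick a shell $[r_k,r_k+h_k]$ on which the first integral is at most $Ch_k/\delta\to0$. This yields the claimed inequality, with $r_k$ being the inner radius.

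The main obstacle is the admissibility and Lipschitz bound for the geodesic interpolation formula. Pairwise disjointness has to be checked carefully in the case where $V_k(x)$ and $U_k(x)$ lie on different axes: for $t<$ the crossing time only component $j$ is positive, and after it only component $i$.
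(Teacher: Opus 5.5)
Your final construction is correct, and it takes a genuinely different route from the paper.

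\textbf{Admissibility and the gradient bound.} The formula $w^i=((1-\eta)\hat v^i+\eta\hat u^i)^+$ is the pointwise geodesic homotopy on the union of the positive axes. The admissibility check you flag as the main obstacle takes one line. Where $V_k=ae_j$ and $U_k=be_i$:
\begin{itemize}
\item for $l\notin\{i,j\}$ the argument is $-(1-\eta)a-\eta b\le 0$;
\item for $i\neq j$ the $j$-th and $i$-th arguments are $a-\eta(a+b)$ and its negative, so at most one is positive;
\item for $i=j$ only the $i$-th argument can be positive.
\end{itemize}
Moreover $(\hat v^i)^+=v^i$ and $(\hat u^i)^+=u^i$, so a single formula defines $W_k\in H^1$ globally. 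Finally, $|\hat u^l-\hat v^l|\le d(U_k,V_k)\le\sqrt2\,|U_k-V_k|$ for every $l$, which gives your gradient bound. This computation should replace the garbled sentence about ``$\sum_i\hat v^i\le\cdots$ cancellations'', and the abandoned pass-through-zero detour should simply be removed.

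\textbf{Comparison with the paper.} The paper sidesteps the non-convexity differently. It first treats the ordered case $V_k\le U_k$ componentwise. There the plain convex combination $\varphi_rU_k+(1-\varphi_r)V_k$ stays in $\mathcal A$, because $\{v^i_k>0\}\subset\{u^i_k>0\}$. It then averages the shell error over $r$ by Fubini, which is equivalent to your pigeonhole over disjoint shells. The general case is handled by two such ordered interpolations through $\min\{U_k,V_k\}$. This function lies in $\mathcal A$, sits below both $U_k$ and $V_k$, and is $L^2$-close to both.

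The paper's route needs only trivial admissibility checks, but it uses two gluing shells. These should be taken adjacent, inside one thin shell, so that the intermediate region where $W_k=\min\{U_k,V_k\}$ also carries negligible energy. Your route does the gluing in a single shell with one explicit map, at the cost of the short pointwise verification above. Since it is the interpolation adapted to the metric $d$ of Lemma~\ref{duv}, it would also carry over to other target spaces admitting Lipschitz geodesic homotopies.
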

\begin{proof} 
First, we prove our claim in the case when in the annular region where we interpolate we have $V_k \leq U_k,$ that is the inequality holds for each component. For notational simplicity, in what follows we drop the subscript $k$. 

Let  $\varphi(s)$ be a smooth function on $\R$, which is $0$ on $(-\infty, 0]$ and $1$ on $[1,+\infty)$ and 
for $0< \mu< \frac{\delta}{4}$, set 
$$\varphi_\mu(s) = \varphi\left(\frac{s}{\mu}\right).$$
For all $r \in [1-\delta, 1-\frac 3 4 \delta]$, denote by $\varphi_r(x):=\varphi_\mu(|x|-r)$ and define 

\begin{equation}\label{w_r}
W_r:= \varphi_r U + (1-\varphi_r)V.
\end{equation} 
Since we are assuming that $V \leq U$, this guarantees that $W_r \in \mathcal A.$
Then,
\begin{align*}J(W_r, B_1) \leq &J(V, B_r) + J(U, B_1 \setminus \bar B_{r+\mu}) + \\
\ & C\int_{B_{r+\mu} \setminus \bar B_r} (|\nabla \varphi_r|^2|V-U|^2 + \varphi_r^2|\nabla U|^2+(1-\varphi_r)^2|\nabla V|^2)dx\\  + & \int_{B_{r+\mu} \setminus \bar B_r} (\varphi_r F\cdot U+ (1-\varphi_r)F\cdot V) dx.
\end{align*}
Thus (for $C$ changing from line to line),
\begin{align*}J(W_r, B_1) \leq &J(V, B_{r}) + J(U, B_1 \setminus \bar B_{r}) + CE(r)\end{align*}
with 
$$E(r):=\int_{B_{r+\mu} \setminus \bar B_r} (\frac{1}{\mu^2}|U-V|^2 + |\nabla U|^2+|\nabla V|^2 +|F||U|+|F||V|) dx$$

We now average in $E(r)$ on the interval $[1-\delta, 1-\frac 3 4 \delta]$ and use Fubini to get 
\begin{align*} \frac 4 \delta \int_{1-\delta}^{1-\frac 3  4 \delta} E(r) dr & \leq  \frac 4  \delta \mu \int_{B_{1-\delta/2} \setminus \bar B_{1-\delta}} (\frac{1}{\mu^2}|U-V|^2 + |\nabla U|^2+|\nabla V|^2 +|F||U|+|F||V|) dx.
\end{align*}
By choosing first $\mu$ small and then $k$ large, we obtain the desired statement in view of the assumption that $\|U_k-V_k\|_{L^2} \to 0$ as $k \to \infty$.

This proves the result when $V_k \leq U_k$. Otherwise, we do this procedure twice. First we interpolate $U_k$ and $\min\{U_k, V_k\}$ between $B_{1-\delta}$ and $B_{1-3\delta/4}$, and then we interpolate the $\min\{U_k,V_k\}$ and $V_k$ between $B_{1-7\delta/8}$ and $B_{1-\delta/2}.$
\end{proof}

We conclude the section with the proofs of Lemma \ref{H1loc} and Proposition \ref{prop:asympt}.

\begin{proof}[Proof of Lemma \ref{H1loc}] By compactness and optimal regularity, $U$ is Lipschitz continuous in $B_1$ and by the Euler-Lagrange equation $U \in C^{1,\alpha}(\{|U|>0\})$. The convergence is then obvious in the set $\{|U|>0\}.$

Let $x_0$ be a point where $|U(x_0)|=0$ and $U$ and all the $U_k$'s are differentiable. It suffices to show that $\nabla U_k(x_0) \to \nabla U (x_0)$, which implies the almost everywhere pointwise convergence of the gradients. 

Since $U\geq 0$, $\nabla U(x_0) = 0$. Hence, 
$$|U(x)| \leq \delta r, \quad \text{in $B_r(x_0)$, for all small $r$'s.}$$ Therefore, for a fixed small $r$ and for all large $k$'s,
$$|U_k(x)| \leq 2\delta r, \quad \text{in $B_r(x_0)$.}$$
By the optimal regularity estimate \eqref{opt} in $B_r(x_0)$,
$$\|\nabla U_k\|_{L^\infty(B_{r/2}(x_0))} \leq C(\frac 1 r \|U_k\|_{L^\infty(B_r(x_0))} + r\|F_k\|_{L^\infty(B_r(x_0))})$$
which combined with the inequality above gives, 
$$\|\nabla U_k\|_{L^\infty(B_{r/2}(x_0))} \leq C(3\delta + r\|F\|_{L^\infty(B_r(x_0))}) \leq \eps$$
by choosing first $\delta$ and then $r.$
\end{proof}

\begin{proof}[Proof of Proposition \ref{prop:asympt}] As in the proof of Lemma \ref{H1loc}, the rescaled Lipschitz estimate, gives that if
$$\liminf_{r\to 0} \frac{\|U\|_{L^\infty(B_r)}}{r}=0$$ then the first alternative holds. Otherwise, there exists $\mu>0$ such that for all small $r$'s, \begin{equation}\label{mu}\mu r \leq \|U\|_{L^\infty(B_r)} \leq \mu^{-1} r.\end{equation} Then, by compactness, the Lipschitz rescalings
$$U_r(x) := \frac{U(rx)}{r}$$ converge uniformly on compact sets to a global solution $U_0$ of $J_0,$  which satisfies \eqref{mu} as well for all $r$'s. By Almgren's frequency formula (Proposition \ref{almgren}) $U_0$ must be homogeneous of degree 1. Thus, the ACF monotonicity formula (Proposition \ref{ACF}) implies that $U_0$ must have either only one non-trivial component or two components given by $a_\pm(x\cdot \nu)^\pm.$ The former is not possible because that component, say $u^1_0$, would coincide with $\hat u_0^1$ and be superharmonic by the Euler Lagrange equation. However, it would be zero at the origin contradicting the maximum principle. Then, $U_0$ must have two components as above, and we end up in the assumptions of Proposition \ref{pert}.
\end{proof}

\noindent {\small \textbf{Acknowledgments.} 
 Ovidiu  Savin was supported by NSF, grant nr.  DMS 2349794; Henrik Shahgholian was supported by  Swedish Research Council, grant nr. 2025-03740.  
}

\end{document}